\title{Polyatomic Logics and Generalised Blok-Esakia Theory}
\author{Rodrigo Nicolau Almeida}
\begin{document}

\maketitle

\begin{abstract}
    This paper presents a novel concept of a \textit{Polyatomic Logic} and initiates its systematic study. This approach, inspired by Inquisitive semantics, is obtained by taking a variant of a given logic, obtained by looking at the fragment covered by a \textit{selector term}. We introduce an algebraic semantics for these logics and prove algebraic completeness. These logics are then related to \textit{translations}, through the introduction of a number of classes of translations involving selector terms, which are noted to be ubiquitous in algebraic logic. In this setting, we also introduce a generalised Blok-Esakia theory which can be developed for special classes of translations. We conclude by showing some systematic connections between the theory of Polyatomic Logics and the general Blok-Esakia theory for a wide class of interesting translations.
\end{abstract}

\section{Introduction}

When working with multiple logical systems, a natural question one might ask is whether such systems are related by translations. Many examples are well-known in the literature:
\begin{itemize}
    \item \textit{Double Negation Translations}, introduced by Kolmogorov \cite{kolmogorovexcludedmidle}, Gödel \cite{davis_1990} and Gentzen \cite{Gentzen1936}, which we refer to as the Kolmogorov-Gödel-Gentzen translation (KGG) from classical logic $\mathsf{CPC}$ into intuitionistic logic, $\mathsf{IPC}$.
    \item The \textit{Gödel-McKinsey-Tarski translation} (GMT) \cite{godelintuition}, from $\mathsf{IPC}$ into $\mathsf{S4}$ modal logic.
    \item The \textit{Girard call-by-value translation} \cite{GIRARD19871}, from $\mathsf{IPC}$ to Intutitionistic Linear logic.
    \item The \textit{Goldblatt translation} \cite{Goldblatt1974}, from orthologic into $\mathsf{KTB}$ modal logic.
\end{itemize}

Such translations have found applications in logic, mathematics, philosophy and linguistics. This is often done by a semantic analysis of the translation, using some specific class of models of the logical systems. Taking the KGG translation as an example, we start with the algebraic models of intuitionistic logic -- Heyting algebras -- and the models of classical logic -- Boolean algebras. Given a Heyting algebra $\mathcal{H}$, we can consider the set of \textit{regular elements}, i.e.,  elements such that $a=\neg\neg a$,
\begin{equation*}
    \mathcal{H}^{\neg} = \{a\in H : a=\neg\neg a\}.
\end{equation*}
This set can be equipped with a Boolean algebra structure, by taking the meet, implication and bounds of $\mathcal{H}$, as well as the operation $x\veeio y \coloneqq \neg\neg (x\vee y)$. The key idea in applications of such a translation is then to relate these two structures -- the original Heyting algebra $\mathcal{H}$ and the Boolean algebra of regular elements $\mathcal{H}^{\neg}$ -- whether for mathematical (see e.g. \cite{Banaschewski1996}) or linguistic purposes (see e.g. \cite{Ciardelli2010}). For instance in the case of \textit{inquisitive logic}, studied by  
Ciardelli et al.,  \cite{Ciardelli2010}, the goal was to provide logical meaning to inquisitive statements. Following the algebraic approach taken in  \cite{Bezhanishvili2019_grilletti_holliday}, the models of such a logic can be taken to be Heyting algebras $\mathcal{H}$ equipped with a valuation $v$ taking values in $\mathcal{H}^{\neg}$. The idea is then that regular elements represent ``ultimately classical" facts, whilst the surrounding intuitionistic structure represents the inquisitive state of knowledge. This has grown into a rich linguistic, philosophical and logical literature, spawning numerous variations (see \cite{Ciardelli2018-ql} for an extended discussion). As such an example illustrates, the semantic meaning of translations can be exploited to find new logical tools for various purposes. This motivates a deeper understanding of translations as mathematical objects, as well of the logical frameworks which, like inquisitive logic, make productive use of such translations.

This paper, based on a recent masters thesis \cite{minhatesedemestrado}, introduces two tools aimed at this direction of inquiry -- the notion of a \textit{Polyatomic Logic}, and the concept of a \textit{Generalised Blok-Esakia Theory}. Each of these concepts generalises a known line of analysis in the literature, but they are presented here in greater generality. Additionally, we highlight a series of connections between the key ideas of the theory of Polyatomic Logic and those of general Blok-Esakia theory, which as far as we are aware are discussed here for the first time.

Polyatomic logics generalise the notion of a \textit{DNA-logic}, introduced in \cite{Bezhanishvili2019_grilletti_holliday,bezhanishvili_grilletti_quadrellaro_2021}, which in turn generalise the above mentioned inquisitive propositional logic \cite{Ciardelli2010}. The idea is to attach to each superintuitionistic logic (i.e., an s.i. axiomatic extension) $L$, a collection of formulas denoted $L^{\neg}$, defined as
\begin{equation*}
    \phi\in L^{\neg} \iff \phi[\neg \overline{p}/\overline{p}]\in L
\end{equation*}
where $\phi[\neg\overline{p}/\overline{p}]$ is the result of substituting each proposition letter $q$ for $\neg q$ on $\phi$. Such a collection of formulas is called the \textit{DNA-variant} of $L$. Such ``logics" have the feature that they are not closed under uniform substitution. Semantically, this amounts to considering only \textit{regularly generated Heyting algebras of $L$}, i.e., those $\mathcal{H}\vDash L$ such that $\mathcal{H}$ is isomorphic to $\langle \mathcal{H}^{\neg}\rangle$, the subalgebra of $\mathcal{H}$ generated by the regular elements. 

The analysis of such logics has contributed to the understanding  of propositional inquisitive logics \cite{bezhanishvili_grilletti_quadrellaro_2021}, such as a full description of the lattice of extensions of such logics. However, the key features enabling the above definitions are far from exclusive to the KGG situation (a fact already noted and exploited in some generalisations by Grilletti and Quadrellaro \cite{Grilletti2022-oh}, and extensively discussed by Nakov and Quadrellaro \cite{quadrellaronakov}). As an example, let us quickly look at the GMT translation: here, given an $\mathbf{S4}$ algebra $\mathcal{B}$, an element $a$ is called \textit{open} if $a=\Box a$. Then we consider:
\begin{equation*}
    \mathcal{B}_{\Box} \coloneqq \{ a\in B : a=\Box a\}.
\end{equation*}
Equipping this set with the induced operations from the distributive lattice as well as the implication $a\Rightarrow b \coloneqq \Box (\neg a\vee b)$, we obtain a Heyting algebra. In analogy with the situation above, given a normal modal logic $M$, extending $\mathsf{S4}$, we can consider the $\Box$-variant of $M$, $M^{\Box}$, defined as:
\begin{equation*}
    \phi\in M^{\Box} \iff \phi[\Box\overline{p}/\overline{p}]\in M,
\end{equation*}
and consider as algebraic models only the openly generated $S4$-algebras.

 Translations which act by affixing a given term to certain formulas are ubiquitous: from orthomodular lattices to residuated ortholattices \cite{Fussner2021}; reflexivisation translations \cite[Chapter 4]{Chagrov1997-cr} in modal logic; as well as numerous substructural logics into modal systems \cite{Hartonas2020-kn}. Hence, a natural question arises as to what conditions are needed to ensure that one can construct such a logic, and what properties from the theory outlined in \cite{bezhanishvili_grilletti_quadrellaro_2021} can be carried out in general. This is what is achieved by the concept of a Polyatomic Logic, which generalises the above situation to any logic possessing an idempotent term -- deemed a ``selector term" -- analogous to $\neg\neg$ or $\Box$.

 Our second line of analysis concerns the study of translations, from the point of view of the lattices of extensions of the logical systems at play -- something we dub \textit{Generalised Blok-Esakia theory}. This is modelled after the GMT translation, and the notion of a \textit{modal companion} of an s.i. logic $L$, i.e., a normal extension $M$ of $\mathsf{S4}$ such that for each $\phi$ in the language of intuitionistic logic:
 \begin{equation*}
     \phi\in L \iff GMT(\phi)\in M.
 \end{equation*}

The namesake \textit{Blok-Esakia isomorphism}, established independently by Wim Blok and Leo Esakia (\cite{Chagrov1997-cr,esakiapaperatconference,Blok1976VarietiesOI}, see also \cite{antoniocleani,stronkowskiblokesakia,Wolter2014}), establishes the fact that there is an isomorphism between the lattice of normal extensions of $\mathsf{S4.Grz}$ and the lattice $\Lambda(\mathsf{IPC})$ of s.i. logics, sending each s.i. logic to its greatest modal companion. This has been generalised in many directions, and with respect to various different systems (see \cite{Chagrov1997-cr,Wolter2014} for extended discussions). However, as it becomes clear in the analysis in Section \ref{Section: General Blok-Esakia Theory},  many of the properties of $\mathsf{IPC}$ and $\mathsf{S4.Grz}$ used to establish this isomorphism, depend solely on a categorical adjunction between the category of Heyting algebras and the category of Grz-algebras.

Starting from the work of Moraschini \cite{Moraschini2018}, which identifies translations between logical systems and adjunctions between algebraic categories, we study of various kinds of translations, obtained by requiring the adjunction to have more properties which make it resemble an adjoint equivalence, such as a fully faithful left adjoint, or a full right adjoint. We outline the natural generalisations of the concepts present in the classical Blok-Esakia isomorphism, and show that they hold in the presence of such strong properties on the adjunction. We also give various examples of translations which have some, all, or none of these properties.

Having introduced both of these tools as generalisations of known concepts, we then highlight the connection between Polyatomic Logics and Blok-Esakia theory. Key amongst these is the notion of 
$\mathsf{PAt}$-\textit{Blok Esakia isomorphism}, which holds when the lattice of logics of the system being translated is isomorphic to the lattice of Polyatomic Logics of the target system, with respect to the translation. We also show that there is a correspondence betwen concepts which occur naturally in Polyatomic logics -- such as $\mathsf{PAt}$-minimal and maximal logics -- and Blok-Esakia concepts -- such as the greatest and least modal companion. We hence show that the schematic fragment, generalised from DNA-logic, can be seen as the greatest modal companion. This reanalyses ``Blok's lemma" -- which establishes that every $\mathsf{S4.Grz}$ logic is sound and complete with respect to its openly generated algebras, and which establishes the Blok-Esakia isomorphism in full -- and highlights which properties are special to GMT. For instance, the schematic fragment of inquisitive propositional logic is the well-known Medvedev Logic, a logic which is not known to have a recursive axiomatisation.

The paper is organised as follows: in Section \ref{Section 2} we discuss the basic preliminaries necessary to proceed with the analysis. In Section \ref{Section: Basic Theory and Completeness of PA logic} we define Polyatomic logics, and develop its basic theory, proving algebraic completeness of these logics with respect to specific classes of quasivarieties. In Section \ref{Section: Translations and Polyatomic Logics} we recap the approach of Moraschini to translations as adjunctions, and introduce the concept of selective translation, as well as some strengthenings, such as strongly selective and sober translations; in Section \ref{Section: General Blok-Esakia Theory} we outline a general Blok-Esakia theory, and discuss some of the preservations properties which can be derived at a great level of generality; finally in Section \ref{Connecting Blok-Esakia and Polyatomic Logics} we explore the connections between this theory and Polyatomic Logics, as detailed above. We conclude in Section \ref{Section: Conclusion} by outlining some questions left open by the present work.

\section{Preliminaries}\label{Section 2}

In this section we fix notation, and discuss some essential concepts that will be needed throughout the paper. We assume throughout basic familiarity with universal algebra, see for instance \cite{BurrisSankappanavar} for the basic theory of varieties, and \cite{Gorbunov1998-sh} for the basic theory of quasivarieties. For more information on abstract algebraic logic see \cite{Font2016-dk}. We also assume familiarity with the basic theory of Heyting algebras and Boolean algebras (see for example \cite{Davey2002-lr}).

\subsection{Algebraic Semantics}

Throughout we work with sets $\fancyL$ of function symbols, with an arity operation, which we refer to as \textit{algebraic languages} or \textit{types of algebras}. Given an algebraic language $\fancyL$, and a set $X$, we write $Tm(\fancyL,X)$ for the set of terms built up with the variables in $X$. We sometimes refer to terms as ``formulas", especially in the context of algebras of logic. We write $\terms(\fancyL,X)$ for the corresponding absolutely free algebra. We write Greek letters, $\lambda,\gamma,\iota,...$ for terms of the language. When we want to emphasise the variables in the term we write $\lambda(x_{1},...,x_{n})$, or sometimes $\lambda(\overline{x})$. We denote by $Eq(\fancyL,X)$ the set of equations built from this language over $X$. These are formally pairs of elements from $Tm(\fancyL,X)$, and we usually write
\begin{equation*}
    \lambda\approx\gamma,
\end{equation*}
to denote some equation in the language. When the language $\fancyL$ is clear from context, we omit it from the above defintions.

\begin{definition}
Let $\fancyL$ be an algebraic language. A \textit{quasi-equation} is an expression $\Phi$ of the form:
\begin{equation*}
    \Phi\coloneqq \lambda_{1}\approx \gamma_{1} \ \&  \ ... \ \& \ \lambda_{n}\approx \gamma_{n} \rightarrow \lambda\approx \gamma
\end{equation*}
where $\lambda_{i},\gamma_{i},\lambda,\gamma$ are terms in the language.
\end{definition}

Throughout we use $\mathcal{A}$ to denote an algebra, and $A$ to mean the carrier of the algebra. Given an algebra $\mathcal{A}$ of type $\fancyL$ and a term $\lambda$, we write $\lambda^{\mathcal{A}}$ to denote the interpretation of $\lambda$ on $\mathcal{A}$. Often we will omit $\mathcal{A}$ when this is clear from context.

\begin{definition}
Let $\mathcal{A}$ be an algebra of type $\fancyL$, and let $X$ be a set, which we think of as a set of variables. We say that a map $v:X\to A$ is an \textit{assignment of variables} on $\mathcal{A}$. Given an assignment $v$, we extend this to a \textit{valuation} $\overline{v}:Tm(\fancyL,X)\to A$ recursively, by defining:
\begin{itemize}
    \item $\overline{v}(p)=v(p)$
    \item For each $n$-ary operation symbol $f(x_{1},...,x_{n})\in \fancyL$, and $\lambda_{1},...,\lambda_{n}\in Tm(\fancyL,X)$ such that $\overline{v}(\lambda_{i})$ has been defined, we let:
    \begin{equation*}
        \overline{v}(f(\lambda_{1},...,\lambda_{n}))=f^{\mathcal{A}}(\overline{v}(\lambda_{1}),...,\overline{v}(\lambda_{n}))
    \end{equation*}
\end{itemize}
We refer to a pair $(\mathcal{A},v)$ as a \textit{model}. Given an equation $\lambda\approx\gamma\in Eq(\fancyL,X)$, we write $\mathcal{A},v\vDash \lambda\approx\gamma$ to mean that $\overline{v}(\lambda)=\overline{v}(\gamma)$. We write $\mathcal{A}\vDash \lambda\approx\gamma$ if for each model $(\mathcal{A},v)$, we have that $\mathcal{A},v\vDash \lambda\approx\gamma$.

Similarly, given a quasi-equation $\lambda_{1}\approx \gamma_{1} \ \& \ ... \ \& \ \lambda_{n}\approx \gamma_{n} \rightarrow \lambda\approx \gamma$, we write $$\mathcal{A},v\vDash  \lambda_{1}\approx \gamma_{1} \ \&  \ ... \ \& \ \lambda_{n}\approx \gamma_{n} \rightarrow \lambda\approx \gamma$$ to mean that, if $\overline{v}(\lambda_{i})=\overline{v}(\gamma_{i})$ for each $1\leq i\leq n$, then $\overline{v}(\lambda)=\overline{v}(\gamma)$. We write $\mathcal{A}\vDash \lambda_{1}\approx \gamma_{1} \ \&  \ ... \ \& \ \lambda_{n}\approx \gamma_{n} \rightarrow \lambda\approx \gamma$ to mean that $\mathcal{A},v\vDash \lambda_{1}\approx \gamma_{1} \ \&  \ ... \ \& \ \lambda_{n}\approx \gamma_{n} \rightarrow \lambda\approx \gamma$ for each assignment of variables $v$.
\end{definition}

If $\mathcal{A}$ is a structure with a distinguished element $1$ -- as will be the case in many cases throughout this article -- we adopt a particular shorthand for models: given a valuation $v$ on $\mathcal{A}$, and $\lambda\in \fancyL$ we write $\mathcal{A},v\vDash \lambda$ to mean that $\mathcal{A},v\vDash \lambda\approx 1$.

\begin{definition}
Let $\mathbf{K}$ be a class of algebras. We say that $\bf{K}$ is an \textit{equational class} (resp. quasi-equational class) if there exists some set of equations $\Gamma$ (resp., quasi-equations) such that for each algebra $\mathcal{A}$, $\mathcal{A}\in \mathbf{K}$ if and only if $\mathcal{A}\vDash \Phi$ for each $\Phi\in \Gamma$.
\end{definition}

Throughout, if $\mathbf{K}$ is a class of similar algebras, we denote by $\fancyL_{\mathbf{K}}$ the algebraic language of $\mathbf{K}$; when the subscript is understood we drop it. 

We recall the following monotone and idempotent operators on classes of algebras:
\begin{enumerate}
    \item $\mathbb{I}(\mathbf{K})$ - isomorphic copies of algebras in $\mathbf{K}$;
    \item $\mathbb{H}(\mathbf{K})$ - homomorphic images of algebras in $\mathbf{K}$;
    \item $\mathbb{S}(\mathbf{K})$ - subalgebras of algebras in $\mathbf{K}$;
    \item $\mathbb{P}(\mathbf{K})$ - direct products of algebras in $\mathbf{K}$;
    \item $\mathbb{P}_{R}(\mathbf{K})$ - reduced products of algebras in $\mathbf{K}$.
\end{enumerate}

\begin{definition}
Let $\mathbf{K}$ be a class of algebras. Then we say that $\mathbf{K}$ is:
\begin{enumerate}
    \item A \textit{variety} if it is closed under subalgebras, homomorphic images and direct products;
    \item A \textit{quasi-variety} if it is closed under isomorphisms, subalgebras, and reduced products.
\end{enumerate}
\end{definition}

It is clear to see that quasivarieties and varieties form complete lattices under inclusion. Given a quasivariety $\bf{K}$, we write $\Xi(\bf{K})$ for the lattice of subquasivarieties of $\bf{K}$.

The following theorems were proved by Tarski and Maltsev (see respectively \cite[Chapter 2, Theorem 9.5]{BurrisSankappanavar} and \cite[Chapter 5, Theorem 2.25]{BurrisSankappanavar}):

\begin{theorem}
For every class of similar algebras $\mathbf{K}$, we have:
\begin{itemize}
    \item $\mathbf{K}$ is a variety if and only if $\mathbf{K}=\mathbb{HSP}(\mathbf{K}')$ for some class $\mathbf{K}'$ of similar algebras.
    \item $\mathbf{K}$ is a quasivariety if and only if $\mathbf{K}=\mathbb{ISP}_{R}(\mathbf{K}')$ for some class $\mathbf{K}'$ of similar algebras.
\end{itemize}
\end{theorem}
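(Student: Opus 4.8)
\emph{Proof strategy.} I would prove both biconditionals in the same way, treating the variety case in detail and indicating the (completely parallel) changes for quasivarieties. The implication from left to right is immediate: if $\mathbf{K}$ is a variety, put $\mathbf{K}' := \mathbf{K}$; closure of $\mathbf{K}$ under $\mathbb{H},\mathbb{S},\mathbb{P}$ together with monotonicity of these operators gives $\mathbb{HSP}(\mathbf{K}) \subseteq \mathbb{H}(\mathbf{K}) \subseteq \mathbf{K}$, while $\mathbf{K} \subseteq \mathbb{HSP}(\mathbf{K})$ since each of $\mathbb{I},\mathbb{H},\mathbb{S},\mathbb{P}$ is extensive. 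Hence everything reduces to the converse direction, which in turn reduces to the single assertion that \emph{$\mathbb{HSP}$ is idempotent}: granting $\mathbb{HSP}\,\mathbb{HSP} = \mathbb{HSP}$, for $\mathbf{K} = \mathbb{HSP}(\mathbf{K}')$ we get $\mathbb{H}(\mathbf{K}) = \mathbb{H}\,\mathbb{HSP}(\mathbf{K}') = \mathbf{K}$ and, analogously, $\mathbb{S}(\mathbf{K}) \subseteq \mathbf{K}$ and $\mathbb{P}(\mathbf{K}) \subseteq \mathbf{K}$, so $\mathbf{K}$ is a variety.

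The plan is to deduce idempotence of $\mathbb{HSP}$ from a short list of ``operator commutation'' inclusions, each of which has a direct constructive proof: the idempotences $\mathbb{HH} = \mathbb{H}$ and $\mathbb{SS} = \mathbb{S}$ and the bound $\mathbb{PP} \subseteq \mathbb{IP}$; the absorption of $\mathbb{I}$ into each of $\mathbb{H},\mathbb{S},\mathbb{P}$; and, crucially, $\mathbb{SH} \subseteq \mathbb{HS}$, $\mathbb{PS} \subseteq \mathbb{SP}$, $\mathbb{PH} \subseteq \mathbb{HP}$. For instance $\mathbb{SH} \subseteq \mathbb{HS}$ holds because for a surjective homomorphism $f \colon \mathcal{A} \to \mathcal{C}$ and a subalgebra $\mathcal{D}$ of $\mathcal{C}$ the preimage $f^{-1}[D]$ underlies a subalgebra of $\mathcal{A}$ that $f$ maps onto $\mathcal{D}$; $\mathbb{PS} \subseteq \mathbb{SP}$ holds because $\prod_i \mathcal{B}_i$ is a subalgebra of $\prod_i \mathcal{A}_i$ whenever $\mathcal{B}_i \leq \mathcal{A}_i$ for all $i$; and $\mathbb{PH} \subseteq \mathbb{HP}$ holds because a product of surjective homomorphisms is surjective. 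Assembling these purely formally gives $\mathbb{H}\,\mathbb{HSP} = \mathbb{HSP}$, then $\mathbb{S}\,\mathbb{HSP} \subseteq \mathbb{HS}\,\mathbb{SP} = \mathbb{HSP}$, and finally $\mathbb{P}\,\mathbb{HSP} \subseteq \mathbb{HP}\,\mathbb{SP} \subseteq \mathbb{HS}\,\mathbb{PP} \subseteq \mathbb{HS}\,\mathbb{IP} = \mathbb{HSP}$; combining the three yields $\mathbb{HSP}\,\mathbb{HSP} = \mathbb{HSP}$.

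For quasivarieties one runs the same argument with $\mathbb{I}$ in place of $\mathbb{H}$ and $\mathbb{P}_R$ in place of $\mathbb{P}$; the inclusions now needed are $\mathbb{SS} = \mathbb{S}$, the absorption facts $\mathbb{SI} = \mathbb{IS}$ and $\mathbb{P}_R\mathbb{I} \subseteq \mathbb{IP}_R$, the commutation $\mathbb{P}_R\mathbb{S} \subseteq \mathbb{SP}_R$ (a reduced product of subalgebras $\mathcal{B}_i \leq \mathcal{A}_i$ embeds into the reduced product of the $\mathcal{A}_i$ via the map induced on congruence classes), and $\mathbb{P}_R\mathbb{P}_R \subseteq \mathbb{IP}_R$. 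The same bookkeeping then shows $\mathbb{ISP}_R\,\mathbb{ISP}_R = \mathbb{ISP}_R$, so that $\mathbb{ISP}_R(\mathbf{K}')$ is always closed under $\mathbb{I},\mathbb{S},\mathbb{P}_R$, i.e.\ a quasivariety. (It is precisely the use of all reduced products, rather than only ultraproducts, that makes $\mathbb{P}_R\mathbb{P}_R \subseteq \mathbb{IP}_R$ available, since ultrafilters need not compose to ultrafilters.)

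The one step here that is not a one-line verification, and the one I expect to be the main obstacle, is $\mathbb{P}_R\mathbb{P}_R \subseteq \mathbb{IP}_R$: that a reduced product of a family of reduced products is isomorphic to a single reduced product. Given $\mathcal{B}_i = \bigl(\prod_{j \in J_i} \mathcal{A}_{ij}\bigr)/F_i$ for $i \in I$ and $\mathcal{C} = \bigl(\prod_{i \in I} \mathcal{B}_i\bigr)/G$, one takes the new index set $K = \bigsqcup_{i \in I} J_i$ and the family $H$ of subsets of $K$ defined by $\bigsqcup_{i} S_i \in H$ iff $\{ i \in I : S_i \in F_i \} \in G$. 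The work lies in checking that $H$ is a filter on $K$ (which uses that each $F_i$ and $G$ is upward closed and closed under binary intersections) and that the evident assignment $[f]_H \mapsto \bigl[\bigl([f(i,\cdot)]_{F_i}\bigr)_{i \in I}\bigr]_G$ defines a well-defined isomorphism $\bigl(\prod_{(i,j) \in K} \mathcal{A}_{ij}\bigr)/H \to \mathcal{C}$. (For varieties the analogue $\mathbb{PP} \subseteq \mathbb{IP}$ is just the trivial reindexing $\prod_i \prod_j \mathcal{A}_{ij} \cong \prod_{(i,j)} \mathcal{A}_{ij}$.) Finally, since the notions of (quasi-)equational class were introduced above, I would also record that essentially the same circle of ideas, now using free algebras, shows varieties to coincide with equational classes and quasivarieties with quasi-equational classes: any model $\mathcal{A}$ of all equations valid in a variety $\mathbf{K}$ is a homomorphic image of the $\mathbf{K}$-free algebra on $|A|$ generators, which lies in $\mathbb{SP}(\mathbf{K}) \subseteq \mathbf{K}$, and hence so does $\mathcal{A}$; the quasi-equational case is handled analogously, separating distinct elements of a model by homomorphisms into suitable reduced products of members of $\mathbf{K}$.
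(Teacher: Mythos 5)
Your proposal is correct and is exactly the classical Tarski/Mal'tsev argument (reduce everything to idempotence of $\mathbb{HSP}$, resp.\ $\mathbb{ISP}_R$, via the operator inclusions $\mathbb{SH}\subseteq\mathbb{HS}$, $\mathbb{PS}\subseteq\mathbb{SP}$, $\mathbb{PH}\subseteq\mathbb{HP}$, and the associativity of reduced products for $\mathbb{P}_R\mathbb{P}_R\subseteq\mathbb{IP}_R$). The paper gives no proof of its own here --- it simply cites Burris--Sankappanavar, where this is precisely the proof given --- so there is nothing to compare beyond noting that your identification of $\mathbb{P}_R\mathbb{P}_R\subseteq\mathbb{IP}_R$ as the only non-routine step is accurate.
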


We also recall the following theorems, due respectively to Birkhoff and Mal'tsev, which we refer to throughout as ``algebraic completeness" results:

\begin{theorem}
Let $\mathbf{K}$ be a class of algebras. Then:
\begin{itemize}
    \item $\mathbf{K}$ is an equational class if and only if $\mathbf{K}$ is a variety;
    \item $\mathbf{K}$ is a quasi-equational class if and only if $\mathbf{K}$ is a quasivariety.
\end{itemize}
\end{theorem}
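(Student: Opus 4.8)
The plan is to prove the two biconditionals in parallel: in each, the direction ``(quasi-)equational class $\Rightarrow$ (quasi-)variety'' is the routine preservation half, while ``(quasi-)variety $\Rightarrow$ (quasi-)equational class'' is the substantive half, which I would handle by a (relatively) free algebra argument. For the preservation halves I would check that the truth of a single equation $\lambda\approx\gamma$ in an algebra $\mathcal{A}$ survives in homomorphic images, subalgebras and direct products of copies of $\mathcal{A}$, so that the class $\mathrm{Mod}(\Gamma)$ of models of a set of equations $\Gamma$ is closed under $\mathbb{H},\mathbb{S},\mathbb{P}$ and is thus a variety; and that the truth of a single quasi-equation survives in isomorphic copies, subalgebras and reduced products --- the reduced-product case being the only one requiring a short direct calculation with classes modulo the filter, and being a standard fact since quasi-equations are Horn --- so that $\mathrm{Mod}(\Gamma)$ for a set $\Gamma$ of quasi-equations is a quasivariety.

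For the direction ``variety $\Rightarrow$ equational class'', let $\mathbf{K}$ be a variety and take $\Sigma$ to be the set of \emph{all} equations valid in $\mathbf{K}$; only $\mathrm{Mod}(\Sigma)\subseteq\mathbf{K}$ needs proof. Given $\mathcal{A}\vDash\Sigma$, I would fix a set of variables $X$ with $|X|\geq|A|$, a surjective assignment $v\colon X\to A$ and its extension $\overline{v}\colon\terms(\fancyL,X)\to\mathcal{A}$, and let $\theta_{\mathbf{K}}$ be the congruence on $\terms(\fancyL,X)$ of all pairs $(s,t)$ with $s\approx t$ valid in $\mathbf{K}$, so that $\mathbf{F}_{\mathbf{K}}(X)\coloneqq\terms(\fancyL,X)/\theta_{\mathbf{K}}$ is the free $\mathbf{K}$-algebra on $X$. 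One checks $\mathbf{F}_{\mathbf{K}}(X)\in\mathbf{K}$ by realising it as a subalgebra of a product of members of $\mathbf{K}$ --- one factor per assignment of $X$ into an algebra of $\mathbf{K}$, cut down to a set by a cardinality bound --- and using closure under $\mathbb{S}$ and $\mathbb{P}$. Since $\mathcal{A}\vDash\Sigma$ forces $\theta_{\mathbf{K}}\subseteq\ker\overline{v}$, the map $\overline{v}$ factors through $\mathbf{F}_{\mathbf{K}}(X)$, so $\mathcal{A}$ is a homomorphic image of $\mathbf{F}_{\mathbf{K}}(X)\in\mathbf{K}$, and closure under $\mathbb{H}$ gives $\mathcal{A}\in\mathbf{K}$. (One could equivalently begin from the Tarski identity $\mathbf{K}=\mathbb{HSP}(\mathbf{K})$ recalled above and run the same factorisation.)

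For the direction ``quasivariety $\Rightarrow$ quasi-equational class'', let $\mathbf{K}$ be a quasivariety and $\Sigma$ the set of all quasi-equations valid in $\mathbf{K}$; again only $\mathrm{Mod}(\Sigma)\subseteq\mathbf{K}$ is at issue, and the idea is to separate the points of a given $\mathcal{A}\vDash\Sigma$ by homomorphisms into $\mathbf{K}$. First I would observe that the congruences $\phi$ of an algebra whose quotient lies in $\mathbf{K}$ (the ``$\mathbf{K}$-congruences'') form an algebraic closure system: closed under arbitrary intersections by closure of $\mathbf{K}$ under $\mathbb{S}$ and $\mathbb{P}$, and under up-directed unions because a quasi-equation has only finitely many premises, so any failure in the union already occurs in one member. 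Hence any set $R$ of pairs generates a least $\mathbf{K}$-congruence $\mathrm{Cg}_{\mathbf{K}}(R)$, equal to the directed union of the $\mathrm{Cg}_{\mathbf{K}}(R_0)$ over finite $R_0\subseteq R$. Presenting $\mathcal{A}\cong\terms(\fancyL,A)/\theta$ via the identity assignment on $A$, fix $a\neq b$ in $A$, so that $(x_a,x_b)\notin\theta$. If $(x_a,x_b)\in\mathrm{Cg}_{\mathbf{K}}(\theta)$, then $(x_a,x_b)\in\mathrm{Cg}_{\mathbf{K}}(\{(s_1,t_1),\dots,(s_n,t_n)\})$ for finitely many pairs drawn from $\theta$, which makes the quasi-equation $s_1\approx t_1\ \&\ \cdots\ \&\ s_n\approx t_n\rightarrow x_a\approx x_b$ valid throughout $\mathbf{K}$ (for any assignment into a member of $\mathbf{K}$ satisfying the premises, the kernel of the induced homomorphism is a $\mathbf{K}$-congruence containing the $(s_i,t_i)$, hence also $(x_a,x_b)$), so it lies in $\Sigma$; evaluating it in $\mathcal{A}$ under the identity assignment, where all premises hold since $(s_i,t_i)\in\theta$, would then force $a=b$, a contradiction. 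Thus $(x_a,x_b)\notin\mathrm{Cg}_{\mathbf{K}}(\theta)$, and the quotient of $\mathcal{A}$ by $\mathrm{Cg}_{\mathbf{K}}(\theta)/\theta$ is a member of $\mathbf{K}$ separating $a$ from $b$. The product of one such quotient per pair embeds $\mathcal{A}$ into a member of $\mathbb{P}(\mathbf{K})\subseteq\mathbf{K}$, and closure under $\mathbb{S}$ gives $\mathcal{A}\in\mathbf{K}$ --- equivalently this may be phrased through $\mathbf{K}=\mathbb{ISP}_{R}(\mathbf{K})$.

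The main obstacle is the substantive half, and inside it the step crossing from the abstract closure conditions to the syntax of (quasi-)equations: genuinely building the (relatively) free algebra, seeing that it lies in $\mathbf{K}$ (which takes a little care to sidestep the set-versus-proper-class issue, via a cardinality bound furnishing ``enough'' algebras), and verifying that an arbitrary model of the (quasi-)equational theory presents over it. In the quasivariety case there is the further indispensable ingredient of \emph{compactness} of $\mathbf{K}$-congruence generation --- that $\mathrm{Cg}_{\mathbf{K}}(R)$ depends only on the finite subsets of $R$ --- which is exactly what lets membership in $\mathrm{Cg}_{\mathbf{K}}(\theta)$ be captured by a single quasi-equation with finitely many premises; without it the separating quotients would not be detected by anything in $\Sigma$. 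The rest --- the preservation checks and the factorisation lemmas --- is routine.
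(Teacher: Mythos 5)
The paper does not actually prove this statement --- it is the classical Birkhoff/Maltsev theorem and is cited from Burris--Sankappanavar --- so your proposal is effectively being measured against the standard textbook argument, whose outline you follow faithfully: the routine preservation half, the free-algebra factorisation for the variety case, and the relative-congruence separation for the quasivariety case. The variety half is correct as sketched.

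There is, however, one genuine gap in the quasivariety half, and it sits exactly at the step you yourself single out as indispensable. You justify closure of the $\mathbf{K}$-congruences under up-directed unions by saying that ``a quasi-equation has only finitely many premises, so any failure in the union already occurs in one member.'' That argument shows only that $\mathcal{A}/\bigcup_i\phi_i$ satisfies every quasi-equation valid in $\mathbf{K}$, i.e.\ that it lies in $\mathrm{Mod}(\Sigma)$; but $\mathrm{Mod}(\Sigma)=\mathbf{K}$ is precisely what you are trying to prove, so the justification is circular. The correct argument embeds $\mathcal{A}/\bigcup_i\phi_i$ into the reduced product $\prod_i(\mathcal{A}/\phi_i)/F$, where $F$ is the filter on the index set generated by the principal up-sets of the directed index poset, and then invokes closure of $\mathbf{K}$ under $\mathbb{P}_R$ and $\mathbb{S}$. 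This is not a cosmetic repair: as written, your quasivariety argument uses only closure under $\mathbb{I}$, $\mathbb{S}$ and $\mathbb{P}$, and an $\mathbb{ISP}$-closed class need not be quasi-equational, so the reduced-product hypothesis has to enter somewhere --- this lemma is where. Once the inductivity of the $\mathbf{K}$-congruence system is established properly, the compactness of $\mathrm{Cg}_{\mathbf{K}}$ and the rest of your separation argument go through as you describe. (A minor simplification: since $(x_a,x_b)\notin\mathrm{Cg}_{\mathbf{K}}(\theta)$ for \emph{every} pair $a\neq b$, the single quotient $\terms(\fancyL,A)/\mathrm{Cg}_{\mathbf{K}}(\theta)$ already receives an embedding of $\mathcal{A}$, so the product over pairs is unnecessary.)
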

\begin{proof}
See \cite[Chapter 2,Theorem 11.9; Chapter 5,Theorem 2.25]{BurrisSankappanavar}.\end{proof}

We note that in this paper, quasivarieties will take a more prominent role than varieties, given our focus on logics, as discussed in the next section.

\subsection{Logics and Consequence Relations}\label{Subsection: Logics and Cons. Relations}

\begin{definition}
A \textit{consequence relation} on a set $C$ is a relation $\vdash \ \subseteq \mathbb{P}(C)\times C$ such that for each $W\cup V\cup \{a\}\subseteq C$:
\begin{itemize}
    \item (Reflexivity) If $a\in W$ then $(W,a)\in \  \vdash$
    \item (Cut) If $(W,v)\in \  \vdash$ for every $v\in V$, and $(V,a)\in \ \vdash$, then $(W,a)\in \ \vdash$
\end{itemize}
We furthermore say that $\vdash$ is \textit{finitary} if whenever $W\cup \{a\}\subseteq C$
\begin{equation*}
    \text{ if } (W,a)\in \  \vdash \text{ then there exists finite $V\subseteq C$, such that } (V,a)\in \ \vdash 
\end{equation*}
We write $W\vdash a$ to mean $(W,a)\in {\vdash}$; we write $\vdash a$ to mean $(\emptyset,a)\in {\vdash}$. Given $W,V\subseteq C$, we write $W\vdash V$ to mean that $W\vdash v$ for each $v\in V$.
\end{definition}

In this paper, all consequence relations are assumed to be finitary. This is not necessary for most results, but it makes the presentation simpler, and avoids complications with respect to infinitary axiomatisations.

As is well-known, consequence relations form a complete lattice, where the arbitrary meet is intersection, and the arbitrary join is obtained by closing the union under being a consequence relation. This entitles us to the following definition:

\begin{definition}
Let $C$ be a set, and $S\subseteq \mathbb{P}(C)\times C$. We denote by ${\vdash_{s}}\subseteq \mathbb{P}(X)\times X$ the smallest consequence relation on $X$ such that $S\subseteq {\vdash_{s}}$.
\end{definition}

A special kind of consequence relation we will be interested are \textit{logics}.

\begin{definition}
Given an algebraic language $\fancyL$ and a set $X$, a given consequence relation ${\vdash} \subseteq \mathbb{P}(Tm_{\fancyL}(X))\times Tm_{\fancyL}(X)$ is called a \textit{logic} if $\vdash$ is substitution invariant: for every homomorphism $\nu: \terms_{\fancyL}(X)\to \terms_{\fancyL}(X)$, and every $\Gamma\cup\{\phi\}\subseteq Tm(\fancyL,X)$:
\begin{equation*}
    \Gamma\vdash \phi \text{ implies } \nu[\Gamma]\vdash \nu(\phi).
\end{equation*}
\end{definition}

Similar to the above, we also have that given a logic $\vdash$, the collection $\Lambda(\vdash)$ of finitary extensions of $\vdash$ forms a complete lattice with meet as intersection. And similar to before, if $S\subseteq \mathbb{P}(Tm_{\fancyL}(X))\times \terms_{\fancyL}(X)$, we denote by $\mathsf{Log}(S)$ the smallest logic generated by $S$. It is then clear that given $(\vdash_{i})_{i\in I}$ a collection of finitary logics, then $\bigvee_{i\in I}\vdash_{i}=\mathsf{Log}(\bigcup_{i\in I}\vdash_{i})$. Moreover, we have the following:

\begin{lemma}\label{Explicit definition of supremum of logics}
If $(\vdash_{i})_{i\in I}$ is a collection of finitary logics, and $S=\bigcup_{i\in I}\vdash_{i}$, then:
\begin{equation*}
    \bigvee_{i\in I}\vdash_{i} \  = {\vdash_{S}}
\end{equation*}
\end{lemma}

Given a logic $\vdash$, and a collection $\Gamma$ of axioms, we  write
\begin{equation*}
    {\vdash} {\oplus} \Gamma,
\end{equation*}
to denote the smallest logic extending $\vdash$ which contains $\Gamma$.

We denote by $\mathsf{IPC},\mathsf{CPC},\mathsf{S4}$, the well-known logical systems. For each system $L$, we denote by $\Lambda(L)$ the lattice of extensions of this system. Exception is made for modal logical systems, where instead we denote this lattice by $\mathbf{NExt}(L)$, since we wish to emphasise the fact that  only \textit{normal} extensions of these systems are to be considered.

Unfortunately, in these contexts we are faced with an unfortunate ambiguity: the term ``logic" is often used to refer to what would by us be called \textit{axiomatic extensions} of these systems, i.e., extensions by axioms of the form $\vdash \phi$. This ambiguity is persistent and inevitable; we will therefore seek to always make clear which sense of the word is meant throughout the article, by adding ``in the sense of an axiomatic extension" to the term ``logic", where this is appropriate.

We now turn to a crucial example of a logic, coming from algebraic logic.

\begin{definition}
Let $\mathbf{K}$ be a class of algebras and $X$ a set of variables. We define the \textit{equational consequence relative to $\mathbf{K}$}, ${\vDash_{\bf{K}}}\subseteq \mathbb{P}(Eq(\fancyL_{\bf{K}},X))\times Eq(\fancyL_{\bf{K}},X)$ as follows if it satisfies the following properties: for $\Theta\subseteq Eq(\fancyL_{\bf{K}},X)$ and $\gamma\approx\delta\in Eq(X)$:
\begin{align*}
    \Theta\eqrelK \gamma\approx \delta \iff &\text{For every $\mathcal{A}$} \in \mathbf{K} \text{ and } v:X\to A \text{ a variable assignment }\\
    &\text{  if for all $\phi\approx \psi \in \Theta$  \ } \overline{v}(\phi)=\overline{v}(\psi), \text{ then } \overline{v}(\gamma)=\overline{v}(\delta)
\end{align*}
\end{definition}

The equational consequence relative to a class of algebras is very often used as the semantics for specific logics. This is boiled down to the property of having a completeness theorem:

\begin{definition}
Let $\fancyL$ be an algebraic language, $\vdash$ be a logic in this language, and $\mu(x)=\{\gamma_{i}(x)\approx \delta_{i}(x) : i\in I\}$ a set of equations in one variable. We say that a class $\mathbf{K}$ of algebras is a $\mu$-algebraic semantics for $\vdash$ if for all $\Gamma\cup\{\phi\}\subseteq Tm(\fancyL)$:
\begin{equation*}
    \Gamma\vdash \phi \iff \mu[\Gamma]\vDash_{\mathbf{K}} \mu(\phi)
\end{equation*}
where $\mu[\Gamma]=\{\gamma_{i}(\psi)\approx\delta_{i}(\psi) : i\in I, \psi\in \Gamma\}$. We say that $\mathbf{K}$ is an algebraic semantics for the logic $\vdash$, if it is a $\mu$-algebraic semantics for the logic $\vdash$, for some $\mu$.
\end{definition}

As is well-known by the work of Blok and Piggozzi, for some logics there is a canonical choice of algebraic semantics, namely the so-called ``equivalent algebraic semantics".

\begin{definition}\label{Algebraizable logic}
Let $\fancyL$ be an algebraic language, and $\vdash$ a logic for this language. We say that $\vdash$ is \textit{algebraizable} if there is a quasivariety $\mathbf{K}$, $\mu(x)$ a finite set of equations in one variable, and $\Delta(x,y)=\{\phi_{j}(x,y) : j\in J\}$ a finite set of formulas in two variables, such that for all formulas $\Gamma\cup\{\phi\}\subseteq Tm(\fancyL)$ and $\Theta\cup \{\gamma\approx\delta\}\subseteq Eq(\fancyL)$:
\begin{enumerate}
    \item $\Gamma\vdash \phi$ if and only if $\mu[\Gamma]\vDash_{\mathbf{K}}\mu(\phi)$
    \item $\Theta\vDash_{\mathbf{K}} \gamma\approx\delta$ if and only if $\Delta[\Theta]\vdash \Delta(\gamma,\delta)$
    \item $\phi\dashv \vdash \Delta[\mu(\phi)]$
    \item $\gamma\approx \delta \Dashv \vDash_{\mathbf{K}} \mu[\Delta(\gamma,\delta)]$
\end{enumerate}
where $\Delta(\varepsilon,\upsilon)=\{\phi_{j}(\varepsilon,\upsilon) : \phi_{j}\in \Delta(x,y)\}$ and $\Delta[\Theta]=\bigcup_{\varepsilon\approx\upsilon\in \Theta}\Delta(\varepsilon,\upsilon)$. In this case, $\mathbf{K}$ is said to be an \textit{equivalent algebraic semantics} for the logic $\vdash$.
\end{definition}

And indeed, we have:

\begin{theorem}\cite[Theorem 3.17]{Font2016-dk}
Each algebraizable logic has a unique equivalent algebraic semantics.
\end{theorem}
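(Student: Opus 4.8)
The existence of an equivalent algebraic semantics is built into the notion of algebraizability (Definition~\ref{Algebraizable logic} asserts that such a $\mathbf{K}$ exists), so the only content to prove is uniqueness: if $\mathbf{K}$ and $\mathbf{K}'$ are both equivalent algebraic semantics for $\vdash$, witnessed by $(\mu,\Delta)$ and $(\mu',\Delta')$ respectively, then $\mathbf{K}=\mathbf{K}'$. The plan is to show that $\mathbf{K}$ and $\mathbf{K}'$ validate exactly the same quasi-equations, and then invoke the Mal'tsev characterisation recalled above --- each quasivariety is the class of models of the quasi-equations it validates --- to identify them.

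The first observation I would use is that the equational consequence of an equivalent algebraic semantics is fully recoverable from $\vdash$ together with the equivalence formulas: condition~(2) of Definition~\ref{Algebraizable logic} says precisely that $\Theta\vDash_{\mathbf{K}}\gamma\approx\delta$ iff $\Delta[\Theta]\vdash\Delta(\gamma,\delta)$, and likewise $\Theta\vDash_{\mathbf{K}'}\gamma\approx\delta$ iff $\Delta'[\Theta]\vdash\Delta'(\gamma,\delta)$. So it suffices to prove that the two systems of equivalence formulas are interderivable over $\vdash$, i.e. $\Delta(x,y)\dashv\vdash\Delta'(x,y)$ as sets of formulas in the variables $x,y$. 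Granting that, substitution invariance and cut turn the derivations $\Delta[\Theta]\vdash\Delta(\gamma,\delta)$ and $\Delta'[\Theta]\vdash\Delta'(\gamma,\delta)$ into one another, so $\vDash_{\mathbf{K}}$ and $\vDash_{\mathbf{K}'}$ agree on all (finite) premise sets. Since a quasi-equation $\lambda_1\approx\gamma_1\ \&\ \dots\ \&\ \lambda_n\approx\gamma_n\rightarrow\lambda\approx\gamma$ is valid in $\mathbf{K}$ exactly when $\{\lambda_1\approx\gamma_1,\dots,\lambda_n\approx\gamma_n\}\vDash_{\mathbf{K}}\lambda\approx\gamma$, the two quasivarieties then validate the same quasi-equations, and by Mal'tsev's theorem $\mathbf{K}=\mathbf{K}'$.

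The heart of the proof, and the step I expect to be the main obstacle, is therefore establishing $\Delta(x,y)\dashv\vdash\Delta'(x,y)$. Here I would chase conditions~(1)--(4) for both semantics simultaneously: starting from condition~(3) for $\mathbf{K}'$, namely $x\dashv\vdash\Delta'[\mu'(x)]$, I would rewrite each defining equation occurring in $\mu'(x)$ by passing between $\vdash$ and $\vDash_{\mathbf{K}}$ via condition~(1) for $\mathbf{K}$, use condition~(4) for $\mathbf{K}'$ together with condition~(2) for $\mathbf{K}$ to re-express the $\mu'$-facts through $\Delta$, and argue symmetrically for the reverse inclusion; the upshot is that $\mu$ and $\mu'$ are $\vDash_{\mathbf{K}}$-equivalent systems of defining equations and $\Delta,\Delta'$ are $\vdash$-equivalent systems of equivalence formulas. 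The genuine difficulty is bookkeeping: $\mu,\mu',\Delta,\Delta'$ are \emph{finite sets} of equations/formulas rather than single ones, so every interderivability must be phrased for sets, every substitution step applied uniformly across a set, and one must keep track of the fact that the composite translations return to the starting point only up to $\vdash$- (resp. $\vDash_{\mathbf{K}}$-) interderivability, not on the nose.

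If that route proves too delicate, I would argue more structurally: show that $\mathbf{K}$ is generated as a quasivariety by the Lindenbaum--Tarski algebras $\terms_{\fancyL}(Y)/{\equiv_T}$, where $T$ ranges over the $\vdash$-theories on a set of variables $Y$ and $\phi\equiv_T\psi$ iff $\Delta(\phi,\psi)\subseteq T$. Conditions~(1)--(4) give that each such quotient lies in $\mathbf{K}$ and that $\mathbf{K}=\mathbb{ISP}_{R}$ of these quotients, while a further application of (1)--(4) shows that $\equiv_T$ does not depend on the choice of $\Delta$ --- it is the largest congruence of $\terms_{\fancyL}(Y)$ compatible with $T$. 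Since the generating family is then visibly determined by $\vdash$ alone, so is $\mathbf{K}$, which again yields uniqueness.
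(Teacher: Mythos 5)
The paper does not actually prove this statement: it is imported verbatim from \cite{Font2016-dk}, so there is no in-paper argument to measure your proposal against. Judged on its own terms, your strategy is the standard Blok--Pigozzi one and the outer shell is sound: existence is definitional; two quasivarieties validating the same quasi-equations coincide (a quasivariety is the model class of its quasi-equational theory, by the Mal'tsev theorem the paper recalls); condition (2) of Definition~\ref{Algebraizable logic} reduces the agreement of $\vDash_{\mathbf{K}}$ and $\vDash_{\mathbf{K}'}$ on finite premise sets to the interderivability $\Delta(x,y)\dashv\vdash\Delta'(x,y)$; and your fallback via Lindenbaum--Tarski quotients and the largest compatible congruence is essentially how the cited source argues.

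The one place where the proposal is genuinely incomplete is the central lemma $\Delta(x,y)\dashv\vdash\Delta'(x,y)$, which you present only as a plan, and the plan as stated --- entering through condition (3) for $\mathbf{K}'$, i.e. $x\dashv\vdash\Delta'[\mu'(x)]$ --- does not lead anywhere obvious. The chain that works goes through (2) and (4) instead: from $\emptyset\vDash_{\mathbf{K}'}x\approx x$ and condition (2) for $\mathbf{K}'$ one gets $\vdash\Delta'(x,x)$; condition (1) for $\mathbf{K}$ turns this into $\vDash_{\mathbf{K}}\mu[\Delta'(x,x)]$; the congruence (replacement) property of equational consequence then gives $x\approx y\vDash_{\mathbf{K}}\mu[\Delta'(x,y)]$; composing with the half of condition (4) for $\mathbf{K}$ asserting $\mu[\Delta(x,y)]\vDash_{\mathbf{K}}x\approx y$, and applying condition (1) for $\mathbf{K}$ once more, yields $\Delta(x,y)\vdash\Delta'(x,y)$, with the converse by symmetry. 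If you supply this chain (or carry out the Leibniz-congruence argument of your last paragraph in full), the proof is complete.
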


We remark that the map $\mu$ as given above has the following structural property: given any $\nu$ a substitution, and any set of formulas $\Gamma\subseteq Tm(X)$:
\begin{equation*}
    \mu[\nu[\Gamma]]=\{\lambda\approx\gamma \in \nu[\Gamma]\}=\{\nu(\lambda)\approx\nu(\gamma) : \lambda\approx\gamma\in \mu[\Gamma]\}=\nu[\mu[\Gamma]].
\end{equation*}

\subsection{Classical Translations}\label{Subsection: Classical Translation}

We recall here some classical translations which will serve us as motivation, and key examples, for the development of this theory. We also outline some basic facts about inquisitive logic, and their generalisation to DNA logics, which we will need moving forward. Throughout this section, fix a countably infinite set of propositions, $\prop$.

\begin{definition}\label{Double Negation Translation}
The \textit{Kolmogorov-Gödel-Gentzen Double Negation Translation} (KGG)\footnote{We note that this translation, as noted in the introduction, is originally due to Kolmogorov; however, his translation applies the $\neg\neg$-operation to every formula. The specific version outlined is due to Gödel, Gentzen and Glivenko.}, denoted $K$, maps $Tm_{\fancyL_{\mathsf{BA}}}(\mathsf{Prop})$ to $Tm_{\fancyL_{\mathsf{HA}}}(\mathsf{Prop})$ through the following assignment:
\begin{enumerate}
    \item For each $p\in \prop$, $K(p)=\neg\neg p$;
    \item $K(\top)=\top$ and $K(\bot)=\bot$;
    \item $K(\phi\ast \psi)=K(\phi)\ast K(\psi)$, $\ast \in \{\wedge,\rightarrow\}$;
    \item $K(\phi\vee \psi)=\neg\neg( K(\phi)\vee K(\psi))$.
\end{enumerate}
\end{definition}

As mentioned in the introduction, given a Heyting algebra $\mathcal{H}$ we write
\begin{equation*}
    \mathcal{H}^{\neg\neg}=\{a\in H : a=\neg\neg a\}
\end{equation*}
for the Boolean algebra of \textit{regular elements}.

\begin{definition}\label{Heyting algebras and S4}
The \textit{Gödel-McKinsey-Tarski} (GMT) translation maps the set $Tm_{\fancyL_{\mathbf{HA}}}(\prop)$ to $Tm_{\fancyL_{\mathbf{S4}}}(\prop)$ through the following assignment:
\begin{enumerate}
\item For each $p\in \prop$, $GMT(p)=\Box p$;
\item $GMT(\top)=\top$ and  $GMT(\bot)=\bot$;
    \item $GMT(\phi\ast \psi)=GMT(\phi)\ast GMT(\psi)$, $\ast \in \{\wedge,\vee\}$;
    \item $GMT(\phi\rightarrow \psi)=\Box (\neg GMT(\phi)\vee GMT(\psi))$.
\end{enumerate}
\end{definition}

Given an S4-algebra $\mathcal{B}=(B,\Box)$ we write
\begin{equation*}
    \mathcal{B}_{\Box}=\{a\in B : a=\Box a\}
\end{equation*}
for the set of \textit{open elements}. As shown by Godel, this forms a Heyting algebra with the induced operations, $\hat{0}=0$ and $\hat{1}=1$, $a\hat{\wedge} b = a \wedge b$,  $a\hat{\vee }b= a\vee b$ and $a\hat{\hookrightarrow} b = \Box (\neg a \vee b)$.

We recall that an \textit{ortholattice} is an algebra, in the language $\fancyL_{\mathsf{BA}}$, satisfying the same axioms as a Boolean algebra except possibly distributivity. To avoid confusion, though it is the same algebraic language, we write $\fancyL_{\mathsf{Ort}}$ for the type of these algebras. We also recall the reader that a $\mathsf{KTB}$-algebra is a normal modal algebra satisfying the axioms
\begin{equation*}
    \Box p \rightarrow p \text{ and } p\rightarrow \Box\dia p.
\end{equation*}

The following was introduced in \cite{Goldblatt1974} by Goldblatt:

\begin{definition}
The \textit{Goldblatt translation} maps the set $Tm_{\fancyL_{\mathsf{Ort}}}(\prop)$ to $Tm_{\fancyL_{\mathsf{KTB}}}(\prop)$ through the following assignment:
\begin{enumerate}
    \item For each $p\in \prop$, $G(p)=\Box\dia p$;
    \item $G(\top)=\top$ and $G(\bot)=\bot$;
    \item $G(\psi \wedge \phi)=G(\psi)\wedge G(\phi)$;
    \item $G(\phi^{\bot})=\Box\neg G(\phi)$.
\end{enumerate}
\end{definition}

As in previous cases, given a $\mathsf{KTB}$-algebra $\mathcal{B}$, we can consider the set
\begin{equation*}
    \mathcal{B}_{\Box\dia}=\{a\in B : a=\Box\dia a\},
\end{equation*}
called the set of $\bot$-\textit{regular elements}. This can be equipped with the following operations: $a\wedgeio b=a\wedge b$ and $a^{\bot}=\Box \neg a$. It was shown by Goldblatt that this forms an ortholattice.

The three translations just introduced were all proven to be sound and faithful with respect to appropriate derivation systems: $\mathsf{CPC}$ and $\mathsf{IPC}$ for $K$; $\mathsf{IPC}$ and $\mathsf{S4}$ for $GMT$; and \textit{orthologic} \cite{Goldblatt1974} and $\mathsf{KTB}$. This was established by the authors in \cite{kolmogorovexcludedmidle,Goldblatt1974,davis_1990}. We isolate here a particular fact which will interest us later, and which establishes, in all cases, one half of the correctness of the translation (for instance, it shows that if $K(\phi)\notin \mathsf{IPC}$, then $\phi\notin \mathsf{CPC}$, and so on for each system).

\begin{theorem}\label{Translation theorem for the double negation translation}
For each formula $\phi\in Tm(\fancyL_{\mathsf{BA}})$, and each Heyting algebra $\mathcal{H}$, we have that
\begin{equation*}
    \mathcal{H}^{\neg}\vDash \phi \iff \mathcal{H}\vDash K(\phi).
\end{equation*}
For each formula $\phi\in Tm(\fancyL_{\mathsf{HA}})$, and each $\mathsf{S4}$-algebra $\mathcal{B}$, we have that
\begin{equation*}
    \mathcal{B}_{\Box}\vDash \phi \iff \mathcal{B}\vDash GMT(\phi).
\end{equation*}
For each formula $\phi\in Tm(\fancyL_{\mathsf{Ort}})$, and each $\mathsf{KTB}$-algebra $\mathcal{B}$, we have that
\begin{equation*}
    \mathcal{B}_{\Box\dia}\vDash \phi \iff \mathcal{H}\vDash G(\phi).
\end{equation*}
\end{theorem}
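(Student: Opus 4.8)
The plan is to prove each of the three biconditionals by the same strategy: an induction on the complexity of the formula $\phi$, after first establishing a \emph{substitution lemma} that connects valuations on the ``small" algebra ($\mathcal{H}^{\neg}$, $\mathcal{B}_{\Box}$, $\mathcal{B}_{\Box\dia}$) with valuations on the ``large" algebra. Concretely, for the KGG case: given any valuation $v$ on $\mathcal{H}$, the map $p \mapsto \neg\neg v(p)$ lands in $\mathcal{H}^{\neg}$, and conversely every valuation $w$ into $\mathcal{H}^{\neg}$ arises this way (since regular elements satisfy $\neg\neg a = a$). So it suffices to show, by induction on $\phi \in Tm(\fancyL_{\mathsf{BA}})$, that for every valuation $v$ on $\mathcal{H}$, $\overline{v}(K(\phi)) = \overline{w}(\phi)$ where $w(p) = \neg\neg v(p)$ and $\overline{w}$ is computed using the Boolean operations of $\mathcal{H}^{\neg}$. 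Once this is done, $\mathcal{H} \vDash K(\phi)$ (i.e.\ $\overline{v}(K(\phi)) = 1$ for all $v$) is equivalent to $\overline{w}(\phi) = 1$ for all $w \colon \prop \to \mathcal{H}^{\neg}$, which is exactly $\mathcal{H}^{\neg} \vDash \phi$. The same template applies verbatim to GMT with $\mathcal{B}_{\Box}$ and the open-element operations, and to the Goldblatt translation with $\mathcal{B}_{\Box\dia}$ and the ortholattice operations.

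The inductive step is where the defining clauses of each translation do their work, and they have been rigged precisely so that it goes through. For the atomic case $K(p) = \neg\neg p$, we have $\overline{v}(\neg\neg p) = \neg\neg v(p) = w(p)$ by definition. For $\wedge$ and $\to$ in KGG, the clause is homomorphic and the operations of $\mathcal{H}^{\neg}$ agree with those of $\mathcal{H}$ (meet and Heyting implication of regular elements stay regular and are computed as in $\mathcal{H}$), so the induction hypothesis transfers directly. For $\vee$, the point is that the join in $\mathcal{H}^{\neg}$ is \emph{not} the Heyting join but $x \veeio y = \neg\neg(x \vee y)$; the clause $K(\phi\vee\psi) = \neg\neg(K(\phi)\vee K(\psi))$ matches this exactly, so $\overline{v}(K(\phi\vee\psi)) = \neg\neg(\overline{v}(K(\phi)) \vee \overline{v}(K(\psi))) = \overline{w}(\phi) \veeio \overline{w}(\psi) = \overline{w}(\phi\vee\psi)$. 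The constants $\top,\bot$ are handled by noting $0,1$ are regular. The GMT case is analogous: $GMT(p) = \Box p$, and the Heyting operations on $\mathcal{B}_{\Box}$ are $\wedge,\vee$ inherited together with $a \hat{\hookrightarrow} b = \Box(\neg a \vee b)$, matching the translation clause for $\to$. The Goldblatt case uses $G(p) = \Box\dia p$, and the ortholattice operations $a \wedgeio b = a\wedge b$, $a^{\bot} = \Box\neg a$, again matching the clauses.

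The only genuine content beyond bookkeeping is verifying that, in each case, the proposed ``small" operations really are the restrictions used in the translation and that applying a ``small" operation to regular/open elements reproduces the interpretation of the corresponding translated connective in the ``large" algebra --- i.e.\ the identities $\overline{v}(\neg\neg v(p))$ is regular, $\Box a$ is open, $\Box\dia a$ is $\bot$-regular, and that for regular $a,b$ one has $\neg\neg(a\vee b) = a \veeio b$, etc. These are exactly the facts recalled in the paragraphs preceding the theorem (the Boolean structure of $\mathcal{H}^{\neg}$, Gödel's observation about $\mathcal{B}_{\Box}$, Goldblatt's observation about $\mathcal{B}_{\Box\dia}$), so I will cite them rather than reprove them. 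I expect no real obstacle: the induction is routine once the substitution lemma is set up, and the main care needed is simply keeping track of \emph{which} algebra each operation symbol is interpreted in at each step of the computation.
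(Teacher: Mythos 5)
Your proof is correct and is essentially the argument the paper intends: the paper states this theorem without proof, citing the original sources, but the substitution-lemma-plus-induction strategy you describe is precisely how the paper later proves its generalisation to arbitrary contextual translations (Proposition \ref{Correctness of Translation in the unit}), where a valuation $v$ into the algebra of regular elements is paired with a valuation $w$ on the ambient algebra and one shows $\overline{v}(\delta)=\overline{w}(\zeta_{*}(\delta))$ by induction on terms. Nothing further is needed.
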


\subsection{DNA-logics}\label{DNA-logics}

We recall here the basic theory of DNA-logic, as presented in \cite{bezhanishvili_grilletti_quadrellaro_2021}. The concepts outlined here will be mostly immediately relevant for the subsequent sections; further concepts will be introduced when needed.

\begin{definition}\label{Negative variant of a logic}
Let $L\in \Lambda(\mathsf{IPC})$ be an intermediate logic, in the sense of an axiomatic extension. The \textit{negative variant}\footnote{The original definition takes instead the substitution $\phi[\neg \overline{p}/\overline{p}]$; the equivalence with the current presentation is discussed in \cite[Theorem 12]{Grilletti2022}; we choose this presentation as it better suits our purposes.} of $L$ is defined as
\begin{equation*}
    L^{\neg\neg}=\{\phi : \phi[\neg\neg \overline{p}/\overline{p}]\in L\}
\end{equation*}
where $\phi[\neg\neg \overline{p}/\overline{p}]$ is the result of substituting $\neg\neg q$ for each proposition letter $q$ in the term $\phi$.
\end{definition}

DNA-logics are defined to be the DNA-variants of some intermediate logic in the above sense. More explicitly, they are collections $S$ of formulas such that:
\begin{itemize}
    \item $\mathsf{IPC}\subseteq S$;
    \item $\neg\neg p\rightarrow p\in S$;
    \item $S$ is closed under Modus Ponens.
\end{itemize}
Crucially, these ``logics" are not closed under uniform substitution. We will nevertheless refer to them as DNA-logics, and refer, when needed, to logics as outlined in \ref{Subsection: Logics and Cons. Relations} as \textit{standard logics}.

 The algebraic models of DNA-logics are pairs $(\mathcal{H},v^{\neg\neg})$ of a Heyting algebra $\mathcal{H}$ and a valuation $v^{\neg\neg}$, such that $v^{\neg\neg}$ is a valuation taking values in the regular elements of $\mathcal{H}$. Note that given any valuation $v$, one can consider its \textit{double negation variant}, $v^{\neg\neg}$ by defining
 \begin{equation*}
     v^{\neg\neg}(p)=\neg\neg v(p).
 \end{equation*}
 
Given any Heyting algebra $\mathcal{H}$, we denote by:
\begin{equation*}
    \langle \mathcal{H}^{\neg\neg} \rangle
\end{equation*}
The Heyting subalgebra of $\mathcal{H}$ generated by the regular elements. Those Heyting algebras $\mathcal{H}$ such that $\mathcal{H}=\langle \mathcal{H}^{\neg\neg} \rangle$ are called \textit{regularly generated}. In parallel with the usual Birkhoff theorem, and to correspond to DNA-logics, a notion of \textit{DNA-variety} was introduced in \cite{bezhanishvili_grilletti_quadrellaro_2021}:

\begin{definition}
Let $\mathcal{H}$ and $\mathcal{H}'$ be two Heyting algebras. We say that $\mathcal{H}'$ is a \textit{core superalgebra} of $\mathcal{H}$, in symbols, $\mathcal{H}\preceq_{\neg\neg}\mathcal{H
}'$, if $\mathcal{H}\preceq \mathcal{H}'$ and $\mathcal{H}^{\neg\neg}=\mathcal{H}'^{\neg\neg}$. 

Given a class of Heyting algebras $\bf{K}$, we denote the \textit{core-superalgebra closure of $\bf{K}$} as follows:
\begin{equation*}
    \bf{K}^{\uparrow}\coloneqq \{ \mathcal{H}' : \exists \mathcal{H}\in \bf{K}, \mathcal{H}\preceq_{\neg\neg} \mathcal{H}'\}.
\end{equation*}
We say that an algebra is \textit{closed under core-superalgebras} if $\bf{K}=\bf{K}^{\uparrow}$.

We say that a class of algebras $\bf{K}$ is a \textit{DNA-variety} if $\bf{K}$ is a variety and it is closed under core superalgebras.
\end{definition}

It is not difficult to see that arbitrary intersections of DNA-varieties are again DNA-varieties. Indeed, we have the following generalisation of Tarski's $\mathbb{HSP}$ theorem:

\begin{proposition}
For each class of Heyting algebras $\mathbf{K}$, we have that $\mathbb{HSP}^{\uparrow}(\bf{K})$ is the smallest DNA-variety containing $\bf{K}$.
\end{proposition}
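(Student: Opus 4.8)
The plan is to verify three things in turn: that $\mathbb{HSP}^{\uparrow}(\mathbf{K})$ contains $\mathbf{K}$, that it is a DNA-variety, and that it is contained in every DNA-variety containing $\mathbf{K}$. Write $\mathbf{V} = \mathbb{HSP}(\mathbf{K})$ for the ordinary variety generated by $\mathbf{K}$, so that $\mathbb{HSP}^{\uparrow}(\mathbf{K}) = \mathbf{V}^{\uparrow}$; the only non-routine part will be showing that $\mathbf{V}^{\uparrow}$ is again a variety. Throughout I would lean on the following elementary observation, which is the engine of the argument: in any Heyting algebra the map $a \mapsto \neg\neg a$ is built from the term operations $\to$ and $0$, and therefore (i) it is preserved by homomorphisms, $f(\neg\neg a) = \neg\neg f(a)$; (ii) it is absolute for subalgebras, so for $\mathcal{B}_{0} \leq \mathcal{B}_{1}$ one has $\mathcal{B}_{0}^{\neg\neg} = \mathcal{B}_{1}^{\neg\neg} \cap B_{0}$; and (iii) it is computed coordinatewise on products, $(\prod_i \mathcal{B}_i)^{\neg\neg} = \prod_i \mathcal{B}_i^{\neg\neg}$.

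Containment and minimality are immediate. Since $\mathcal{H} \preceq_{\neg\neg} \mathcal{H}$ always holds, $\mathbf{K} \subseteq \mathbf{V} \subseteq \mathbf{V}^{\uparrow}$. For minimality, if $\mathbf{D}$ is a DNA-variety with $\mathbf{K} \subseteq \mathbf{D}$, then $\mathbf{V} = \mathbb{HSP}(\mathbf{K}) \subseteq \mathbb{HSP}(\mathbf{D}) = \mathbf{D}$ because $\mathbf{D}$ is a variety, and so $\mathbf{V}^{\uparrow} \subseteq \mathbf{D}^{\uparrow} = \mathbf{D}$ because $\mathbf{D}$ is closed under core-superalgebras. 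It then remains to see that $\mathbf{V}^{\uparrow}$ is itself a DNA-variety. Closure under core-superalgebras amounts to idempotence of $(-)^{\uparrow}$: since $\preceq_{\neg\neg}$ is reflexive and transitive (transitivity because $\preceq$ is transitive and the condition $\mathcal{B}_{0}^{\neg\neg}=\mathcal{B}_{1}^{\neg\neg}$ telescopes), $(-)^{\uparrow}$ is a closure operator, so $(\mathbf{V}^{\uparrow})^{\uparrow} = \mathbf{V}^{\uparrow}$.

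The substance is closure of $\mathbf{V}^{\uparrow}$ under $\mathbb{H}$, $\mathbb{S}$ and $\mathbb{P}$. In each case one is handed an algebra $\mathcal{C}$ constructed from an algebra $\mathcal{B}$ having a core-subalgebra $\mathcal{A} \in \mathbf{V}$, and must exhibit a core-subalgebra of $\mathcal{C}$ that lies in $\mathbf{V}$; the natural choices are $\prod_i \mathcal{A}_i$ for a product $\mathcal{C} = \prod_i \mathcal{B}_i$ (with $\mathcal{A}_i \preceq_{\neg\neg}\mathcal{B}_i$), the image $f[\mathcal{A}]$ for a quotient $f : \mathcal{B} \to \mathcal{C}$, and the subalgebra $\langle \mathcal{C}^{\neg\neg}\rangle$ generated by the regular elements of $\mathcal{C}$ for a subalgebra $\mathcal{C} \leq \mathcal{B}$. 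In each case one checks, using $\mathbf{V}$-closure under the corresponding operator, that the witness lies in $\mathbf{V}$, and then invokes (i)--(iii): for products, $\neg\neg$ being coordinatewise gives that the regular elements of $\prod_i\mathcal{B}_i$ are $\prod_i \mathcal{A}_i^{\neg\neg}\subseteq \prod_i A_i$; for quotients, $\neg\neg$ commuting with $f$ gives $\mathcal{C}^{\neg\neg} = f[\mathcal{B}^{\neg\neg}] \subseteq f[A]$; for subalgebras, absoluteness gives $\mathcal{C}^{\neg\neg} = \mathcal{B}^{\neg\neg}\cap C \subseteq A$, whence $\langle\mathcal{C}^{\neg\neg}\rangle\leq\mathcal{A}$ and $\langle\mathcal{C}^{\neg\neg}\rangle^{\neg\neg}=\mathcal{C}^{\neg\neg}$. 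In all three cases this shows the witness is a core-subalgebra of $\mathcal{C}$ lying in $\mathbf{V}$, so $\mathcal{C}\in\mathbf{V}^{\uparrow}$; hence $\mathbf{V}^{\uparrow}$ is a variety, and being also closed under core-superalgebras it is a DNA-variety — necessarily the least one containing $\mathbf{K}$.

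I do not expect a deep obstacle here; the one point requiring a little care is the homomorphic-image case, where — in contrast with $\mathbb{S}$ and $\mathbb{P}$ — the natural witness $f[\mathcal{A}]$ is not visibly a core-subalgebra of $\mathcal{C}$, and one must argue that the quotient has picked up no new regular elements. This step is exactly where the term-definability of $\neg\neg$, which makes it commute with homomorphisms, is indispensable; it is worth flagging that an analogous statement should not be expected in a setting where the ``selector'' fails to be a term operation, a remark that anticipates the general Polyatomic framework developed later.
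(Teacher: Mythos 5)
Your proof is correct and follows the standard route. The paper itself states this proposition without proof (it is recalled from the DNA-logics literature), but your argument coincides with the one the paper gives for its generalisation in Proposition \ref{Characterisation of PA-quasivarieties}, the only cosmetic differences being that in the subalgebra case the paper's witness is $\mathcal{C}\cap\mathcal{A}$ where yours is $\langle\mathcal{C}^{\neg\neg}\rangle$ (both work), and that you carry out the homomorphic-image case explicitly --- correctly using that $\neg\neg$ is a term operation, so surjections send $\mathcal{B}^{\neg\neg}$ onto $\mathcal{C}^{\neg\neg}$ --- where the paper defers it to the cited reference.
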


In \cite{bezhanishvili_grilletti_quadrellaro_2021} the following facts about these logics and varieties were established:

\begin{itemize}
    \item DNA-logics form a complete lattice under inclusion;
    \item DNA-varieties form a complete lattice under inclusion;
    \item The lattices of DNA-logics and DNA-varieties are dually isomorphic.
    \item Every DNA-variety is generated by its regularly generated algebras.
\end{itemize}

As we will see in the next chapter, none of these properties is special of this situation: all of them generalise straightforwardly to a much broader setting.

\section{Polyatomic Logics: Basic Theory and Completeness}\label{Section: Basic Theory and Completeness of PA logic}

In this section we introduce the concept of a Polyatomic logic and develop its main theory. The work here is inspired by the approach taken in \cite{bezhanishvili_grilletti_quadrellaro_2021}, generalising it to the setting of any algebraizable logic containing appropriate terms. We first introduce the notion of $\mathsf{PAt}$-$f$-logic, and prove some of its basic properties. We then prove algebraic completeness of these systems with respect to specific kinds of algebras. Throughout this section we assume that we have an algebraizable logic $\vdash$ and a quasivariety $\mathbf{Y}$, its equivalent algebraic semantics, and two sets $\mu(x)$ and $\Delta(x,y)$ witnessing algebraizability. When necessary we assume a fixed set of variables $\mathsf{Prop}$.

\subsection{PAt-Logics}

Our first definition is better understood as a naming convention for a well-known concept:

\begin{definition}
    Let $f(x)$ be a unary term in the language of $\vdash$. We say that $f$ is a \textit{selector term} if it is idempotent, i.e., $f^{2}(x)\dashv\vdash f(x)$.
\end{definition}

Idempotent terms are extremely abundant in algebraic logic:
\begin{enumerate}
    \item The term $\Box p$, in $\mathsf{S4}$ modal logic, is idempotent;
    \item The term $\neg\neg p$ in $\mathsf{IPC}$ is idempotent;
    \item The term $\Box\dia p$ in $\mathsf{KTB}$ is idempotent;
    \item The term $!p$ in intuitionistic linear logic is idempotent.
\end{enumerate}

Our usage of the term ``selector term" stems from the way such terms appear both in the setting of translations and in DNA-logics, as we will have occasion to see throughout this article: they work to select specific elements as relevant. Using such terms we now introduce the main concept of this section:

\begin{definition}
Let ${\vdash_{*}}\in \Lambda(\vdash)$, and let $f$ be a selector term. We define the \textit{$\mathsf{PAt}$-$f$-variant} of $\vdash_{*}$, denoted $\vdash^{f}_{*}$, as follows: for each set $\Gamma\cup \phi \subseteq Tm(\fancyL_{\classY},\mathsf{Prop})$,
\begin{equation*}
    \Gamma\vdash^{f}_{*}\phi \iff \Gamma[f(\overline{p})/\overline{p}]\vdash_{*}\phi[f(\overline{p})/\overline{p}],
\end{equation*}
where for each formula, $\chi[f(\overline{p})/\overline{p}]$ is obtained by substituting, for each proposition letter $q$ ocurring in $\chi$, by $f(q)$, and $\Gamma[f(\overline{p})/\overline{p}]=\{\chi[f(\overline{p})/\overline{p}]:\chi\in \Gamma\}$. When $f$ is understood, we refer simply to the $\mathsf{PAt}$\textit{-variant} of $\vdash_{*}$.
\end{definition}

It is clear that the former is a generalisation from the notion of DNA-variant in Definition \ref{Negative variant of a logic}, and leads to the following notion:

\begin{definition}
Let $\vdash_{s}$ be an arbitrary subset of $\mathbb{P}(Tm(\fancyL_{Y},\mathsf{Prop}))\times Tm(\fancyL_{Y},\mathsf{Prop})$, and let $f$ be a selector term. We say that $\vdash_{s}$ is a $\mathsf{PAt}$-$f$-logic if it is the $\mathsf{PAt}$-$f$-variant of a logic ${\vdash_{*}}\in \Lambda(\vdash)$.
\end{definition}

As DNA-logics, these relations on formulas have the notable feature that they need not be closed under uniform substitution. We collect some observations about them in the next Proposition\footnote{In fact, the properties outlined here could have been taken as the definition of $\mathsf{PAt}$-$f$-logic.}:

\begin{proposition}\label{Properties of PA-logics}
Let $\vdash_{s}$ be a $\mathsf{PAt}$-$f$-logic, and ${\vdash_{*}} \in \Lambda(\vdash)$ some logic such that $\vdash_{*}^{f} \ = \ \vdash_{s}$. Then $\vdash_{s}$ is the least consequence relation in the language $\fancyL_{\classY}$ such that:
\begin{enumerate}
    \item ${\vdash_{*}} \subseteq {\vdash_{s}}$;
    \item For all $p\in \mathsf{Prop}$, we have $\vdash_{s} \Delta(f(p),p)$.
\end{enumerate}
\end{proposition}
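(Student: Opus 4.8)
The plan is to show that $\vdash_s$ satisfies properties (1) and (2), and then that any consequence relation $\vdash'$ satisfying (1) and (2) must contain $\vdash_s$. For property (1): given $\Gamma \vdash_* \phi$, we must show $\Gamma \vdash_s \phi$, i.e. $\Gamma[f(\overline p)/\overline p] \vdash_* \phi[f(\overline p)/\overline p]$. But this is immediate from substitution invariance of $\vdash_*$ (which holds since $\vdash_* \in \Lambda(\vdash)$, hence is a logic), applying the substitution $p \mapsto f(p)$. For property (2): we must show $\vdash_s \Delta(f(p),p)$, i.e. $\emptyset \vdash_* \Delta(f(p),p)[f(\overline p)/\overline p] = \Delta(f(f(p)), f(p))$ — here I would be a little careful that $\Delta(x,y)$ in the definition of algebraizability is a set of formulas in two variables $x,y$, and that substituting $f(p)$ for $p$ distributes through $\Delta$. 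Since $f$ is a selector term, $f^2(p) \dashv\vdash f(p)$, so by clause (3) of algebraizability ($\phi \dashv\vdash \Delta[\mu(\phi)]$) together with congruence properties of $\Delta$ and $\mu$ — or more directly, since $f(f(p)) \dashv\vdash f(p)$ and $\Delta$ witnesses the equivalence between provable equivalence in $\vdash$ and equality in $\mathbf Y$ — we get $\vdash_* \Delta(f^2(p), f(p))$. Concretely: $f^2(p) \dashv\vdash f(p)$ means by clause (2)/(4) of algebraizability that $\mu(f^2(p)) \mathbin{\Dashv\vDash_{\mathbf Y}} \mu(f(p))$, equivalently $\vDash_{\mathbf Y} f^2(p) \approx f(p)$ in whatever sense, and then clause (2) ($\Theta \vDash_{\mathbf Y} \gamma\approx\delta$ iff $\Delta[\Theta]\vdash \Delta(\gamma,\delta)$) applied with empty $\Theta$ gives $\vdash \Delta(f^2(p), f(p))$, hence $\vdash_* \Delta(f^2(p),f(p))$ since $\vdash_* \supseteq \vdash$.

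For minimality, suppose $\vdash'$ is a consequence relation on $Tm(\fancyL_{\classY},\mathsf{Prop})$ satisfying (1) and (2); I want $\vdash_s \subseteq \vdash'$. So suppose $\Gamma \vdash_s \phi$, i.e. $\Gamma[f(\overline p)/\overline p] \vdash_* \phi[f(\overline p)/\overline p]$. By (1), $\Gamma[f(\overline p)/\overline p] \vdash' \phi[f(\overline p)/\overline p]$. The key move is now to "descend" from $f(p)$ back to $p$: since by (2) $\vdash' \Delta(f(p),p)$ for every variable $p$, and $\vdash'$ is algebraizable-style reasoning should let us conclude $\psi[f(\overline p)/\overline p] \dashv\vdash' \psi$ for every formula $\psi$ — because $\Delta$ expresses a congruence, so once $f(p) \dashv\vdash' p$ is available (which is what $\vdash' \Delta(f(p),p)$ amounts to, via clause (3): $\psi \dashv\vdash \Delta[\mu(\psi)]$ and the fact that $\Delta$ respects the logical operations), one can replace $f(p)$ by $p$ uniformly inside any formula. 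Granting this, from $\Gamma[f(\overline p)/\overline p] \vdash' \phi[f(\overline p)/\overline p]$ and $\Gamma[f(\overline p)/\overline p] \dashv\vdash' \Gamma$ and $\phi[f(\overline p)/\overline p] \dashv\vdash' \phi$, a cut/reflexivity argument yields $\Gamma \vdash' \phi$, as desired.

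The main obstacle is the step "$\vdash' \Delta(f(p),p)$ implies $\psi[f(\overline p)/\overline p] \dashv\vdash' \psi$ for all $\psi$." This is essentially the statement that having the interderivability $f(p) \dashv\vdash' p$ at the level of variables lets you substitute within arbitrary formulas, which requires that $\vdash'$ validate the relevant congruence rules — and $\vdash'$ is only assumed to be a consequence relation, not a logic or an extension of $\vdash$. Here I would lean on property (1): since $\vdash_* \subseteq \vdash'$ and $\vdash_*$ is algebraizable with equivalence formulas $\Delta$, the replacement/congruence principle for $\Delta$ is already derivable in $\vdash_* \subseteq \vdash'$ (this is one of the standard consequences of algebraizability — the "congruence" property of $\Delta$ holds in $\vdash$ hence in every extension). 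So the argument factors: congruence of $\Delta$ comes from (1) via $\vdash_* \supseteq \vdash$, and the hypothesis $f(p) \dashv\vdash' p$ to be plugged into it comes from (2). I should also make sure the induction on formula structure for "$\psi[f(\overline p)/\overline p] \dashv\vdash' \psi$" is clean: base case is exactly (2) rephrased via $\psi \dashv\vdash \Delta[\mu(\psi)]$, and the inductive step uses the congruence of $\Delta$ with respect to each connective of $\fancyL_{\classY}$. The remaining bookkeeping — that substitution $p\mapsto f(p)$ distributes over $\Gamma$, over connectives, and over $\Delta$ and $\mu$ (using the structural remark about $\mu$ following the algebraizability theorem) — is routine.
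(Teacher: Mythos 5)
Your proposal is correct and follows essentially the same route as the paper's proof: soundness of (1) via substitution invariance of $\vdash_{*}$, soundness of (2) via $f^{2}(p)\dashv\vdash f(p)$ and algebraizability, and minimality by an induction establishing $\Vdash \Delta(\phi[f(\overline{p})/\overline{p}],\phi)$ from the base case (2) together with the congruence property of $\Delta$ inherited through ${\vdash_{*}}\subseteq{\Vdash}$, followed by $\Delta(x,y),x\vdash_{*}y$ and Cut. The concern you flag about $\vdash'$ being only a consequence relation is resolved exactly as the paper does, by sourcing the $\Delta$-congruence rules from condition (1).
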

\begin{proof}
First we show that $\vdash_{s}$ is indeed in the given conditions. The fact that $\vdash_{s}$ is a consequence relation, follows from $\vdash_{*}$ being one, and the definition of being a $\mathsf{PAt}$ variant. To see (1), note that if $\Gamma\vdash_{*}\phi$, then by closure under uniform substitution, $\Gamma[f(\overline{p})/\overline{p}]\vdash_{*}\phi[f(\overline{p})/\overline{p}]$, so $\Gamma\vdash_{s}\phi$. The third property follows from the fact that, $f^{2}(q)\dashv\vdash f(q)$, and hence, $\vdash_{*} \Delta(f^{2}(q),f(q))$ by definition of algebraizability, so by definition of being a variant, $\vdash_{s}\Delta(f(q),q)$, for each proposition letter $q$.

Now we show that $\vdash_{s}$ is least in these conditions. Suppose that $\Vdash$ is another consequence relation satisfying the above properties. Suppose that $\Gamma\vdash_{s}\phi$. By definition, then, $\Gamma[f(\overline{p})/\overline{p}]\vdash_{*}\phi[f(\overline{p})/\overline{p}]$; hence by assumption $\Gamma[f(\overline{p})/\overline{p}]\Vdash \phi[f(\overline{p})/\overline{p}])$ by the second property. Now by assumption, $\Vdash \Delta(f(p),p)$ for all atomic propositions. We also recall that ${\vdash_{*}} \subseteq {\Vdash}$, and the former is assumed to be algebraizable, hence satisfies that for each $n$-ary function symbol $f$, $\Delta(x_{0},y_{0}),...,\Delta(x_{n},y_{n})\vdash_{*} \Delta(f(x_{0},...,x_{n}),f(y_{0},...,y_{n}))$). So by induction on complexity of formulas, and the properties of $\Delta$, we get that
\begin{equation*}
    \Vdash \Delta(\phi[f(\overline{p})/\overline{p}],\phi),
\end{equation*}
and similar for any formula in $\Gamma$. So since $\Gamma[f(\overline{p})/\overline{p}]\Vdash \phi[f(\overline{p})/\overline{p}]$, noting that $\Delta(x,y),x\vdash_{*} y$, applying uniform substitution in $\vdash_{*}$ and the Cut rule in $\Vdash$, we obtain that $\Gamma\Vdash \phi$.\end{proof}

 \begin{example}\label{Box logics}
      In addition to the DNA-logics presented in \ref{DNA-logics}, we can also conceive of $\Box$-logics, as mentioned in the introduction, as a natural example. Let $L\in \mathbf{NExt}(\mathsf{S4})$ be a normal extension of $\mathsf{S4}$, in the sense of axiomatic extensions, and consider:
\begin{equation*}
    L^{\Box}=\{\phi : \phi[\Box \overline{p}/\overline{p}]\in L\}
\end{equation*}
Then we call $L^{\Box}$ the $\Box$-\textit{variant} of $L$. We will return to these logics in \ref{Polyatomic Logics as Generalised Companions}.
\end{example}

Given our hypothesis, these logics fit naturally within the framework of \textit{weak logics} discussed in \cite{quadrellaronakov}; for instance, we note that all of them will be \textit{algebraizable} (see \cite[Theorem 28]{quadrellaronakov}) in the sense outlined in that paper, given our assumption of algebraizability of the logic $\vdash_{*}$, and Proposition \ref{Properties of PA-logics}. We now proceed to analyse the collection of all $\mathsf{PAt}$-$f$-logics, for a given fixed $f$. As such, fix throughout a given selector term $f$ in the language of $\classY$. To start, we note the following lemma which was shown in \cite[Proposition 3.4.4]{minhatesedemestrado}:

\begin{proposition}\label{Complete Lattice Homomorphisms}
Let $(\vdash_{i})_{i\in I}$ be a collection of $\mathsf{PAt}$-logics. Then $\bigcap_{i\in I}\vdash_{i}$ is a $\mathsf{PAt}$-logic. Moreover, if $S=\bigcup_{i\in I}\vdash_{i}$, the least $\mathsf{PAt}$-logic containing $S$, is the least consequence relation generated by $S$, i.e,
\begin{equation*}
    \bigvee_{i\in I}\vdash_{i}= {\vdash_{S}}
\end{equation*}
\end{proposition}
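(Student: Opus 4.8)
The plan is to exploit the characterisation of $\mathsf{PAt}$-$f$-logics given in Proposition \ref{Properties of PA-logics}, namely that a consequence relation is a $\mathsf{PAt}$-$f$-logic precisely when it is of the form $\vdash_*^f$ for some ${\vdash_*}\in\Lambda(\vdash)$, together with the explicit description of suprema of logics from Lemma \ref{Explicit definition of supremum of logics}. The key observation is that the operation $({-})^f$ sending a standard logic ${\vdash_*}$ to its $\mathsf{PAt}$-$f$-variant $\vdash_*^f$ is monotone, and one wants to show it interacts well with meets and joins. I would first record the elementary fact that for any family $(\vdash_{*,i})_{i\in I}$ of logics in $\Lambda(\vdash)$,
\begin{equation*}
    \Bigl(\bigcap_{i\in I}{\vdash_{*,i}}\Bigr)^f = \bigcap_{i\in I}\bigl({\vdash_{*,i}}\bigr)^f,
\end{equation*}
which is immediate from the definition: $\Gamma\vdash^f\phi$ under the left side iff $\Gamma[f(\overline p)/\overline p]\vdash_{*,i}\phi[f(\overline p)/\overline p]$ for every $i$, iff $\Gamma\vdash^f_{*,i}\phi$ for every $i$. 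Since each $\vdash_i$ is by hypothesis of the form $\vdash_{*,i}^f$ for a suitable standard logic $\vdash_{*,i}$, this gives at once that $\bigcap_{i\in I}\vdash_i$ is the $\mathsf{PAt}$-$f$-variant of $\bigcap_{i\in I}\vdash_{*,i}$ — which is a standard logic since $\Lambda(\vdash)$ is closed under intersections — hence a $\mathsf{PAt}$-$f$-logic.

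For the second claim, set $S=\bigcup_{i\in I}\vdash_i$ and let ${\vdash_S}$ be the least consequence relation containing $S$. One direction is trivial: $\vdash_S$ is contained in any $\mathsf{PAt}$-$f$-logic containing all the $\vdash_i$, since such a logic is in particular a consequence relation containing $S$. The real content is that $\vdash_S$ is itself a $\mathsf{PAt}$-$f$-logic, i.e.\ that it already satisfies the closure conditions and is the variant of a standard logic. Here I would use Proposition \ref{Properties of PA-logics} in the form of its conclusion: I claim $\vdash_S$ is the least consequence relation satisfying (1) ${\vdash_{**}}\subseteq{\vdash_S}$, where ${\vdash_{**}}=\bigvee_{i}{\vdash_{*,i}}$ is the join in $\Lambda(\vdash)$ of the underlying standard logics, and (2) $\vdash_S\Delta(f(p),p)$ for all $p\in\mathsf{Prop}$. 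To see this, note each $\vdash_i=\vdash_{*,i}^f$ contains $\vdash_{*,i}$ and satisfies $\vdash_i\Delta(f(p),p)$ by Proposition \ref{Properties of PA-logics}; hence $\vdash_S$, containing the union of the $\vdash_i$, contains each $\vdash_{*,i}$, and therefore contains their join $\vdash_{**}$ in $\Lambda(\vdash)$ — this uses Lemma \ref{Explicit definition of supremum of logics}, which says that join is just ``generate as a consequence relation'', so $\vdash_{**}\subseteq\vdash_S$ because $\vdash_S$ is a consequence relation containing all of them. And $\vdash_S\Delta(f(p),p)$ since already some $\vdash_i$ does. Conversely any consequence relation with properties (1) and (2) contains every $\vdash_i=\vdash_{*,i}^f$, again by the minimality clause of Proposition \ref{Properties of PA-logics} applied with $\vdash_{*,i}$, hence contains $S$, hence contains $\vdash_S$. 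Thus $\vdash_S$ satisfies the characterisation in Proposition \ref{Properties of PA-logics} with respect to the standard logic $\vdash_{**}$, so $\vdash_S=\vdash_{**}^f$, which is a $\mathsf{PAt}$-$f$-logic. This simultaneously identifies $\bigvee_{i\in I}\vdash_i$, computed in the lattice of $\mathsf{PAt}$-$f$-logics, with $\vdash_S$.

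The main obstacle I anticipate is a subtlety in the minimality argument: Proposition \ref{Properties of PA-logics} characterises $\vdash_{*,i}^f$ as least \emph{among consequence relations} satisfying (1)+(2) for the \emph{particular} witness $\vdash_{*,i}$, but one must be careful that a single consequence relation $\Vdash$ satisfying (1)+(2) for the join $\vdash_{**}$ simultaneously satisfies (1)+(2) for each $\vdash_{*,i}$ — which is clear since $\vdash_{*,i}\subseteq\vdash_{**}$ — and conversely that containing all the $\vdash_i$ forces (1) for $\vdash_{**}$, which again needs Lemma \ref{Explicit definition of supremum of logics} to pass from ``contains each $\vdash_{*,i}$ as a set of pairs'' to ``contains the generated logic''. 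Once these bookkeeping points about which minimality clause applies to which logic are handled cleanly, the argument is routine; no genuinely hard estimate or construction is involved, it is essentially a diagram chase through the Galois-type correspondence set up by Proposition \ref{Properties of PA-logics}.
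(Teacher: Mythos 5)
Your argument is correct. Note that the paper does not actually prove this proposition in the text --- it is cited from \cite[Proposition 3.4.4]{minhatesedemestrado} --- so there is no in-paper proof to compare against; but your route is exactly the natural one suggested by the surrounding material. The meet claim follows from the identity $\bigl(\bigcap_{i}{\vdash_{*,i}}\bigr)^f=\bigcap_{i}{\vdash_{*,i}^f}$, which you verify correctly from the definition, together with closure of $\Lambda(\vdash)$ under intersections. For the join, your two-sided minimality argument is sound: Lemma \ref{Explicit definition of supremum of logics} gives ${\vdash_{**}}\subseteq{\vdash_S}$ because ${\vdash_{**}}$ is already the least \emph{consequence relation} over $\bigcup_i{\vdash_{*,i}}$ (no substitution closure needed), and the minimality clause of Proposition \ref{Properties of PA-logics}, applied once to each witness ${\vdash_{*,i}}$ and once to ${\vdash_{**}}$, pins down ${\vdash_S}={\vdash_{**}^f}$. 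The only points worth making explicit are that the choice of witnesses ${\vdash_{*,i}}$ is not unique but immaterial to the argument, and that clause (2) for ${\vdash_S}$ requires $I\neq\emptyset$ (the statement is false for the empty family, as the analogous lemma for $\mathsf{PAt}$-quasivarieties acknowledges by assuming nonemptiness); neither is a genuine gap.
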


In light of the previous facts, we can denote by $\Lambda^{f}(\vdash)$ the complete lattice of $\mathsf{PAt}$-variants of the logic $\vdash$. Moreover, we have the following:

\begin{corollary}\label{PA-logic map is complete Lattice Homomorphism}
The map $(-)^{f}:\Lambda(\vdash)\to \Lambda^{f}(\vdash)$ that assigns to each logic ${\vdash_{*}}\in \Lambda(\vdash)$ its $\mathsf{PAt}$-variant is a complete lattice homomorphism.
\end{corollary}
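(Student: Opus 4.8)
The plan is to verify directly that $(-)^{f}$ preserves arbitrary meets and arbitrary joins, drawing on Proposition \ref{Complete Lattice Homomorphisms} for both; the work is mostly a matter of unwinding definitions, since the lattice structure on $\Lambda^{f}(\vdash)$ is by construction inherited from the lattice of consequence relations. First I would record that $(-)^{f}$ is order-preserving: if ${\vdash_{1}}\subseteq{\vdash_{2}}$ in $\Lambda(\vdash)$, then for any $\Gamma\cup\{\phi\}$ the implication $\Gamma[f(\overline{p})/\overline{p}]\vdash_{1}\phi[f(\overline{p})/\overline{p}]\implies\Gamma[f(\overline{p})/\overline{p}]\vdash_{2}\phi[f(\overline{p})/\overline{p}]$ is immediate, so ${\vdash_{1}^{f}}\subseteq{\vdash_{2}^{f}}$. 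Hence $(-)^{f}$ maps the meet (resp.\ join) of a family below (resp.\ above) the corresponding meet (resp.\ join) of the images, and it remains to prove the two reverse inclusions.

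For meets, let $(\vdash_{i})_{i\in I}$ be a family in $\Lambda(\vdash)$ with meet $\bigcap_{i}\vdash_{i}$ (intersection, by the remarks preceding Lemma \ref{Explicit definition of supremum of logics}). I claim $(\bigcap_{i}\vdash_{i})^{f}=\bigcap_{i}(\vdash_{i}^{f})$. Indeed, $\Gamma\,(\bigcap_{i}\vdash_{i})^{f}\,\phi$ means $\Gamma[f(\overline{p})/\overline{p}]\,\bigcap_{i}\vdash_{i}\,\phi[f(\overline{p})/\overline{p}]$, i.e.\ $\Gamma[f(\overline{p})/\overline{p}]\vdash_{i}\phi[f(\overline{p})/\overline{p}]$ for every $i$, which says exactly $\Gamma\vdash_{i}^{f}\phi$ for every $i$, i.e.\ $\Gamma\,\bigcap_{i}(\vdash_{i}^{f})\,\phi$. (By Proposition \ref{Complete Lattice Homomorphisms}, $\bigcap_{i}(\vdash_{i}^{f})$ is itself a $\mathsf{PAt}$-logic, so this is the meet computed in $\Lambda^{f}(\vdash)$.) Note the substitution operation $\chi\mapsto\chi[f(\overline{p})/\overline{p}]$ is a fixed map on formulas, so it commutes with the intersection of relations without any subtlety; this is the crux.

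For joins, let $S=\bigcup_{i}\vdash_{i}$. On the source side, Lemma \ref{Explicit definition of supremum of logics} gives $\bigvee_{i}\vdash_{i}={\vdash_{S}}$, and on the target side Proposition \ref{Complete Lattice Homomorphisms} gives $\bigvee_{i}(\vdash_{i}^{f})={\vdash_{S'}}$ where $S'=\bigcup_{i}(\vdash_{i}^{f})$. I must show $(\vdash_{S})^{f}={\vdash_{S'}}$. The inclusion ${\vdash_{S'}}\subseteq(\vdash_{S})^{f}$ follows because $(\vdash_{S})^{f}$ is a consequence relation containing each $\vdash_{i}^{f}$ (by monotonicity, since ${\vdash_{i}}\subseteq{\vdash_{S}}$), hence containing $S'$, hence containing the least consequence relation $\vdash_{S'}$ generated by $S'$. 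The reverse inclusion is the one genuine obstacle: I would argue that the preimage $\{(\Gamma,\phi) : \Gamma[f(\overline{p})/\overline{p}]\,{\vdash_{S'}}\,\phi[f(\overline{p})/\overline{p}]\}$ is a consequence relation containing $S$ (it contains each $\vdash_{i}$ because, by definition of the variant and Proposition \ref{Complete Lattice Homomorphisms}, $\vdash_{i}\subseteq\vdash_{i}^{f}\subseteq{\vdash_{S'}}$ after noting that a derivation of $\Gamma\vdash_i\phi$ yields via uniform substitution one of $\Gamma[f(\overline{p})/\overline{p}]\vdash_i\phi[f(\overline{p})/\overline{p}]$), and that it is closed under Cut and Reflexivity because $\vdash_{S'}$ is and substitution of $f(\overline{p})$ for $\overline{p}$ is a monotone operation on formulas; therefore it contains $\vdash_{S}$, which unwinds to $(\vdash_{S})^{f}\subseteq{\vdash_{S'}}$. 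The main care needed here is checking closure under Cut for the preimage relation: given $\Gamma\,(\text{preimage})\,v$ for all $v\in V$ and $V\,(\text{preimage})\,\psi$, one applies Cut in $\vdash_{S'}$ to the $f(\overline{p})$-substituted instances, which works because the substitution is applied uniformly and pointwise to all formulas involved.
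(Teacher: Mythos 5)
Your meet case is correct, and so is the inclusion ${\vdash_{S'}}\subseteq(\vdash_{S})^{f}$ in the join case. The gap is in the final sentence. Writing $P=\{(\Gamma,\phi):\Gamma[f(\overline{p})/\overline{p}]\vdash_{S'}\phi[f(\overline{p})/\overline{p}]\}$, what you actually establish is ${\vdash_{S}}\subseteq P$, i.e.\ the \emph{image} statement: if $\Gamma\vdash_{S}\phi$ then $\Gamma[f(\overline{p})/\overline{p}]\vdash_{S'}\phi[f(\overline{p})/\overline{p}]$. What the reverse inclusion requires is the \emph{preimage} statement: if $\Gamma[f(\overline{p})/\overline{p}]\vdash_{S}\phi[f(\overline{p})/\overline{p}]$ then $\Gamma\vdash_{S'}\phi$. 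These do not coincide, and the first does not "unwind" into the second: applying your containment to the pair $(\Gamma[f(\overline{p})/\overline{p}],\phi[f(\overline{p})/\overline{p}])$ only yields $\Gamma[f^{2}(\overline{p})/\overline{p}]\vdash_{S'}\phi[f^{2}(\overline{p})/\overline{p}]$, and stripping the (now doubled) substitution inside $\vdash_{S'}$ is exactly the nontrivial part. It needs that $\vdash_{S'}$ proves $\Delta(f(p),p)$ and extends the base algebraizable logic, so that the replacement argument in the proof of Proposition \ref{Properties of PA-logics} can be run; none of that is supplied by the consequence-relation axioms alone.

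The clean repair is to use the minimality characterisation of Proposition \ref{Properties of PA-logics} directly: $(\vdash_{S})^{f}$ is the \emph{least} consequence relation containing $\vdash_{S}$ together with all $\Delta(f(p),p)$. Now $\vdash_{S'}$ contains each $\vdash_{i}^{f}$, hence each $\vdash_{i}$ (since ${\vdash_{i}}\subseteq{\vdash_{i}^{f}}$ by that same Proposition), hence $S$ and therefore $\vdash_{S}$; and it contains $\Delta(f(p),p)$ because each $\vdash_{i}^{f}$ does (for a nonempty family). Minimality then gives $(\vdash_{S})^{f}\subseteq{\vdash_{S'}}$ in one line, with no need for the auxiliary relation $P$. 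With that substitution your argument is complete and matches the content the paper compresses into its citation of Proposition \ref{Complete Lattice Homomorphisms}.
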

\begin{proof}
This follows from Proposition \ref{Complete Lattice Homomorphisms}.\end{proof}

\subsection{Regular Algebras and PAt-Quasivarieties}

Just like the syntactic aspects of the theory, also the semantic constructions of DNA-logic can be generalised. Let $\mathcal{A}\in \classY$ be some algebra. Then consider the set
\begin{equation*}
    \mathcal{A}^{f}\coloneqq \{ a\in \mathcal{A} : f(a)=a\},
\end{equation*}
which we call the set of $f$-\textit{regular elements}; when $f$ is understood we refer to them solely as regular elements \footnote{We note that this notion specialises the notion of \textit{core elements} of \cite{quadrellaronakov}; our case concerns what are there called ``polynomially definable" cores}. Using these elements we can define the semantics of our models:

\begin{definition}
Let $\mathcal{A}\in \classY$. We denote by $\langle \mathcal{A}^{f}\rangle$ the subalgebra of $\mathcal{A}$ generated by regular elements. We say that $\mathcal{A}$ is \textit{regularly generated} if $\mathcal{A}=\langle \mathcal{A}^{f}\rangle$. We say that a given model $(\mathcal{A},v^{f})$ is a \textit{polyatomic model} if $v^{f}$ maps propositional variables to regular elements. 
\end{definition}

Given an algebra $\mathcal{A}$ and an equation $\lambda\approx\gamma\in Eq(\fancyL_{\classY},\mathsf{Prop})$, we write $\mathcal{A}\vDash_{f}\lambda\approx\gamma$ if for each polyatomic model $(\mathcal{A},v^{f})$ we have $\mathcal{A},v^{f}\vDash \lambda\approx\gamma$. Given a class $\bf{K}$ of algebras, we similarly write $\bf{K}\vDash_{f}\lambda\approx\gamma$ to mean that for each $\mathcal{A}\in \bf{K}$ we have $\mathcal{A}\vDash_{f}\lambda\approx\gamma$.

For the sequel, we quickly outline what it means for an algebraic model to satisfy a sequent. Given a pair of the form $(\Gamma,\phi)$, we write  $(\mathcal{A},v)\vDash (\Gamma,\phi)$ if and only if,
\begin{equation*}
    \mathcal{A},v\vDash \mu[\Gamma] \text{ implies } \mathcal{A},v\vDash \mu(\phi).
\end{equation*}
We write $\mathcal{A}\vDash (\Gamma,\phi)$ if for each valuation $v$, we have $\mathcal{A},v\vDash (\Gamma,\phi)$. Given a collection of sequents $S$, we write $\mathcal{A}\vDash S$ to mean that for each $(\Gamma,\phi)\in S$, $\mathcal{A}\vDash (\Gamma,\phi)$. As above, we also write $\mathcal{A}\vDash_{f}(\Gamma,\phi)$, and $\mathcal{A}\vDash_{f} S$ if the algebra validates the sequent/set of sequents, when restricted to the polyatomic models. Similarly we write $\bf{K}\vDash (\Gamma,\phi)$, respectively $\bf{K}\vDash_{f}(\Gamma,\phi)$, when the validity holds for each $\mathcal{A}\in \bf{K}$.

For an arbitrary valuation on $\mathcal{A}$, $v$, we can define an $f$-\textit{variant} of this valuation, $v^{f}$, by defining $v^{f}(p)=v(f(p))$ for each proposition letter $p$; in this way we have that for any $\chi\in Tm(\fancyL_{\classY},\mathsf{Prop})$,
\begin{equation*}
    v^{f}(\chi)=v(\chi[f(\overline{p})/\overline{p}]).
\end{equation*}

The following lemma collects some basic properties of polyatomic models, and $f$-validity.

\begin{lemma}\label{Invariance of truth for f(p) valuation}
Let $\mathcal{A}\in \classY$ and ${\vdash_{*}}\in \Lambda(\vdash)$. Then we have:
\begin{enumerate}
    \item $\mathcal{A}\vDash (\Gamma[f(\overline{p})/\overline{p}],\phi[f(\overline{p})/\overline{p}])$ if and only if $\mathcal{A}\vDash_{f} (\Gamma,\phi)$;
    \item If $\mathcal{A}\vDash \ \vdash_{*}$ then $\mathcal{A}\vDash_{f} \ \vdash_{*}^{f}$;
    \item  $\mathcal{A}\vDash_{f}(\Gamma,\phi)$ if and only if $\langle \mathcal{A}^{f}\rangle\vDash_{f}(\Gamma,\phi)$;
    \item If $\mathcal{A}\vDash_{f}\ \vdash_{*}^{f}$ then $\langle \mathcal{A}^{f}\rangle \vDash  {\vdash_{*}}$.
\end{enumerate}
\end{lemma}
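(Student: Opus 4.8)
The plan is to prove the four statements in order, since each feeds into the next. For (1), I would unwind the definitions: $\mathcal{A}\vDash_f(\Gamma,\phi)$ means that for every polyatomic model $(\mathcal{A},v^f)$ — i.e.\ every valuation sending proposition letters to $f$-regular elements — we have $\mathcal{A},v^f\vDash \mu[\Gamma]$ implies $\mathcal{A},v^f\vDash\mu(\phi)$. The key observation, already recorded just before the lemma, is that for an arbitrary valuation $v$ the $f$-variant $v^f$ satisfies $v^f(\chi)=v(\chi[f(\overline p)/\overline p])$, and conversely every polyatomic valuation arises (up to agreement on the relevant variables) as $v^f$ for some $v$, because $f$ is idempotent so $v(f(p))$ is always $f$-regular and if $v$ already takes values in regular elements then $v^f$ agrees with $v$. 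Combining this with the substitution-invariance of $\mu$ noted in the preliminaries ($\mu[\nu[\Gamma]]=\nu[\mu[\Gamma]]$), the condition ``$\mathcal{A},v\vDash\mu[\Gamma[f(\overline p)/\overline p]]$ implies $\mathcal{A},v\vDash\mu(\phi[f(\overline p)/\overline p])$ for all $v$'' translates literally into ``$\mathcal{A},v^f\vDash\mu[\Gamma]$ implies $\mathcal{A},v^f\vDash\mu(\phi)$ for all $v$,'' which is the $f$-validity statement. This gives (1).

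For (2): suppose $\mathcal{A}\vDash\,\vdash_*$ and let $(\Gamma,\phi)\in\,\vdash_*^f$, i.e.\ $\Gamma[f(\overline p)/\overline p]\vdash_*\phi[f(\overline p)/\overline p]$. Since $\mathcal{A}$ is an algebraic model of $\vdash_*$ (via $\mu$, $\mathbf{K}$), we get $\mathcal{A}\vDash(\Gamma[f(\overline p)/\overline p],\phi[f(\overline p)/\overline p])$, and then (1) immediately gives $\mathcal{A}\vDash_f(\Gamma,\phi)$. For (3): the point is that a polyatomic model only ever evaluates variables — and hence all terms — inside the subalgebra $\langle\mathcal{A}^f\rangle$ generated by the regular elements, and the $f$-regular elements of $\langle\mathcal{A}^f\rangle$ coincide with those of $\mathcal{A}$ (since $f$ is a term, $\langle\mathcal{A}^f\rangle$ is closed under $f$, and $a=f(a)$ is an absolute property). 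So polyatomic valuations of $\mathcal{A}$ and of $\langle\mathcal{A}^f\rangle$ are in bijection, and term-evaluation agrees because $\langle\mathcal{A}^f\rangle$ is a subalgebra; hence the two $f$-validity relations coincide. For (4): assume $\mathcal{A}\vDash_f\,\vdash_*^f$ and let $\Gamma\vdash_*\phi$. By closure under uniform substitution, $\Gamma[f(\overline p)/\overline p]\vdash_*\phi[f(\overline p)/\overline p]$, so $(\Gamma,\phi)\in\,\vdash_*^f$, whence $\mathcal{A}\vDash_f(\Gamma,\phi)$, and by (3), $\langle\mathcal{A}^f\rangle\vDash_f(\Gamma,\phi)$. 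The remaining task is to upgrade $\vDash_f$ to ordinary $\vDash$ on $\langle\mathcal{A}^f\rangle$: since that algebra is \emph{regularly generated}, every element is a term in regular elements; given an arbitrary valuation $w$ on $\langle\mathcal{A}^f\rangle$, each $w(p)$ is of the form $t^{\langle\mathcal{A}^f\rangle}(\overline a)$ for $\overline a$ regular, so I would compose with the substitution $p\mapsto t$ and a polyatomic valuation sending fresh variables to the $\overline a$, then invoke substitution-invariance of $\vdash_*$ (which holds on $\langle\mathcal{A}^f\rangle$ through $\mu$ as for any algebraic model of $\vdash_*$) to transfer the polyatomic satisfaction back to $w$.

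The main obstacle I expect is precisely this last upgrade step in (4): one must be careful that the substitution witnessing ``$w(p)$ is a term in regular elements'' has only finitely many regular arguments (fine, since $w$ need only be controlled on the finitely many variables occurring in $\Gamma\cup\{\phi\}$, using finitarity), and that the manipulation with $\mu$ and $\Delta$ is legitimate — here I would lean on the structural identity $\mu[\nu[\Gamma]]=\nu[\mu[\Gamma]]$ and on the fact that $\langle\mathcal{A}^f\rangle$, being a subalgebra of a member of $\mathbf{Y}$, again lies in the quasivariety $\mathbf{Y}=\mathbf{K}$, so it is a legitimate algebraic model of $\vdash_*$. Everything else is a routine unwinding of the definitions of polyatomic model, $f$-variant valuation, and the $\mathsf{PAt}$-$f$-variant of a logic.
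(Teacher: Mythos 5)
Your proposal is correct and follows essentially the same route as the paper's proof: (1) by unwinding the definitions via the identity $v^{f}(\chi)=v(\chi[f(\overline{p})/\overline{p}])$ together with structurality of $\mu$, (2) as an immediate consequence of (1), (3) from $\langle\mathcal{A}^{f}\rangle^{f}=\mathcal{A}^{f}$, and (4) by writing elements of the regularly generated subalgebra as terms in finitely many regular elements and transferring via a substitution $\nu$ and the identity $\mu[\nu[\Gamma]]=\nu[\mu[\Gamma]]$. The only cosmetic difference is that the paper argues (4) contrapositively while you argue it directly; the key construction is identical.
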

\begin{proof}
(1) Suppose that $\mathcal{A}\nvDash (\Gamma[f(\overline{p})/\overline{p}],\phi[f(\overline{p})/\overline{p}])$; let $v$ be a valuation, such that $\mathcal{A},v\vDash \mu[\Gamma[f(\overline{p}/\overline{p})]]$ but $\mathcal{A},v\nvDash \mu(\phi[f(\overline{p})/\overline{p}])$. By the Remark after \ref{Algebraizable logic}, this means that:
\begin{equation*}
    \mathcal{A},v\vDash (\mu[\Gamma])^{f} \text{ and } \mathcal{A},v\nvDash \mu(\phi)^{f}
\end{equation*}
where $(\mu[\Gamma])^{f}$ denotes the application throughout of the substitution applying $f$ to all proposition letters. Consider $v^{f}$ as defined above; then since for each formula $\chi$, we have $v^{f}(\chi)=v(\chi[f(p)/p])$, this means that 
$$\mathcal{A},v^{f}\nvDash (\Gamma,\phi).$$ For the converse, note that if $\mathcal{A},v^{f}\nvDash (\Gamma,\phi)$, then since $v^{f}(p)=v^{f}(f(p))$ (given the properties of $f$ as a selector term), then again using structurality of $\mu$ we obtain that  $\mathcal{A},v^{f}\nvDash (\Gamma[f(p)/p],\phi[f(p)/p])$, which implies the result.

(2) Suppose that $\mathcal{A}\nvDash_{f}(\Gamma,\phi)$ where $\Gamma\vdash_{*}^{f}\phi$. Then by the previous statement, $\mathcal{A}\nvDash (\Gamma[f(p)/p],\phi[f(p)/p])$; but the latter is by definition in $\vdash_{*}$, so $\mathcal{A}\nvDash \ \vdash_{*}$.

(3) Note that $\langle \mathcal{A}^{f}\rangle^{f}=\mathcal{A}^{f}$, since the former is by definition a subalgebra of $\mathcal{A}$. Hence a given valuation $v$ on $\mathcal{A}$ is regular if and only if it is a regular valuation on $\langle \mathcal{A}^{f}\rangle$. This yields the result immediately.

(4) Suppose that $\langle \mathcal{A}^{f}\rangle\nvDash \ \vdash_{*}$, hence, there is a sequent $\Gamma\vdash_{*}\phi$, and a valuation $v$, such that:
\begin{equation*}
    \langle \mathcal{A}^{f}\rangle,v\vDash \mu[\Gamma] \text{ and } \langle \mathcal{A}^{f}\rangle,v\nvDash  \mu(\phi)
\end{equation*}
Since $\langle \mathcal{A}^{f}\rangle$ is an algebra generated by $\mathcal{A}^{f}$, we can write each element $a\in \langle A^{f}\rangle$ in the following form: for some elements $b_{1},...,b_{n}\in A^{f}$, and for some $n$-ary term $\delta(y_{1},..,y_{n})$, we have that the valuation $v:\mathsf{Prop}\to \langle A^{f}\rangle$ such that $k(x_{i})=b_{i}$ yields
\begin{equation*}
    \overline{k}(\delta(y_{1},...,y_{n}))=a.
\end{equation*}
Proceeding in this way, we can ensure that for each $x_{i}$, a variable occurring in some term $\lambda$, which appears in an equation in $\mu[\Gamma]$ or $\mu(\phi)$, we can find a term $\delta_{x_{i}}(\overline{y})$ and regular elements $c_{1},...,c_{n_{i}}$ such that the valuation $k_{x_{i}}$ assigning $k_{x_{i}}(y_{i})=c_{i}$ yields that  $
    \overline{k}_{x_{i}}(\delta_{x_{i}}(\overline{y}))=v(x_{i})$. Note moreover that by renaming if necessary, we can ensure that all these variables are distinct.

    Having this in place,  
define a valuation $w:\mathsf{Prop}\to \langle \mathcal{A}^{f}\rangle$ as follows: for each $y\in \mathsf{Prop}$ if $y=y_{i_{j}}$ for some term $\delta_{x_{i}}(\overline{y})$ of some $x_{i}$ occurring as above, then $w(y)=c_{i_{j}}$.  Otherwise we let $w$ have the value of an arbitrary regular element. Then note that if $\lambda(x_{1},...,x_{n})$ is a term in the above conditions, then:
\begin{align*}
    \overline{v}(\lambda(x_{1},...,x_{n}))&=\lambda(v(x_{1}),...,v(x_{n}))\\
    &=\lambda(\overline{k}_{x_{1}}(\delta_{x_{1}}(\overline{y}_{1})),...,\overline{k}_{x_{n}}(\delta_{x_{n}}(\overline{y}_{n})))\\
    &=\lambda(\overline{w}(\delta_{x_{1}}(\overline{y}_{1})),...,\overline{w}(\delta_{x_{n}}(\overline{y}_{n})))\\
    &=\overline{w}(\lambda(\delta_{x_{1}}(\overline{y}_{1}),...,\delta_{x_{n}}(\overline{y}_{n}))).
\end{align*}
Thus, if we consider the substitution $\nu:\terms(\mathsf{Prop})\to \terms(\mathsf{Prop})$ defined by $\nu(x_{i})=\delta_{x_{i}}(\overline{y}_{i})$, we obtain that:
\begin{equation*}
    \langle \mathcal{A}^{f} \rangle, w\vDash \nu[\mu[\Gamma]] \text{ and } \langle \mathcal{A}^{f}\rangle,w\nvDash \nu[\mu(\phi)].
\end{equation*}
By structurality of $\mu$, this means that $\langle \mathcal{A}^{f}\rangle,w\nvDash (\nu[\Gamma],\nu(\phi))$.  Since the valuation $w$ takes values in regular elements, also $\langle \mathcal{A}^{f}\rangle,w\nvDash_{f} (\nu[\Gamma],\nu(\phi))$. Now by part (3) of this proposition, we then obtain that  $\mathcal{A}\nvDash_{f} (\nu[\Gamma],\nu(\phi))$. Since $\vdash_{*}$ is a standard logic, and thus, is closed under substitution, and since $\Gamma\vdash_{*}\phi$, we have that $\nu[\Gamma]\vdash_{*}\nu(\phi)$, and in turn, $\nu[\Gamma][f(\overline{p})/\overline{p}]\vdash_{*}\nu(\phi)[f(\overline{p})/\overline{p}]$. Hence:
\begin{equation*}
    \nu[\Gamma]\vdash_{*}^{f}\nu(\phi)
\end{equation*}
But then since $\mathcal{A}\nvDash_{f} (\nu[\Gamma],\nu(\phi))$, we have that $\mathcal{A}\nvDash_{f} \ \vdash_{*}^{f}$, which was to show.\end{proof}

We isolate one fact which was used in the previous proof, and will likewise be necessary at a later stage:

\begin{corollary}\label{Failure in regularly generated yields substitution instance}
Suppose that $\langle \mathcal{A}^{f}\rangle \nvDash (\Gamma,\phi)$. Then there is a substitution $\nu$, such that $\mathcal{A}\nvDash_{f} (\nu[\Gamma][f(\overline{p})/\overline{p}],\nu(\phi)[f(\overline{p})/\overline{p}])$.
\end{corollary}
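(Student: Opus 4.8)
The plan is to extract the construction already carried out inside the proof of part~(4) of Lemma~\ref{Invariance of truth for f(p) valuation}. First I would unwind the hypothesis: since $\langle \mathcal{A}^{f}\rangle \nvDash (\Gamma,\phi)$, there is a valuation $v$ on $\langle \mathcal{A}^{f}\rangle$ with $\langle \mathcal{A}^{f}\rangle, v \vDash \mu[\Gamma]$ but $\langle \mathcal{A}^{f}\rangle, v \nvDash \mu(\phi)$. Only finitely many variables occur in the finitely many terms appearing in $\mu[\Gamma]$ and $\mu(\phi)$ (here finiteness of $\mu$ and $\Delta$ is used); list them as $x_{1},\dots,x_{n}$. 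Since $\langle \mathcal{A}^{f}\rangle$ is generated by the regular elements $\mathcal{A}^{f}$, for each $x_{i}$ I can pick a term $\delta_{x_{i}}(\overline{y}_{i})$ and a tuple $\overline{c}_{i}$ of regular elements so that the assignment $\overline{y}_{i}\mapsto\overline{c}_{i}$ evaluates $\delta_{x_{i}}$ to $v(x_{i})$; by renaming I arrange the tuples $\overline{y}_{i}$ to be pairwise disjoint.

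Next I would set $\nu(x_{i})=\delta_{x_{i}}(\overline{y}_{i})$ (and, say, $\nu(z)=z$ otherwise), and define a valuation $w$ on $\langle \mathcal{A}^{f}\rangle$ sending each variable occurring in some $\overline{y}_{i}$ to the corresponding entry of $\overline{c}_{i}$ and every other variable to a fixed regular element. A routine induction on term complexity yields $\overline{w}(\nu(\lambda))=\overline{v}(\lambda)$ for every term $\lambda$ over $x_{1},\dots,x_{n}$, and in particular for all terms occurring in $\mu[\Gamma]\cup\mu(\phi)$; by the structurality of $\mu$ (the Remark after Definition~\ref{Algebraizable logic}) this gives $\langle \mathcal{A}^{f}\rangle, w \vDash \mu[\nu[\Gamma]]$ and $\langle \mathcal{A}^{f}\rangle, w \nvDash \mu(\nu(\phi))$, i.e.\ $\langle \mathcal{A}^{f}\rangle, w \nvDash (\nu[\Gamma],\nu(\phi))$. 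Since $w$ takes values in regular elements it is a polyatomic valuation, so $\langle \mathcal{A}^{f}\rangle \nvDash_{f} (\nu[\Gamma],\nu(\phi))$, and then part~(3) of Lemma~\ref{Invariance of truth for f(p) valuation} upgrades this to $\mathcal{A} \nvDash_{f} (\nu[\Gamma],\nu(\phi))$.

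Finally I would pass to the stated form: for any polyatomic valuation $u$ and any formula $\chi$ one has $\overline{u}(\chi[f(\overline{p})/\overline{p}])=\overline{u}(\chi)$, since $u$ assigns $f$-fixed values, so a polyatomic model refutes $(\Psi,\psi)$ precisely when it refutes $(\Psi[f(\overline{p})/\overline{p}],\psi[f(\overline{p})/\overline{p}])$; applying this to $\mathcal{A}\nvDash_{f}(\nu[\Gamma],\nu(\phi))$ yields $\mathcal{A}\nvDash_{f}(\nu[\Gamma][f(\overline{p})/\overline{p}],\nu(\phi)[f(\overline{p})/\overline{p}])$, as required. There is no genuine obstacle here --- the statement is essentially a repackaging of a step in the proof of Lemma~\ref{Invariance of truth for f(p) valuation} --- and the only point demanding care is the bookkeeping in the first two paragraphs: keeping the $\overline{y}_{i}$ pairwise disjoint so that the single valuation $w$ simultaneously reproduces all the values $v(x_{i})$, and invoking finiteness of $\mu$ and $\Delta$ to ensure the construction is finite.
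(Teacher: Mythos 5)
Your proposal is correct and matches the paper's intended argument: the paper gives no separate proof of this corollary but explicitly isolates it as the construction carried out inside the proof of part (4) of Lemma \ref{Invariance of truth for f(p) valuation}, which is exactly what you reproduce (terms over regular generators, the substitution $\nu$, the regular-valued valuation $w$, structurality of $\mu$, and part (3) to pass from $\langle\mathcal{A}^{f}\rangle$ to $\mathcal{A}$). Your final observation that polyatomic valuations are insensitive to the substitution $[f(\overline{p})/\overline{p}]$ correctly bridges to the stated form.
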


As stated, our goal will be to prove an algebraic completeness result, reminiscent of Birkhoff and Maltsev's results. As such, we now turn to the analogues of quasivarieties in this setting.

\begin{definition}
Let $\mathcal{A},\mathcal{B}\in \classY$ be algebras. We say that $\mathcal{A}$ is a \textit{core superalgebra} of $\mathcal{B}$, and write $\mathcal{B}\preceq_{f}\mathcal{A}$ if $\mathcal{B}\preceq \mathcal{A}$ and $\mathcal{A}^{f}=\mathcal{B}^{f}$.
\end{definition}

\begin{definition}
Let $\bf{K}\in \Xi(\classY)$ be a subquasivariety of $\classY$, and $f$ a selector term. We define the \textit{$\mathsf{PAt}$-$f$-variant} of $\bf{K}$ as:
\begin{equation*}
    \bf{K}^{\uparrow} \coloneqq \{ \mathcal{A} : \exists \mathcal{B}\in \bf{K}, \mathcal{B}\preceq_{f} \mathcal{A}\}.
\end{equation*}
We say that $\bf{K}\in \Xi(\classY)$ is a \textit{$\mathsf{PAt}$-$f$-quasivariety},, if it is the $\mathsf{PAt}$-variant of some subquasivariety of $\classY$.

Furthermore, we say that a $\mathsf{PAt}$-$f$-quasivariety is a \textit{$\mathsf{PAt}$-$f$-variety}, relative to the selector term and $\classY$, if it is the $\mathsf{PAt}$-$f$-variant of a subvariety of $\classY$.
\end{definition}

This definition likewise parallels the one for DNA-varieties, introduced in \cite{bezhanishvili_grilletti_quadrellaro_2021}. We will now show that analogues of Maltsev and Tarski's results hold. Again, assume throughout a fixed selector term $f$, such that any mention of $\mathsf{PAt}$-logic should be taken to mean $\mathsf{PAt}$-$f$-logic. We will need the following fact, which easily follows from the definitions of products and reduced products:

\begin{lemma}\label{Functoriality of products and reduced products}
Let $(\mathcal{A}_{i})_{i\in I}$ and $(\mathcal{B}_{i\in I}$ be two collections of similar algebras, and assume that for each $i$, $\mathcal{A}_{i}\preceq \mathcal{B}_{i}$. Then:
\begin{itemize}
\item $\prod_{i\in I}\mathcal{A}_{i\in I}\preceq \prod_{i\in I}\mathcal{B}_{i\in I}$
\item Given a filter $F$ on $I$, $\prod_{i\in I}\mathcal{A}_{i\in I}/F \preceq \prod_{i\in I}\mathcal{B}_{i\in I}/F$
\end{itemize}
\end{lemma}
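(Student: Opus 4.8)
The plan is to exhibit, in each case, an explicit injective homomorphism from the structure built on the $\mathcal{A}_{i}$'s into the one built on the $\mathcal{B}_{i}$'s, namely the map induced by the coordinatewise inclusions $A_{i}\hookrightarrow B_{i}$; the whole statement is then a bookkeeping verification.

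First I would handle the direct product. Since $\mathcal{A}_{i}\preceq \mathcal{B}_{i}$ for each $i$, the carrier $\prod_{i\in I}A_{i}$ is a subset of $\prod_{i\in I}B_{i}$, and for every $n$-ary operation symbol the interpretation in either product is computed coordinatewise. Because each $\mathcal{A}_{i}$ is closed under the operations of $\mathcal{B}_{i}$ and the interpretations agree on $A_{i}$, the subset $\prod_{i\in I}A_{i}$ is closed under the operations of $\prod_{i\in I}\mathcal{B}_{i}$ and the inclusion map is a homomorphism; being injective, it witnesses $\prod_{i\in I}\mathcal{A}_{i}\preceq \prod_{i\in I}\mathcal{B}_{i}$.

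For the reduced product, recall that $\prod_{i\in I}\mathcal{A}_{i}/F$ is the quotient of $\prod_{i\in I}\mathcal{A}_{i}$ by the congruence $\theta_{F}^{\mathcal{A}}$ given by $a\mathrel{\theta_{F}^{\mathcal{A}}}b \iff \{i\in I : a_{i}=b_{i}\}\in F$, and similarly for $\mathcal{B}$. I would check that $[a]_{\theta_{F}^{\mathcal{A}}}\mapsto [a]_{\theta_{F}^{\mathcal{B}}}$ — using the inclusion of the first part — is well defined and a homomorphism, which is immediate from $\theta_{F}^{\mathcal{A}}\subseteq \theta_{F}^{\mathcal{B}}$ together with the product case, and then that it is injective. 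Injectivity is the only point needing a word: if $[a]_{\theta_{F}^{\mathcal{B}}}=[b]_{\theta_{F}^{\mathcal{B}}}$ then $\{i\in I : a_{i}=b_{i}\}\in F$ where the equalities are read in $B_{i}$, but $a_{i},b_{i}\in A_{i}$ and $A_{i}$ carries the restricted structure, so $a_{i}=b_{i}$ in $B_{i}$ iff $a_{i}=b_{i}$ in $A_{i}$; hence $a\mathrel{\theta_{F}^{\mathcal{A}}}b$ and the two classes already coincide in $\prod_{i\in I}\mathcal{A}_{i}/F$.

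There is no genuine obstacle here: the only thing one must not overlook is that the ``same-coordinate'' set defining the reduced-product congruence does not depend on whether equality is read in $A_{i}$ or in $B_{i}$, which is exactly the content of $\mathcal{A}_{i}\preceq\mathcal{B}_{i}$. In particular, $F$ being merely a filter, rather than an ultrafilter, plays no role in the argument.
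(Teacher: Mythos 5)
Your proof is correct and is exactly the routine verification the paper has in mind: the paper states this lemma without proof, remarking only that it ``easily follows from the definitions of products and reduced products.'' Your identification of the one point worth a word --- that the agreement set $\{i\in I : a_{i}=b_{i}\}$ is the same whether equality is read in $A_{i}$ or in $B_{i}$, so the induced map on reduced products is injective --- is the right thing to single out.
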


\begin{proposition}\label{Characterisation of PA-quasivarieties}
Let $\mathbf{K}\subseteq \classY$. Then:
\begin{enumerate}
    \item $\mathbf{K}$ is a $\mathsf{PAt}$-quasivariety if and only if it is closed under subalgebras, reduced products and core superalgebras.
    \item $\mathbf{K}$ is a $\mathsf{PAt}$-variety if and only if it is closed under subalgebras, homomorphic images, products and core superalgebras.
\end{enumerate}
\end{proposition}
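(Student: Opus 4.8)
The plan is to prove each of the two biconditionals by treating the two implications in turn; clauses (1) and (2) run in complete parallel, the only difference being that (2) replaces closure under $\mathbb{P}_{R}$ by closure under $\mathbb{P}$ and adds closure under $\mathbb{H}$, so I would carry out the details for (1) and flag the extra case needed for (2). The elementary fact underpinning everything is that if $\mathcal{B}\preceq\mathcal{A}$ then $f$ is interpreted on $\mathcal{B}$ by restriction, so $\mathcal{B}^{f}=B\cap\mathcal{A}^{f}$; consequently $\mathcal{B}\preceq_{f}\mathcal{A}$ is equivalent to $\mathcal{A}^{f}\subseteq B$, and $\preceq_{f}$ is transitive. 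I would record these observations at the outset, together with the remark that $\mathcal{A}^{f}$ is always nonempty (it contains $f(a)$ for any $a\in A$, by idempotency of $f$).

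For the forward direction I would assume $\mathbf{K}=\mathbf{Q}^{\uparrow}$ for a subquasivariety (resp.\ subvariety) $\mathbf{Q}$ of $\classY$. Closure under core superalgebras is immediate from transitivity of $\preceq_{f}$. For closure under $\mathbb{S}$: given $\mathcal{C}\preceq\mathcal{A}\in\mathbf{K}$ with witness $\mathcal{B}\preceq_{f}\mathcal{A}$, $\mathcal{B}\in\mathbf{Q}$, the intersection $\mathcal{B}\cap\mathcal{C}$ taken inside $\mathcal{A}$ is a subalgebra of $\mathcal{B}$ (hence in $\mathbf{Q}$), is a subalgebra of $\mathcal{C}$, and satisfies $(\mathcal{B}\cap\mathcal{C})^{f}=B\cap C\cap\mathcal{A}^{f}=C\cap\mathcal{A}^{f}=\mathcal{C}^{f}$ since $\mathcal{A}^{f}=\mathcal{B}^{f}\subseteq B$; so $\mathcal{C}\in\mathbf{Q}^{\uparrow}$. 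For closure under $\mathbb{P}_{R}$ (and $\mathbb{P}$, as the case of the trivial filter): given $(\mathcal{A}_{i})_{i\in I}$ in $\mathbf{K}$ and a filter $F$, choose witnesses $\mathcal{B}_{i}\preceq_{f}\mathcal{A}_{i}$ with $\mathcal{B}_{i}\in\mathbf{Q}$; Lemma \ref{Functoriality of products and reduced products} gives $\prod_{i}\mathcal{B}_{i}/F\preceq\prod_{i}\mathcal{A}_{i}/F$, and the left-hand side lies in $\mathbf{Q}$. To see $(\prod_{i}\mathcal{B}_{i}/F)^{f}=(\prod_{i}\mathcal{A}_{i}/F)^{f}$, the inclusion $\subseteq$ is the subalgebra observation, and for $\supseteq$ a regular class $[a]_{F}$ of the big quotient has $J:=\{i:f(a_{i})=a_{i}\}\in F$, whence $a_{i}\in\mathcal{A}_{i}^{f}=\mathcal{B}_{i}^{f}\subseteq B_{i}$ for $i\in J$; replacing the coordinates off $J$ by arbitrary elements of the (nonempty) $B_{i}$ produces a representative in $\prod_{i}B_{i}$, so $[a]_{F}\in\prod_{i}\mathcal{B}_{i}/F$. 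Hence $\prod_{i}\mathcal{B}_{i}/F\preceq_{f}\prod_{i}\mathcal{A}_{i}/F$ and $\prod_{i}\mathcal{A}_{i}/F\in\mathbf{Q}^{\uparrow}$. Finally, for closure under $\mathbb{H}$ in clause (2): if $h\colon\mathcal{A}\to\mathcal{C}$ is a surjective homomorphism with $\mathcal{A}\in\mathbf{V}^{\uparrow}$ and witness $\mathcal{B}\preceq_{f}\mathcal{A}$, $\mathcal{B}\in\mathbf{V}$, then homomorphisms preserve regular elements, and idempotency of $f$ yields the reverse inclusion $\mathcal{C}^{f}=h[\mathcal{A}^{f}]$ (if $f(c)=c$ and $c=h(a)$ then $c=h(f(a))$ with $f(a)$ regular); since $\mathcal{A}^{f}=\mathcal{B}^{f}\subseteq B$, the subalgebra $h[\mathcal{B}]$ of $\mathcal{C}$ lies in $\mathbb{H}(\mathbf{V})=\mathbf{V}$ and has $(h[\mathcal{B}])^{f}=h[B]\cap\mathcal{C}^{f}=\mathcal{C}^{f}$, so $h[\mathcal{B}]\preceq_{f}\mathcal{C}$ and $\mathcal{C}\in\mathbf{V}^{\uparrow}$.

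For the converse I would assume $\mathbf{K}\subseteq\classY$ is closed under $\mathbb{S}$, $\mathbb{P}_{R}$ and core superalgebras (resp.\ under $\mathbb{S}$, $\mathbb{H}$, $\mathbb{P}$ and core superalgebras). Let $\mathbf{K}_{\mathrm{rg}}:=\{\mathcal{A}\in\mathbf{K}:\mathcal{A}=\langle\mathcal{A}^{f}\rangle\}$ be the class of regularly generated members, and put $\mathbf{Q}:=\mathbb{ISP}_{R}(\mathbf{K}_{\mathrm{rg}})$ (resp.\ $\mathbf{V}:=\mathbb{HSP}(\mathbf{K}_{\mathrm{rg}})$), which by Mal'tsev's (resp.\ Tarski's) theorem is the least subquasivariety (resp.\ subvariety) containing $\mathbf{K}_{\mathrm{rg}}$; since $\mathbf{K}_{\mathrm{rg}}\subseteq\mathbf{K}$ and $\mathbf{K}$ is closed under the relevant class operators, $\mathbf{Q}\subseteq\mathbf{K}\subseteq\classY$, so $\mathbf{Q}\in\Xi(\classY)$. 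I then claim $\mathbf{K}=\mathbf{Q}^{\uparrow}$: the inclusion $\mathbf{Q}^{\uparrow}\subseteq\mathbf{K}^{\uparrow}=\mathbf{K}$ holds because $\mathbf{Q}\subseteq\mathbf{K}$ and $\mathbf{K}$ is closed under core superalgebras; conversely, for $\mathcal{A}\in\mathbf{K}$ the subalgebra $\langle\mathcal{A}^{f}\rangle$ is regularly generated (it is generated by $\mathcal{A}^{f}$, which is exactly its own set of regular elements), lies in $\mathbf{K}$ by closure under $\mathbb{S}$, and hence belongs to $\mathbf{K}_{\mathrm{rg}}\subseteq\mathbf{Q}$; since $\langle\mathcal{A}^{f}\rangle\preceq_{f}\mathcal{A}$ we conclude $\mathcal{A}\in\mathbf{Q}^{\uparrow}$. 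The same argument with $\mathbf{V}$ replacing $\mathbf{Q}$ settles clause (2).

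I expect the main obstacle to be the reduced-product step of the forward direction — verifying that the reduced power acquires no regular elements lying outside the reduced power of the $\mathcal{B}_{i}$ — which is the coordinate-modification argument above and requires the facts that $J\in F$ and that each $B_{i}$ is nonempty. A secondary point that needs attention is the bookkeeping ensuring that all the auxiliary algebras ($\mathcal{B}\cap\mathcal{C}$, $h[\mathcal{B}]$, $\langle\mathcal{A}^{f}\rangle$, and the generated (quasi)variety $\mathbf{Q}$) remain inside $\classY$; this is exactly where the hypothesis $\mathbf{K}\subseteq\classY$ and the fact that $\classY$ is a quasivariety are used.
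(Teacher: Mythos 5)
Your proof is correct, and the substantive (forward) direction coincides with the paper's: the same $\mathcal{B}\cap\mathcal{C}$ intersection argument for subalgebras and the same coordinate-modification argument showing $\bigl(\prod_{i}\mathcal{B}_{i}/F\bigr)^{f}=\bigl(\prod_{i}\mathcal{A}_{i}/F\bigr)^{f}$ for reduced products; you additionally write out the $\mathbb{H}$ case for clause (2), which the paper leaves to the reader. Where you genuinely diverge is the converse: the paper disposes of it in one line by observing that a class closed under the listed operators is a quasivariety and, being closed under core superalgebras, is its \emph{own} $\mathsf{PAt}$-variant (since $\preceq_{f}$ is reflexive, $\mathbf{K}\subseteq\mathbf{K}^{\uparrow}$, and core-superalgebra closure gives the reverse inclusion). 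Your route through $\mathbf{Q}=\mathbb{ISP}_{R}(\mathbf{K}_{\mathrm{rg}})$ and the identity $\mathbf{K}=\mathbf{Q}^{\uparrow}$ is valid but does more work than needed; its compensation is that it exhibits a \emph{regularly generated} witness $\langle\mathcal{A}^{f}\rangle\preceq_{f}\mathcal{A}$ for every $\mathcal{A}\in\mathbf{K}$, which is essentially the content of the paper's later Corollary that every $\mathsf{PAt}$-quasivariety is generated by its regularly generated algebras. One cosmetic point: both you and the paper silently treat idempotency of the selector term as the equational fact $f(f(a))=f(a)$ in every $\mathcal{A}\in\classY$ (needed, e.g., for $\mathcal{A}^{f}\neq\emptyset$ and for $\mathcal{C}^{f}=h[\mathcal{A}^{f}]$); this is how the paper uses selector terms throughout, so it is not a gap relative to the paper's conventions.
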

\begin{proof}
The proof is a generalisation of \cite[Proposition 3.18]{bezhanishvili_grilletti_quadrellaro_2021}, where the role of Heyting algebra regular elements is replaced by that of $f$-regular elements. Indeed, we provide only the proof of the first statement, as the second follows by the arguments in that Proposition.

For the right to left direction, we simply note that if $\bf{K}$ is closed under the specified operations, then it forms a quasivariety; hence if it is further closed under core superalgebras it will be its own $\mathsf{PAt}$-variant. For the other direction, let $\bf{K}^{\uparrow}$ be some $\mathsf{PAt}$-variant. We check closure under the operations.
\begin{itemize}
    \item (Subalgebras): Suppose that $\mathcal{A}\preceq \mathcal{B}$ and $\mathcal{B}\in\bf{K}^{\uparrow}$. By definition, then, there is some $\mathcal{C}\in\bf{K}$ such that $\mathcal{C}\preceq\mathcal{B}$, with the same set of regular elements. Consider $\mathcal{C}\cap \mathcal{A}$; we have that this will be a subalgebra of $\mathcal{B}$ and of $\mathcal{A}$ (since intersections of subalgebras are again subalgebras), so it will be in $\bf{K}$. Moreover, note that $(\mathcal{C}\cap\mathcal{A})^{f}=(\mathcal{A})^{f}$: whenever $a\in A\cap C$, then $a\in A$, so if $f(a)=a$ in $\mathcal{A}\cap \mathcal{C}$, then the same will hold in $\mathcal{A}$, since the former is a subalgebra of $\mathcal{A}$; conversely, if $a\in A$, and $f(a)=a$, then already $f(a)=a$ in $\mathcal{B}$, so by assumption also $a\in C$, i.e. $a\in A\cap C$. This then shows that $\mathcal{A}\in \mathbf{K}^{\uparrow}$.
    \item (Reduced Products): Suppose that $(\mathcal{A}_{i})_{i\in I}$ are a family of algebras all in $\bf{K}^{\uparrow}$, and hence, that $(\mathcal{B}_{i})_{i\in I}$ is a family of algebras in $\bf{K}$ such that $\mathcal{B}_{i}\preceq_{f} \mathcal{A}_{i}$. Now consider $\prod_{i\in I}\mathcal{A}_{i}/R$. By closure of $\bf{K}$ under reduced products, we obtain that $\prod_{i\in I}\mathcal{B}_{i}/R\in \bf{K}$. Moreover, we can show that
    \begin{equation*}
        \big(\prod_{i\in I}\mathcal{B}_{i}/R\big)^{f}=\prod_{i\in I}\mathcal{B}_{i}^{f}/R.
    \end{equation*}
    Indeed, if $b/R$ is an element in the first set, then $S=\{i\in I : f(b(i))=b(i)\}\in R$; hence consider $c\in \prod_{i\in I}\mathcal{B}_{i}^{f}$, such that $c(i)=b(i)$ whenever $i\in S$, is equal to an arbitrary regular element in the remaining coordinates. Then $S\subseteq \{i\in I : c(i)=b(i)\}$, so $c/R=b/R$, and so $b/R\in \prod_{i\in I}\mathcal{B}_{i}^{f}/R$. Conversely if $c/R\in \prod_{i\in I}\mathcal{B}^{f}_{i}/R$, then surely $\{i\in I : f(c(i))=c(i)\}\in R$ (since it holds everywhere), so $c/R$ belongs to the first set.

    Now by this, and the fact that $\mathcal{A}_{i}^{f}=\mathcal{B}_{i}^{f}$ for each $i$ we get that:
    \begin{equation*}
        (\prod_{i\in I}\mathcal{A}_{i}/R)^{f}=\prod_{i\in I}\mathcal{A}_{i}^{f}/R=\prod_{i}\mathcal{B}_{i}^{f}/R=(\prod_{i\in I}\mathcal{B}_{i}/R)^{f}.
    \end{equation*}
    Since by Lemma \ref{Functoriality of products and reduced products},  $\prod_{i\in I}\mathcal{A}_{i}/R\in \mathbf{K}^{\uparrow}$ is a superalgebra of $\prod_{i\i I}\mathcal{B}_{i}/R$, it will then be a core superalgebra, as intended.
\end{itemize}
\end{proof}

As could be expected, $\mathsf{PAt}$-quasivarieties and $\mathsf{PAt}$-varieties form complete lattices; the former entails this if we recall that any poset which has all infima also has all suprema:

\begin{lemma}
Let $(\mathbf{K}_{i})_{i\in I}$ be a non-empty collection of $\mathsf{PAt}$-quasivarieties (resp. $\mathsf{PAt}$-varieties). Then $\bigcap_{i\in I}\mathbf{K}_{i}$ is a $\mathsf{PAt}$-quasivariety (resp. $\mathsf{PAt}$-variety).
\end{lemma}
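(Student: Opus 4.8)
The statement to prove is that a non-empty intersection of $\mathsf{PAt}$-quasivarieties is again a $\mathsf{PAt}$-quasivariety, and similarly for $\mathsf{PAt}$-varieties. Given Proposition~\ref{Characterisation of PA-quasivarieties}, this reduces to a closure-property check: I need to show that $\bigcap_{i\in I}\mathbf{K}_i$ is closed under subalgebras, reduced products, and core superalgebras (and, in the variety case, additionally under homomorphic images and products, dropping the reduced-products clause).

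The plan is to treat each closure operator pointwise. First I would fix $\mathbf{K}=\bigcap_{i\in I}\mathbf{K}_i$ and recall that each $\mathbf{K}_i$ satisfies the three (resp. four) closure properties from Proposition~\ref{Characterisation of PA-quasivarieties}. For subalgebras: if $\mathcal{A}\preceq\mathcal{B}$ with $\mathcal{B}\in\mathbf{K}$, then $\mathcal{B}\in\mathbf{K}_i$ for every $i$, so $\mathcal{A}\in\mathbf{K}_i$ for every $i$, hence $\mathcal{A}\in\mathbf{K}$. For reduced products: if $(\mathcal{A}_j)_{j\in J}$ are all in $\mathbf{K}$ and $R$ is a filter on $J$, then each $\mathcal{A}_j\in\mathbf{K}_i$, so $\prod_{j}\mathcal{A}_j/R\in\mathbf{K}_i$ for every $i$, giving membership in $\mathbf{K}$. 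The core-superalgebra clause is identical: if $\mathcal{B}\preceq_f\mathcal{A}$ and $\mathcal{B}\in\mathbf{K}$, then $\mathcal{B}\in\mathbf{K}_i$ for each $i$, so $\mathcal{A}\in\mathbf{K}_i$ for each $i$, so $\mathcal{A}\in\mathbf{K}$. The variety case is handled the same way, replacing reduced products by arbitrary products and adding the homomorphic-images clause, again with a purely pointwise argument. Finally I would invoke Proposition~\ref{Characterisation of PA-quasivarieties} in the reverse direction to conclude that $\mathbf{K}$, being closed under the required operations, is a $\mathsf{PAt}$-quasivariety (resp.\ $\mathsf{PAt}$-variety).

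Honestly, there is no real obstacle here: the whole content is that each of the relevant closure operators is "local" in the sense that membership of the output in $\bigcap_i\mathbf{K}_i$ follows from membership of the input in each $\mathbf{K}_i$ separately, and this is immediate because the operators $\mathbb{S}$, $\mathbb{P}_R$, $\mathbb{P}$, $\mathbb{H}$ and the core-superalgebra passage all take an input family/algebra and produce an output algebra, so "$\mathcal{A}\in\mathbf{K}$" unwinds to a universally quantified statement over $i$. The only thing worth a sentence of care is non-emptiness of $I$, which is needed so that $\bigcap_{i\in I}\mathbf{K}_i$ is contained in $\classY$ (an empty intersection would be the class of all $\fancyL_{\classY}$-algebras, which is not a subquasivariety of $\classY$ unless $\classY$ itself is everything) — but this is exactly why the hypothesis "non-empty collection" is stated. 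If one wants a more structural phrasing, one can alternatively note that $\mathbf{K}_i=\mathbf{L}_i^{\uparrow}$ for subquasivarieties $\mathbf{L}_i$ and check $\bigcap_i\mathbf{K}_i=\bigl(\bigcap_i\mathbf{L}_i\bigr)^{\uparrow}$ directly, but the closure-property route via Proposition~\ref{Characterisation of PA-quasivarieties} is cleaner and I would use that.
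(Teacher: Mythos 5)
Your proposal is correct and follows exactly the paper's route: the paper likewise reduces the claim to Proposition \ref{Characterisation of PA-quasivarieties} and observes that the intersection inherits closure under each of the relevant operations pointwise. Your write-up merely spells out the details (and the role of non-emptiness) that the paper leaves implicit.
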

\begin{proof}
In light of Proposition \ref{Characterisation of PA-quasivarieties}, we simply note that the intersection will be closed under all the operations.\end{proof}

Moreover, using the same ideas as in the proof of Proposition \ref{Characterisation of PA-quasivarieties}, it is not difficult to show an analogue of Maltsev's characterisation:

\begin{lemma}
    Given a class of algebras $\bf{K}\subseteq\classY$, this will be a $\mathsf{PAt}$-quasivariety if and only if there is a class $\bf{K}'\subseteq \classY$ such that:
\begin{align*}
\bf{K}=(\mathbb{ISP}_{R})^{\uparrow}(\bf{K}').
\end{align*}
Similarly, a class $\mathbf{K}$ is a $\mathsf{PAt}$-variety if and only if $\mathbf{K}=\mathbb{HSP}^{\uparrow}(\mathbf{K})$
\end{lemma}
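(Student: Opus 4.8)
The plan is to mirror the structure of the classical Maltsev and Tarski (Birkhoff $\mathbb{HSP}$) theorems, using the characterisation of $\mathsf{PAt}$-$f$-quasivarieties/varieties in Proposition \ref{Characterisation of PA-quasivarieties} as the basic tool. For the quasivariety case, one direction is immediate: if $\mathbf{K}=(\mathbb{ISP}_{R})^{\uparrow}(\mathbf{K}')$, I would first observe that $(\mathbb{ISP}_{R})^{\uparrow}(\mathbf{K}')$ is automatically closed under subalgebras, reduced products and core superalgebras (the last being clear since $(-)^{\uparrow}$ is an idempotent closure operator adding all core superalgebras, and the first two following from the argument already run inside the proof of Proposition \ref{Characterisation of PA-quasivarieties}, together with Lemma \ref{Functoriality of products and reduced products} to see that core superalgebras of reduced products of things in $\mathbb{ISP}_{R}(\mathbf{K}')$ stay in the class). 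Hence by Proposition \ref{Characterisation of PA-quasivarieties}(1) it is a $\mathsf{PAt}$-$f$-quasivariety.

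For the converse, suppose $\mathbf{K}$ is a $\mathsf{PAt}$-$f$-quasivariety, so $\mathbf{K}=\mathbf{Q}^{\uparrow}$ for some subquasivariety $\mathbf{Q}\subseteq\classY$. By the ordinary Maltsev theorem, $\mathbf{Q}=\mathbb{ISP}_{R}(\mathbf{Q})$, and I would simply take $\mathbf{K}'=\mathbf{Q}$ and check $\mathbf{K}=(\mathbb{ISP}_{R})^{\uparrow}(\mathbf{Q})$. The inclusion $(\mathbb{ISP}_{R})^{\uparrow}(\mathbf{Q})=\mathbf{Q}^{\uparrow}=\mathbf{K}$ is then basically a restatement, provided one knows that applying $\mathbb{ISP}_{R}$ and then $(-)^{\uparrow}$ to a quasivariety gives the same thing as applying $(-)^{\uparrow}$ directly — which holds because $\mathbb{ISP}_{R}(\mathbf{Q})=\mathbf{Q}$. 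The only subtlety is the order of the operators: one must be slightly careful that $(\mathbb{ISP}_{R})^{\uparrow}$ genuinely denotes ``first close under $\mathbb{ISP}_{R}$, then under core superalgebras'', and that this composite, applied to an arbitrary $\mathbf{K}'$, always lands in a $\mathsf{PAt}$-quasivariety; this is exactly the content of the first paragraph above. The variety case is entirely parallel, replacing $\mathbb{ISP}_{R}$ by $\mathbb{HSP}$, invoking Tarski's theorem and Proposition \ref{Characterisation of PA-quasivarieties}(2) in place of Maltsev's; here one uses that $\mathbb{HSP}(\mathbf{K})$ is closed under $\mathbb{H}$, $\mathbb{S}$, $\mathbb{P}$ and, after applying $(-)^{\uparrow}$, also under core superalgebras (and that core superalgebras interact correctly with $\mathbb{H}$, $\mathbb{S}$, $\mathbb{P}$ as in the cited Proposition 3.18 of \cite{bezhanishvili_grilletti_quadrellaro_2021}).

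I expect the main obstacle to be purely bookkeeping: verifying that the composite closure operator $(\mathbb{ISP}_{R})^{\uparrow}$ (resp. $\mathbb{HSP}^{\uparrow}$) really is idempotent and really does produce a class closed under \emph{all} of subalgebras, reduced products (resp. homomorphic images, products) \emph{and} core superalgebras simultaneously — i.e., that closing under core superalgebras after $\mathbb{ISP}_{R}$ does not destroy closure under $\mathbb{S}$ and $\mathbb{P}_{R}$. This is precisely where the arguments in the proof of Proposition \ref{Characterisation of PA-quasivarieties} (the intersection-of-subalgebras trick for $\mathbb{S}$, and the computation $(\prod\mathcal{B}_i/R)^{f}=\prod\mathcal{B}_i^{f}/R$ for $\mathbb{P}_{R}$) are reused essentially verbatim, so the proof is short once those are cited. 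I would therefore keep the writeup brief, explicitly pointing to Proposition \ref{Characterisation of PA-quasivarieties} and Lemma \ref{Functoriality of products and reduced products} and to the analogue \cite[Proposition 3.18]{bezhanishvili_grilletti_quadrellaro_2021}, and only spelling out the identification of $\mathbf{K}'$ with the underlying ordinary (quasi)variety.
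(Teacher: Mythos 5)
Your proposal is correct and matches the route the paper intends: the paper gives no written proof for this lemma, stating only that it follows ``using the same ideas as in the proof of Proposition \ref{Characterisation of PA-quasivarieties}'', and your argument fills in exactly that, combining the classical Maltsev/Tarski theorems with the closure characterisation. (A minor simplification: the forward direction is immediate from the definition of a $\mathsf{PAt}$-quasivariety, since $(\mathbb{ISP}_{R}(\mathbf{K}'))^{\uparrow}$ is by construction the $\mathsf{PAt}$-variant of the quasivariety $\mathbb{ISP}_{R}(\mathbf{K}')$, so the closure-property bookkeeping you flag as the main obstacle can be bypassed entirely.)
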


The latter lemma allows us to explicitly describe the join of the lattice of $\mathsf{PAt}$-quasivarieties. Given a family $\bf{K}_{i}$ of $\mathsf{PAt}$-quasivarieties, it is not hard to see that
\begin{equation*}
    \bigvee_{i\in I}\bf{K}_{i} = (\mathbb{ISP}_{R})^{\uparrow}(\bigcup_{i\in I}\bf{K}_{i}).
\end{equation*}

We now proceed to show that the map assigning the $\mathsf{PAt}$-variant to a quasivariety is indeed a well-defined homomorphism. For that purpose, denote by $\Xi^{f}(\classY)$ the lattice of $\mathsf{PAt}$-quasviarieties of the quasivariety $\classY$.

\begin{proposition}\label{Pa-variety map is complete homomorphism}
The map $(-)^{\uparrow}:\Xi(\classY)\to \Xi^{f}(\classY)$ is a complete lattice homomorphism.
\end{proposition}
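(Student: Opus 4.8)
The plan is to show that $(-)^{\uparrow}$ preserves arbitrary meets and arbitrary joins, which suffices for it to be a complete lattice homomorphism. Throughout I will use Proposition \ref{Characterisation of PA-quasivarieties}, which characterises $\mathsf{PAt}$-quasivarieties as exactly those subclasses of $\classY$ closed under subalgebras, reduced products and core superalgebras, together with the fact established just before the statement that $\bigvee_{i\in I}\mathbf{K}_{i}=(\mathbb{ISP}_{R})^{\uparrow}(\bigcup_{i\in I}\mathbf{K}_{i})$ for $\mathsf{PAt}$-quasivarieties, and the analogous Mal'tsev characterisation $(-)^{\uparrow}=(\mathbb{ISP}_{R})^{\uparrow}(-)$ on arbitrary classes.

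For \emph{meets}: given a family $(\mathbf{K}_{i})_{i\in I}$ in $\Xi(\classY)$, the meet in $\Xi(\classY)$ is $\bigcap_{i\in I}\mathbf{K}_{i}$ and the meet in $\Xi^{f}(\classY)$ is again intersection (by the Lemma preceding the statement, the intersection of $\mathsf{PAt}$-quasivarieties is one). So I must show $(\bigcap_{i\in I}\mathbf{K}_{i})^{\uparrow}=\bigcap_{i\in I}(\mathbf{K}_{i}^{\uparrow})$. The inclusion $\subseteq$ is immediate from monotonicity of $(-)^{\uparrow}$. For $\supseteq$, take $\mathcal{A}\in\bigcap_{i}\mathbf{K}_{i}^{\uparrow}$; then for each $i$ there is $\mathcal{B}_{i}\in\mathbf{K}_{i}$ with $\mathcal{B}_{i}\preceq_{f}\mathcal{A}$, i.e. $\mathcal{B}_{i}\preceq\mathcal{A}$ and $\mathcal{B}_{i}^{f}=\mathcal{A}^{f}$. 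The key observation is that the subalgebra $\mathcal{B}=\bigcap_{i\in I}\mathcal{B}_{i}$ of $\mathcal{A}$ still satisfies $\mathcal{B}\preceq_{f}\mathcal{A}$: since $\mathcal{A}^{f}\subseteq B_{i}$ for every $i$ we get $\mathcal{A}^{f}\subseteq B$, and $\mathcal{A}^{f}\subseteq\langle\mathcal{A}^{f}\rangle\subseteq\mathcal{B}$ shows $\mathcal{A}^{f}=\mathcal{B}^{f}$ (the reverse inclusion $\mathcal{B}^{f}\subseteq\mathcal{A}^{f}$ holds because $\mathcal{B}$ is a subalgebra of $\mathcal{A}$, so $f$ is computed the same way). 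Each $\mathbf{K}_{i}$ is closed under subalgebras and core superalgebras, so from $\mathcal{B}_{i}\in\mathbf{K}_{i}$, $\mathcal{B}\preceq\mathcal{B}_{i}$ we need $\mathcal{B}\in\mathbf{K}_{i}$ — here I mimic the (Subalgebras) argument in the proof of Proposition \ref{Characterisation of PA-quasivarieties}, using that $\mathcal{B}_{i}$ and $\mathcal{B}$ have the same $f$-regular elements (both equal $\mathcal{A}^{f}$) so $\mathcal{B}_{i}\preceq_{f}$ the join of $\mathcal{B}_{i}$ and $\mathcal{B}$ inside $\mathcal{A}$, which lands back in $\mathbf{K}_{i}$ by closure under core superalgebras, and then $\mathcal{B}$ is a subalgebra of it. Hence $\mathcal{B}\in\bigcap_{i}\mathbf{K}_{i}$ and $\mathcal{B}\preceq_{f}\mathcal{A}$, so $\mathcal{A}\in(\bigcap_{i}\mathbf{K}_{i})^{\uparrow}$.

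For \emph{joins}: the join in $\Xi(\classY)$ of $(\mathbf{K}_{i})_{i\in I}$ is $\mathbb{ISP}_{R}(\bigcup_{i}\mathbf{K}_{i})$, and the join in $\Xi^{f}(\classY)$ of the $\mathsf{PAt}$-quasivarieties $\mathbf{K}_{i}^{\uparrow}$ is $(\mathbb{ISP}_{R})^{\uparrow}(\bigcup_{i}\mathbf{K}_{i}^{\uparrow})$. I must show $(\mathbb{ISP}_{R}(\bigcup_{i}\mathbf{K}_{i}))^{\uparrow}=(\mathbb{ISP}_{R})^{\uparrow}(\bigcup_{i}\mathbf{K}_{i}^{\uparrow})$, i.e. $(\mathbb{ISP}_{R}(\bigcup_{i}\mathbf{K}_{i}))^{\uparrow}=(\mathbb{ISP}_{R})^{\uparrow}((\bigcup_{i}\mathbf{K}_{i})^{\uparrow})$ since $\bigcup_i \mathbf{K}_i^{\uparrow} = (\bigcup_i \mathbf{K}_i)^{\uparrow}$. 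This reduces to the identity of operators $(\mathbb{ISP}_{R})^{\uparrow}\circ(-)^{\uparrow}=(-)^{\uparrow}\circ\mathbb{ISP}_{R}$ (as closure operators on subclasses of $\classY$), equivalently that the smallest $\mathsf{PAt}$-quasivariety containing a class $\mathbf{K}'$ does not depend on whether one first takes core superalgebras or first takes $\mathbb{ISP}_{R}$. Both sides equal the smallest $\mathsf{PAt}$-quasivariety containing $\bigcup_{i}\mathbf{K}_{i}$: the left side is a $\mathsf{PAt}$-quasivariety containing each $\mathbf{K}_{i}$, hence each $\mathbf{K}_{i}^{\uparrow}$, hence contains the right side; and the right side is a $\mathsf{PAt}$-quasivariety (by the Mal'tsev-type lemma) containing $\bigcup_{i}\mathbf{K}_{i}$, hence closed under $\mathbb{ISP}_{R}$ and core superalgebras, hence contains the left side.

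The main obstacle I anticipate is the meet case — specifically verifying that the intersection $\mathcal{B}=\bigcap_{i}\mathcal{B}_{i}$ genuinely lies in each $\mathbf{K}_{i}$, since an infinite intersection of subalgebras could in principle fail to retain membership; the resolution is the core-superalgebra closure of $\mathbf{K}_{i}$, which pins the $f$-regular elements and lets one recover $\mathcal{B}_{i}$-membership by the same bootstrap used in Proposition \ref{Characterisation of PA-quasivarieties}. The join case is essentially formal manipulation of the closure operators once the explicit descriptions from the preceding lemmas are in hand.
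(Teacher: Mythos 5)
Your proof is correct, but the two halves relate differently to the paper's argument. For \emph{meets}, you take the same route as the paper: intersect the witnesses $\mathcal{B}_{i}\preceq_{f}\mathcal{A}$ and check that $\bigcap_{i}\mathcal{B}_{i}$ is still a core subalgebra of $\mathcal{A}$ lying in every $\mathbf{K}_{i}$. However, your final ``bootstrap'' is both unnecessary and based on a premise you are not entitled to: the $\mathbf{K}_{i}$ are arbitrary elements of $\Xi(\classY)$, i.e.\ ordinary subquasivarieties, and there is no reason they should be closed under core superalgebras. Fortunately you do not need that: a quasivariety is closed under subalgebras, so $\bigcap_{i}\mathcal{B}_{i}\preceq\mathcal{B}_{j}\in\mathbf{K}_{j}$ already gives $\bigcap_{i}\mathcal{B}_{i}\in\mathbf{K}_{j}$ directly, which is exactly what the paper does. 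For \emph{joins}, you take a genuinely different and cleaner route: rather than the paper's explicit construction (forming $\prod_{j}\mathcal{E}_{j}/F$ from the core subalgebras $\mathcal{E}_{j}\preceq_{f}\mathcal{D}_{j}$ and intersecting with $\mathcal{B}$ to produce a core subalgebra in $\mathbb{SP}_{R}(\bigcup_{i}\mathbf{K}_{i})$), you identify both sides as the smallest $\mathsf{PAt}$-quasivariety containing $\bigcup_{i}\mathbf{K}_{i}$, using the Mal'tsev-type lemma to know that $(\mathbb{ISP}_{R})^{\uparrow}(-)$ of any class is a $\mathsf{PAt}$-quasivariety. This is legitimate (that lemma is available and itself rests on Proposition \ref{Characterisation of PA-quasivarieties}) and buys brevity; the paper's hands-on argument buys independence from the unproved-in-detail Mal'tsev analogue, since it redoes the relevant reduced-product manipulation explicitly.
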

\begin{proof}
First let $(\mathbf{K}_{i})_{i\in I}$ be a collection of subquasivarieties of $\classY$. Then we show that:
\begin{equation*}
    \big(\bigcap_{i\in I} \mathbf{K}_{i}\big)^{\uparrow}=\bigcap_{i\in I}\mathbf{K}_{i}^{\uparrow}.
\end{equation*}
Indeed, if $\mathcal{A}\in (\bigcap_{i\in I} \mathbf{K}_{i})^{\uparrow}$ then $\mathcal{A}$ is the core superalgebra of some $\mathcal{B}$, such that $\mathcal{B}\in \mathbf{K}_{i}$ for each $i$; but since all are closed under core superalgebras, then clearly $\mathcal{A}\in \bigcap_{i\in I}\bf{K}_{i}$ as well. For the converse, if $\mathcal{B}_{i}\preceq_{f} \mathcal{A}$ for each $i$, consider $\bigcap_{i\in I}\mathcal{B}_{i}$; then surely we will have that $\bigcap_{i\in I}\mathcal{B}_{i}\preceq \mathcal{A}$, and since all these subalgebras share the same regular elements, we also have $\bigcap_{i\in I}\mathcal{B}_{i}\preceq_{f}\mathcal{A}$. Additionally, $\bigcap_{i\in I}\mathcal{B}_{i}\in \bigcap \mathbf{K}_{i}$, by closure under subalgebras, which shows that $\mathcal{A}\in (\bigcap_{i\in I}\bf{K}_{i})^{\uparrow}$.

For the preservation of the join, we show that:
\begin{equation*}
    \big(\bigvee_{i\in I}\mathbf{K}_{i}\big)^{\uparrow}=\bigvee_{i\in I}\mathbf{K}_{i}^{\uparrow}
\end{equation*}
Where the left hand side join is taken in the lattice of quasivarieties, and the right hand side join in the lattice $\mathsf{PAt}$-quasivarieties. Indeed, by our previous remark, it suffices to show that:
\begin{equation*}
    \mathbb{ISP}_{R}^{\uparrow}\big(\bigcup_{i\in I}\mathbf{K}_{i}\big) = \mathbb{ISP}_{R}^{\uparrow}\big(\bigcup_{i\in I}\mathbf{K}_{i}^{\uparrow}\big).
\end{equation*}

One inclusion follows from the inflationarity and monotonicity of the operators at play. For the other, assume that $\mathcal{B}\preceq_{f} \mathcal{A}$, where $\mathcal{B}$ is a subalgebra of $\mathcal{C}=\prod_{j\in J}\mathcal{D}_{j}/F$, where for each $j$, there is some $\mathcal{E}_{j}$ such that $\mathcal{E}_{j}\preceq_{f} \mathcal{D}_{j}$, and $\mathcal{E}_{j}\in \mathbf{K}_{j}$.

Now consider $\mathcal{E}=\prod_{j\in J}\mathcal{E}_{j}/F$. Note that $\mathcal{E}\in \mathbb{P}_{R}(\bigcup_{i\in I}\bf{K}_{i})$. Then by Lemma \ref{Functoriality of products and reduced products}, we have that $\prod_{j\in J}\mathcal{E}_{j}/F \preceq \mathcal{C}$, and indeed, by the arguments used in Proposition \ref{Characterisation of PA-quasivarieties}, $\prod_{j\in J}\mathcal{E}_{j}/F \preceq_{f} \mathcal{C}$. Since $\mathcal{B}\preceq \mathcal{C}$, we can look at $\mathcal{G}= \mathcal{B}\cap \prod_{j\in J}\mathcal{E}_{j}/F$, which is a subalgebra of both of these algebras. Then we claim that $\mathcal{B}$ is a core superalgebra of $\mathcal{G}$: indeed, if $a\in B$, and $f(a)=a$, then surely $a\in \mathcal{C}$, and hence, $a\in \prod_{j\in J}\mathcal{E}_{j}/F$ since this has the same regular elements as $\mathcal{C}$. So $a\in G$ by definition. Thus, since $\mathcal{A}$ is a core superalgebra of $\mathcal{B}$, it will also be a core superalgebra of $\mathcal{G}$, and $\mathcal{G}$ is in $\mathbb{SP}_{R}(\bigcup_{i\in I}\mathbf{K}_{i})$. So $\mathcal{A}\in \mathbb{ISP}_{R}^{\uparrow}(\bigcup_{i\in I}\mathbf{K}_{i})$, as desired. This shows the other inclusion, and concludes the proof.\end{proof}

We conclude this section by mentioning some basic facts which will allow us a sharper completeness theorem further along:

\begin{corollary}\label{Regular generation of PA-Quasivarieties}
Every $\mathsf{PAt}$-Quasivariety is generated by its regularly generated algebras.
\end{corollary}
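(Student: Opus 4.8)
The plan is to reduce the statement to the two closure properties of $\mathsf{PAt}$-quasivarieties already established, namely closure under subalgebras and under core superalgebras (Proposition~\ref{Characterisation of PA-quasivarieties}). Fix a $\mathsf{PAt}$-$f$-quasivariety $\mathbf{K}^{\uparrow}$ and let $\mathbf{R}$ be the class of those $\mathcal{A}\in\mathbf{K}^{\uparrow}$ that are regularly generated. I claim $\mathbf{K}^{\uparrow}=\mathbf{R}^{\uparrow}$; since $\mathbf{R}\subseteq\mathbf{K}^{\uparrow}$ and $\mathbf{K}^{\uparrow}$ is closed under core superalgebras, the inclusion $\mathbf{R}^{\uparrow}\subseteq\mathbf{K}^{\uparrow}$ is immediate, so everything comes down to showing $\mathbf{K}^{\uparrow}\subseteq\mathbf{R}^{\uparrow}$.

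To that end, take an arbitrary $\mathcal{A}\in\mathbf{K}^{\uparrow}$ and pass to $\langle\mathcal{A}^{f}\rangle$. First I would verify that $\langle\mathcal{A}^{f}\rangle$ is regularly generated: since $f$ is a term and $\langle\mathcal{A}^{f}\rangle$ is a subalgebra of $\mathcal{A}$ that contains $\mathcal{A}^{f}$, an element of $\langle\mathcal{A}^{f}\rangle$ is fixed by $f$ in the subalgebra iff it is fixed by $f$ in $\mathcal{A}$, so $\langle\mathcal{A}^{f}\rangle^{f}=\mathcal{A}^{f}$; consequently $\langle\langle\mathcal{A}^{f}\rangle^{f}\rangle=\langle\mathcal{A}^{f}\rangle$. (This is exactly the observation already used in the proof of Lemma~\ref{Invariance of truth for f(p) valuation}(3).) In particular $\langle\mathcal{A}^{f}\rangle\preceq_{f}\mathcal{A}$, i.e.\ $\mathcal{A}$ is a core superalgebra of $\langle\mathcal{A}^{f}\rangle$. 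Moreover $\langle\mathcal{A}^{f}\rangle$ is a subalgebra of $\mathcal{A}\in\mathbf{K}^{\uparrow}$ and $\mathbf{K}^{\uparrow}$ is closed under subalgebras, so $\langle\mathcal{A}^{f}\rangle\in\mathbf{K}^{\uparrow}$; being regularly generated, it lies in $\mathbf{R}$. Hence $\mathcal{A}\in\mathbf{R}^{\uparrow}$, proving $\mathbf{K}^{\uparrow}\subseteq\mathbf{R}^{\uparrow}$ and therefore $\mathbf{K}^{\uparrow}=\mathbf{R}^{\uparrow}$. Since $\mathbf{R}^{\uparrow}\subseteq(\mathbb{ISP}_{R})^{\uparrow}(\mathbf{R})\subseteq\mathbf{K}^{\uparrow}$, we also obtain $\mathbf{K}^{\uparrow}=(\mathbb{ISP}_{R})^{\uparrow}(\mathbf{R})$, which is the asserted generation statement.

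There is no serious obstacle here: the argument is essentially bookkeeping once Proposition~\ref{Characterisation of PA-quasivarieties} is in hand, and the only point requiring a moment's care is the verification that $\langle\mathcal{A}^{f}\rangle$ is itself regularly generated --- that is, that enlarging a subalgebra up to one containing all $f$-regular elements of $\mathcal{A}$ does not create new $f$-regular elements, which holds because $f$ is a term and is therefore interpreted identically on every subalgebra. Note in passing that the proof uses only closure under subalgebras and core superalgebras, so the same conclusion holds verbatim for $\mathsf{PAt}$-varieties.
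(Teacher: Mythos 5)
Your proof is correct and takes essentially the same route as the paper: for each $\mathcal{A}$ in the $\mathsf{PAt}$-quasivariety, pass to $\langle\mathcal{A}^{f}\rangle$, observe it is regularly generated, lies in the class by closure under subalgebras, and has $\mathcal{A}$ as a core superalgebra. You simply spell out more carefully the verification that $\langle\mathcal{A}^{f}\rangle^{f}=\mathcal{A}^{f}$ and what "generated by" means, which the paper leaves implicit.
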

\begin{proof}
Note that if $\mathbf{K}$ is a $\mathsf{PAt}$-Quasivariety, and $\mathcal{B}\in \bf{K}$ is not regularly generated, then $\langle \mathcal{B}^{f}\rangle \leq \mathcal{B}$, and so $\langle \mathcal{B}^{f}\rangle \in \mathbf{K}$ is a regularly generated algebra which has $\mathcal{B}$ as its core-superalgebra.\end{proof}

\subsection{Dual equivalence between PAt-logics and quasivarieties}

Given $\mathbf{K}\in \Xi(\classY)$ and ${\vdash_{*}}\in \Lambda(\vdash)$, we recall the following two maps, witnessing the algebraic completeness of quasivarieties with respect to logics.

\begin{align*}
    \mathsf{QVar}(\vdash_{*}) &\coloneqq \{ \mathcal{A} : {\forall} (\Gamma,\phi)\in {\vdash_{*}}, \mathcal{A}\vDash (\Gamma,\phi)\}\\
    \mathsf{Log}(\mathbf{K}) &\coloneqq \{(\Gamma,\phi) : \bf{K}\vDash (\Gamma,\phi)\}
\end{align*}

As a result of completeness, we have that $\mathsf{QVar}(-):\Lambda(\vdash)\to \Xi(\classY)$ is a dual isomorphism, where $\mathsf{Log}(-)$ is the inverse. Consequently, given a quasivariety $\mathbf{K}\in \Xi(\classY)$, we often refer to $\mathsf{Log}(\mathbf{K})$, as the \textit{dual logic} of $\mathbf{K}$, and similarly, to $\mathsf{QVar}(\vdash_{*})$ as the \textit{dual quasivariety}.

In a similar fashion, we can define the $f$-Polyatomic version of these maps:
\begin{align*}
    \mathsf{QVar}^{f}(\vdash_{*})&\coloneqq \{\mathcal{A} :\forall (\Gamma,\phi)\in \  \vdash_{*},  \mathcal{A}\vDash_{f} (\Gamma,\phi)\}\\
    \mathsf{Log}^{f}(\mathbf{K}) &\coloneqq \{(\Gamma,\phi) : \bf{K}\vDash_{f} (\Gamma,\phi)\}
\end{align*}

Our goal in this section will be to show that $\mathsf{QVar}^{f}(-):\Lambda^{f}(\vdash)\to \Xi^{f}(\classY)$ is also a dual isomorphism. First, we note the following, which follows by similar arguments to those given in \cite[Lemma 3.22]{bezhanishvili_grilletti_quadrellaro_2021}, making use of Lemma \ref{Invariance of truth for f(p) valuation}.

\begin{proposition}\label{Preservation of PA-validity under operations}
The $\mathsf{PAt}$-validity of a pair $(\Gamma,\phi)$ is preserved under the operations of taking isomorphisms, subalgebras, reduced products, and core superalgebras.
\end{proposition}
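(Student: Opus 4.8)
The plan is to verify the four closure properties one at a time, in each case reducing $\mathsf{PAt}$-validity $\vDash_f$ to ordinary validity $\vDash$ on the subalgebra generated by the regular elements, via Lemma \ref{Invariance of truth for f(p) valuation}(1) and (3), and then invoking the analogous classical preservation facts for quasivarieties. Concretely, part (1) of Lemma \ref{Invariance of truth for f(p) valuation} tells us that $\mathcal{A}\vDash_f(\Gamma,\phi)$ iff $\mathcal{A}\vDash(\Gamma[f(\overline{p})/\overline{p}],\phi[f(\overline{p})/\overline{p}])$, i.e. $\mathsf{PAt}$-validity of $(\Gamma,\phi)$ is just ordinary validity of a \emph{fixed} translated pair $(\Gamma^f,\phi^f)$. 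Since ordinary validity of a sequent (in the algebraizable sense, using $\mu$) is preserved under $\mathbb{I}$, $\mathbb{S}$ and $\mathbb{P}_R$ --- this is exactly the content of the classical Mal'tsev theory, as $\mathsf{QVar}(\mathsf{Log}(\cdot))$ is a quasivariety --- preservation of $\vDash_f$ under isomorphisms, subalgebras and reduced products is immediate: if every $\mathcal{A}_i$ satisfies $(\Gamma^f,\phi^f)$ in the ordinary sense, so does any subalgebra, isomorphic copy, or reduced product, and translating back gives $\vDash_f(\Gamma,\phi)$.

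For the core-superalgebra case I would argue separately, since it has no classical analogue. Suppose $\mathcal{B}\preceq_f\mathcal{A}$ and $\mathcal{B}\vDash_f(\Gamma,\phi)$; we want $\mathcal{A}\vDash_f(\Gamma,\phi)$. By Lemma \ref{Invariance of truth for f(p) valuation}(3), $\mathcal{A}\vDash_f(\Gamma,\phi)$ iff $\langle\mathcal{A}^f\rangle\vDash_f(\Gamma,\phi)$, and likewise $\mathcal{B}\vDash_f(\Gamma,\phi)$ iff $\langle\mathcal{B}^f\rangle\vDash_f(\Gamma,\phi)$. But $\mathcal{B}\preceq_f\mathcal{A}$ means precisely $\mathcal{A}^f=\mathcal{B}^f$, and since $\mathcal{B}\preceq\mathcal{A}$ the subalgebra of $\mathcal{B}$ generated by $\mathcal{B}^f$ coincides with the subalgebra of $\mathcal{A}$ generated by $\mathcal{A}^f$; that is, $\langle\mathcal{B}^f\rangle=\langle\mathcal{A}^f\rangle$ as algebras. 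Hence the two $\mathsf{PAt}$-validity statements are literally the same statement, and the implication is trivial. (In fact this shows $\vDash_f$ is an \emph{invariant} of the core, not merely preserved upward along $\preceq_f$.)

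The only point requiring a little care --- and what I expect to be the main obstacle, though a minor one --- is the reduced-product step: one must be sure that applying the substitution $[f(\overline{p})/\overline{p}]$ commutes appropriately with the semantics in a reduced product, i.e. that $\prod_{i\in I}\mathcal{A}_i/F\vDash(\Gamma^f,\phi^f)$ really does follow from each $\mathcal{A}_i\vDash(\Gamma^f,\phi^f)$. This is just the fact that $\mathsf{QVar}(\mathsf{Log}(\vdash_*))$, being of the form $\mathbb{ISP}_R(\cdot)$ by Mal'tsev's theorem, is closed under $\mathbb{P}_R$, applied to the (fixed) set of translated sequents; the structurality remark following Definition \ref{Algebraizable logic} ensures $\mu[\Gamma^f]=(\mu[\Gamma])^f$ so that nothing goes wrong at the level of equations. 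I would state this reduction explicitly once and then dispatch all three classical operations in one line, reserving the displayed argument for the core-superalgebra clause. Overall the proof is short: it is essentially the observation that $\vDash_f$ factors through the translation and through the core-generated subalgebra, reducing everything to Lemma \ref{Invariance of truth for f(p) valuation} plus the classical Mal'tsev closure properties.
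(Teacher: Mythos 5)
Your proof is correct and follows essentially the route the paper intends: the paper gives no explicit argument, deferring to the analogous DNA-logic lemma together with Lemma \ref{Invariance of truth for f(p) valuation}, and your uniform reduction of $\vDash_f$ to ordinary validity of the translated pair (for $\mathbb{I}$, $\mathbb{S}$, $\mathbb{P}_R$) plus the observation that $\langle\mathcal{B}^f\rangle=\langle\mathcal{A}^f\rangle$ when $\mathcal{B}\preceq_f\mathcal{A}$ is exactly that argument, cleanly packaged. The one caveat, which the paper's own statement shares, is that preservation under reduced products genuinely requires $\Gamma$ to be finite (infinitary quasi-equations are not preserved by $\mathbb{P}_R$); this is harmless here because the consequence relations are assumed finitary, but it deserves the one-line remark you already half-make in your final paragraph.
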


\begin{corollary}\label{Maps are well-defined 1}
Given a logic ${\vdash_{*}}\in \Lambda(\vdash)$, the class $\mathsf{QVar}^{f}(\vdash_{*})$ is a $\mathsf{PAt}$-quasivariety.
\end{corollary}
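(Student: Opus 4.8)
The plan is to show that $\mathsf{QVar}^{f}(\vdash_{*})$ satisfies the closure conditions characterising $\mathsf{PAt}$-quasivarieties given in Proposition \ref{Characterisation of PA-quasivarieties}(1): closure under subalgebras, reduced products, and core superalgebras. All three of these are handled by Proposition \ref{Preservation of PA-validity under operations}, which says exactly that $\mathsf{PAt}$-validity of a pair $(\Gamma,\phi)$ is preserved under isomorphisms, subalgebras, reduced products, and core superalgebras. So the core of the argument is essentially bookkeeping: fix $(\Gamma,\phi)\in {\vdash_{*}}$; if $\mathcal{A}\vDash_{f}(\Gamma,\phi)$ and $\mathcal{B}$ is a subalgebra (resp.\ $\mathcal{A}$ is built as a reduced product of members, resp.\ $\mathcal{A}$ is a core superalgebra of a member), then the preservation proposition gives $\mathcal{B}\vDash_{f}(\Gamma,\phi)$ (resp.\ for the other cases). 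Since this holds for every $(\Gamma,\phi)\in {\vdash_{*}}$ simultaneously, the resulting algebra lies in $\mathsf{QVar}^{f}(\vdash_{*})$.

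In more detail, I would argue as follows. For subalgebras: suppose $\mathcal{B}\preceq \mathcal{A}$ with $\mathcal{A}\in \mathsf{QVar}^{f}(\vdash_{*})$. For each $(\Gamma,\phi)\in {\vdash_{*}}$ we have $\mathcal{A}\vDash_{f}(\Gamma,\phi)$, hence by Proposition \ref{Preservation of PA-validity under operations} also $\mathcal{B}\vDash_{f}(\Gamma,\phi)$; as this holds for all such pairs, $\mathcal{B}\in \mathsf{QVar}^{f}(\vdash_{*})$. For reduced products: if $(\mathcal{A}_{i})_{i\in I}$ all lie in $\mathsf{QVar}^{f}(\vdash_{*})$ and $R$ is a filter on $I$, then for each $(\Gamma,\phi)\in {\vdash_{*}}$ each $\mathcal{A}_{i}\vDash_{f}(\Gamma,\phi)$, so by the preservation proposition $\prod_{i\in I}\mathcal{A}_{i}/R \vDash_{f}(\Gamma,\phi)$; again this holds for all pairs, so the reduced product lies in $\mathsf{QVar}^{f}(\vdash_{*})$. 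The core superalgebra case is identical, using the corresponding clause of Proposition \ref{Preservation of PA-validity under operations}. By Proposition \ref{Characterisation of PA-quasivarieties}(1), closure under these three operations is precisely what it means for $\mathsf{QVar}^{f}(\vdash_{*})$ to be a $\mathsf{PAt}$-quasivariety.

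There is no real obstacle here: the substantive work has already been isolated into Proposition \ref{Preservation of PA-validity under operations} (and behind it, Lemma \ref{Invariance of truth for f(p) valuation}), and the present corollary is a direct packaging of that fact together with the Maltsev-style characterisation in Proposition \ref{Characterisation of PA-quasivarieties}. The only point worth a line of care is that $\mathsf{QVar}^{f}(\vdash_{*})$ is defined as an intersection, over all $(\Gamma,\phi)\in {\vdash_{*}}$, of the classes $\{\mathcal{A}: \mathcal{A}\vDash_{f}(\Gamma,\phi)\}$; since each such class is closed under subalgebras, reduced products, and core superalgebras by Proposition \ref{Preservation of PA-validity under operations}, and these closure properties are preserved under arbitrary intersection, the whole intersection is so closed. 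Hence $\mathsf{QVar}^{f}(\vdash_{*})$ is a $\mathsf{PAt}$-quasivariety.
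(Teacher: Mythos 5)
Your proof is correct and follows exactly the paper's own route: invoke Proposition \ref{Preservation of PA-validity under operations} to get closure of $\mathsf{QVar}^{f}(\vdash_{*})$ under subalgebras, reduced products, and core superalgebras, then conclude via the characterisation in Proposition \ref{Characterisation of PA-quasivarieties}. The extra bookkeeping you supply (treating $\mathsf{QVar}^{f}(\vdash_{*})$ as an intersection of closed classes) is a harmless elaboration of the same two-line argument.
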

\begin{proof}
By Proposition \ref{Preservation of PA-validity under operations}, we have that $\mathsf{QVar}^{f}(\vdash_{*})$ is closed under subalgebras, reduced products and core superalgebras. Hence by Proposition \ref{Characterisation of PA-quasivarieties}, we get the result.\end{proof}

For ease of notation, given a quasivariety $\bf{K}$ we sometimes write $\vdash_{\bf{K}}$ to denote the dual logic of this quasivariety.

\begin{proposition}\label{Maps are well-defined 2}
Given $\mathbf{K}\in \Xi(\classY)$, $\mathsf{Log}^{f}(\mathbf{K})$ is the $\mathsf{PAt}$-variant of $\mathsf{Log}(\bf{K})$.
\end{proposition}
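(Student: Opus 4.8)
The plan is to prove the two inclusions
\[
\mathsf{Log}^{f}(\mathbf{K}) \subseteq \big(\mathsf{Log}(\mathbf{K})\big)^{f} \quad\text{and}\quad \big(\mathsf{Log}(\mathbf{K})\big)^{f} \subseteq \mathsf{Log}^{f}(\mathbf{K}),
\]
recalling that by definition $(\Gamma,\phi)\in\big(\mathsf{Log}(\mathbf{K})\big)^{f}$ iff $(\Gamma[f(\overline{p})/\overline{p}],\phi[f(\overline{p})/\overline{p}])\in\mathsf{Log}(\mathbf{K})$, i.e.\ iff $\mathbf{K}\vDash(\Gamma[f(\overline{p})/\overline{p}],\phi[f(\overline{p})/\overline{p}])$. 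The main tool is part (1) of Lemma~\ref{Invariance of truth for f(p) valuation}, which states precisely that $\mathcal{A}\vDash(\Gamma[f(\overline{p})/\overline{p}],\phi[f(\overline{p})/\overline{p}])$ if and only if $\mathcal{A}\vDash_{f}(\Gamma,\phi)$, together with Corollary~\ref{Maps are well-defined 1} (so that $\mathsf{Log}^{f}(\mathbf{K})$ is indeed a $\mathsf{PAt}$-variant of \emph{some} logic, and in particular a $\mathsf{PAt}$-logic).

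For the inclusion $\big(\mathsf{Log}(\mathbf{K})\big)^{f}\subseteq\mathsf{Log}^{f}(\mathbf{K})$: suppose $(\Gamma,\phi)\in\big(\mathsf{Log}(\mathbf{K})\big)^{f}$, so $\mathbf{K}\vDash(\Gamma[f(\overline{p})/\overline{p}],\phi[f(\overline{p})/\overline{p}])$. Then for each $\mathcal{A}\in\mathbf{K}$ we have $\mathcal{A}\vDash(\Gamma[f(\overline{p})/\overline{p}],\phi[f(\overline{p})/\overline{p}])$, so by Lemma~\ref{Invariance of truth for f(p) valuation}(1), $\mathcal{A}\vDash_{f}(\Gamma,\phi)$; hence $\mathbf{K}\vDash_{f}(\Gamma,\phi)$, i.e.\ $(\Gamma,\phi)\in\mathsf{Log}^{f}(\mathbf{K})$. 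The reverse inclusion is symmetric: if $\mathbf{K}\vDash_{f}(\Gamma,\phi)$ then for each $\mathcal{A}\in\mathbf{K}$, $\mathcal{A}\vDash_{f}(\Gamma,\phi)$, so again by Lemma~\ref{Invariance of truth for f(p) valuation}(1) we get $\mathcal{A}\vDash(\Gamma[f(\overline{p})/\overline{p}],\phi[f(\overline{p})/\overline{p}])$, whence $\mathbf{K}\vDash(\Gamma[f(\overline{p})/\overline{p}],\phi[f(\overline{p})/\overline{p}])$ and $(\Gamma,\phi)\in\big(\mathsf{Log}(\mathbf{K})\big)^{f}$. This already shows the set equality $\mathsf{Log}^{f}(\mathbf{K})=\big(\mathsf{Log}(\mathbf{K})\big)^{f}$.

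The one genuine subtlety — and where I expect the real content of the statement to lie — is that the assertion is not merely the set equality but the claim that $\mathsf{Log}^{f}(\mathbf{K})$ ``is the $\mathsf{PAt}$-variant of $\mathsf{Log}(\mathbf{K})$'': one must check that $\mathsf{Log}(\mathbf{K})$ is an honest member of $\Lambda(\vdash)$ (a standard logic extending $\vdash$, which holds since $\mathbf{K}$ is a subquasivariety of $\classY=\mathsf{QVar}(\vdash)$ and algebraic completeness makes $\mathsf{Log}(\mathbf{K})$ its dual logic), and that the $(-)^{f}$ operation applied to that logic yields exactly the relation computed above. The latter is immediate from the definition of $\mathsf{PAt}$-$f$-variant once the set equality is in hand. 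Thus the argument is short; the care needed is bookkeeping — making sure that the substitution-based definition of $(-)^{f}$ on logics matches the ``restrict to polyatomic models'' definition of $\mathsf{Log}^{f}$, which is exactly the bridge supplied by Lemma~\ref{Invariance of truth for f(p) valuation}(1). No step should require more than invoking the cited lemma and unravelling definitions.
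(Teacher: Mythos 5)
Your proof is correct and follows essentially the same route as the paper's: both arguments unfold the definition of the $\mathsf{PAt}$-variant and apply Lemma \ref{Invariance of truth for f(p) valuation}(1) pointwise over the algebras of $\mathbf{K}$ to get the two inclusions. The extra remark about checking that $\mathsf{Log}(\mathbf{K})$ is an honest logic in $\Lambda(\vdash)$ is sensible bookkeeping but is already implicit in the paper's setup via algebraic completeness.
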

\begin{proof}
Let $\vdash_{s}$ be the $\mathsf{PAt}$-variant of $\vdash_{\bf{K}}$. Assume that $\Gamma\vdash_{s}\phi$. Then by definition, $\Gamma[f(\overline{p})/\overline{p}]\vdash_{\mathbf{K}}\phi(f(\overline{p})/\overline{p})$. Now if $\mathcal{A}\in \bf{K}$, then by Proposition \ref{Invariance of truth for f(p) valuation}, we have that $\mathcal{A}\vDash_{f}(\Gamma,\phi)$, hence $(\Gamma,\phi)\in \mathsf{Log}^{f}(\bf{K})$. Conversely, if $(\Gamma,\phi)\in \mathsf{Log}^{f}(\bf{K})$, and $\mathcal{A}\in \bf{K}$, then we have that $\mathcal{A}\vDash_{f}(\Gamma,\phi)$. By the above cited Proposition, then $\mathcal{A}\vDash (\Gamma[f(\overline{p})/\overline{p}],\phi(f(\overline{p})/\overline{p}))$. Hence $\Gamma[f(\overline{p})/\overline{p}]\vdash_{\bf{K}}\phi(f(\overline{p})/\overline{p})$, and so by definition $\Gamma\vdash_{s}\phi$.\end{proof}

Using Corollary \ref{Maps are well-defined 1} and Proposition \ref{Maps are well-defined 2}, we have that the maps outlined above are indeed well-defined. In order to obtain the dual isomorphism, we will need to establish the following lemma:

\begin{lemma}\label{Basic Commutativity Result}
    Let ${\vdash_{*}}\in \Lambda(\vdash)$ and $\bf{K}\in \Xi(\classY)$. Then we have that:
    \begin{enumerate}
        \item $\mathsf{QVar}^{f}(\vdash_{*}^{f})=\mathsf{QVar}^{\uparrow}(\vdash_{*})$
    \item $\mathsf{Log}^{f}(\mathbf{K}^{\uparrow})=(\mathsf{Log}(\mathbf{K}))^{f}$
    \end{enumerate}
\end{lemma}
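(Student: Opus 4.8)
The plan is to prove each identity by a double inclusion, exploiting the machinery already assembled: the characterisation of $\mathsf{PAt}$-quasivarieties (Proposition \ref{Characterisation of PA-quasivarieties}), the four-part translation lemma (Lemma \ref{Invariance of truth for f(p) valuation}), and its Corollary \ref{Failure in regularly generated yields substitution instance}. Since $\mathsf{QVar}^{f}(-)$ and $\mathsf{Log}^{f}(-)$ are already known to be well-defined (Corollary \ref{Maps are well-defined 1}, Proposition \ref{Maps are well-defined 2}), and both items are really two faces of the same commutativity statement, one may even derive (2) from (1) together with Proposition \ref{Maps are well-defined 2} and the fact that $\mathsf{QVar},\mathsf{Log}$ are mutually inverse dual isomorphisms; but I would prove both directly for transparency.

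\emph{Proof of (1).} For the inclusion $\mathsf{QVar}^{\uparrow}(\vdash_{*})\subseteq\mathsf{QVar}^{f}(\vdash_{*}^{f})$: take $\mathcal{A}\in\mathsf{QVar}^{\uparrow}(\vdash_{*})$, so there is $\mathcal{B}\in\mathsf{QVar}(\vdash_{*})$ with $\mathcal{B}\preceq_{f}\mathcal{A}$. Since $\mathcal{B}\vDash\ \vdash_{*}$, part (2) of Lemma \ref{Invariance of truth for f(p) valuation} gives $\mathcal{B}\vDash_{f}\ \vdash_{*}^{f}$; and since $\mathcal{A}^{f}=\mathcal{B}^{f}$ and hence $\langle\mathcal{A}^{f}\rangle=\langle\mathcal{B}^{f}\rangle$, part (3) of that lemma (applied to both $\mathcal{A}$ and $\mathcal{B}$, noting $\langle\mathcal{B}^{f}\rangle\preceq\mathcal{B}$) transfers $\mathsf{PAt}$-validity of each $(\Gamma,\phi)\in\ \vdash_{*}^{f}$ from $\mathcal{B}$ to $\mathcal{A}$; thus $\mathcal{A}\in\mathsf{QVar}^{f}(\vdash_{*}^{f})$. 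For the reverse inclusion: take $\mathcal{A}\vDash_{f}\ \vdash_{*}^{f}$. By part (4) of Lemma \ref{Invariance of truth for f(p) valuation}, $\langle\mathcal{A}^{f}\rangle\vDash\ \vdash_{*}$, so $\langle\mathcal{A}^{f}\rangle\in\mathsf{QVar}(\vdash_{*})$; since $\langle\mathcal{A}^{f}\rangle\preceq_{f}\mathcal{A}$ (same $f$-regular elements, as in Corollary \ref{Regular generation of PA-Quasivarieties}), we conclude $\mathcal{A}\in\mathsf{QVar}^{\uparrow}(\vdash_{*})$.

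\emph{Proof of (2).} We must show $(\Gamma,\phi)\in\mathsf{Log}^{f}(\mathbf{K}^{\uparrow})$ iff $\Gamma[f(\overline{p})/\overline{p}]\vdash_{\mathbf{K}}\phi[f(\overline{p})/\overline{p}]$, i.e. iff $\mathbf{K}\vDash(\Gamma[f(\overline{p})/\overline{p}],\phi[f(\overline{p})/\overline{p}])$. If $(\Gamma,\phi)\in\mathsf{Log}^{f}(\mathbf{K}^{\uparrow})$ then in particular $\mathbf{K}\vDash_{f}(\Gamma,\phi)$ since $\mathbf{K}\subseteq\mathbf{K}^{\uparrow}$; by part (1) of Lemma \ref{Invariance of truth for f(p) valuation} this is exactly $\mathbf{K}\vDash(\Gamma[f(\overline{p})/\overline{p}],\phi[f(\overline{p})/\overline{p}])$, so $(\Gamma,\phi)\in(\mathsf{Log}(\mathbf{K}))^{f}$. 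Conversely, if $\mathbf{K}\vDash(\Gamma[f(\overline{p})/\overline{p}],\phi[f(\overline{p})/\overline{p}])$, then $\mathbf{K}\vDash_{f}(\Gamma,\phi)$ by part (1) again; now let $\mathcal{A}\in\mathbf{K}^{\uparrow}$, with witness $\mathcal{B}\in\mathbf{K}$, $\mathcal{B}\preceq_{f}\mathcal{A}$. Then $\mathcal{B}\vDash_{f}(\Gamma,\phi)$, and by part (3) of Lemma \ref{Invariance of truth for f(p) valuation} applied at $\langle\mathcal{B}^{f}\rangle=\langle\mathcal{A}^{f}\rangle$ we get $\mathcal{A}\vDash_{f}(\Gamma,\phi)$; hence $\mathbf{K}^{\uparrow}\vDash_{f}(\Gamma,\phi)$, i.e. $(\Gamma,\phi)\in\mathsf{Log}^{f}(\mathbf{K}^{\uparrow})$. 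Alternatively, once (1) is proved one can obtain (2) by applying $\mathsf{Log}^{f}$ and $\mathsf{Log}$ to the respective sides using that these are inverse to $\mathsf{QVar}^{f}$, $\mathsf{QVar}$ (dual isomorphisms), together with Corollary \ref{PA-logic map is complete Lattice Homomorphism} and Proposition \ref{Pa-variety map is complete homomorphism}.

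The main obstacle is the direction of (1) that requires passing from $\mathcal{A}\vDash_{f}\ \vdash_{*}^{f}$ back to membership in $\mathsf{QVar}^{\uparrow}(\vdash_{*})$ — but this is precisely where the substantive content has already been isolated, namely part (4) of Lemma \ref{Invariance of truth for f(p) valuation} (whose proof, via Corollary \ref{Failure in regularly generated yields substitution instance}, handles the delicate point that a valuation into $\langle\mathcal{A}^{f}\rangle$ can be rewritten as a substitution instance of a regular valuation). Given that lemma, everything else is bookkeeping: checking that $\langle\mathcal{A}^{f}\rangle$ and $\mathcal{A}$ genuinely share $f$-regular elements (immediate, since $\langle\mathcal{A}^{f}\rangle$ is a subalgebra containing $\mathcal{A}^{f}$, so $\langle\mathcal{A}^{f}\rangle^{f}=\mathcal{A}^{f}$) and invoking the transfer of $\mathsf{PAt}$-validity along $\preceq_{f}$ from Proposition \ref{Preservation of PA-validity under operations}.
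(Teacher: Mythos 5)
Your proposal is correct and follows essentially the same route as the paper: both items are proved by double inclusion resting on the four parts of Lemma \ref{Invariance of truth for f(p) valuation}, with part (4) carrying the substantive direction of (1) via $\langle\mathcal{A}^{f}\rangle\preceq_{f}\mathcal{A}$. The only cosmetic difference is that where you transfer $\mathsf{PAt}$-validity along $\preceq_{f}$ by applying part (3) twice, the paper instead cites closure of $\mathsf{QVar}^{f}(\vdash_{*}^{f})$ under core superalgebras, which amounts to the same thing.
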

\begin{proof}
    (1) Let $\mathcal{A}\in \mathsf{QVar}^{f}(\vdash_{*}^{f})$. By definition, then $\mathcal{A}\vDash_{f} \ \vdash_{*}^{f}$, so by Lemma \ref{Invariance of truth for f(p) valuation}, we have that $\langle \mathcal{A}^{f}\rangle \vDash \ \vdash_{*}$. Then we have that $\langle \mathcal{A}^{f}\rangle\in \mathsf{QVar}(\vdash_{*})$, and since $\mathcal{A}$ is a core-superalgebra of $\langle \mathcal{A}^{f}\rangle$, $\mathcal{A}\in \mathsf{QVar}^{\uparrow}(\vdash_{*})$. Conversely, assume that $\mathcal{A}\in \mathsf{QVar}(\vdash_{*})^{\uparrow}$. Hence for some $\mathcal{B}\in \mathsf{QVar}(\vdash_{*})$, $\mathcal{B}\preceq_{f}\mathcal{A}$; since $\mathcal{B}\vDash \ \vdash_{*}$, again by Lemma \ref{Invariance of truth for f(p) valuation}, $\mathcal{B}\vDash_{f} \ \vdash_{*}^{f}$, so $\mathcal{B}\in \mathsf{QVar}^{f}(\vdash_{*}^{f})$; since this is a $\mathsf{PAt}$-quasivariety, it is closed under core superalgebras, so $\mathcal{A}\in \mathsf{QVar}^{f}(\vdash_{*}^{f})$.

    (2) Suppose that $(\Gamma,\phi)\notin \mathsf{Log}^{f}(\bf{K}^{\uparrow})$. Hence, there is some $\mathcal{A}\in \bf{K}^{\uparrow}$, such that $\mathcal{A}\nvDash_{f} (\Gamma,\phi)$. Hence by Lemma \ref{Invariance of truth for f(p) valuation}, we have that $\langle \mathcal{A}^{f}\rangle\nvDash_{f} (\Gamma,\phi)$. By construction, then there is some $\mathcal{B}\in \bf{K}$ such that $\mathcal{B}\preceq_{f}\mathcal{A}$; thus, by using the same Lemma, $\mathcal{B}\nvDash_{f} (\Gamma,\phi)$, and from there $\mathcal{B}\nvDash (\Gamma[f(\overline{p})/\overline{p}],\phi[f(\overline{p})/\overline{p}])$. This implies that $(\Gamma,\phi)\notin (\mathsf{Log}(\bf{K}))^{f}$. Conversely if $(\Gamma,\phi)\notin (\mathsf{Log}(\bf{K}))^{f}$, then $(\Gamma[f(\overline{p})/\overline{p}],\phi[f(\overline{p})/\overline{p}])\notin \mathsf{Log}(\bf{K})$, so for some $\mathcal{A}\in \bf{K}$, $\mathcal{A}\nvDash (\Gamma[f(\overline{p})/\overline{p}],\phi[f(\overline{p})/\overline{p}])$; then $\mathcal{A}\nvDash_{f} (\Gamma,\phi)$, and since $\mathcal{A}\in \bf{K}\subseteq \bf{K}^{\uparrow}$, then $(\Gamma,\phi)\notin \mathsf{Log}^{f}(\bf{K}^{\uparrow})$.
\end{proof}

\begin{corollary}\label{Dual Isomorphism for PAt logics and quasivarieties}
    For each $\vdash_{s}\in \Lambda^{f}(\vdash)$ and $\bf{K}\in \Xi^{f}(\classY)$ we have that:
    \begin{enumerate}
        \item $\mathsf{Log}^{f}(\mathsf{QVar}^{f}(\vdash_{s}))= {\vdash_{s}}$;
        \item $\mathsf{QVar}^{f}(\mathsf{Log}^{f}(\bf{K}))=\bf{K}$.
    \end{enumerate}
\end{corollary}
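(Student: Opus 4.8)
The plan is to derive both statements from Lemma \ref{Basic Commutativity Result} together with the ordinary (non-polyatomic) algebraic completeness, namely that $\mathsf{QVar}(-):\Lambda(\vdash)\to \Xi(\classY)$ and $\mathsf{Log}(-)$ are mutually inverse dual isomorphisms. The key observation is that every $\mathsf{PAt}$-$f$-logic is by definition of the form $\vdash_{*}^{f}$ for some ${\vdash_{*}}\in \Lambda(\vdash)$, and every $\mathsf{PAt}$-$f$-quasivariety is of the form $\mathbf{K}^{\uparrow}$ for some $\mathbf{K}\in \Xi(\classY)$; so it suffices to verify the two round-trip identities on representatives of this form.

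For (1): given $\vdash_{s}\in \Lambda^{f}(\vdash)$, pick ${\vdash_{*}}\in \Lambda(\vdash)$ with $\vdash_{*}^{f} = \vdash_{s}$. Then
\begin{equation*}
\mathsf{Log}^{f}(\mathsf{QVar}^{f}(\vdash_{s})) = \mathsf{Log}^{f}(\mathsf{QVar}^{f}(\vdash_{*}^{f})) = \mathsf{Log}^{f}(\mathsf{QVar}^{\uparrow}(\vdash_{*})) = (\mathsf{Log}(\mathsf{QVar}(\vdash_{*})))^{f} = (\vdash_{*})^{f} = \vdash_{s},
\end{equation*}
using part (1) of Lemma \ref{Basic Commutativity Result} in the second step, part (2) of the same lemma in the third step (with $\mathbf{K} = \mathsf{QVar}(\vdash_{*})$), and the ordinary dual isomorphism $\mathsf{Log}(\mathsf{QVar}(\vdash_{*})) = \vdash_{*}$ in the fourth. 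For (2): given $\mathbf{K}\in \Xi^{f}(\classY)$, pick $\mathbf{L}\in \Xi(\classY)$ with $\mathbf{L}^{\uparrow} = \mathbf{K}$; then symmetrically
\begin{equation*}
\mathsf{QVar}^{f}(\mathsf{Log}^{f}(\mathbf{K})) = \mathsf{QVar}^{f}(\mathsf{Log}^{f}(\mathbf{L}^{\uparrow})) = \mathsf{QVar}^{f}((\mathsf{Log}(\mathbf{L}))^{f}) = \mathsf{QVar}^{\uparrow}(\mathsf{Log}(\mathbf{L})) = \mathbf{L}^{\uparrow} = \mathbf{K},
\end{equation*}
now applying part (2) of the lemma, then part (1), then $\mathsf{QVar}(\mathsf{Log}(\mathbf{L})) = \mathbf{L}$ and closure of $\mathbf{L}^{\uparrow}$ under core superalgebras.

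There is essentially no obstacle here beyond bookkeeping: the real content — the commutation of the polyatomic operations with $\mathsf{QVar}$ and $\mathsf{Log}$, which in turn rests on Lemma \ref{Invariance of truth for f(p) valuation} and the characterisation of $\mathsf{PAt}$-quasivarieties — has already been established. The one point worth stating carefully is that the representatives ${\vdash_{*}}$ and $\mathbf{L}$ need not be unique, but this does not matter: the computations above show that $\mathsf{Log}^{f}\circ\mathsf{QVar}^{f}$ and $\mathsf{QVar}^{f}\circ\mathsf{Log}^{f}$ act as the identity on \emph{any} object of the respective lattices presented in this form, which by definition is every object. Combined with Corollary \ref{Maps are well-defined 1} and Proposition \ref{Maps are well-defined 2} (which guarantee the maps land in the right lattices) and the evident order-reversal, this yields that $\mathsf{QVar}^{f}(-):\Lambda^{f}(\vdash)\to \Xi^{f}(\classY)$ is a dual isomorphism with inverse $\mathsf{Log}^{f}(-)$.
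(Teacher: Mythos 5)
Your proposal is correct and follows essentially the same route as the paper: both reduce the round-trip identities to Lemma \ref{Basic Commutativity Result} plus the ordinary dual isomorphism between $\Lambda(\vdash)$ and $\Xi(\classY)$, applied to a representative ${\vdash_{*}}$ (resp.\ $\mathbf{L}$) of the given $\mathsf{PAt}$-object. The only difference is that you write out part (2) explicitly where the paper leaves it as ``similar arguments,'' and your remark that the choice of representative is immaterial is a harmless (and accurate) piece of extra care.
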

\begin{proof}
    We show (1), since (2) follows using similar arguments. Let $\vdash_{*}$ be such that ${\vdash_{*}^{f}}={\vdash_{s}}$. Then we have that:
    \begin{equation*}
        \mathsf{Log}^{f}(\mathsf{QVar}^{f}(\vdash_{s}))=\mathsf{Log}^{f}(\mathsf{QVar}^{\uparrow}(\vdash_{*}))=(\mathsf{Log}(\mathsf{QVar}(\vdash_{*})))^{f}={\vdash_{s}}
    \end{equation*}
    where the first and second equalities use Lemma \ref{Basic Commutativity Result}.
\end{proof}

Using these facts, we can obtain the desired definability theorems and algebraic completeness. The following tells us that $\mathsf{PAt}$-quasivarieties are defined by the $\mathsf{PAt}$-logic to which they are associated.

\begin{theorem}[Definability Theorem]
For $\mathbf{K}\in \Xi^{f}(\classY)$, and for every algebra $\mathcal{A}\in \classY$:
\begin{equation*}
    \mathcal{A}\in \mathbf{K} \iff \mathcal{A}\vDash_{f} \mathsf{Log}^{f}(\mathbf{K})
\end{equation*}
\end{theorem}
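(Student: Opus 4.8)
The plan is to deduce this "Definability Theorem" directly from the dual isomorphism already established in Corollary~\ref{Dual Isomorphism for PAt logics and quasivarieties}, together with the characterisation of $\mathsf{PAt}$-quasivarieties. The right-to-left direction is essentially by definition of $\mathsf{Log}^{f}$: if $\mathcal{A}\vDash_{f}\mathsf{Log}^{f}(\mathbf{K})$, then $\mathcal{A}$ validates (restricted to polyatomic models) every pair in the $\mathsf{PAt}$-logic $\mathsf{Log}^{f}(\mathbf{K})$, which is exactly to say $\mathcal{A}\in\mathsf{QVar}^{f}(\mathsf{Log}^{f}(\mathbf{K}))$. By Corollary~\ref{Dual Isomorphism for PAt logics and quasivarieties}(2), since $\mathbf{K}\in\Xi^{f}(\classY)$, we have $\mathsf{QVar}^{f}(\mathsf{Log}^{f}(\mathbf{K}))=\mathbf{K}$, so $\mathcal{A}\in\mathbf{K}$.

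For the left-to-right direction, suppose $\mathcal{A}\in\mathbf{K}$. I want to show $\mathcal{A}\vDash_{f}(\Gamma,\phi)$ for every $(\Gamma,\phi)\in\mathsf{Log}^{f}(\mathbf{K})$. But by the very definition of $\mathsf{Log}^{f}(\mathbf{K})=\{(\Gamma,\phi):\mathbf{K}\vDash_{f}(\Gamma,\phi)\}$, membership of $(\Gamma,\phi)$ in this set means precisely that every algebra in $\mathbf{K}$ $f$-validates $(\Gamma,\phi)$; in particular $\mathcal{A}$ does. Hence $\mathcal{A}\vDash_{f}\mathsf{Log}^{f}(\mathbf{K})$. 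Strictly speaking this direction does not even need $\mathbf{K}$ to be a $\mathsf{PAt}$-quasivariety — it is immediate from the definitions — so the content of the theorem lies entirely in the converse direction, which is where the dual isomorphism (and hence Lemma~\ref{Basic Commutativity Result} and Proposition~\ref{Characterisation of PA-quasivarieties}) does the real work.

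The only subtlety — and the step I would be most careful about — is making sure that $\mathsf{Log}^{f}(\mathbf{K})$ is genuinely a $\mathsf{PAt}$-logic, so that the statement "$\mathcal{A}\vDash_{f}\mathsf{Log}^{f}(\mathbf{K})$" coincides with "$\mathcal{A}\in\mathsf{QVar}^{f}(\mathsf{Log}^{f}(\mathbf{K}))$" in the sense needed to invoke Corollary~\ref{Dual Isomorphism for PAt logics and quasivarieties}(2). This is supplied by Proposition~\ref{Maps are well-defined 2}, which tells us $\mathsf{Log}^{f}(\mathbf{K})=(\mathsf{Log}(\mathbf{K}))^{f}$, the $\mathsf{PAt}$-variant of an ordinary logic, hence a bona fide $\mathsf{PAt}$-logic in $\Lambda^{f}(\vdash)$. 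With that in hand the argument is a two-line chase through the established correspondence, and I do not anticipate any genuine obstacle; the "hard work" was already done in proving Lemma~\ref{Invariance of truth for f(p) valuation} and Lemma~\ref{Basic Commutativity Result}.
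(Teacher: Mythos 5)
Your proposal is correct and follows essentially the same route as the paper: the easy direction is immediate from the definition of $\mathsf{Log}^{f}$, and the substantive direction is exactly the identity $\mathsf{QVar}^{f}(\mathsf{Log}^{f}(\mathbf{K}))=\mathbf{K}$ from Corollary~\ref{Dual Isomorphism for PAt logics and quasivarieties} (the paper phrases it contrapositively, but that is the same argument). Your additional remark on why $\mathsf{Log}^{f}(\mathbf{K})$ is a bona fide $\mathsf{PAt}$-logic, via Proposition~\ref{Maps are well-defined 2}, is a sensible precaution that the paper leaves implicit.
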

\begin{proof}
If $\mathcal{A}\in \mathbf{K}$, then certainly $\mathcal{A}\vDash_{f} \mathsf{Log}^{f}(\mathbf{K})$. Conversely, if $\mathcal{A}\notin \bf{K}$, by Lemma \ref{Dual Isomorphism for PAt logics and quasivarieties}, we have that $\bf{K}=\mathsf{QVar}^{f}(\mathsf{Log}^{f}(\mathbf{K}))$. So $\mathcal{A}\notin \mathsf{QVar}^{f}(\mathsf{Log}^{f}(\mathbf{K}))$, hence $\mathcal{A}\nvDash_{f} \mathsf{Log}^{f}(\mathbf{K})$.
\end{proof}

As a corollary we have the following Birkhoff theorem analogue:

\begin{theorem}[$\ \mathsf{PAt}$-Birkhoff]
A class $\mathbf{K}$ of algebras is a $\mathsf{PAt}$-quasivariety if and only if it is $\mathsf{PAt}$-definable by a collection of quasi-equations.\end{theorem}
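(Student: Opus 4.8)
The plan is to derive this ``$\mathsf{PAt}$-Birkhoff'' theorem as a direct corollary of the Definability Theorem together with the characterisation of $\mathsf{PAt}$-quasivarieties in Proposition \ref{Characterisation of PA-quasivarieties} and the dual isomorphism of Corollary \ref{Dual Isomorphism for PAt logics and quasivarieties}. The left-to-right direction should be essentially immediate: if $\mathbf{K}$ is a $\mathsf{PAt}$-quasivariety, then by the Definability Theorem $\mathcal{A}\in\mathbf{K}$ iff $\mathcal{A}\vDash_f \mathsf{Log}^f(\mathbf{K})$, so $\mathbf{K}$ is $\mathsf{PAt}$-defined by the set of pairs $(\Gamma,\phi)\in\mathsf{Log}^f(\mathbf{K})$. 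The only subtlety is that the statement asks for definability by a collection of \emph{quasi-equations}, not by a collection of pairs $(\Gamma,\phi)$ from a consequence relation; so I would first translate between the two using the algebraizability data $\mu$, noting that $\mathcal{A}\vDash_f(\Gamma,\phi)$ unwinds, via $\mu$, to $\mathcal{A}$ satisfying (on polyatomic models) the quasi-equations $\bigwedge_{\psi\in\Gamma}\bigwedge_i \mu_i(\psi)\to \mu_j(\phi)$ (finitary since $\vdash$ is finitary and $\mu$ finite), so that the collection $\mathsf{Log}^f(\mathbf{K})$ is interchangeable with a collection of quasi-equations for the purposes of $\vDash_f$-definability.

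For the right-to-left direction, suppose $\mathbf{K}$ is $\mathsf{PAt}$-definable by a set $\Sigma$ of quasi-equations, i.e.\ $\mathcal{A}\in\mathbf{K}$ iff $\mathcal{A}\vDash_f\Sigma$. I would again pass through $\mu$ (or simply work directly, since a quasi-equation is itself a pair of the relevant shape once one treats $\mu$ as the identity-like translation on the algebraic side) to view $\Sigma$ as a set $S$ of pairs, and then argue that the class $\{\mathcal{A}\in\classY : \mathcal{A}\vDash_f S\}$ is closed under subalgebras, reduced products and core superalgebras --- which is exactly Proposition \ref{Preservation of PA-validity under operations}, stating that $\mathsf{PAt}$-validity of a pair is preserved under $\mathbb{I}$, $\mathbb{S}$, $\mathbb{P}_R$ and $(-)^{\uparrow}$. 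An arbitrary intersection of such classes retains all these closure properties, so $\mathbf{K}=\bigcap_{\sigma\in\Sigma}\{\mathcal{A}:\mathcal{A}\vDash_f\sigma\}$ is closed under subalgebras, reduced products and core superalgebras; by Proposition \ref{Characterisation of PA-quasivarieties}(1) this means $\mathbf{K}$ is a $\mathsf{PAt}$-quasivariety, completing the proof.

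The main obstacle, such as it is, is purely bookkeeping: being careful about the distinction between ``$\mathsf{PAt}$-definable by quasi-equations'' and ``$\mathsf{PAt}$-definable by a set of pairs $(\Gamma,\phi)$'', and making sure the translation via $\mu$ preserves finitarity and the relevant closure properties in both directions. Since $\vdash$ is assumed finitary and $\mu,\Delta$ are finite, a single sequent $(\Gamma,\phi)$ with $\Gamma$ finite corresponds under $\mu$ to finitely many quasi-equations, and conversely each quasi-equation $\lambda_1\approx\gamma_1\ \&\ \dots\ \&\ \lambda_n\approx\gamma_n\to\lambda\approx\gamma$ corresponds --- using clause (4) of algebraizability, $\gamma\approx\delta\Dashv\vDash_{\mathbf{K}}\mu[\Delta(\gamma,\delta)]$ --- to a pair $\bigl(\{\Delta(\lambda_i,\gamma_i):i\le n\},\Delta(\lambda,\gamma)\bigr)$ (a finite set of formulas on each side). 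With this dictionary in place the two directions reduce cleanly to the Definability Theorem and to Propositions \ref{Characterisation of PA-quasivarieties} and \ref{Preservation of PA-validity under operations} respectively, and there is no genuine additional difficulty.
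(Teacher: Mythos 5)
Your proof is correct and takes essentially the same route the paper intends: the paper states this theorem as an immediate corollary of the Definability Theorem, with the converse direction resting on exactly the closure facts you invoke (Proposition \ref{Preservation of PA-validity under operations} together with Proposition \ref{Characterisation of PA-quasivarieties}, as already packaged in Corollary \ref{Maps are well-defined 1}). The only content the paper suppresses is the $\mu$/$\Delta$ dictionary between sequents and quasi-equations, and you handle that bookkeeping correctly.
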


As another immediate consequence of Corollary \ref{Dual Isomorphism for PAt logics and quasivarieties}, we obtain the following completeness result:

\begin{theorem}[Algebraic completeness]
Every $\mathsf{PAt}$-logic $\vdash_{s}$ is complete with respect to its corresponding $\mathsf{PAt}$-quasivariety, i.e, for every pair $(\Gamma,\phi)$:
\begin{equation*}
    \Gamma\vdash_{s} \phi \iff \mathsf{QVar}^{f}(\vdash_{s})\vDash_{f} (\Gamma,\phi)
\end{equation*}
\end{theorem}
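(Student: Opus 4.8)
The plan is to deduce this theorem directly from the dual isomorphism established in Corollary \ref{Dual Isomorphism for PAt logics and quasivarieties}. Since $\vdash_s$ is a $\mathsf{PAt}$-logic, it lies in $\Lambda^f(\vdash)$, and by part (1) of that corollary we have $\mathsf{Log}^f(\mathsf{QVar}^f(\vdash_s)) = {\vdash_s}$. Unwinding the definition of $\mathsf{Log}^f$, this says precisely that for every pair $(\Gamma,\phi)$, we have $\Gamma \vdash_s \phi$ if and only if $\mathsf{QVar}^f(\vdash_s) \vDash_f (\Gamma,\phi)$, which is the claimed equivalence.

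In slightly more detail: the inclusion ${\vdash_s} \subseteq \mathsf{Log}^f(\mathsf{QVar}^f(\vdash_s))$ is the soundness direction and is immediate from the definition of $\mathsf{QVar}^f(\vdash_s)$ as the class of algebras $\mathcal{A}$ with $\mathcal{A} \vDash_f (\Gamma,\phi)$ for all $(\Gamma,\phi) \in {\vdash_s}$; so if $\Gamma \vdash_s \phi$ then every member of $\mathsf{QVar}^f(\vdash_s)$ $f$-validates $(\Gamma,\phi)$. The reverse inclusion is the content of completeness, and it is exactly where Corollary \ref{Dual Isomorphism for PAt logics and quasivarieties}(1) does the work: writing $\vdash_s = \vdash_*^f$ for some ${\vdash_*} \in \Lambda(\vdash)$, that corollary chains the identities $\mathsf{QVar}^f(\vdash_s) = \mathsf{QVar}^\uparrow(\vdash_*)$ and $\mathsf{Log}^f(\mathsf{QVar}^\uparrow(\vdash_*)) = (\mathsf{Log}(\mathsf{QVar}(\vdash_*)))^f = (\vdash_*)^f = {\vdash_s}$, using the ordinary algebraic completeness of $\vdash_*$ with respect to $\mathsf{QVar}(\vdash_*)$ and Lemma \ref{Basic Commutativity Result}.

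There is essentially no obstacle remaining at this point, since all the hard analytic work — particularly Lemma \ref{Invariance of truth for f(p) valuation}(4), which relates $f$-validity in $\mathcal{A}$ to ordinary validity in $\langle \mathcal{A}^f\rangle$, and the characterisation of $\mathsf{PAt}$-quasivarieties — has already been carried out. The only point requiring minimal care is to record that $\mathsf{QVar}^f(\vdash_s)$ is well-defined as a $\mathsf{PAt}$-quasivariety (Corollary \ref{Maps are well-defined 1}), so that Corollary \ref{Dual Isomorphism for PAt logics and quasivarieties} is applicable; with that in hand the theorem follows by a one-line rewriting. Hence I would present the proof as simply: by Corollary \ref{Dual Isomorphism for PAt logics and quasivarieties}, ${\vdash_s} = \mathsf{Log}^f(\mathsf{QVar}^f(\vdash_s))$, and expanding the definition of $\mathsf{Log}^f$ yields the stated equivalence.
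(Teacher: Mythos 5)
Your proposal is correct and matches the paper's own treatment: the paper presents this theorem as an immediate consequence of Corollary \ref{Dual Isomorphism for PAt logics and quasivarieties}, exactly as you do, with the equivalence obtained by unwinding the definition of $\mathsf{Log}^{f}$. The supporting details you supply (soundness from the definition of $\mathsf{QVar}^{f}$, completeness via Lemma \ref{Basic Commutativity Result} and ordinary algebraic completeness of $\vdash_{*}$) are the same ingredients the paper relies on.
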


In light of this, together with Corollary \ref{PA-logic map is complete Lattice Homomorphism} and Proposition \ref{Pa-variety map is complete homomorphism}, we then have:

\begin{corollary}\label{Completeness mapping}
The map $\mathsf{QVar}^{f}$ is a dual isomorphism between the complete lattices $\Lambda^{f}(\vdash)$ and $\Xi^{f}(\classY)$.
\end{corollary}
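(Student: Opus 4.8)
The plan is to assemble this corollary directly from the pieces already established, since at this point all the real work has been done. The statement asserts two things: that $\mathsf{QVar}^{f}$ is a dual isomorphism of posets between $\Lambda^{f}(\vdash)$ and $\Xi^{f}(\classY)$, and implicitly that both are complete lattices so that it is a \emph{complete} lattice anti-isomorphism. First I would recall that $\Lambda^{f}(\vdash)$ is a complete lattice (established in the discussion following Proposition \ref{Complete Lattice Homomorphisms}) and $\Xi^{f}(\classY)$ is a complete lattice (from the Lemma following Proposition \ref{Characterisation of PA-quasivarieties}). Then, by Corollary \ref{Dual Isomorphism for PAt logics and quasivarieties}, the maps $\mathsf{QVar}^{f}:\Lambda^{f}(\vdash)\to\Xi^{f}(\classY)$ and $\mathsf{Log}^{f}:\Xi^{f}(\classY)\to\Lambda^{f}(\vdash)$ are mutually inverse bijections.

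The next step is to check that these bijections are order-reversing. This is essentially immediate from the definitions: if $\vdash_{s}\,\subseteq\,\vdash_{t}$ are two $\mathsf{PAt}$-logics, then any algebra $\mathsf{PAt}$-validating every pair in $\vdash_{t}$ a fortiori $\mathsf{PAt}$-validates every pair in $\vdash_{s}$, so $\mathsf{QVar}^{f}(\vdash_{t})\subseteq\mathsf{QVar}^{f}(\vdash_{s})$; dually for $\mathsf{Log}^{f}$. A bijection between posets that is order-reversing with an order-reversing inverse is a dual order-isomorphism, and a dual order-isomorphism between complete lattices automatically preserves (and interchanges) arbitrary joins and meets, hence is a complete lattice anti-isomorphism. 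Alternatively — and this is the route the phrasing ``In light of this, together with Corollary \ref{PA-logic map is complete Lattice Homomorphism} and Proposition \ref{Pa-variety map is complete homomorphism}'' seems to invite — one can argue via the commutative square: Corollary \ref{PA-logic map is complete Lattice Homomorphism} gives that $(-)^{f}:\Lambda(\vdash)\to\Lambda^{f}(\vdash)$ is a surjective complete lattice homomorphism, Proposition \ref{Pa-variety map is complete homomorphism} gives that $(-)^{\uparrow}:\Xi(\classY)\to\Xi^{f}(\classY)$ is a surjective complete lattice homomorphism, the classical $\mathsf{QVar}(-):\Lambda(\vdash)\to\Xi(\classY)$ is a complete dual isomorphism, and Lemma \ref{Basic Commutativity Result}(1) says the square commutes: $\mathsf{QVar}^{f}\circ(-)^{f}=(-)^{\uparrow}\circ\mathsf{QVar}$. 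Since the three outer maps are complete (dual) homomorphisms and the top map is surjective, $\mathsf{QVar}^{f}$ inherits completeness of join/meet preservation; combined with bijectivity from Corollary \ref{Dual Isomorphism for PAt logics and quasivarieties} we are done.

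There is no genuine obstacle here — the corollary is a bookkeeping consequence of Corollary \ref{Dual Isomorphism for PAt logics and quasivarieties}. The only point requiring a line of care is the observation that $\Lambda^{f}(\vdash)$ and $\Xi^{f}(\classY)$ are indeed \emph{complete} lattices (not merely posets with a bijection between them), so that ``complete lattice homomorphism / dual isomorphism'' even makes sense; this was arranged earlier, so I would simply cite it. I would therefore keep the proof to three or four sentences: invoke completeness of both lattices, invoke the mutual-inverse bijection of Corollary \ref{Dual Isomorphism for PAt logics and quasivarieties}, note monotone-reversal from the definitions of $\mathsf{QVar}^{f}$ and $\mathsf{Log}^{f}$, and conclude. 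The only thing to be slightly wary of in the write-up is consistency of the word ``homomorphism'' versus ``anti-isomorphism'': since $\mathsf{QVar}^{f}$ reverses order it is a dual (anti-)isomorphism, and I would phrase the conclusion accordingly while noting it is in particular a complete lattice isomorphism onto the order-dual of $\Xi^{f}(\classY)$.
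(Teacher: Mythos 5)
Your proposal is correct and matches the paper's (unstated) argument: the paper simply asserts the corollary as following from the algebraic completeness results, Corollary \ref{PA-logic map is complete Lattice Homomorphism} and Proposition \ref{Pa-variety map is complete homomorphism}, which is exactly your commutative-square route, and your primary route (mutually inverse bijections from Corollary \ref{Dual Isomorphism for PAt logics and quasivarieties} plus order-reversal) is an equally valid, slightly more self-contained way of saying the same thing. No gaps.
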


Moreover, in light of Corollary \ref{Regular generation of PA-Quasivarieties} we obtain the following stronger form of completeness, which allows us to focus on the regularly generated models.

\begin{corollary}
Every $\mathsf{PAt}$-logic $\vdash_{f}$ is complete with respect to the class of regularly generated algebras validating the logic.
\end{corollary}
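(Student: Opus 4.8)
The plan is to derive this immediately from the algebraic completeness theorem just established, together with Corollary~\ref{Regular generation of PA-Quasivarieties} and part~(3) of Lemma~\ref{Invariance of truth for f(p) valuation}. Write $\mathbf{K}=\mathsf{QVar}^{f}(\vdash_{s})$ and let $\mathbf{K}_{\mathrm{rg}}$ be the class of regularly generated algebras lying in $\mathbf{K}$; by the Definability Theorem, $\mathbf{K}_{\mathrm{rg}}$ is precisely the class of regularly generated $\mathcal{A}$ with $\mathcal{A}\vDash_{f}\vdash_{s}$. The left-to-right (soundness) direction is then immediate: if $\Gamma\vdash_{s}\phi$, then by algebraic completeness $\mathbf{K}\vDash_{f}(\Gamma,\phi)$, and \emph{a fortiori} $\mathbf{K}_{\mathrm{rg}}\vDash_{f}(\Gamma,\phi)$.

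For the converse I would argue contrapositively. Suppose $\Gamma\nvdash_{s}\phi$. By the algebraic completeness theorem there is some $\mathcal{A}\in\mathbf{K}$ with $\mathcal{A}\nvDash_{f}(\Gamma,\phi)$. Pass to the subalgebra $\langle\mathcal{A}^{f}\rangle$ generated by the $f$-regular elements. By Lemma~\ref{Invariance of truth for f(p) valuation}(3) the failure transfers, so $\langle\mathcal{A}^{f}\rangle\nvDash_{f}(\Gamma,\phi)$. Moreover $\langle\mathcal{A}^{f}\rangle\preceq\mathcal{A}$, and $\mathbf{K}$, being a $\mathsf{PAt}$-quasivariety, is closed under subalgebras by Proposition~\ref{Characterisation of PA-quasivarieties}; hence $\langle\mathcal{A}^{f}\rangle\in\mathbf{K}$, i.e. $\langle\mathcal{A}^{f}\rangle\vDash_{f}\vdash_{s}$. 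Finally $\langle\mathcal{A}^{f}\rangle$ is regularly generated, since it is generated by $\mathcal{A}^{f}$ and $\langle\mathcal{A}^{f}\rangle^{f}=\mathcal{A}^{f}$ (the equality observed in the proof of Lemma~\ref{Invariance of truth for f(p) valuation}(3)). Thus $\langle\mathcal{A}^{f}\rangle\in\mathbf{K}_{\mathrm{rg}}$ witnesses $\mathbf{K}_{\mathrm{rg}}\nvDash_{f}(\Gamma,\phi)$, completing the contrapositive.

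There is no serious obstacle here: the result is essentially a repackaging of algebraic completeness via the fact that $\mathsf{PAt}$-quasivarieties are generated by their regularly generated members (Corollary~\ref{Regular generation of PA-Quasivarieties}). The only points needing care are (i) that $\langle\mathcal{A}^{f}\rangle$ remains inside the quasivariety, which is exactly closure under subalgebras, and (ii) that passing to $\langle\mathcal{A}^{f}\rangle$ does not repair the refuted sequent, which is precisely Lemma~\ref{Invariance of truth for f(p) valuation}(3). Alternatively, one could invoke Corollary~\ref{Regular generation of PA-Quasivarieties} directly to locate a regularly generated algebra in $\mathbf{K}$ refuting $(\Gamma,\phi)$, bypassing the explicit passage to $\langle\mathcal{A}^{f}\rangle$.
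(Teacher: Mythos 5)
Your proof is correct and follows essentially the route the paper intends: the paper states this corollary without proof, deriving it from the algebraic completeness theorem together with Corollary \ref{Regular generation of PA-Quasivarieties}, whose own proof is exactly your key step of passing to $\langle\mathcal{A}^{f}\rangle$ and noting it is a regularly generated subalgebra refuting the same sequent by Lemma \ref{Invariance of truth for f(p) valuation}(3). Your write-up just makes the implicit argument explicit, with all the details in order.
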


We have thus laid out the basics of the theory of $\mathsf{PAt}$-logics. This generalises the situation as known in the literature in a very straightforward way. However, we would also like to remark that these logics could be useful for the study of many well-known logical systems which lack uniform substitution: logics based on team semantics, such as inquisitive logic \cite{Ciardelli2018-ql} and dependence logic \cite{Yang2016}, dynamic logics such as public announcement logic and dynamic epistemic logic \cite{sep-dynamic-epistemic}, Buss' logic of ``pure provability" \cite{Buss1990} and various two-dimensional modal logics \cite{aqvisttwodimensional,daviesandhumberstone}. We leave an analysis of the applicability of this framework for such cases for future work.

\section{Translations and Adjunctions}\label{Section: Translations and Polyatomic Logics}

In this section we make concrete what we mean by translations, from an algebraic point of view. The connection between translations and adjunctions was introduced in \cite{Moraschini2018,moraschiniphd}, and developed in much greater generality than we present it here. In subsection \ref{Algebraic Translations and Adjunctions}, we present the parts of the theory which are relevant for our analysis and relate it to a number of relevant examples. Additionally in subsection \ref{Adjunctions and Selective translations} we introduce some classes of translations, namely selective, strongly selective and sober translations, which will set up the groundwork for the development of a Generalised Blok-Esakia theory, in the next section.

\subsection{Algebraic Translations and Adjunctions}\label{Algebraic Translations and Adjunctions}

In \cite{Moraschini2018}, the concept of a \textit{contextual translation} was introduced with the goal of providing a characterisation of adjunctions between generalised quasivarieties. This was inspired by work of McKenzie \cite{McKenzie2017} and Dukarm \cite{Dukarm1988}, and related to work by Freyd \cite{freyd}, which describes categorical equivalence through two ``deformations" of categories of algebras. The key idea of this approach is that right adjoints correspond to translations between the equational consequence relations of the categories of algebras. Below we recall the definition of contextual translation, according to the presentation of  \cite{Moraschini2018}, as well as some key results, which we provide without proof. We also note that our discussion focus only on \textit{unary translations}.

\begin{definition}
Let $\classX$ and $\classY$ be classes of algebras, and let $X$ be a set of variables. We say that a map $\zeta:\fancyL_{\classX}\to Tm(\fancyL_{\classY},X)$ is a \textit{symbol assignment} if for each $n$, $\zeta$ sends $n$-ary function symbols from $\fancyL_{\classX}$ to $n$-ary terms of $Tm(\fancyL_{\classY},X)$.
\end{definition}

Given a symbol assignment $\zeta$ we define a \textit{translation map} $\zeta_{*}:Tm(\fancyL_{\classX},X)\to Tm(\fancyL_{\classY},X)$ by stipulating that:
\begin{enumerate}
    \item $\zeta_{*}(x_{i})=x_{i}$ for variables $x_{i}\in X$;
    \item $\zeta_{*}(c)=\zeta(c)$ for constants $c\in \fancyL_{\classX}$;
    \item If $\phi_{1},...,\phi_{n}\in Tm(\fancyL_{\classX},X)$, and $g(x_{1},...,x_{n})\in \fancyL_{\classX}$, then,
    \begin{equation*}
      \zeta_{*}(g(\phi_{1},...,\phi_{n}))=\zeta(g)(\zeta_{*}(\phi_{1}),...,\zeta_{*}(\phi_{n}))
    \end{equation*}
\end{enumerate}

We furthermore write $\zeta^{*}$ for the lifting of $\zeta_{*}$ to sets of equations. We define $\zeta^{*}:\mathbb{P}(Eq(\fancyL_{\classX},X))\to \mathbb{P}(Eq(\fancyL_{\classY},X))$ by setting, for $\Phi\subseteq Eq(\fancyL_{\classX},X)$:
\begin{equation*}
    \zeta^{*}(\Phi)=\{\zeta_{*}(\delta)\approx \zeta_{*}(\gamma) : \delta\approx\gamma\in \Phi\}
\end{equation*}

For simplicity of notation, when making definitions and explanations we will let $\zeta$ denote the translation; however, elsewhere we will keep the distinction of $\zeta,\zeta_{*},\zeta^{*}$ clear. We also note that whilst above we have kept the set $X$ fixed throughout, in the definition of the translation map any set $Z$ can chosen, so long as $Z$ extends the set used for the symbol assignment. When $X$ is not important, we will omit it from notations of the translation map.

More than a merely syntactic assignment, though, we want our semantics to reflect this translation in some sense. Hence we will need the notion of a  \textit{contextual translation}.

\begin{definition}
Let $\classX$ and $\classY$ be two classes of algebras. We say that a pair $(\zeta,\Theta)$, where $\zeta$ is a translation, and $\Theta\subseteq Eq(\fancyL_{\classY},\{x\})$ is a finite subset, is a \textit{contextual translation} if the following holds:
\begin{enumerate}
    \item For every set $\Phi\cup\{\lambda\approx \gamma\}\subseteq Eq(\fancyL_{\classX},X)$, we have:
    \begin{equation*}
       \text{ if }  \Phi \eqrelX \lambda\approx \gamma, \text{ then } \zeta^{*}(\Phi)\cup \bigcup_{x\in X}\Theta(x)\eqrelY\zeta^{*}(\lambda\approx \gamma)
    \end{equation*}
    \item For every $n$-ary operation symbol $f(y_{1},...,y_{n})\in \fancyL_{\classX}$, we have:
    \begin{equation*}
        \Theta(x_{1})\cup...\cup \Theta(x_{n})\eqrelY \Theta(\zeta_{*}(f(x_{1},...,x_{n})))
    \end{equation*}
\end{enumerate}
In this case we refer to the set of equations $\Theta$ as the \textit{context} of the translation.
\end{definition}

\begin{example}(KGG and GMT translations)\label{Example of KGG and GMT translations 1}
The KGG and GMT translations we met in Definition \ref{Double Negation Translation} and Definition \ref{Heyting algebras and S4} can be seen as contextual translations: in both cases we amend the clauses on proposition letters, by letting instead, for $p$ proposition letters:
\begin{equation*}
    K(p)=p \text{ and } GMT(p)=p,
\end{equation*}
and taking the contexts
\begin{equation*}
    \Theta_{K}=\{p\approx \neg\neg p\} \text{ and } \Theta_{GMT}=\{p\approx \Box p\}.
\end{equation*}
In both cases, these translations were shown to be contextual in \cite{Moraschini2018}; this can essentially be derived from  a fact which can be derived from Theorem \ref{Translation theorem for the double negation translation}.
\end{example}

The introduction of a context has the following effect: we take the equations we are interested in, when looking at a given algebra, and we interpret these equations in a smaller algebra, comprised only of the elements satisfying the equations in the context. Suggesting a connection we will draw in the next section, we refer generically to these as the $\Theta$-\textit{regular elements}. The next definition shows how this can be interpreted algebraically:

\begin{definition}
Let $\classY$ be a class of similar algebras, $\fancyL\subseteq Tm(\fancyL_{\classY},X)$, and $\theta\subseteq Eq(\fancyL_{\classY},\{x\})$. We say that $\theta$ is \textit{compatible with $\fancyL$} if for each $g(x_{1},...,x_{n})\in \fancyL$ we have that:
\begin{equation*}
    \theta(x_{1})\cup...\cup\theta(x_{n})\vDash_{\classY} \theta(g(x_{1},...,x_{n})).
\end{equation*}
Let $\mathcal{A}\in \classY$ be some algebra. Then we let $\theta(\mathcal{A})$ be the following structure,
\begin{equation*}
    \theta(\mathcal{A})\coloneqq \{a\in A : \mathcal{A}\vDash \theta(a)\}
\end{equation*}
equipped with the operations in $\fancyL'$: for each $n$-ary operation symbol $g(x_{1},...,x_{n})$, and $a_{1},...,a_{n}\in \theta(\mathcal{A})$, $g^{\theta(\mathcal{A})}(a_{1},...,a_{n})=g^{\mathcal{A}}(a_{1},...,a_{n})$. We call this the \textit{algebra of $\theta$-regular elements of $\mathcal{A}$}.
\end{definition}

Now assume that $\fancyL=\fancyL_{\classX}$. Then the assignment $\mathcal{A} \mapsto \theta(\mathcal{A})$ constitutes the object part of a functor $\theta:\classY\to \classX$. Given a homomorphism of $\fancyL$-algebras $f:\mathcal{A}\to \mathcal{B}$, we can define $\theta(f)$ as the restriction,  as the restriction, and obtain that this is a well-defined homomorphism in the language $\fancyL$. This defined $\theta$ as a functor. We can now see the connection between the notion of a contextual translation and such a functor: if $\classX$ and $\classY$ are two classes of algebras, and $\langle\zeta,\Theta\rangle:Tm(\fancyL_{\classX})\to Tm(\fancyL_{\classY})$ is a contextual translation between them, consider the following algebraic language
\begin{equation*}
    \fancyL_{\zeta}\coloneqq \{\zeta(\psi)\in Tm(\fancyL_{\classY}) : \psi\in \fancyL_{\classX}\}.
\end{equation*}

By definition of being a contextual translation we obtain that $\Theta$ is a set of equations, compatible with $\fancyL_{\zeta}$. Thus, this yields a functor in the manner described above, which we denote by $\theta_{\zeta}$ (when $\zeta$ is understood, we omit it). More than that, we have the following characterisation (for a proof see \cite{Moraschini2018}, Theorem 5.1 and Lemma 5.4):

\begin{theorem}\label{Explicit form of right adjoint functors in PA translations}
Let $\classX$ and $\classY$ be two quasivarieties. If $\langle \zeta,\Theta\rangle:Tm(\fancyL_{\classX})\to Tm(\fancyL_{\classY})$ is a contextual translation, then $\theta:\classY\to\classX$ is a right adjoint functor.
\end{theorem}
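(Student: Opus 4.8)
The plan is to exhibit an explicit left adjoint $\mathcal{F}:\classX\to\classY$ to the functor $\theta=\theta_\zeta$ by a free-algebra-plus-congruence construction, and then verify the adjunction via the universal property. First I would fix, for each algebra $\mathcal{A}\in\classX$, the $\fancyL_{\classY}$-algebra $\mathcal{F}(\mathcal{A})$ obtained as follows: take the term algebra $\terms(\fancyL_{\classY},A)$ on the underlying set of $\mathcal{A}$ as a set of generators, quotient first by the congruence generated by the quasivariety $\classY$ (i.e. pass to the free $\classY$-algebra on $A$), and then further quotient by the congruence generated by the two families of relations dictated by the translation data: (i) for each generator $a\in A$, impose $\Theta(a)$, i.e. force every generator to be $\Theta$-regular; and (ii) for each $n$-ary $g\in\fancyL_{\classX}$ and each $a_1,\dots,a_n\in A$, impose $\zeta_*(g)(a_1,\dots,a_n)\approx g^{\mathcal{A}}(a_1,\dots,a_n)$, i.e. force the generators to respect the $\classX$-structure read through $\zeta$. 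Call the resulting $\classY$-algebra $\mathcal{F}(\mathcal{A})$ and let $\eta_{\mathcal{A}}:\mathcal{A}\to\theta(\mathcal{F}(\mathcal{A}))$ send $a$ to the class of $a$ — this lands in $\theta(\mathcal{F}(\mathcal{A}))$ precisely because of relations (i), and it is an $\fancyL_{\classX}$-homomorphism precisely because of relations (ii).

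Next I would verify the universal property: given any $\mathcal{B}\in\classY$ and any $\fancyL_{\classX}$-homomorphism $h:\mathcal{A}\to\theta(\mathcal{B})$, I must produce a unique $\fancyL_{\classY}$-homomorphism $\bar h:\mathcal{F}(\mathcal{A})\to\mathcal{B}$ with $\theta(\bar h)\circ\eta_{\mathcal{A}}=h$. Uniqueness is immediate since the image of $\eta_{\mathcal{A}}$ generates $\mathcal{F}(\mathcal{A})$. For existence, define $\bar h$ on generators by $a\mapsto h(a)\in B$ and extend to the term algebra; this descends through the quotient by $\classY$ since $\mathcal{B}\in\classY$, and it descends through relations (i) because $h(a)\in\theta(\mathcal{B})$ means $\mathcal{B}\vDash\Theta(h(a))$, and through relations (ii) because $h$ is an $\fancyL_{\classX}$-homomorphism into $\theta(\mathcal{B})$, whose operations are by definition the $\zeta_*$-translated ones, so $\zeta_*(g)^{\mathcal{B}}(h(a_1),\dots,h(a_n)) = g^{\theta(\mathcal{B})}(h(\overline a)) = h(g^{\mathcal{A}}(\overline a))$. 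Hence $\bar h$ is well-defined, and naturality of the bijection $\Hom_{\classY}(\mathcal{F}(\mathcal{A}),\mathcal{B})\cong\Hom_{\classX}(\mathcal{A},\theta(\mathcal{B}))$ is then a routine check, giving $\mathcal{F}\dashv\theta$.

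The main obstacle — and the only place the contextual-translation axioms do real work beyond bookkeeping — is confirming that the two clauses in the definition of contextual translation are exactly what is needed for $\theta(\bar h)\circ\eta_{\mathcal{A}}=h$ to typecheck and for $\theta$ to be a functor at all. Clause (2) (compatibility of $\Theta$ with the translated symbols, $\Theta(x_1)\cup\dots\cup\Theta(x_n)\eqrelY\Theta(\zeta_*(f(\overline x)))$) is what guarantees that $\theta(\mathcal{A})$ is closed under the operations of $\fancyL_{\zeta}$, so that $\theta$ is well-defined on objects; and clause (1) (the consequence-relation transport) is what guarantees that $\theta(\mathcal{B})\in\classX$, i.e. that the $\Theta$-regular part of any $\classY$-algebra satisfies the quasi-equations defining $\classX$ — without this, the codomain of $\theta$ would not be $\classX$. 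I would therefore front-load the verification that $\theta$ is a genuine functor $\classY\to\classX$ (this is essentially already packaged in the discussion preceding the theorem), and then the adjunction argument above is the substance. Since the paper cites \cite{Moraschini2018} for the full proof, I would present this as a sketch, remarking that the left adjoint is the ``$\zeta$-freely generated'' $\classY$-algebra and deferring the congruence-theoretic details; the finiteness of $\Theta$ is not needed for adjointness itself, only for later refinements.
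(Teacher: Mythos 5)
Your proposal is correct and takes essentially the same route as the paper's (cited) proof: your $\mathcal{F}(\mathcal{A})$ is, up to the choice of presentation, exactly the algebra $\terms_{\fancyL_{\classY}}(X)/\mathsf{Cg}_{\classY}(\zeta^{*}(\Psi)\cup\bigcup_{x\in X}\Theta(x))$ of Proposition \ref{Explicit construction of the left adjoint} (your relations (ii) generate the same $\classY$-congruence as $\zeta^{*}(\Psi)$, by induction on terms), and your universal-property verification is the argument of Lemma \ref{Explicit description of the unit}. You also correctly isolate where the two clauses of contextuality are used, namely that clause (2) makes $\theta(\mathcal{B})$ an $\fancyL_{\classX}$-algebra and clause (1) places it in the quasivariety $\classX$.
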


We also remark that being a right adjoint, the functor $\theta$ has a left adjoint, which we denote $\mathcal{F}$. An explicit description is proved in \cite[Corollary 5.2]{Moraschini2018}, and is captured by the following proposition.

\begin{notation}
    
Given a quasivariety $\bf{K}$ and $\Psi\subseteq \terms(\fancyL_{\bf{K}},X)^{2}$, we write $\mathsf{Cg}_{\bf{K}}(\Phi)$ for the congruence generated by these pairs of terms. Recall also that by general universal algebra, for each $\mathcal{A}$, and each set $X$ such that $|X|\geq |A|$ there is a congruence $\Psi$ on $\terms(\fancyL_{\bf{K}},X)$ such that $\mathcal{A}\cong \terms(\fancyL_{\bf{K}},X)/\Psi$. \footnote{Note that this depends on the size of $\mathcal{A}$, but by our earlier remark, we take the translation to be defined on any expansion of the set of terms by a larger collection of propositional variables.}
\end{notation}

\begin{proposition}\label{Explicit construction of the left adjoint}
Let $\classX$ and $\classY$ be two quasivarieties, and  $\langle\zeta,\Theta\rangle:Tm(\fancyL_{\classX})\to Tm(\fancyL_{\classY})$ a contextual translation, and $\theta$ be the induced right adjoint functor. Then the left adjoint functor, $\mathcal{F}:\classX\to \classY$, acts on objects as follows: if $\mathcal{A}\in \classY$ and $\Psi$ is a congruence on $\terms(\fancyL_{\classX},X)$ such that $\mathcal{A}\cong \terms(\fancyL_{\classX},X)/\Psi$. Then
\begin{equation*}
    \mathcal{F}(\mathcal{A})=\terms_{\fancyL_{\classY}}(X)/\mathsf{Cg}_{\classY}(\zeta^{*}(\Psi)\cup \bigcup_{x\in X}\Theta(x))
\end{equation*}
Where $\zeta^{*}(\Psi)=\{(\zeta_{*}(\lambda),\zeta_{*}(\gamma)) : (\lambda,\gamma)\in \Psi\}$.
\end{proposition}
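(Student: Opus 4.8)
The plan is to verify the defining universal property of a left adjoint directly, by showing that for any $\mathcal{A}\in\classX$ and any $\mathcal{B}\in\classY$ there is a natural bijection
\begin{equation*}
    \mathrm{Hom}_{\classY}(\mathcal{F}(\mathcal{A}),\mathcal{B}) \;\cong\; \mathrm{Hom}_{\classX}(\mathcal{A},\theta(\mathcal{B})).
\end{equation*}
Since the right adjoint $\theta$ is already known to exist by Theorem \ref{Explicit form of right adjoint functors in PA translations}, I need only exhibit an object $\mathcal{F}(\mathcal{A})$ with this property and then invoke uniqueness of adjoints up to isomorphism to conclude that the candidate in the statement is (isomorphic to) the value of the left adjoint. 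So first I would fix a presentation $\mathcal{A}\cong\terms(\fancyL_{\classX},X)/\Psi$ with $|X|$ large enough (as licensed by the Notation box), set $\Psi^{+}=\mathsf{Cg}_{\classY}\big(\zeta^{*}(\Psi)\cup\bigcup_{x\in X}\Theta(x)\big)$, and define $\mathcal{F}(\mathcal{A})=\terms_{\fancyL_{\classY}}(X)/\Psi^{+}$; note $\mathcal{F}(\mathcal{A})\in\classY$ since it is a quotient of a free $\classY$-algebra by a $\classY$-congruence.

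The core computation is a chain of natural bijections. A homomorphism $h\colon \terms_{\fancyL_{\classY}}(X)/\Psi^{+}\to\mathcal{B}$ corresponds, by freeness of $\terms_{\fancyL_{\classY}}(X)$, to an assignment $v\colon X\to B$ such that $\overline{v}$ kills every pair in $\zeta^{*}(\Psi)\cup\bigcup_{x}\Theta(x)$; that is, (i) $\mathcal{B},v\vDash\Theta(x)$ for each $x\in X$, and (ii) $\overline{v}(\zeta_{*}(\lambda))=\overline{v}(\zeta_{*}(\gamma))$ for all $(\lambda,\gamma)\in\Psi$. Condition (i) says precisely that $v$ lands in $\theta(\mathcal{B})$. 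Given that, the key lemma to establish (by induction on term complexity, using the clauses defining $\zeta_{*}$ and the compatibility of $\Theta$ with $\fancyL_{\zeta}$) is that for every $\fancyL_{\classX}$-term $t$ one has $\overline{v}(\zeta_{*}(t)) = \overline{v'}(t)$, where $v'\colon X\to\theta(\mathcal{B})$ is $v$ viewed as a valuation into the algebra of $\Theta$-regular elements, and $\overline{v'}$ is computed using the $\fancyL_{\classX}$-operations of $\theta(\mathcal{B})$. With that lemma, condition (ii) becomes exactly the statement that $\overline{v'}$ factors through $\Psi$, i.e.\ that $v'$ extends to a homomorphism $\mathcal{A}=\terms(\fancyL_{\classX},X)/\Psi\to\theta(\mathcal{B})$. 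Tracking these correspondences shows the assignment $h\mapsto(\text{induced map }\mathcal{A}\to\theta(\mathcal{B}))$ is a bijection, and checking naturality in $\mathcal{B}$ is routine since every step is given by composition/restriction. Finally, naturality in $\mathcal{A}$ (hence well-definedness of $\mathcal{F}$ on morphisms) follows from uniqueness of adjoints, or can be checked by the same presentation-chasing; either way one concludes $\mathcal{F}\dashv\theta$, so $\mathcal{F}$ is the left adjoint and acts on objects as claimed.

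I expect the main obstacle to be the interpolation lemma $\overline{v}(\zeta_{*}(t))=\overline{v'}(t)$: one must be careful that the $\fancyL_{\classX}$-operations on $\theta(\mathcal{B})$ are \emph{by definition} the interpretations of the translated symbols $\zeta(g)$ restricted to $\Theta$-regular elements, and that the induction step for an $n$-ary symbol $g$ requires knowing the values $\overline{v}(\zeta_{*}(t_i))$ are themselves $\Theta$-regular — which is where the second clause of contextuality, $\Theta(x_1)\cup\dots\cup\Theta(x_n)\eqrelY\Theta(\zeta_{*}(g(x_1,\dots,x_n)))$, is used to make $\theta(\mathcal{B})$ closed under the operations. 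A secondary subtlety is the role of the set $X$: the translation map was originally defined relative to a fixed variable set, so I would explicitly note (as the footnote in the Notation box anticipates) that $\zeta_{*}$ extends to any larger variable set, so choosing $X$ with $|X|\geq|A|$ is harmless. Everything else — freeness, the description of homomorphisms out of a quotient of a free algebra, naturality — is standard universal algebra.
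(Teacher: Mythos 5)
The paper does not actually prove this proposition: it defers entirely to \cite[Corollary 5.2]{Moraschini2018}, where the description of the left adjoint is extracted from the general machinery relating contextual translations to adjunctions between (generalized) quasivarieties. Your proposal instead gives a direct, self-contained verification that the candidate algebra represents the functor $\mathrm{Hom}_{\classX}(\mathcal{A},\theta(-))$, and it is correct. The chain of bijections is sound: homomorphisms out of $\terms_{\fancyL_{\classY}}(X)/\Psi^{+}$ into $\mathcal{B}\in\classY$ correspond to valuations killing the \emph{generators} of the relative congruence (this uses that the kernel of any homomorphism into a member of $\classY$ is itself a $\classY$-congruence, since quasivarieties are closed under subalgebras --- worth stating explicitly, as it is the one place where ``$\mathsf{Cg}_{\classY}$ of the generators'' rather than the generators themselves could cause trouble); condition (i) places the valuation in $\theta(\mathcal{B})$; and your interpolation lemma $\overline{v}(\zeta_{*}(t))=\overline{v'}(t)$, with the induction step powered by the second contextuality clause guaranteeing closure of $\theta(\mathcal{B})$ under the translated operations, converts condition (ii) into factorization through $\Psi$. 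You also correctly identify the two genuine subtleties (regularity of intermediate values in the induction, and the need to enlarge the variable set), both of which the paper itself only gestures at in a footnote. What your route buys is independence from Moraschini's general theorem at the cost of re-deriving a special case of it; what the paper's citation buys is brevity and consistency with the broader framework it imports. Either way the statement stands, and your argument would serve as a legitimate proof if one wanted the paper to be self-contained on this point.
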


Whilst this description can be useful, and we will make use of it in the next section, working with presentations of algebras can be cumbersome, making this construction difficult to work in practice. However, in many situations -- especially in the presence of a duality result -- we can have a better grasp of the left adjoint, as we will discuss regarding the GMT translation in \ref{GMT and classic Blok-Esakia}.

\subsection{Adjunctions and Selective Translations}\label{Adjunctions and Selective translations}

Having presented the basics of Moraschini's work, in this section we proceed to establish some basic facts regarding the algebras involved in the adjunctions described above. For this section we will assume some knowledge of category theory; we refer the reader to \cite{MacLane1978,Awodey2010-eu} for the necessary background. For the rest of this section, let $\classX$ and $\classY$ be two quasivarieties, and assume that $\langle \zeta,\Theta\rangle:Tm(\fancyL_{\classX})\to Tm(\fancyL_{\classY})$ is a unary contextual translation, with associated functor $\theta$.

\begin{notation}
We write:
\begin{equation*}
    \mathcal{A},\Theta\vDash \lambda\approx \gamma
\end{equation*}
to mean that $\mathcal{A}$ validates the equation $\lambda\approx\gamma$ in the context $\Theta$, that is, in any valuation where all variables ocurring in $\lambda,\gamma$ are assumed to satisfy $\Theta$.\qed \end{notation}

The first proposition we show will be heavily used throughout the rest of this article, and can be seen as a generalisation to any contextual translation of Theorem \ref{Translation theorem for the double negation translation}:

\begin{proposition}\label{Correctness of Translation in the unit}
For each $\mathcal{A}\in \classY$ and $\lambda\approx\gamma \in Eq(\fancyL_{\classX},X)$,
\begin{equation*}
    \mathcal{A},\Theta\vDash \zeta^{*}(\lambda\approx \gamma) \iff \theta(\mathcal{A}) \vDash\lambda\approx\gamma. 
\end{equation*}
\end{proposition}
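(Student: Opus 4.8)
The plan is to deduce both directions of the biconditional from a single structural lemma that intertwines the translation map $\zeta_*$ with interpretation in $\mathcal{A}$ and in $\theta(\mathcal{A})$. Recall that the carrier of $\theta(\mathcal{A})$ is $\{a\in A : \mathcal{A}\vDash \Theta(a)\}$, and that the second clause in the definition of a contextual translation says precisely that $\Theta$ is compatible with $\fancyL_{\zeta}$; this is what makes $\theta(\mathcal{A})$ a well-defined $\fancyL_{\classX}$-algebra, namely the carrier of $\theta(\mathcal{A})$ is closed under each $\zeta(g)^{\mathcal{A}}$ for $g\in \fancyL_{\classX}$, and $g^{\theta(\mathcal{A})}$ is the restriction of $\zeta(g)^{\mathcal{A}}$ to that carrier.

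The key claim is: for every $\fancyL_{\classX}$-term $\lambda$ and every assignment $v$ of the variables into the carrier of $\theta(\mathcal{A})$,
\[
  \overline{v}^{\mathcal{A}}\big(\zeta_*(\lambda)\big) \ = \ \overline{v}^{\theta(\mathcal{A})}(\lambda),
\]
where on the left $v$ is read as an assignment into $\mathcal{A}$ and $\zeta_*(\lambda)$ is interpreted in the $\fancyL_{\classY}$-algebra $\mathcal{A}$, and on the right $v$ is read as an assignment into $\theta(\mathcal{A})$ and $\lambda$ is interpreted in that $\fancyL_{\classX}$-algebra. I would prove this by induction on the structure of $\lambda$: for a variable $x$ both sides are $v(x)$ since $\zeta_*(x)=x$; for a constant $c$ both sides equal $\zeta(c)^{\mathcal{A}}=c^{\theta(\mathcal{A})}$, where compatibility guarantees this element lies in the carrier of $\theta(\mathcal{A})$; and for $\lambda=g(\lambda_1,\dots,\lambda_n)$ one unfolds $\zeta_*(\lambda)=\zeta(g)(\zeta_*(\lambda_1),\dots,\zeta_*(\lambda_n))$, applies the induction hypothesis to each $\lambda_i$ — which also records that $\overline{v}^{\theta(\mathcal{A})}(\lambda_i)$ lies in the carrier of $\theta(\mathcal{A})$ — and uses that $\zeta(g)^{\mathcal{A}}$ restricted to that carrier is $g^{\theta(\mathcal{A})}$.

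Granting the claim, both directions are short. For the implication from right to left: if $\theta(\mathcal{A})\vDash \lambda\approx\gamma$, then for any assignment $v$ into $A$ under which every variable — all of which already occur in $\lambda,\gamma$, since $\zeta_*$ introduces none — satisfies $\Theta$, the values lie in the carrier of $\theta(\mathcal{A})$, so $\overline{v}^{\mathcal{A}}(\zeta_*(\lambda))=\overline{v}^{\theta(\mathcal{A})}(\lambda)=\overline{v}^{\theta(\mathcal{A})}(\gamma)=\overline{v}^{\mathcal{A}}(\zeta_*(\gamma))$, i.e. $\mathcal{A},\Theta\vDash \zeta^*(\lambda\approx\gamma)$. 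For the converse: given an arbitrary assignment $w$ into the carrier of $\theta(\mathcal{A})$, composing with the inclusion yields an assignment into $A$ satisfying $\Theta$ at each variable, so the hypothesis $\mathcal{A},\Theta\vDash \zeta^*(\lambda\approx\gamma)$ gives $\overline{w}^{\mathcal{A}}(\zeta_*(\lambda))=\overline{w}^{\mathcal{A}}(\zeta_*(\gamma))$, and the claim rewrites this as $\overline{w}^{\theta(\mathcal{A})}(\lambda)=\overline{w}^{\theta(\mathcal{A})}(\gamma)$, whence $\theta(\mathcal{A})\vDash \lambda\approx\gamma$. The only genuine obstacle is bookkeeping: one must be scrupulous about the two ambient algebra structures and invoke the compatibility and contextuality hypotheses exactly where closure of the carrier of $\theta(\mathcal{A})$ under the translated operations is needed; once that is pinned down, the rest is a routine induction.
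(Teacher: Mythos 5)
Your proposal is correct and follows essentially the same route as the paper's own proof: both hinge on the inductive claim that, for an assignment landing in the $\Theta$-regular elements, interpreting $\zeta_{*}(\lambda)$ in $\mathcal{A}$ agrees with interpreting $\lambda$ in $\theta(\mathcal{A})$, and both directions of the equivalence then follow by reading one assignment in the two ambient algebras. Your version is somewhat more explicit about where the compatibility clause of contextuality is used to keep the carrier of $\theta(\mathcal{A})$ closed under the translated operations, but the argument is the same.
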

\begin{proof}
First suppose that $\mathcal{A},\Theta\vDash \zeta^{*}(\lambda\approx \lambda)$. Let $v:\terms(\fancyL_{\classX},X)\to \theta_{\zeta}(A)$ be an arbitrary valuation. Then define a new valuation:
\begin{align*}
    w:\terms(\fancyL_{\classY},X) &\mapsto A\\
    x &\mapsto v(x)
\end{align*}
which is defined on variables, and lifted to all terms in $Tm(\fancyL_{\classY},X)$. Note that since $v$ was an assignment taking values in regular elements, then $w$ will also take values in regular elements. Moreover, we can see by induction on the construction of terms, and the construction of the translation, that for each $\delta\in Tm(\fancyL_{\classX},X)$:
\begin{equation*}
    \overline{v}(p\delta(\overline{x}))=\overline{w}(\zeta_{*}(\delta(\overline{x}))).
\end{equation*}
Since by assumption $\mathcal{A},\Theta\vDash \zeta^{*}(\lambda\approx\gamma)$, we obtain that $\theta(\mathcal{A}),w\vDash \delta\approx\gamma$, as intended. The other direction follows by similar arguments, noting that any valuation $v$ on $\mathcal{A}$ taking values in regular elements will be precisely defined on $\theta(\mathcal{A})$.\end{proof}

In this article, our interest will not be so much on general contextual translations, but rather in specific translations which essentially ``internalise" the context. These are the subject of our next definition:

\begin{definition}
Let $\langle\zeta,\Theta,f\rangle$ be a contextual unary translation together with a unary term $f(x)\in \fancyL_{\classY}$ called the \textit{selector}. We say that the triple $\langle\zeta,\Theta,f\rangle$ is a \textit{selective translation} if:
\begin{enumerate}
    \item $\eqrelY \Theta(f(x))$, i.e, selected elements are regular;
    \item $\Theta(x)\eqrelY f(x)\approx x$.
\end{enumerate}
\end{definition}

\begin{example}\label{Kolmogorov and GMT translations are selective}
The GMT and KGG translations we have encountered are both selective. Let the selector term be $f(x)\coloneqq \Box x$. Then by the transitivity and reflexivity axioms, for any S4-modal algebra $\mathcal{A}$, and $a\in A$, $\Box a =\Box\Box a$. By this fact and given the context is $\Box x$, we have the conditions. The arguments for the KGG translation are similar, using the selector term $\neg\neg x$, and using the fact that on Heyting algebras the equation $\neg\neg\neg\neg x\approx \neg\neg x$ holds.\end{example}

As the reader may have suspected, for selective translations the context $\Theta$ is redundant. The following lemma will be used, many times implicitly, throughout the paper: 

\begin{lemma}\label{Identity of the notions of regular elements}
    Let $\mathcal{A}\in \classY$. Then:
    \begin{equation*}
    \theta(\mathcal{A})=\mathcal{A}^{f}
    \end{equation*}
\end{lemma}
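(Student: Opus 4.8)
The plan is to prove the two inclusions $\theta(\mathcal{A})\subseteq \mathcal{A}^f$ and $\mathcal{A}^f\subseteq \theta(\mathcal{A})$ as \emph{sets}, and then observe that the algebra structures agree because both carry the operations induced from $\mathcal{A}$ via the language $\fancyL_\zeta$. Recall that by definition $\theta(\mathcal{A})=\{a\in A : \mathcal{A}\vDash \Theta(a)\}$ and $\mathcal{A}^f=\{a\in A : f^{\mathcal{A}}(a)=a\}$. So everything reduces to showing, for an arbitrary $a\in A$, that $\mathcal{A}\vDash \Theta(a)$ if and only if $f^{\mathcal{A}}(a)=a$.

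First I would prove the direction $f^{\mathcal{A}}(a)=a \Rightarrow \mathcal{A}\vDash\Theta(a)$. By clause (1) of selectivity, $\classY\vDash \Theta(f(x))$, i.e. for every valuation $v$ on any $\mathcal{B}\in\classY$ we have $\mathcal{B}\vDash\Theta(f^{\mathcal{B}}(v(x)))$. Applying this with $\mathcal{B}=\mathcal{A}$ and $v(x)=a$ gives $\mathcal{A}\vDash\Theta(f^{\mathcal{A}}(a))$; since $f^{\mathcal{A}}(a)=a$ by hypothesis, this is exactly $\mathcal{A}\vDash\Theta(a)$, so $a\in\theta(\mathcal{A})$.

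For the converse, $a\in\theta(\mathcal{A})$, i.e. $\mathcal{A}\vDash\Theta(a)$, I would invoke clause (2) of selectivity: $\Theta(x)\vDash_{\classY} f(x)\approx x$. Unpacking the definition of equational consequence relative to a class, this says that for every $\mathcal{B}\in\classY$ and every valuation $v$, if $\mathcal{B},v\vDash\Theta(x)$ then $\mathcal{B},v\vDash f(x)\approx x$. Taking $\mathcal{B}=\mathcal{A}$ and $v(x)=a$: the premise $\mathcal{A}\vDash\Theta(a)$ holds, hence $f^{\mathcal{A}}(a)=a$, i.e. $a\in\mathcal{A}^f$. This establishes equality of the underlying sets. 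Finally, the operation symbols of $\theta(\mathcal{A})$ are exactly those of $\fancyL_\zeta$, interpreted as the restrictions of the corresponding $\fancyL_\zeta$-term operations of $\mathcal{A}$; since $\mathcal{A}^f$ is by definition also equipped with the induced $\fancyL_\zeta$-operations of $\mathcal{A}$ on the same carrier, the two algebras coincide, not merely have the same universe.

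I do not anticipate a serious obstacle here: the statement is essentially unwinding the two defining conditions of a selective translation against the definition of $\theta(\mathcal{A})$. The only point requiring a little care is making sure that the compatibility of $\Theta$ with $\fancyL_\zeta$ (part of being a contextual translation) is what guarantees $\theta(\mathcal{A})$ is closed under the $\fancyL_\zeta$-operations in the first place, so that the equality of carriers genuinely lifts to an equality of algebras; but this is already built into the setup preceding the statement, so it can simply be cited.
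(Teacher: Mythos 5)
Your proof is correct and follows essentially the same route as the paper: both inclusions are obtained by instantiating the two defining clauses of selectivity at the element $a$, exactly as in the paper's own argument. Your closing remark that the equality of carriers lifts to an equality of algebras (via the compatibility of $\Theta$ with $\fancyL_{\zeta}$) is a harmless extra precision that the paper leaves implicit.
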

\begin{proof}
Assume that $a\in \theta(\mathcal{A})$. By assumption, then $\mathcal{A}\vDash \Theta(a)$. By the second condition of selectivity, we have that then $\mathcal{A}\vDash f(a)=a$. Hence $a\in \mathcal{A}^{f}$. Conversely, if $a\in \mathcal{A}^{f}$, then note that since by first condition of selectivity, we have that $\mathcal{A}\vDash \Theta(f(a))$, hence we obtain that $\mathcal{A}\vDash \Theta(a)$.
\end{proof}

The former allows us to dispense with the context $\Theta$, provided one makes a small modification in the interpretation of variables on the translation: rather than $\zeta_{*}(x)=x$, we define $\zeta_{*}(x)=f(x)$. Keeping in mind this proviso, we will use the simpler definition of selective translation throughout the rest of the paper.

In addition to capturing internally the meaning of contexts, the addition of the selector term has some categorical consequences. The following Proposition is an example of this:

\begin{proposition}\label{Selective translation preserves surjections}
Let $\langle \zeta,f\rangle:Tm(\fancyL_{\classX})\to Tm(\fancyL_{\classY})$ be a selective translation. Then the right adjoint functor, $\theta$, preserves surjective homomorphisms.
\end{proposition}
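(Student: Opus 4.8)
The plan is to show that if $g:\mathcal{A}\twoheadrightarrow\mathcal{B}$ is a surjective homomorphism in $\classY$, then $\theta(g):\theta(\mathcal{A})\to\theta(\mathcal{B})$ is surjective. By Lemma \ref{Identity of the notions of regular elements} we may identify $\theta(\mathcal{A})$ with $\mathcal{A}^{f}$ and $\theta(\mathcal{B})$ with $\mathcal{B}^{f}$, and $\theta(g)$ is just the restriction $g\restriction_{\mathcal{A}^{f}}$. So the real content is: every $f$-regular element of $\mathcal{B}$ lies in the image of $\mathcal{A}^{f}$ under $g$.

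First I would take an arbitrary $b\in\mathcal{B}^{f}$, so $f^{\mathcal{B}}(b)=b$. Since $g$ is surjective, pick any $a\in A$ with $g(a)=b$. The element $a$ need not be $f$-regular, but its ``regularisation'' $f^{\mathcal{A}}(a)$ is: because $f$ is a selector term (idempotent), $f^{\mathcal{A}}(f^{\mathcal{A}}(a))=f^{\mathcal{A}}(a)$, so $f^{\mathcal{A}}(a)\in\mathcal{A}^{f}$. Now compute, using that $g$ is a homomorphism and $f$ is a term in the language $\fancyL_{\classY}$:
\begin{equation*}
    g(f^{\mathcal{A}}(a))=f^{\mathcal{B}}(g(a))=f^{\mathcal{B}}(b)=b.
\end{equation*}
Hence $b=g(f^{\mathcal{A}}(a))=\theta(g)(f^{\mathcal{A}}(a))$ with $f^{\mathcal{A}}(a)\in\mathcal{A}^{f}$, which shows $\theta(g)$ is surjective.

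This argument is short and essentially formal; the only subtlety worth flagging is that it genuinely uses the \emph{selective} hypothesis rather than just contextuality: it is precisely condition (2) in the definition of a selective translation (equivalently Lemma \ref{Identity of the notions of regular elements}) that lets us describe $\theta(\mathcal{A})$ as the fixed points $\mathcal{A}^{f}$ of a \emph{term} $f$, so that the homomorphism $g$ commutes with the operation picking out the regular part. For a general contextual translation with context $\Theta$ this step can fail, since the preimage of a $\Theta$-regular element need not contain a $\Theta$-regular element. Thus the ``main obstacle'', if any, is conceptual rather than computational — recognising that idempotence of $f$ is exactly what provides a retraction $a\mapsto f^{\mathcal{A}}(a)$ of $A$ onto $\mathcal{A}^{f}$ that is natural in $\mathcal{A}$ — and once that is in place the verification is a two-line diagram chase.
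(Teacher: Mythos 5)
Your proof is correct and follows essentially the same route as the paper's: take a regular element of $\mathcal{B}$, lift it along the surjection, apply $f$ to the preimage, and use that $g$ commutes with the term $f$ together with regularity of the target element to conclude. The extra remarks you add (that $f^{\mathcal{A}}(a)$ is regular by idempotence, and that selectivity rather than mere contextuality is what makes the argument go through) are accurate and only make explicit what the paper leaves implicit.
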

\begin{proof}
Suppose that $h:\mathcal{A}\to \mathcal{B}$ is a surjective homomorphism. Let $a \in \theta(\mathcal{B})$ be some regular element. Since $h$ is surjective, let $a'\in A$ be an element such that $h(a')=a$. Then note that:
\begin{equation*}
    \theta(h)(f(a')) = f(h(a')) = f(a) = a,
\end{equation*}
where the second equality follows from Corollary \ref{Identity of the notions of regular elements}, and the fact that the element is assumed to be regular. So $\theta(h)$ is surjective as intended.\end{proof}

One of the nice properties of the relationship between translations and adjunctions is that often one can translate natural properties of the adjunction into natural properties of the translation, and vice-versa. This was exploited in \cite{Moraschini2018} and \cite{moraschiniphd}, with a special focus on properties, such as the deduction theorem or the inconsistency lemma, which are relevant for abstract algebraic logic. Our focus, motivated by the analysis carried out in subsequent sections, is on properties of the unit and counit maps of the adjunction. To see this, we first recall some basic facts about these constructions.

Given  $F:\mathbb{C}\to \mathbb{D}$ a functor between two categories, we say that $F$ is \textit{faithful} if for all $\mathcal{A},\mathcal{B}\in \mathbb{C}$ and all maps $f,g:\mathcal{A}\to \mathcal{B}$: if $F(f)=F(g)$ then $f=g$; we say that it is \textit{full} if whenever $f:F(\mathcal{A})\to F(\mathcal{B})$ is a map on $\mathbb{D}$, then there is some map $g:\mathcal{A}\to \mathcal{B}$ such that $F(g)=f$. We say that $F$ is \textit{fully faithful} if it is both full and faithful. Additionally, we recall that all adjunctions have two natural transformations, $\eta:\mathbf{1}_{\mathbb{C}}\implies G(F(-))$ and $\varepsilon:F(G(-))\implies \mathbf{1}_{\mathbb{D}}$ which we call the \textit{unit} and the \textit{counit}, if for all $\mathcal{B}\in \mathbb{C}$ we have that the composition,
\begin{equation*}
    F(\mathcal{B})\xrightarrow{F(\eta_{\mathcal{B}})} F(G(F(\mathcal{B}))) \xrightarrow{\varepsilon_{F(\mathcal{B}})} F(\mathcal{B})
\end{equation*}
is equal to the identity on $\mathcal{B}$, and also, for all $\mathcal{A}\in \mathbb{D}$ we have that the composition,
\begin{equation*}
    G(\mathcal{A})\xrightarrow{\eta_{G(\mathcal{A})}} G(F(G(\mathcal{A}))) \xrightarrow{G(\varepsilon_{\mathcal{A}})} G(\mathcal{A})
\end{equation*}
is equal to the identity on $\mathcal{A}$. We say that $\eta$ is \textit{pointwise injective} (resp. surjective) if for each $\mathcal{A}\in \mathbb{C}$, $\eta_{\mathcal{A}}$ is injective (surjecitve). Similarly we say that $\varepsilon$ is pointwise injective/surjective\\

We will now provide a more explicit description of the \textit{unit} of the adjunction outlined in the previous section. For this purpose, note that given a congruence $\Psi\subseteq \terms(\fancyL_{\classX},Z)$, and $\mathcal{A}\cong \terms(\fancyL_{\classX},Z)/\Psi$ (what is sometimes referred to as a \textit{presentation} of $\mathcal{A}$ on the generators $Z$, with relations determined by $\Psi$), we can construct a map $m:\mathcal{A}\to \theta(\mathcal{F}(\mathcal{A}))$. To do so, for $\lambda\in Tm(\fancyL_{\classY},Z)$, write $[\lambda]_{\mathcal{F}(\mathcal{A})}$ for the equivalence class of $\lambda$ under the congruence:
\begin{equation*}
    \mathsf{Cg}_{\classY}(\zeta^{*}(\Phi)\cup \{(f(x),x) : x\in Z\}).
\end{equation*}
Note that because the translation is selective this is equivalent to the description given in Proposition \ref{Explicit construction of the left adjoint}. Similarly, when $\gamma\in Tm(\fancyL_{\classX},Z)$, we write $[\gamma]_{\mathcal{A}}$ to mean the equivalence class under the congruence $\Psi$ as given above.

Now for $x\in Z$, let $m(x)=[f(x)]_{\mathcal{F}(\mathcal{A})}$, where $f$ is the unary selector term; then by the universal mapping property this defines a map $m^{*}:Tm(\fancyL_{\classX},Z)\to \theta(\mathcal{F}(\mathcal{A}))$, which by compatibility of $\theta$ is such that:
\begin{equation*}
    m^{*}(\lambda)=[\zeta_{*}(\lambda)]_{\mathcal{F}(\mathcal{A})}.
\end{equation*}
Now note that if $(\lambda,\gamma)\in \Phi$, then by construction, $(\zeta_{*}(\lambda),\zeta_{*}(\gamma))\in \zeta^{*}(\Phi)$, which shows that $m^{*}(\lambda)=m^{*}(\gamma)$. Hence by standard algebraic facts, the map $m^{*}$ factors through $\mathcal{A}$. In other words, we have a well-defined homomorphism $m_{\mathcal{A}}:\mathcal{A}\to \theta(\mathcal{F}(\mathcal{A}))$, which acts by
\begin{equation*}
    m_{\mathcal{A}}([\lambda]_{\mathcal{A}})=[\zeta_{*}(\lambda)]_{\mathcal{F}(\mathcal{A})}.
\end{equation*}

\begin{lemma}\label{Explicit description of the unit}
    The map $m_{\mathcal{A}}:\mathcal{A}\to \theta(\mathcal{F}(\mathcal{A}))$ is the unit of the adjunction $\mathcal{F}\vdash \theta$.
\end{lemma}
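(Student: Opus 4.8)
The plan is to prove the statement by verifying that $m_{\mathcal{A}}$ has the universal property characterising the unit: for every $\mathcal{B}\in\classY$ and every homomorphism $g\colon\mathcal{A}\to\theta(\mathcal{B})$ there is a unique homomorphism $\hat{g}\colon\mathcal{F}(\mathcal{A})\to\mathcal{B}$ with $\theta(\hat{g})\circ m_{\mathcal{A}}=g$. Since $\mathcal{F}$ was already identified in Proposition~\ref{Explicit construction of the left adjoint} as left adjoint to $\theta$, and the unit is precisely the universal arrow from $\mathcal{A}$ to $\theta$, establishing this property identifies $m_{\mathcal{A}}$ with $\eta_{\mathcal{A}}$, and naturality of $m$ then comes for free. (Equivalently, and perhaps more directly, one may note that by selectivity the congruence $\mathsf{Cg}_{\classY}(\zeta^{*}(\Psi)\cup\{(f(x),x):x\in Z\})$ coincides with the one used in Proposition~\ref{Explicit construction of the left adjoint}, so that $m_{\mathcal{A}}$ is literally the universal arrow produced there; the argument below can be read as a self-contained verification of this.)

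For the universal property, fix the presentation $\mathcal{A}\cong\terms(\fancyL_{\classX},Z)/\Psi$ used to build $\mathcal{F}(\mathcal{A})$ and, given $g$, set $\hat{g}_{0}(x)\coloneqq g([x]_{\mathcal{A}})$ for $x\in Z$, with lift $\hat{g}_{0}^{*}\colon\terms_{\fancyL_{\classY}}(Z)\to\mathcal{B}$. One checks that $\hat{g}_{0}^{*}$ annihilates the generators of $\mathsf{Cg}_{\classY}(\zeta^{*}(\Psi)\cup\{(f(x),x):x\in Z\})$, so that it factors through $\mathcal{F}(\mathcal{A})$. For a pair $(f(x),x)$: each $g([x]_{\mathcal{A}})$ lies in $\theta(\mathcal{B})=\mathcal{B}^{f}$ by Lemma~\ref{Identity of the notions of regular elements}, whence $\hat{g}_{0}^{*}(f(x))=f^{\mathcal{B}}(g([x]_{\mathcal{A}}))=g([x]_{\mathcal{A}})=\hat{g}_{0}^{*}(x)$. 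For a pair $(\zeta_{*}(\lambda),\zeta_{*}(\gamma))$ with $(\lambda,\gamma)\in\Psi$: let $v$ be the valuation into $\theta(\mathcal{B})$ with $v(x)=g([x]_{\mathcal{A}})$; since $[\lambda]_{\mathcal{A}}=[\gamma]_{\mathcal{A}}$ we get $\overline{v}(\lambda)=g([\lambda]_{\mathcal{A}})=g([\gamma]_{\mathcal{A}})=\overline{v}(\gamma)$, i.e.\ $\theta(\mathcal{B}),v\vDash\lambda\approx\gamma$, and the inductive correspondence $\overline{w}(\zeta_{*}(\delta))=\overline{v}(\delta)$ established in the proof of Proposition~\ref{Correctness of Translation in the unit} (with $w$ the valuation on $\mathcal{B}$ agreeing with $v$ on $Z$, so $w=\hat{g}_{0}^{*}$) gives $\hat{g}_{0}^{*}(\zeta_{*}(\lambda))=\hat{g}_{0}^{*}(\zeta_{*}(\gamma))$. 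This yields a homomorphism $\hat{g}\colon\mathcal{F}(\mathcal{A})\to\mathcal{B}$ with $\hat{g}([\lambda]_{\mathcal{F}(\mathcal{A})})=\hat{g}_{0}^{*}(\lambda)$.

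It then remains to check the triangle $\theta(\hat{g})\circ m_{\mathcal{A}}=g$ and uniqueness. The triangle follows from the same inductive correspondence: $\theta(\hat{g})(m_{\mathcal{A}}([\lambda]_{\mathcal{A}}))=\hat{g}([\zeta_{*}(\lambda)]_{\mathcal{F}(\mathcal{A})})=\hat{g}_{0}^{*}(\zeta_{*}(\lambda))=\overline{w}(\zeta_{*}(\lambda))=\overline{v}(\lambda)=g([\lambda]_{\mathcal{A}})$. For uniqueness, note that $\mathcal{F}(\mathcal{A})$ is generated by $\{[x]_{\mathcal{F}(\mathcal{A})}:x\in Z\}$, and because $(f(x),x)$ lies in the defining congruence we have $[x]_{\mathcal{F}(\mathcal{A})}=[f(x)]_{\mathcal{F}(\mathcal{A})}=m_{\mathcal{A}}([x]_{\mathcal{A}})$; hence any $\hat{g}'$ with $\theta(\hat{g}')\circ m_{\mathcal{A}}=g$ agrees with $\hat{g}$ on all generators, so $\hat{g}'=\hat{g}$. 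The only non-bookkeeping point — and the one I would flag as the crux — is the bridge between the valuation-semantics statement of Proposition~\ref{Correctness of Translation in the unit} and the congruence presentation of $\mathcal{F}(\mathcal{A})$: that evaluating a translated term $\zeta_{*}(\lambda)$ under $\hat{g}_{0}^{*}$ in $\mathcal{B}$ computes the same element as evaluating $\lambda$ in the subalgebra $\theta(\mathcal{B})$ of regular elements. Everything else is routine manipulation of presentations and equivalence classes.
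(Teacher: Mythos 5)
Your proposal is correct and follows essentially the same route as the paper's proof: you define the candidate map on generators by $x\mapsto g([x]_{\mathcal{A}})$, check that its lift annihilates both the $(f(x),x)$ pairs (via regularity of the image of $g$) and the $\zeta^{*}(\Psi)$ pairs (via the correspondence between evaluating $\zeta_{*}(\lambda)$ in $\mathcal{B}$ and $\lambda$ in $\theta(\mathcal{B})$), factor through $\mathcal{F}(\mathcal{A})$, and verify the triangle identity and uniqueness on the generators $[x]_{\mathcal{F}(\mathcal{A})}=[f(x)]_{\mathcal{F}(\mathcal{A})}$ exactly as the paper does. The step you flag as the crux is indeed the same bridge the paper relies on when it asserts that the homomorphisms ``take value in $\theta(\mathcal{B})$.''
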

\begin{proof}
Suppose that $g:\mathcal{A}\to \theta(\mathcal{B})$ is a homomorphism. Let $\pi_{\mathcal{A}}:\terms(\fancyL_{\classX},Z)\to \mathcal{A}$ be the projection map. Define
\begin{align*}
k:\terms(\fancyL_{\classY},Z)\to \mathcal{B}
\end{align*}
as follows: for each $x\in Z$, $k(z)=g(\pi_{\mathcal{A}}(z))$. By the universal mapping property, this lifts to a homomorphism $k^{*}$ on $\terms(\fancyL_{\classY},Z)$. Now assume that $(\zeta_{*}(\lambda),\zeta_{*}(\gamma))\in \zeta^{*}(\Phi)$, where $\lambda(x_{1},...,x_{n})$ and $\gamma(x_{1},...,x_{m})$ are two terms in $Tm(\fancyL_{\classX},Z)$. Then by definition, $(\lambda,\gamma)\in \Phi$, so $g(\pi_{\mathcal{A}}(\lambda))=g(\pi_{\mathcal{A}}(\gamma))$. It follows since these are homomorphisms, and since they take value in $\theta(\mathcal{B})$, that:
\begin{equation*}
    \zeta_{*}(\lambda)^{\mathcal{B}}(g(\pi_{\mathcal{A}}(x_{1})),...,g(\pi_{\mathcal{A}}(x_{n})))=\zeta_{*}(\gamma)^{\mathcal{B}}(g(\pi_{\mathcal{A}}(x_{1})),...,g(\pi_{\mathcal{A}}(x_{n}))),
\end{equation*}
hence $k^{*}(\zeta_{*}(\lambda))=k^{*}(\zeta_{*}(\gamma))$. Additionally, if $z\in Z$, taking the selector term $f$, we have,
\begin{equation*}
    k^{*}(f(z))=f^{\mathcal{B}}(g(\pi_{\mathcal{A}}(z)))=g(\pi_{\mathcal{A}}(z)),
\end{equation*}
since $g(\pi_{\mathcal{A}}(z))\in \theta(\mathcal{B})$. Hence $k^{*}$ factors through $\mathcal{F}(\mathcal{A})$, defining a map:
\begin{equation*}
    k_{\mathcal{F}(\mathcal{A})}:\mathcal{F}(\mathcal{A})\to \mathcal{B}.
\end{equation*}

such that whenever $\lambda\in \fancyL_{\classX}$, $k_{\mathcal{F}(\mathcal{A})}([\zeta^{*}(\lambda)]_{\mathcal{F}(\mathcal{A})})=g([\lambda]_{\mathcal{A}}$. Hence we have that:
\begin{equation*}
    \theta(k_{\mathcal{F}(\mathcal{A})})\circ m_{\mathcal{A}}=g;
\end{equation*}
since if $\lambda\in Tm(\fancyL_{\classX},Z)$, we have
\begin{align*}
    \theta(k_{\mathcal{F}(\mathcal{A})})(m_{\mathcal{A}}[\lambda]_{\mathcal{A}})
    &=\theta(k_{\mathcal{F}(\mathcal{A})})([\zeta_{*}(\lambda)]_{\mathcal{F}(\mathcal{A})})\\
    &=k_{\mathcal{F}(\mathcal{A})}([\zeta_{*}(\lambda)]_{\mathcal{F}(\mathcal{A})})\\
    &=g([\lambda]_{\mathcal{A}})
\end{align*}
Which shows that the diagram in Figure \ref{fig:unitdiagram} commutes. It is not difficult to see that the map $k_{\mathcal{F}(\mathcal{A})}$ is unique in these conditions: if $p:\mathcal{F}(\mathcal{A})\to \mathcal{B}$ was some arrow making the diagram commute, then note that whenever $x\in Z$, $m_{\mathcal{A}}([x]_{\mathcal{A}})=[f(x)]_{\mathcal{F}(\mathcal{A})}$, and hence by assumption we have that
\begin{equation*}
    g([x]_{\mathcal{A}})=p([f(x)]_{\mathcal{F}(\mathcal{A})})=k([f(x)]_{\mathcal{F}(\mathcal{A})});
\end{equation*}
however, since $(f(x),x)\in \mathsf{Cg}_{\classY}(\zeta^{*}(\Phi)\cup\{(f(x),x) : x\in Z\})$, we have that $[f(x)]_{\mathcal{F}(\mathcal{A})})=[x]_{\mathcal{F}(\mathcal{A})}$, and so $k([x]_{\mathcal{F}(\mathcal{A})})=p([x]_{\mathcal{F}(\mathcal{A})})$. This immediately implies that $k=p$, concluding the proof.\end{proof}

\begin{figure}[h]
    \centering
\begin{tikzcd}
\mathcal{A} \arrow[r, "m_{\mathcal{A}}"] \arrow[rd, "g"'] & \theta(\mathcal{F}(\mathcal{A})) \arrow[d, "\theta(k_{\mathcal{F}(\mathcal{A})})"] \\
                                                & \mathcal{B}                                                                  
\end{tikzcd}    \caption{Unit diagram}
    \label{fig:unitdiagram}
\end{figure}

Using this explicit description, we can prove, as promised, a connection between properties of the unit and the structure of the translation.

\begin{definition}
    Let $\langle \zeta,f\rangle: Tm(\fancyL_{\classX})\to Tm(\fancyL_{\classY})$ be a selective translation. We say that $\langle \zeta,f\rangle$ is \textit{faithful} if for every collection $\Phi\cup\{\lambda\approx \gamma\}\subseteq Eq(\fancyL_{\classY},X)$ we have
    \begin{equation*}
        \Phi\eqrelX \lambda\approx \gamma, \text{ iff } \zeta^{*}(\Phi)\cup \{(f(x),x) : x\in X\} \eqrelY\zeta^{*}(\lambda\approx \gamma).
    \end{equation*}
\end{definition}

The following appeared originally in \cite[Lemma 6.4]{moraschiniphd}, where a different proof was given:

\begin{proposition}\label{Unit is a split mono}
If $\langle \zeta,f\rangle$ is a selective translation as above, then $\zeta$ is faithful if and only if for each $\mathcal{A}\in \classX$, $\eta_{\mathcal{A}}$ is injective.
\end{proposition}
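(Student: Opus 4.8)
The plan is to prove both directions via the explicit description of the unit $m_{\mathcal{A}}:\mathcal{A}\to\theta(\mathcal{F}(\mathcal{A}))$ given in Lemma \ref{Explicit description of the unit}, namely $m_{\mathcal{A}}([\lambda]_{\mathcal{A}})=[\zeta_{*}(\lambda)]_{\mathcal{F}(\mathcal{A})}$. The key observation is that injectivity of $m_{\mathcal{A}}$ on a presented algebra $\mathcal{A}\cong\terms(\fancyL_{\classX},Z)/\Psi$ amounts to: whenever $\zeta_{*}(\lambda)$ and $\zeta_{*}(\gamma)$ are identified in $\mathcal{F}(\mathcal{A})=\terms(\fancyL_{\classY},Z)/\mathsf{Cg}_{\classY}(\zeta^{*}(\Psi)\cup\{(f(x),x):x\in Z\})$, then $\lambda$ and $\gamma$ were already identified in $\mathcal{A}$, i.e. $(\lambda,\gamma)\in\Psi$. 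Translating membership in a generated $\classY$-congruence into the semantic side via the completeness of quasivarieties (Mal'cev), $(\zeta_{*}(\lambda),\zeta_{*}(\gamma))\in\mathsf{Cg}_{\classY}(\zeta^{*}(\Psi)\cup\{(f(x),x)\})$ is equivalent to $\zeta^{*}(\Psi)\cup\{f(x)\approx x:x\in Z\}\eqrelY\zeta_{*}(\lambda)\approx\zeta_{*}(\gamma)$, and similarly $(\lambda,\gamma)\in\Psi$ iff $\Psi\eqrelX\lambda\approx\gamma$ (interpreting $\Psi$ as a set of equations). So the statement ``$m_{\mathcal{A}}$ injective for the algebra presented by $\Psi$'' is exactly the instance of faithfulness for the set $\Phi=\Psi$.

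For the direction \emph{faithful $\Rightarrow$ each $\eta_{\mathcal{A}}$ injective}: given any $\mathcal{A}\in\classX$, pick a presentation $\mathcal{A}\cong\terms(\fancyL_{\classX},Z)/\Psi$ with $Z$ large enough (allowed by the Notation block, and by the standing convention that translations are defined on any expansion of the variable set). Suppose $m_{\mathcal{A}}([\lambda]_{\mathcal{A}})=m_{\mathcal{A}}([\gamma]_{\mathcal{A}})$, i.e. $(\zeta_{*}(\lambda),\zeta_{*}(\gamma))$ lies in the generating congruence. By the above reformulation this gives $\zeta^{*}(\Psi)\cup\{f(x)\approx x:x\in Z\}\eqrelY\zeta^{*}(\lambda\approx\gamma)$; applying faithfulness (with $\Phi=\Psi$) yields $\Psi\eqrelX\lambda\approx\gamma$, hence $(\lambda,\gamma)\in\Psi$, i.e. $[\lambda]_{\mathcal{A}}=[\gamma]_{\mathcal{A}}$. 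Thus $\eta_{\mathcal{A}}=m_{\mathcal{A}}$ is injective.

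For the converse \emph{each $\eta_{\mathcal{A}}$ injective $\Rightarrow$ faithful}: let $\Phi\cup\{\lambda\approx\gamma\}\subseteq Eq(\fancyL_{\classX},X)$ and suppose $\zeta^{*}(\Phi)\cup\{f(x)\approx x:x\in X\}\eqrelY\zeta^{*}(\lambda\approx\gamma)$. The left-to-right implication of faithfulness is just the contextual-translation property built into the definition of a selective translation (this is the ``easy'' half, already available), so we only need the right-to-left one. Let $\Psi=\mathsf{Cg}_{\classX}(\Phi)$ and set $\mathcal{A}=\terms(\fancyL_{\classX},X)/\Psi$, so that $\Phi\eqrelX\mu\approx\nu$ iff $(\mu,\nu)\in\Psi$ for all terms $\mu,\nu$. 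The hypothesis says $(\zeta_{*}(\lambda),\zeta_{*}(\gamma))$ belongs to $\mathsf{Cg}_{\classY}(\zeta^{*}(\Psi)\cup\{(f(x),x):x\in X\})$ — here one checks $\mathsf{Cg}_{\classY}(\zeta^{*}(\Phi)\cup\ldots)=\mathsf{Cg}_{\classY}(\zeta^{*}(\Psi)\cup\ldots)$, since $\zeta^{*}(\Phi)\subseteq\zeta^{*}(\Psi)$ and each pair of $\zeta^{*}(\Psi)$ is forced by $\zeta^{*}(\Phi)$ together with the selector identifications, using clause (2) of contextual translations / selectivity to push $\zeta_{*}$ through the congruence generation. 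Therefore $m_{\mathcal{A}}([\lambda]_{\mathcal{A}})=m_{\mathcal{A}}([\gamma]_{\mathcal{A}})$, and injectivity of $\eta_{\mathcal{A}}=m_{\mathcal{A}}$ gives $(\lambda,\gamma)\in\Psi$, i.e. $\Phi\eqrelX\lambda\approx\gamma$. This establishes faithfulness.

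The main obstacle I anticipate is the bookkeeping in the converse: verifying carefully that the congruence on $\terms(\fancyL_{\classY},X)$ generated by $\zeta^{*}(\Phi)$ together with the selector relations coincides with the one generated by $\zeta^{*}(\mathsf{Cg}_{\classX}(\Phi))$ together with those relations — i.e. that $\zeta_{*}$ interacts correctly with congruence generation. This needs the compatibility condition (clause (2) of contextual translations, sharpened by selectivity to $\zeta_{*}(x)=f(x)$), plus an induction showing that if $(\mu,\nu)$ is obtained from $\Phi$ by reflexivity/symmetry/transitivity/replacement and $\classX$-axioms, then $(\zeta_{*}(\mu),\zeta_{*}(\nu))$ is obtained from $\zeta^{*}(\Phi)\cup\{(f(x),x)\}$ by the corresponding operations in $\classY$. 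Once this is in hand, both directions are immediate from Lemma \ref{Explicit description of the unit}.
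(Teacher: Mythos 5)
Your proof is correct, but it takes a genuinely different route from the paper's. The paper argues model-theoretically: for ``injective implies faithful'' it takes a countermodel $(\mathcal{A},v)$ witnessing $\Phi\nvDash_{\classX}\lambda\approx\gamma$, pushes the valuation along $\eta_{\mathcal{A}}$ into $\theta(\mathcal{F}(\mathcal{A}))$ (using injectivity to keep $\lambda$ and $\gamma$ apart), and then converts this into a countermodel on $\mathcal{F}(\mathcal{A})$ for the translated consequence via Proposition \ref{Correctness of Translation in the unit}; for the converse it argues contrapositively, extracting from a non-injective $\eta_{\mathcal{A}}$ a pair $(\lambda,\gamma)\notin\Phi$ with $(\zeta_{*}(\lambda),\zeta_{*}(\gamma))$ in the generated $\classY$-congruence, and verifying the offending $\classY$-consequence by hand by factoring arbitrary valuations through $\mathcal{F}(\mathcal{A})$. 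You instead make systematic use of the correspondence between membership in a relatively generated congruence and quasi-equational consequence, which collapses both directions into the single observation that injectivity of $\eta$ at the algebra presented by a congruence $\Psi$ is \emph{literally} the faithfulness instance at $\Phi=\Psi$, with general $\Phi$ handled by passing to $\mathsf{Cg}_{\classX}(\Phi)$. This buys uniformity and avoids the valuation-chasing (the paper essentially re-proves the congruence/consequence correspondence inline in its second half), at the cost of invoking that correspondence as a black box and of the mild bookkeeping about enlarging the variable set, which the paper's conventions do license. One simplification you should note: the ``main obstacle'' you anticipate is not actually an obstacle, since your converse direction only needs the trivial inclusion $\mathsf{Cg}_{\classY}(\zeta^{*}(\Phi)\cup\{(f(x),x)\})\subseteq\mathsf{Cg}_{\classY}(\zeta^{*}(\mathsf{Cg}_{\classX}(\Phi))\cup\{(f(x),x)\})$, not the equality; the reverse inclusion (which does hold, by clause (1) of contextuality) is never used. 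Both arguments ultimately rest on Lemma \ref{Explicit description of the unit} and Proposition \ref{Explicit construction of the left adjoint}.
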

\begin{proof}
Assume that $\eta$ is pointwise injective. One half of the above condition is true of every contextual translation, so we focus on the other one. Suppose that $\Phi\nvDash_{\classX}\lambda\approx \gamma$. Let $\mathcal{A}\in\classX$ be some algebra, such that $\mathcal{A},v\vDash \Phi$, but $\mathcal{A},v\nvDash \lambda\approx\gamma$. Now define a valuation $w:X\to \theta(\mathcal{F}(\mathcal{A}))$ by letting $w(x)=\eta_{\mathcal{A}}(v(x))$. By induction, we can then show that for $\delta\in Tm(\fancyL_{\classX},X)$ $w(\delta)=\eta_{\mathcal{A}}(v(\delta))$. Note that we have that if $\delta\approx\varepsilon\in \Phi_{eq}$, since $v(\delta)=v(\varepsilon)$, then $w(\delta)=w(\varepsilon)$; additionally, if $w(\lambda)=w(\gamma)$, then by injectivity $v(\lambda)=v(\gamma)$, which is a contradiction. So $\theta(\mathcal{F}(\mathcal{A})),w\vDash \Phi$, and $\theta(\mathcal{F}(\mathcal{A})),w\nvDash \lambda\approx\gamma$. But then using the arguments from Proposition \ref{Correctness of Translation in the unit}, we get that $\mathcal{F}(\mathcal{A}),v'\vDash \zeta^{*}(\Phi)$ whilst $\mathcal{F}(\mathcal{A}),v'\nvDash \zeta^{*}(\lambda\approx\gamma)$. This shows that:
\begin{equation*}
    \zeta^{*}(\Phi)\cup \{f(x)\approx x: x\in X\}\nvDash_{\classY}\zeta^{*}(\lambda\approx \gamma)
\end{equation*}
as desired.

Now assume that $\eta$ is not pointwise injective, say for an algebra $\mathcal{A}\cong \terms(\fancyL_{\classX},X)/\Phi$. By Lemma \ref{Explicit description of the unit}, this means that there are two terms $\lambda,\gamma\in Tm(\fancyL_{\classX},X)$ such that $(\lambda,\gamma)\notin \Phi$, and $(\zeta_{*}(\lambda),\zeta_{*}(\gamma))\in \mathsf{Cg}(\zeta^{*}(\Phi)\cup \{f(x)\approx x: x\in X\})$. Hence first note that $\Phi\nvDash_{\classX}\lambda\approx\gamma$, since the identity valuation witnesses this on $\mathcal{A}$. On the other hand, let $\mathcal{B}\in \classY$ be any algebra, and let $v$ be a valuation such that $\mathcal{B},v\vDash \zeta^{*}(\Phi)\cup \{f(x)\approx x :x\in X\}$. Define a map $p:Tm(\fancyL_{\classY},X)\to B$ by letting $p(x)=v(x)$, and lifting accordingly. Then by the assumption of $\bf{B}$, we obtain that the kernel of $p$ contains $\mathsf{Cg}(\zeta^{*}(\Phi)\cup \{f(x)\approx x: x\in X\})$; hence this factors as a homomorphism $p^{*}:\mathcal{F}(\mathcal{A})\to \mathcal{B}$, such that for each $\delta\in Tm(\fancyL_{\classY},X)$,  $p^{*}([\delta]_{\mathcal{F}(\mathcal{A})})=v(\delta)$. Now since $[\zeta_{*}(\lambda)]_{\mathcal{F}(\mathcal{A})}=[\zeta_{*}(\gamma)]_{\mathcal{F}(\mathcal{A})}$, we then obtain that $v(\zeta_{*}(\lambda))=v(\zeta_{*}(\gamma))$. This shows that $\mathcal{B},v\vDash \zeta_{*}(\lambda)\approx\zeta_{*}(\gamma)$, and thus establishes that:
\begin{equation*}
    \zeta^{*}(\Phi)\cup \{f(x)\approx x: x\in X\}\vDash_{\classY} \zeta_{*}(\lambda)\approx\zeta_{*}(\gamma) \text{ and } \Phi\nvDash_{\classX}\lambda\approx\gamma
\end{equation*}
This concludes the proof.\end{proof}

Most translations one is interested tend to be faithful translations. Indeed, as noted in \ref{Subsection: Classical Translation}, the GMT, double negation and Goldblatt\footnote{For the GMT and double negation translation the reader may consult \cite{Moraschini2018}; for a treatment of the Goldblatt translation as a selective translation, we refer the reader to \cite[Chapter 5]{minhatesedemestrado}, where this is discussed in detail.} translation are all faithful. However, these properties do not suffice for many of our purposes in the next section. As such we introduce specialised classes of translations which are obtained by strengthening the properties which hold of the adjunction.

\begin{definition}
    Let $\langle \zeta,f\rangle$ be a selective translation. We say that $\zeta$ is \textit{strongly selective} if additionally we have:
\begin{enumerate}
    \item[3.] The unit $\eta$ of the adjunction is an isomorphism.
\end{enumerate}
\end{definition}

Before we turn to some examples, we note that our description of the unit allows us to derive a sufficient condition for strong selectivity:

\begin{definition}
    Let $\langle \zeta,f\rangle:Tm(\fancyL_{\classX},X)\to Tm(\fancyL_{\classY},Y)$ be a selective translation. We say that $\zeta$ is \textit{essentially full} if for each $n$-ary term of the form $f(\lambda(f(x_{1}),...,f(x_{n}))\in Tm(\fancyL_{\classY},Y)$, there is some $n$-ary term $\gamma(x_{1},...,x_{n})\in Tm(\fancyL_{\classX},X)$ such that:
    \begin{equation*}
        \vDash_{\classY} f(\lambda(f(x_{1}),...,f(x_{n})) \approx \zeta_{*}(\gamma(x_{1},...,x_{n})).
    \end{equation*}
\end{definition}

\begin{lemma}\label{Essentially full implies surjective}
    Let $\langle \zeta,f\rangle$ be an essentially full translation, as above. Then for each $\mathcal{A}\in \classX$, $\eta_{\mathcal{A}}$ is surjective.
\end{lemma}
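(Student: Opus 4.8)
The plan is to read off surjectivity of $\eta_{\mathcal{A}}$ from the explicit description of the unit in Lemma~\ref{Explicit description of the unit}, combined with the identification $\theta(\mathcal{F}(\mathcal{A})) = \mathcal{F}(\mathcal{A})^{f}$ of Lemma~\ref{Identity of the notions of regular elements}. Fix $\mathcal{A}\in\classX$ and present it as $\mathcal{A}\cong\terms(\fancyL_{\classX},Z)/\Psi$ for a sufficiently large set $Z$; by selectivity $\mathcal{F}(\mathcal{A}) = \terms(\fancyL_{\classY},Z)/\Theta$ with $\Theta = \mathsf{Cg}_{\classY}\big(\zeta^{*}(\Psi)\cup\{(f(x),x):x\in Z\}\big)$, and $\eta_{\mathcal{A}} = m_{\mathcal{A}}$ acts by $m_{\mathcal{A}}([\lambda]_{\mathcal{A}}) = [\zeta_{*}(\lambda)]_{\mathcal{F}(\mathcal{A})}$. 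Since $\theta(\mathcal{F}(\mathcal{A}))$ is exactly the set of $f$-regular elements of $\mathcal{F}(\mathcal{A})$, it suffices to exhibit every $f$-regular element of $\mathcal{F}(\mathcal{A})$ as a value of $m_{\mathcal{A}}$.

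The load-bearing remark is that $(f(x),x)\in\Theta$ for every generator $x\in Z$, so in $\mathcal{F}(\mathcal{A})$ one may freely insert or delete applications of $f$ at generators: by congruence compatibility, $[\mu(x_{1},\dots,x_{n})]_{\mathcal{F}(\mathcal{A})} = [\mu(f(x_{1}),\dots,f(x_{n}))]_{\mathcal{F}(\mathcal{A})}$ for any $\fancyL_{\classY}$-term $\mu$. Now take an arbitrary $f$-regular element and write it as $[\mu]_{\mathcal{F}(\mathcal{A})}$ with $\mu = \mu(x_{1},\dots,x_{n})\in Tm(\fancyL_{\classY},Z)$; regularity means $[f(\mu)]_{\mathcal{F}(\mathcal{A})} = [\mu]_{\mathcal{F}(\mathcal{A})}$, and applying the previous remark inside the outer $f$ yields
\begin{equation*}
    [\mu]_{\mathcal{F}(\mathcal{A})} \;=\; \big[f\big(\mu(f(x_{1}),\dots,f(x_{n}))\big)\big]_{\mathcal{F}(\mathcal{A})}.
\end{equation*}
The term $f(\mu(f(x_{1}),\dots,f(x_{n})))$ is precisely of the shape governed by essential fullness (take $\lambda := \mu$), so there is an $\fancyL_{\classX}$-term $\gamma(x_{1},\dots,x_{n})$ with $\vDash_{\classY} f(\mu(f(x_{1}),\dots,f(x_{n}))) \approx \zeta_{*}(\gamma(x_{1},\dots,x_{n}))$. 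Evaluating this $\classY$-valid identity in $\mathcal{F}(\mathcal{A})$ at the generators $x_{1},\dots,x_{n}$ gives $[\mu]_{\mathcal{F}(\mathcal{A})} = [\zeta_{*}(\gamma)]_{\mathcal{F}(\mathcal{A})} = m_{\mathcal{A}}([\gamma]_{\mathcal{A}})$, so $[\mu]_{\mathcal{F}(\mathcal{A})}$ lies in the image of the unit; as it was an arbitrary $f$-regular element, $\eta_{\mathcal{A}}$ is surjective.

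The remaining verifications are routine universal algebra: that $\Theta$ identifies $f(x)$ with $x$ on generators, that congruence compatibility licenses the $f$-insertions, and that a $\classY$-valid identity may be evaluated at the generating family of $\mathcal{F}(\mathcal{A})$. The only place one must take a little care is the bookkeeping of the two conventions for $\zeta_{*}$ on variables (the original $\zeta_{*}(x)=x$ used in Lemma~\ref{Explicit description of the unit} versus the selective shorthand $\zeta_{*}(x)=f(x)$); since $[f(x)]_{\mathcal{F}(\mathcal{A})} = [x]_{\mathcal{F}(\mathcal{A})}$, both conventions name the same element and the ambiguity is harmless. I do not anticipate a genuine obstacle: the entire content of the statement is carried by the explicit form of the unit together with the defining property of essential fullness, and the argument is the natural dual of the faithfulness-versus-pointwise-injectivity argument in Proposition~\ref{Unit is a split mono}.
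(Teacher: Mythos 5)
Your proof is correct and follows essentially the same route as the paper's: present $\mathcal{A}$, take an arbitrary regular element of $\mathcal{F}(\mathcal{A})$, insert $f$ at the generators via the congruence, use regularity for the outer $f$, and apply essential fullness to land in the image of the unit as described in Lemma \ref{Explicit description of the unit}. Your extra remark about the two conventions for $\zeta_{*}$ on variables is a harmless clarification the paper leaves implicit.
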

\begin{proof}
Let $\mathcal{A}\in \classX$ be arbitrary, assume that $\mathcal{A}\cong \terms(\fancyL_{\classX},X)/\Phi$, and let $[\lambda(x_{1},...,x_{n})]_{\theta(\mathcal{F}(\mathcal{A}))}\in \theta(\mathcal{F}(\mathcal{A}))$ be some element. Now because $(f(x_{i}),x_{i})\in \mathsf{Cg}(\zeta^{*}(\Phi)\cup \{f(x)\approx x : x\in X\})$, we have that:
\begin{equation*}
    [\lambda(x_{1},...,x_{n})]_{\mathcal{F}(\mathcal{A})}=[\lambda(f(x_{1}),...,f(x_{n})]_{\mathcal{F}(\mathcal{A})};
\end{equation*}
moreover, since the element is regular:
\begin{equation*}
    [\lambda(x_{1},...,x_{n})]_{\mathcal{F}(\mathcal{A})}=[f(\lambda(f(x_{1}),...,f(x_{n})))]_{\mathcal{F}(\mathcal{A})}.
\end{equation*}
Now by essential fullness, we have some $\gamma(x_{1},...,x_{n})\in Tm(\fancyL_{\classX},X)$ such that we obtain
\begin{equation*}
    [\zeta^{*}(\gamma(x_{1},...,x_{n})]_{\mathcal{F}(\mathcal{A})}=[\lambda(x_{1},...,x_{n})]_{\mathcal{F}(\mathcal{A})}.
\end{equation*}
Hence by our description of the unit map in Lemma \ref{Explicit description of the unit}, we have that $\eta_{\mathcal{A}}([\gamma]_{\mathcal{A}})=[\zeta^{*}(\gamma)]_{\mathcal{F}(\mathcal{A})}=[\lambda]_{\mathcal{F}(\mathcal{A})}$. This shows that $\eta_{\mathcal{A}}$ is surjective.\end{proof}

\begin{example}\label{KGG translation is strongly selective}
    The KGG is a strongly selective translation. This fact was shown in \cite[Lemma 4.2]{Tur2008}, where the authors showed that the unit is an isomorphism. However, having established the translation is faithful, one can also use Lemma \ref{Essentially full implies surjective} to show this. By induction on formulae, we can show that if $\phi$ is equivalent to a formula of the form$   \neg\neg \psi(\neg\neg\overline{p})$, then there is some classical logic formula $\psi$ such that $K(\psi)=\phi$: if $\phi=\neg\neg p$, then $\psi=p$ works; for $\bot$ and $\top$ this trivially holds. Now if this holds for $\chi_{i}$, where we assume we have $\mu_{i}$ classical formulas such that $K(\mu_{i})=\neg\neg\chi_{i}(\neg\neg\overline{p})$ then if $\phi=\chi_{0}\wedge \chi_{1}$, note that:
    \begin{equation*}
        \neg\neg(\chi_{0}(\neg\neg\overline{p})\wedge \chi_{1}(\neg\neg\overline{p}))=\neg\neg\chi_{0}(\neg\neg\overline{p})\wedge \neg\neg \chi_{1}(\neg\neg\overline{p})=K(\mu_{0})\wedge K(\mu_{1})=K(\mu_{0}\wedge \mu_{1})
    \end{equation*}
    and
    \begin{equation*}
        \neg\neg(\chi_{0}(\neg\neg\overline{p})\vee \chi_{1}(\neg\neg\overline{p}))=\neg\neg (K(\mu_{0})\vee K(\mu_{1}))=K(\mu_{0}\vee \mu_{1})
    \end{equation*}
    and
    \begin{equation*}
        \neg\neg(\chi_{0}((\neg\neg\overline{p})\rightarrow\chi_{1}(\neg\neg\overline{p}))=\neg\neg \chi_{0}(\neg\neg\overline{p})\rightarrow\neg\neg \chi_{1}(\neg\neg\overline{p}) = K(\mu_{0})\rightarrow K(\mu_{1}) = K(\mu_{0}\rightarrow \mu_{1}).
    \end{equation*}
This then shows the essential fullness, which entails that KGG is a strongly selective translation.

The GMT translation is also strongly selective; the property of essential fullness is well-known in this case (see e.g. \cite[Theorem 14.9]{Chagrov1997-cr}). Indeed, more is true, as we will show in Section \ref{GMT and classic Blok-Esakia}.
\end{example}

Not all selective translations are strongly selective; it was shown in \cite[Proposition 5.3.34]{minhatesedemestrado} that the Goldblatt translation, whilst faithful, is not strongly selective.

We can further strengthen the adjunction by imposing conditions on the \textit{counit}, and a preservation requirement on the left adjoint.

\begin{definition}
    Let $\langle \zeta,f\rangle$ be a selective translation. $\zeta$ is \textit{sober} if it is strongly selective and additionally we have:
\begin{enumerate}
    \item[4.] The counit $\varepsilon$ of the adjunction is pointwise injective.
    \item[5.] The left adjoint $\mathcal{F}$ preserves injective homomorphisms.
\end{enumerate}
\end{definition}

For these conditions we have not been able to find syntactic counterparts.

\begin{example}
    In the next section we will show that the GMT translation is sober. On the other hand it was shown in \cite{minhatesedemestrado} that the double negation translation is not sober, since the counit is not an isomorphism.
\end{example}

The intuition behind sober translations, in particular, condition 4, lies originally in the \textit{duality theory} of intuitionistic, classical and modal logic. Through duality, given the adjunction $\mathcal{F}\vdash \theta$, we can obtain an adjunction on the dual categories $\mathcal{F}^{*}\vdash\theta^{*}$, which usually allows for a more transparent description of the overall transformations. For instance, the map $\theta^{*}:\mathbf{S4}\to \mathbf{ES}$ from $S4$-modal spaces to Esakia spaces, operates by collapsing all clusters, which may exist on these spaces, forming the so-called ``skeleton". However, the map $\theta^{*}:\mathbf{ES}\to \mathbf{ST}$ from Esakia spaces to Stone spaces, associated to the adjunction of the KGG translation, operates by looking at a poset $P$, which is required to have maximal elements, and considering $\mathsf{Max}(P)$. This can be seen as ``forgetting" all points of the partially ordered set which are not maximal. This is the mechanism exploited in \cite{minhatesedemestrado} to show that the counit is not an injection, and hence, the translation is not sober. Morally, then, sober translations are those where the adjunction cannot ``erase" any structure which is meaningful from the point of view of the translation, and can be seen as an intuitive justification for why sobriety appears (in the next section) to be crucial in guaranteeing that several structures are well-defined.

\section{A General Blok-Esakia Theory}\label{Section: General Blok-Esakia Theory}

We now make use of the classes of translations introduced in the previous section to develop a general ``Blok-Esakia theory". By this we mean a theory which explains the way that the translation operates on the lattices of extensions of the translated systems, and discussing which structural properties of the systems -- such as FMP, tabularity, decidability, amongst others -- can be transferred along the translation.

With this goal in mind, in Section \ref{GMT and classic Blok-Esakia} we recall the basics of this theory, and provide parallels which are used in Section \ref{Strongly Selective and Sober translations} to derive the main results.

\subsection{The GMT Translation and Classic Blok-Esakia}\label{GMT and classic Blok-Esakia}

In this subsection we recall some facts from ``Blok-Esakia theory". All of the facts concerning companions, the adjunction between Heyting algebras and $\mathbf{S4}$-algebras, as well as well as the transfer properties between companions, are well-known. We refer the reader to \cite{Chagrov1992,Chagrov1997-cr} for an in-depth discussion of all of these results. We also recommend \cite{stronkowskiblokesakia}, \cite{Esakiach2019HeyAlg} and especially \cite{antoniocleani} for good proofs of some facts which we claim below. We provide full proofs of some facts which turn out to have all the relevant ideas which are needed for the general case, and we prefer to outline them in this more concrete setting.

We begin with the notion of a modal companion which grounds the whole endeavour:

\begin{definition}
For $M\in \mathbf{NExt}(\mathsf{S4})$ and $L\in \Lambda(\mathsf{IPC})$, logics in the sense of axiomatic extensions, we say that $M$ is a \textit{modal companion} of $L$ iff:
\begin{equation*}
\phi\in L \iff GMT(\phi)\in M.
\end{equation*}
\end{definition}

It is well known -- and can be seen, by combining Theorem \ref{Translation theorem for the double negation translation} with some facts we mention below -- that $\mathsf{S4}$ is a modal companion of $\mathsf{IPC}$. Nevertheless, it is far from the only one. Another well-known companion is the logic known as $\mathsf{Grz}$: this can be axiomatised over $\mathsf{S4}$ as:
\begin{equation*}
  \mathsf{Grz}\coloneqq \mathsf{K}\oplus \Box(\Box(p\rightarrow \Box p)\rightarrow p)\rightarrow p. 
\end{equation*}

It was shown by Grzegorczyk \cite{Grzegorczyk1967-bb} that this logic is a modal companion of $\mathsf{IPC}$. Additionally, it was shown by Esakia \cite{esakiapaperabouts41} that the logic $\mathsf{S4.1}$ axiomatised over $\mathsf{S4}$ by $\Box\dia p\rightarrow\dia\Box p$ is also a companion of $\mathsf{IPC}$. Given this diversity of companions, one generally makes use of three maps, carrying logics to logics. The first two take us from axiomatic extensions of $\mathsf{IPC}$ to axiomatic extensions respectively of $\mathsf{S4}$ and $\mathsf{Grz}$, and are often denoted by $\tau$ and $\sigma$:
\begin{align*}
    \tau: \Lambda(\mathsf{IPC})&\to \mathbf{NExt}(\mathsf{S4})\\
    L &\mapsto \mathsf{S4}\oplus \{GMT(\phi) : \phi\in L\}\\
    \sigma: \Lambda(\mathsf{IPC})&\to \mathbf{NExt}(\mathsf{Grz})\\
    L &\mapsto \mathsf{Grz}\oplus \{GMT(\phi) : \phi\in L\}
\end{align*}

By definition, we have that given $L\in \Lambda(\mathsf{IPC})$, $\tau(L)$ and $\sigma(L)$ are logics, though we have no guarantee that they are -- as desired -- modal companions, since they may end up proving more translations. In the opposite direction, one defines the following map, from logics $M\in \mathbf{NExt}(\mathsf{S4})$ to sets of intuitionistic formulas:

\begin{equation*}
    \rho(M)=\{\phi:GMT(\phi)\in M\}
\end{equation*}

The definition is suggested by the notion of modal companion, and one would want to say that $\rho(M)\in \Lambda(\mathsf{IPC})$ is a logic, but this is again not immediate, since closure under substitution cannot be transferred -- at least without further considerations -- along the translation. A way to prove that these maps are well-defined, and to obtain a better grip of what they capture proceeds by looking at the algebraic semantics of these logics, and associates to these syntactic assignments some semantic ones. To do so, we start by describing in detail the adjunction arising from this translation.

Let $(-)_{\Box}:\mathbf{S4}\to\mathbf{HA}$ be the functor which maps an S4-algebra $\mathcal{B}$ to $\mathcal{B}_{\Box}$ (see Definition \ref{Heyting algebras and S4}), and which takes every homomorphism of S4-algebras $f:\mathcal{B}\to \mathcal{B}'$ o $f\restriction_{\Box}:\mathcal{B}_{\Box}\to \mathcal{B}'_{\Box}$, the restriction of $f$, following the general recipe outlined in  \ref{Algebraic Translations and Adjunctions}. We thus know that $(-)_{\Box}$ is a right adjoint functor, and from Proposition \ref{Selective translation preserves surjections}\footnote{The special case of this Proposition applying to the setting of the GMT translation was known, and can be found for example in \cite[Theorem 2.25]{Esakiach2019HeyAlg}.} we obtain that it preserves surjective homomorphisms in addition to all limits (including injective homomorphisms and products, which are specific kinds of limits).

Its corresponding left adjoint functor $B(-):\mathbf{HA}\to \mathbf{S4}$ is obtained as by taking the \textit{Boolean envelope} (\cite[Construction 2.5.7]{Esakiach2019HeyAlg}, see also \cite[pp.25]{gehrkevangoolnewbook} for a broader discussion): for each $\mathcal{H}$ a Heyting algebra, let $\mathsf{Pr}(\mathcal{H})$ denote the set of prime filters of $\mathcal{H}$. Define a map $\phi:H\to \mathbb{P}(\mathsf{Pr}(\mathcal{H}))$, where $\mathbb{P}(\mathsf{Pr}(\mathcal{H}))$ is the Boolean algebra of subsets, by setting $\phi(a)=\{x\in \mathsf{Pr}(\mathcal{H}) : a\in x\}$. $\phi$ is an injective lattice homomorphism, and we can let $B(\mathcal{H})$ be the Boolean subalgebra of $\mathbb{P}(\mathsf{Pr}(\mathcal{H}))$ generated by $\phi[\mathcal{H}]$. As a result, every element $a\in B(\mathcal{H})$ can be written in the form
\begin{equation*}
    \bigwedge_{i=1}^{n} \neg c_{i} \vee d_{i}
\end{equation*}
where $c_{i},d_{i}\in H$. We then define a $\Box$-modality on this Boolean algebra, by setting, for each $a\in B(\mathcal{H})$, where $a=\bigwedge_{i=1}^{n} \neg c_{i} \vee d_{i}$:
\begin{equation*}
    \Box a = \bigwedge_{i=1}^{n} c_{i}\rightarrow d_{i},
\end{equation*}
where $c_{i}\rightarrow d_{i}$ is the Heyting implication.

On maps, given $f:\mathcal{H}\to \mathcal{H}'$ a Heyting algebra homomorphism, we let $B(f):B(\mathcal{H})\to B(\mathcal{H}')$ be the unique lift of the map $f$ to the Boolean envelope, which can be shown to preserve the modality since $f$ is a Heyting homomorphism. Then we have that (see \cite[Proposition 2.5.9]{Esakiach2019HeyAlg}):
\begin{itemize}
    \item The functor $B(-)$ is a left adjoint, and hence preserves all colimits, and also preserves all finite limits.
    \item For each product $\prod_{i\in I}\mathcal{H}_{i}$ of Heyting algebras, $B(\prod_{i\in I}\mathcal{H}_{i})$ is a subalgebra of $\prod_{i\in I}B(\mathcal{H}_{i})$.
\end{itemize}

Having the above explicit description also makes it possible to prove, without too much difficulty, the following two facts \cite[Theorem 2.5.11]{Esakiach2019HeyAlg}:
\begin{itemize}
    \item The unit map $\eta$ is an isomorphism.
    \item The counit map $\varepsilon$ is pointwise injective.
\end{itemize}

Hence, we obtain that:

\begin{corollary}\label{The GMT translation is sober}
    The GMT translation is sober.
\end{corollary}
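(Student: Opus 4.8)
The plan is to verify, clause by clause, the definition of a sober translation for the triple $\langle GMT, f\rangle$ with selector $f(x)\coloneqq \Box x$, relying on the adjunction $B(-)\dashv (-)_{\Box}$ between $\mathbf{HA}$ and $\mathbf{S4}$ set up above. Clauses~1 and~2, that this is a selective translation, were recorded in Example~\ref{Kolmogorov and GMT translations are selective}. Clause~3, that the unit $\eta$ is an isomorphism, is the content of Example~\ref{KGG translation is strongly selective} for $GMT$ --- equivalently it follows from faithfulness of $GMT$ together with essential fullness, via Proposition~\ref{Unit is a split mono} and Lemma~\ref{Essentially full implies surjective} --- and is in any case the first of the two facts quoted above from \cite[Theorem~2.5.11]{Esakiach2019HeyAlg}. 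Clause~4, that the counit $\varepsilon$ is pointwise injective, is the second of those facts. Thus the only clause still to be argued is clause~5: that the left adjoint $B(-)$ preserves injective homomorphisms.

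For clause~5 the key point, noted above (see \cite[Proposition~2.5.9]{Esakiach2019HeyAlg}), is that $B(-)$ not only preserves all colimits, being a left adjoint, but also preserves all \emph{finite} limits. A functor preserving finite limits automatically preserves monomorphisms: a homomorphism $h\colon \mathcal{A}\to \mathcal{B}$ is monic exactly when the map $\mathcal{A}\to \mathcal{A}\times_{\mathcal{B}}\mathcal{A}$ induced by the pair $(\mathrm{id}_{\mathcal{A}},\mathrm{id}_{\mathcal{A}})$ is an isomorphism, and this characterisation is carried along by any pullback-preserving functor. Since $\mathbf{HA}$ and $\mathbf{S4}$ are quasivarieties, their monomorphisms coincide with the injective homomorphisms (a non-injective homomorphism is separated by a suitable pair of homomorphisms out of the free algebra on one generator). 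Hence $B(-)$ sends injective homomorphisms to injective homomorphisms, which is clause~5; combining clauses~1--5 yields that $GMT$ is sober.

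I do not expect a genuine obstacle here: everything except clause~5 has already been established in the discussion preceding the statement, and clause~5 rests only on the routine categorical observation just given. The one mildly non-formal ingredient is the passage from ``$B(-)$ preserves finite limits'' to ``$B(-)$ preserves injective homomorphisms''; should one prefer to avoid it, the same conclusion can be reached concretely, by writing a generic element of $B(\mathcal{H})$ in the normal form $\bigwedge_{i=1}^{n}\neg c_{i}\vee d_{i}$ with $c_{i},d_{i}\in H$, observing that $B(f)$ then acts by $\bigwedge_{i=1}^{n}\neg c_{i}\vee d_{i}\mapsto \bigwedge_{i=1}^{n}\neg f(c_{i})\vee f(d_{i})$, and checking directly, from the injectivity of the representation $\phi\colon H\to \mathbb{P}(\mathsf{Pr}(\mathcal{H}))$ and of $f$, that this map is injective.
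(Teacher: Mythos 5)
Your proof is correct and takes essentially the same route as the paper: the paper's own proof just notes that $GMT$ is strongly selective and appeals to the facts recorded immediately above the corollary (the counit is pointwise injective, and $B(-)$ preserves all finite limits), which is precisely what you verify clause by clause. Your explicit derivation of clause~5 from finite-limit preservation (pullback-preserving functors preserve monos, and monos in quasivarieties are the injective homomorphisms) is the step the paper leaves implicit, and it is sound.
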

\begin{proof}
    We have noted in Example \ref{KGG translation is strongly selective} that the GMT translation is strongly selective, which in light of the above yields that it is sober. 
\end{proof}

Note that as a result of the unit being an isomorphism, whenever $\bf{H}\nvDash \phi$, then $B(\bf{H})\nvDash GMT(\phi)$; this is because since $\bf{H}\cong\theta(B(\bf{H}))$, this follows from Theorem \ref{Translation theorem for the double negation translation}. This thus allows us, as promised, to show that $\mathsf{S4}$ is a modal companion.

Using these transformations we can readily define appropriate maps on the lattice of varieties\footnote{We recall that varieties correspond to axiomatic extensions of our systems, and in this section we are focusing on this setting, it being the classical place of analysis in the literature}. Given a variety $\bf{K}$, denote the lattice of subvarieties of $\bf{K}$ as $\Xi(\bf{K})$. Then given $\bf{K}\in \Xi(\bf{HA})$, we write
\begin{equation*}
    \tau(\mathbf{K})=\{ \mathcal{B}\in \mathbf{S4} : \mathcal{B}_{\Box}\in \mathbf{K}\}.
\end{equation*}

The ambiguity in denoting this as $\tau$ is intentional, and is explained by the next proposition. It was essentially shown by Dummett and Lemmon \cite{Dummett1959}, though it is presented here in a different way.

\begin{proposition}\label{Key Properties of the tau-map in the S4 case}
Let $L\in \Lambda(\mathsf{IPC})$ and $\mathbf{K}\in \Xi(\mathbf{HA})$. Then:
\begin{enumerate}
    \item $\tau(\mathbf{K})$ is a variety.
    \item $\tau(\mathsf{Var}(L))=\mathsf{Var}(\tau(L))$. Hence $\tau(\mathsf{Log}(\mathbf{K}))=\mathsf{Log}(\tau(\mathbf{K}))$
    \item $\tau$ is a complete homomorphism on the lattice of logics.
    \item $\tau(L)$ is the least modal companion of $L$;
\end{enumerate}
\end{proposition}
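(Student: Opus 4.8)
The plan is to establish the four claims in order, leveraging the adjunction $B(-) \dashv (-)_\Box$ between $\mathbf{HA}$ and $\mathbf{S4}$ and the soberness/strong selectivity of GMT recorded in Corollary \ref{The GMT translation is sober}. For (1), I would show $\tau(\mathbf{K}) = \{\mathcal{B}\in\mathbf{S4} : \mathcal{B}_\Box\in\mathbf{K}\}$ is closed under $\mathbb{H}$, $\mathbb{S}$, $\mathbb{P}$. Closure under $\mathbb{S}$ and $\mathbb{P}$ follows because the functor $(-)_\Box$ preserves injective homomorphisms and products (both being limits); so if $\mathcal{B}'\leq\mathcal{B}$ or $\mathcal{B} = \prod_i\mathcal{B}_i$ with all $(\mathcal{B}_i)_\Box\in\mathbf{K}$, then $\mathcal{B}'_\Box \leq \mathcal{B}_\Box$ resp. $\mathcal{B}_\Box = \prod_i(\mathcal{B}_i)_\Box$ lies in $\mathbf{K}$. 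Closure under $\mathbb{H}$ uses Proposition \ref{Selective translation preserves surjections}: if $h:\mathcal{B}\twoheadrightarrow\mathcal{C}$ is surjective then $h\restriction_\Box : \mathcal{B}_\Box \twoheadrightarrow \mathcal{C}_\Box$ is surjective, so $\mathcal{C}_\Box \in \mathbb{H}(\mathbf{K}) = \mathbf{K}$.

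For (2), the inclusion $\tau(\mathsf{Var}(L)) \subseteq \mathsf{Var}(\tau(L))$ is the easy direction: if $\mathcal{B}_\Box \vDash L$ then $\mathcal{B} \vDash GMT(\phi)$ for every $\phi\in L$ by Theorem \ref{Translation theorem for the double negation translation}, so $\mathcal{B}\vDash\tau(L)$. For the reverse inclusion I would argue contrapositively: if $\mathcal{B}_\Box \nvDash L$, pick $\phi\in L$ with $\mathcal{B}_\Box\nvDash\phi$; then by Theorem \ref{Translation theorem for the double negation translation} again $\mathcal{B}\nvDash GMT(\phi)$, so $\mathcal{B}\nvDash\tau(L)$, i.e. $\mathcal{B}\notin\mathsf{Var}(\tau(L))$. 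The consequence about $\mathsf{Log}$ then follows by applying the dual isomorphism $\mathsf{Var}(-)/\mathsf{Log}(-)$ and the fact that every variety is of the form $\mathsf{Var}(L)$ for its dual logic $L$.

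For (3), completeness of $\tau$ as a lattice map: using part (2) it suffices to show the corresponding variety-map $\mathbf{K}\mapsto\tau(\mathbf{K})$ preserves arbitrary intersections and arbitrary joins of subvarieties of $\mathbf{HA}$; since the lattice operations on logics are dual to those on varieties, this gives completeness of $\tau$ on $\Lambda(\mathsf{IPC})$. Preservation of intersections is immediate from the defining condition ``$\mathcal{B}_\Box\in\mathbf{K}$''. For joins, the key point is that $\bigvee_i\mathbf{K}_i = \mathbb{HSP}(\bigcup_i\mathbf{K}_i)$, and one shows $\mathcal{B}_\Box \in \bigvee_i\mathbf{K}_i$ iff $\mathcal{B}\in\bigvee_i\tau(\mathbf{K}_i)$; the nontrivial direction uses the soundness/faithfulness of GMT together with the fact that $\mathbf{K}_i = \mathsf{Var}(\mathsf{Log}(\mathbf{K}_i))$ and that the join of logics is axiomatised by the union of their axioms (Lemma \ref{Explicit definition of supremum of logics}), pulling the computation back to equational consequence. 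Finally, for (4): that $\tau(L)$ is \emph{a} modal companion of $L$ amounts to showing $GMT(\phi)\in\tau(L) \Rightarrow \phi\in L$ (the converse being the definition of $\tau(L)$). By algebraic completeness it suffices to show that if $\mathcal{H}\in\mathsf{Var}(L)$ then $\mathcal{H}\vDash\phi$; using that the unit $\eta$ is an isomorphism, $\mathcal{H}\cong (B(\mathcal{H}))_\Box$ and $B(\mathcal{H})\in\tau(\mathsf{Var}(L)) = \mathsf{Var}(\tau(L))$ by part (2), so $B(\mathcal{H})\vDash GMT(\phi)$, and then $\mathcal{H}\cong(B(\mathcal{H}))_\Box\vDash\phi$ by Theorem \ref{Translation theorem for the double negation translation}. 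That $\tau(L)$ is the \emph{least} companion: if $M$ is any modal companion of $L$ then $GMT(\phi)\in M$ for all $\phi\in L$, and since $M$ is a normal modal logic extending $\mathsf{S4}$ it is closed under the rules used to generate $\tau(L) = \mathsf{S4}\oplus\{GMT(\phi):\phi\in L\}$; hence $\tau(L)\subseteq M$.

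The main obstacle I anticipate is part (3), specifically the preservation of joins: the union of two varieties is not a variety, so one must genuinely pass through $\mathbb{HSP}$ (or equivalently through the syntactic description of joins of logics), and carefully check that a failure of $GMT(\phi)$ in the join variety $\tau(\mathbf{K}_1\vee\mathbf{K}_2)$ can be pushed down to a failure of $\phi$ in $\mathbf{K}_1\vee\mathbf{K}_2$ — this is where one really uses that the unit is an isomorphism (so that no Heyting algebra ``disappears'' under $B(-)$) rather than merely that GMT is faithful. All the other parts are essentially formal consequences of the adjunction's preservation properties plus Theorem \ref{Translation theorem for the double negation translation}.
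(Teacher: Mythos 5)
Your treatment of parts (1), (2) and (4) coincides with the paper's proof. Part (1) is the closure of $\tau(\mathbf{K})$ under $\mathbb{H}$, $\mathbb{S}$, $\mathbb{P}$ via the preservation properties of $(-)_{\Box}$ (the paper phrases it as $\mathsf{Var}(\tau(\mathbf{K}))\subseteq \tau(\mathbf{K})$, which is the same computation); (2) is the same chain of equivalences through Theorem \ref{Translation theorem for the double negation translation}; and (4) is the same use of the unit isomorphism to produce, for each $\mathcal{H}\in\mathsf{Var}(L)$ refuting $\phi$, an algebra $\mathcal{B}$ with $\mathcal{B}_{\Box}\cong\mathcal{H}$ lying in $\mathsf{Var}(\tau(L))$, with leastness immediate from $\tau(L)$ being generated by axioms that any companion must contain.

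Part (3) is where your plan has a gap, and it is worth sorting out the direction bookkeeping. Under the dual isomorphism $\mathsf{Var}$, joins of logics correspond to intersections of varieties and meets of logics to joins of varieties. Preservation of arbitrary joins of logics therefore already follows from the trivial identity $\tau(\bigcap_{i}\mathbf{K}_{i})=\bigcap_{i}\tau(\mathbf{K}_{i})$ together with (2). Preservation of arbitrary meets of logics requires $\tau(\bigvee_{i}\mathbf{K}_{i})=\bigvee_{i}\tau(\mathbf{K}_{i})$ for joins of varieties. Of this, the inclusion that the $\mathbb{HSP}$ argument actually delivers is $\bigvee_{i}\tau(\mathbf{K}_{i})\subseteq\tau(\bigvee_{i}\mathbf{K}_{i})$ (push $(-)_{\Box}$ through a homomorphic image of a subalgebra of a product), and this one also follows from monotonicity of $\tau$ plus part (1). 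The converse inclusion $\tau(\bigvee_{i}\mathbf{K}_{i})\subseteq\bigvee_{i}\tau(\mathbf{K}_{i})$, which you correctly single out as the hard one, is not established by ``pushing a failure of $GMT(\phi)$ down'': an equation separating $\bigvee_{i}\tau(\mathbf{K}_{i})$ from $\tau(\bigvee_{i}\mathbf{K}_{i})$ is an arbitrary modal equation, not a translated intuitionistic one, and there is no functor carrying an algebra $\mathcal{A}$ with $\mathcal{A}_{\Box}\in\mathbb{HSP}(\bigcup_{i}\mathbf{K}_{i})$ into $\mathbb{HSP}(\bigcup_{i}\tau(\mathbf{K}_{i}))$ --- the counit only gives $B(\mathcal{A}_{\Box})$ as a \emph{subalgebra} of $\mathcal{A}$, and $\mathbb{HSP}$-classes are not closed under superalgebras. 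You should also note that the paper's own proof of (3) only carries out the inclusion $\bigvee_{i}\tau(\mathbf{K}_{i})\subseteq\tau(\bigvee_{i}\mathbf{K}_{i})$; closing the remaining inclusion genuinely requires input beyond the adjunction's preservation properties, so your instinct that this is the main obstacle is right, but the sketch you give does not overcome it.
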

\begin{proof}
To see (1), suppose that $\mathcal{A}\in \mathsf{Var}(\tau(\mathbf{K}))$. By Tarski's HSP theorem, assume that $\mathcal{A}$ is a homomorphic image of $\mathcal{B}$, which is a subalgebra of $\prod_{i\in I}\mathcal{B}_{i}$, and $(\mathcal{B}_{i})_{\Box}\in \bf{K}$. Since $(-)_{\Box}$ commutes with all limits and surjective homomorphisms, then $\mathcal{A}_{\Box}$ is a homomorphic image of $\mathcal{B}_{\Box}$ which is a subalgebra of $\prod_{i\in I}(\mathcal{B}_{i})_{\Box}$. Hence $\mathcal{A}_{\Box}\in K$, and thus, $\mathcal{A}\in \tau(K)$.

To see (2), we see that for each $\mathbf{S4}$-algebra $\mathcal{B}$, $\mathcal{B}\in \tau(\mathsf{Var}(L))$ if and only if $\mathcal{B}_{\Box}\in \mathsf{Var}(L)$, if and only if $\mathcal{B}_{\Box}\vDash L$ if and only if $\mathcal{B}\vDash \{GMT(\phi) : \phi\in L\}$ (by Theorem \ref{Translation theorem for the double negation translation}), if and only if $\mathcal{B}\in \mathsf{Var}(\tau(L))$. The second statement follows by the first-one, using algebra-logic duality.

To see (3) first note that $\tau(\bigcap_{i\in I}\mathbf{K}_{i})=\bigcap_{i\in I}\tau(\mathbf{K}_{i})$ by definition: $\mathcal{A}\in \tau(\bigcap_{i\in I}\mathbf{K}_{i})$ if and only if $\mathcal{A}_{\Box}\in \bigcap_{i\in I}\mathbf{K}_{i}$, if and only if $\mathcal{A}_{\Box}\in \mathbf{K}_{i}$ for each $i$, if and only if $\mathcal{A}\in \tau(\mathbf{K}_{i})$ for each $i$, if and only if $\mathcal{A}\in \bigcap_{i\in I}\tau(\mathbf{K}_{i})$. For the join, first set $L_{i}=\mathsf{Log}(\bf{K}_{i})$. Note that since
\begin{equation*}
   \mathsf{S4}\oplus\{GMT(\phi) : \phi\in \bigcup_{i\in I}L_{i}\}\subseteq \mathsf{S4}\oplus\{GMT(\phi) : \phi\in \bigvee_{i\in I}L_{i}\}
\end{equation*}
then we have that $\bigvee_{i\in I}\tau(L_{i})\subseteq \tau(\bigvee_{i\in I}L_{i})$. To show the other inclusion, by algebraic completeness and the fact showed in (1) that these are really varieties, it suffices to show that $\mathsf{Var}(\bigvee_{i\in I}\tau(L_{i}))\subseteq \mathsf{Var}(\tau(\bigvee_{i\in I}L_{i}))$, which by (2) means that, $\bigvee_{i\in I}\tau(\bf{K}_{i})\subseteq \tau(\bigvee_{i\in I}\bf{K}_{i})$. Assume that $\mathcal{A}$ is a homomorphic image of $\mathcal{B}$, which is a subalgebra of $\mathcal{C}=\prod_{i\in I}\mathcal{D}_{i}$, and $\mathcal{D}_{i}$ all belong to $\bf{K}_{i}$. Using the fact that $(-)_{\Box}$ preserves all limits and surjective homomorphisms, we obtain that $\mathcal{B}_{\Box}\in \bigvee_{i\in I}\bf{K}_{i}$, and so $\mathcal{B}\in \tau(\bigvee_{i\in I}\bf{K}_{i})$ as desired.

To see (4), notice that by definition if $\phi\in L$ then $GMT(\phi)\in \tau(L)$. Conversely, if $\phi\notin L$, let $\mathcal{H}\in \mathsf{Var}(L)$ be such that $\mathcal{H}\nvDash \phi$; because $\eta$ is an isomorphism, we know that there is some $\mathcal{B}$ such that $\mathcal{B}_{\Box}\cong \mathcal{H}$; hence, $\mathcal{B}\in \tau(\mathsf{Var}(L))$. By (2), then $\mathcal{B}\in \mathsf{Var}(\tau(L))$, i.e, $\mathcal{B}\vDash \tau(L)$, and $\mathcal{B}\nvDash GMT(\phi)$. This proves that $\tau(L)$ is a modal companion. It is clear to see that it must be least.\end{proof}

We now turn to the other direction. Paralleling the assignment $\rho$ above, we can define:
\begin{align*}
    \rho(\mathbf{K})\coloneqq \{\mathcal{B}_{\Box} : \mathcal{B}\in \mathbf{K}\}.
\end{align*}

The following proposition, essentially established by Esakia \cite{esakiapaperabouts41}, tells us that this is a well-defined map, and also that the map $\rho$ on logics gives us genuine modal companions. We call the attention to the reader to the different properties of the adjunction we will use here, compared to $\tau$:

\begin{proposition} \label{Key Properties of the rho-map in S4}
Let $\mathbf{K}\in \Xi(\mathbf{S4})$ and $M\in \Lambda(\mathsf{S4})$. Then:
\begin{enumerate}
    \item $\rho(\mathbf{K})$ is a variety.
    \item $\rho(\mathsf{Var}(M))=\mathsf{Var}(\rho(M))$. Hence the map $\rho:\mathbf{NExt}(\mathsf{S4})\to \Lambda(\mathsf{IPC})$ defined as $\rho(L)=\{\phi : GMT(\phi)\in L\}$ is well-defined.
    \item $\rho$ is a surjective complete homomorphism on the lattice of varieties.
    \item For all $N\in \mathbf{NExt}(\mathsf{S4})$, and $L\in \Lambda(\mathsf{IPC})$, $N$ is a modal companion of $L$ if and only if $\rho(N)=L$. Hence, for all $\mathbf{K}\in \mathsf{Var}(\mathsf{S4})$ and $\mathbf{P}\in \mathsf{Var}(\mathsf{HA})$ we have that if $\mathsf{Log}(\bf{K})$ is a modal companion of $\mathsf{Log}(\mathbf{P})$ then $\rho(\mathbf{K})=\mathbf{P}$.
\end{enumerate}
\end{proposition}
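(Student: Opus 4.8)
The plan is to treat the four items essentially in the order given, with item (1) carrying the real content; the remaining items then follow by the same bookkeeping as in Proposition~\ref{Key Properties of the tau-map in the S4 case}, but now also invoking the extra structure guaranteed by the GMT translation being sober (Corollary~\ref{The GMT translation is sober}): the unit $\eta$ of the adjunction $B(-)\dashv(-)_\Box$ is an isomorphism, the counit $\varepsilon$ is pointwise injective, and the left adjoint $B(-)$ preserves injective homomorphisms. I also use freely that $(-)_\Box=\theta$ preserves all limits and, by Proposition~\ref{Selective translation preserves surjections}, surjective homomorphisms, and that $B(-)$, being a left adjoint, preserves epimorphisms. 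For item (1), I check that $\rho(\mathbf{K})=\{\mathcal{B}_\Box : \mathcal{B}\in\mathbf{K}\}$ is closed under $\mathbb{H}$, $\mathbb{S}$ and $\mathbb{P}$. Products are immediate since $\theta$ preserves products, so $\prod_i(\mathcal{B}_i)_\Box\cong(\prod_i\mathcal{B}_i)_\Box$. For $\mathbb{S}$ and $\mathbb{H}$ the crux is the observation that if $\mathcal{H}=\mathcal{B}_\Box$ with $\mathcal{B}\in\mathbf{K}$, then the counit $\varepsilon_\mathcal{B}\colon B(\theta(\mathcal{B}))\to\mathcal{B}$ is injective, so $B(\mathcal{H})$ embeds into $\mathcal{B}$ and hence $B(\mathcal{H})\in\mathbf{K}$. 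Then a subalgebra $\mathcal{H}'\hookrightarrow\mathcal{H}$ gives $B(\mathcal{H}')\hookrightarrow B(\mathcal{H})\in\mathbf{K}$ (as $B(-)$ preserves injections), so $B(\mathcal{H}')\in\mathbf{K}$, and since $\eta$ is an isomorphism $\mathcal{H}'\cong\theta(B(\mathcal{H}'))\in\rho(\mathbf{K})$; and a quotient $\mathcal{H}\twoheadrightarrow\mathcal{H}'$ gives $B(\mathcal{H})\twoheadrightarrow B(\mathcal{H}')$, so again $B(\mathcal{H}')\in\mathbf{K}$ and $\mathcal{H}'\cong\theta(B(\mathcal{H}'))\in\rho(\mathbf{K})$. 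I expect this to be the main obstacle: closure under $\mathbb{S}$ and $\mathbb{H}$ is precisely where sobriety is indispensable — injectivity of the counit to get $B(\mathcal{B}_\Box)\in\mathbf{K}$, and preservation of injections by $B(-)$ to pass to subalgebras — and it is the one place the $\rho$-side needs strictly more than the $\tau$-side of Proposition~\ref{Key Properties of the tau-map in the S4 case}.

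Items (2) and (4) are then formal. By Theorem~\ref{Translation theorem for the double negation translation}, $\mathcal{B}_\Box\vDash\phi$ iff $\mathcal{B}\vDash GMT(\phi)$, so for $M=\mathsf{Log}(\mathbf{K})$ one has $\phi\in\rho(M)\iff GMT(\phi)\in M\iff\mathbf{K}\vDash GMT(\phi)\iff\rho(\mathbf{K})\vDash\phi$; hence $\rho(M)=\mathsf{Log}(\rho(\mathbf{K}))$, which by item (1) is the logic of a subvariety of $\mathbf{HA}$ — so $\rho(M)$ is a genuine intermediate logic — and by algebra--logic duality $\mathsf{Var}(\rho(M))=\rho(\mathbf{K})=\rho(\mathsf{Var}(M))$. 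Given this, item (4)'s first equivalence is just the definition of modal companion read off directly: $N$ is a modal companion of $L$ iff $\{\phi : GMT(\phi)\in N\}=L$ iff $\rho(N)=L$; and the ``hence'' clause follows by combining it with (2) and (1): if $\mathsf{Log}(\mathbf{K})$ is a modal companion of $\mathsf{Log}(\mathbf{P})$ then $\mathsf{Log}(\rho(\mathbf{K}))=\rho(\mathsf{Log}(\mathbf{K}))=\mathsf{Log}(\mathbf{P})$, and since $\rho(\mathbf{K})$ is a variety this forces $\rho(\mathbf{K})=\mathbf{P}$.

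For item (3), monotonicity of $\rho$ is immediate from the definition, and meets are handled as in (1)'s subalgebra case: $\rho(\bigcap_i\mathbf{K}_i)\subseteq\bigcap_i\rho(\mathbf{K}_i)$ is trivial, and conversely if $\mathcal{A}\in\bigcap_i\rho(\mathbf{K}_i)$ then $\mathcal{A}\cong(\mathcal{B}_i)_\Box$ for some $\mathcal{B}_i\in\mathbf{K}_i$, so $B(\mathcal{A})$ embeds into $\mathcal{B}_i$ via the injective counit, whence $B(\mathcal{A})\in\bigcap_i\mathbf{K}_i$ and $\mathcal{A}\cong\theta(B(\mathcal{A}))\in\rho(\bigcap_i\mathbf{K}_i)$. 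Joins: $\bigvee_i\rho(\mathbf{K}_i)\subseteq\rho(\bigvee_i\mathbf{K}_i)$ is monotonicity, and the reverse inclusion runs exactly the $\mathbb{HSP}$ argument of Proposition~\ref{Key Properties of the tau-map in the S4 case}(3), applying $\theta$ (which preserves limits and surjections) to realise any $\mathcal{B}_\Box$ with $\mathcal{B}\in\mathbb{HSP}(\bigcup_i\mathbf{K}_i)$ as an $\mathbb{HSP}$-construction from $\bigcup_i\rho(\mathbf{K}_i)$. Surjectivity is witnessed by $\tau$: for $\mathbf{P}\in\Xi(\mathbf{HA})$, $\tau(\mathbf{P})$ is a variety by Proposition~\ref{Key Properties of the tau-map in the S4 case}(1), $\rho(\tau(\mathbf{P}))\subseteq\mathbf{P}$ holds by definition, and for $\mathcal{H}\in\mathbf{P}$ we have $B(\mathcal{H})\in\tau(\mathbf{P})$ because $(B(\mathcal{H}))_\Box\cong\mathcal{H}\in\mathbf{P}$, so $\mathcal{H}\cong\theta(B(\mathcal{H}))\in\rho(\tau(\mathbf{P}))$; thus $\rho\circ\tau=\mathrm{id}_{\Xi(\mathbf{HA})}$ and $\rho$ is surjective. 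Everything here other than item (1) and the meet clause (which share the sobriety input) is routine.
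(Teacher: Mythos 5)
Your proposal is correct and follows essentially the same route as the paper's proof: item (1) rests on exactly the same ingredients (the unit being an isomorphism, the injectivity of the counit to get $B(\mathcal{B}_\Box)$ inside $\mathcal{B}$, and preservation of injections and surjections by $B(-)$), with the only cosmetic difference that you verify closure under $\mathbb{H}$, $\mathbb{S}$, $\mathbb{P}$ separately where the paper runs a single $\mathbb{HSP}$ decomposition, and items (2)--(4) match the paper's arguments, including surjectivity via $\rho\circ\tau=\mathrm{id}$. You also spell out the meet and join computations in (3) that the paper delegates to ``similar arguments,'' which is fine but not a different method.
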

\begin{proof}
(1) Suppose that $\mathcal{A}\in \mathsf{Var}(\rho(\mathbf{K}))$. By Tarski's HSP theorem, $\mathcal{A}$ is a homomorphic image of $\mathcal{B}$, which is a subalgebra of $\prod_{i\in I}(\mathcal{C}_{i})_{\Box}$, where $\mathcal{C}_{i}\in \mathbf{K}$. Note that since $(-)_{\Box}$ preserves products, the latter is $(\prod_{i\in I}\mathcal{C}_{i})_{\Box}$. Hence, since $B(-)$ preserves homomorphic images, $B(\mathcal{A})$ is a homomorphic image of $B(\mathcal{B})$ which is a subalgebra of $B((\prod_{i\in I}\mathcal{C}_{i})_{\Box})$; since the counit is an injective homomorphism, $B((\prod_{i\in I}\mathcal{C}_{i})_{\Box})$ is a subalgebra of $\prod_{i\in I}\mathcal{C}_{i}$, and so $B(\mathcal{B})$ is as well. Thus, $B(\mathcal{A})\in \mathbf{K}$, since the latter is a variety. Since $(B(\mathcal{A}))_{\Box}\cong \mathcal{A}$, given the unit is an isomorphism, we have that $\mathcal{A}\in \rho(\mathbf{K})$.

To see (2), first note that if $\mathcal{H}\in \rho(\mathsf{Var}(M))$ then $\mathcal{H}=\mathcal{B}_{\Box}$ where $\mathcal{B}\vDash M$; hence, if $GMT(\phi)\in M$, then $\mathcal{H}\vDash \phi$, so $\mathcal{H}\vDash \rho(M)$. Thus $H\in \mathsf{Var}(\rho(M))$. Conversely, assume that $\phi\in \rho(M)$, and let $\mathcal{H}$ be such that $\mathcal{B}_{\Box}=\mathcal{H}$ and $\mathcal{B}\vDash M$; since $\phi\in \rho(M)$, then $GMT(\phi)\in M$, so $\mathcal{B}\vDash GMT(\phi)$, and hence, $\mathcal{H}\vDash \phi$ so:
\begin{equation*}
    \rho(M)\subseteq \mathsf{Log}(\rho(\mathsf{Var}(M))
\end{equation*}
Hence by algebraic completeness, and the fact that $\rho(\mathsf{Var}(M))$ is a variety:
\begin{equation*}
    \mathsf{Var}(\rho(M))\subseteq \mathsf{Var}(\mathsf{Log}(\rho(\mathsf{Var}(M))))=\rho(\mathsf{Var}(M))
\end{equation*}
This shows the other inclusion. Hence we have that $\rho(\mathsf{Var}(M))=\mathsf{Var}(\rho(M))$, which by logic-algebra duality and (1) implies the remaining facts.

(3) The verification that this is a complete homomorphism proceeds by similar arguments to those of Proposition \ref{Key Properties of the tau-map in the S4 case} (3). To see that it is surjective, simply note that if $\mathbf{K}$ is any variety of Heyting algebras, then $\tau(\mathbf{K})=\{\mathcal{B} : \rho(\mathcal{B})\in \mathbf{K}\}$ is such that $\rho\tau(\mathbf{K})=\mathbf{K}$.

(4) Note that given any such $N$ and $L$, we have that $\rho(N)=L$ if and only if ``$\phi\in L$ if and only if $GMT(\phi)\in M$" if and only if $M$ is a modal companion of $L$. To see the last statement, assume that $\mathsf{Log}(\bf{K})$ is a modal companion of $\mathsf{Log}(\bf{P})$. Then $\rho(\mathsf{Log}(\bf{K}))=\mathsf{Log}(\bf{P})$. Hence $\mathsf{Log}(\rho(\bf{K}))=\mathsf{Log}(\bf{P})$, so by algebraic completeness, $\rho(\bf{K})=\bf{P}$.\end{proof}

The final map that appears in this context is usually called $\sigma$. Whereas, by the contents of the previous propositions, $\tau$ is the \textit{least} modal companion, $\sigma$ outlines the greatest. Its definition on algebras is thus as follows:
\begin{align*}
    \sigma(\mathbf{K})\coloneqq \mathbb{HSP}\{B(\mathcal{H}) : \mathcal{H}\in \mathbf{K}\}
\end{align*}

Unlike the remaining cases, the fact that $\sigma(\bf{K})$ is dual to $\sigma(\mathsf{Log}(\bf{K}))$ depends on special properties of $\mathsf{S4}$ and $\mathsf{IPC}$. Namely, one needs to make use of ``Blok's Lemma", or an analogous tool, which shows that every algebra $\mathcal{A}\vDash \sigma(L)$ will belong to $\sigma(\mathsf{Var}(L))$. This was established idempendently by Blok \cite{Blok1976VarietiesOI} and Esakia \cite{esakiapaperatconference}, and it relies heavily on the varieties $\sigma(\bf{K})$ being in fact varieties of $\bf{Grz}$-algebras. Nevertheless, the \textit{existence} of greatest modal companions can be established generically, just relying on properties of a sober translation:

\begin{proposition}\label{Maximal Grz-companion}
For any $L\in \Lambda(\mathsf{IPC})$, $\mathsf{Log}(\sigma(\mathsf{Var}(L))$ is the greatest modal companion of $L$.
\end{proposition}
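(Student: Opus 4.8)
The plan is to set $\mathbf{K} := \mathsf{Var}(L)$ and $\mathbf{V} := \sigma(\mathbf{K}) = \mathbb{HSP}\{B(\mathcal{H}) : \mathcal{H} \in \mathbf{K}\}$, and establish two things: that $\mathsf{Log}(\mathbf{V})$ is a modal companion of $L$, and that it contains every modal companion of $L$. Together these say exactly that $\mathsf{Log}(\sigma(\mathsf{Var}(L)))$ is the greatest modal companion.

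For the first point, note that $\mathbf{V} \subseteq \mathbf{S4}$, so $\mathsf{Log}(\mathbf{V}) \in \mathbf{NExt}(\mathsf{S4})$. Since the validity of $GMT(\phi)$ is expressed by an equation, it is preserved by $\mathbb{H}, \mathbb{S}, \mathbb{P}$, so $GMT(\phi) \in \mathsf{Log}(\mathbf{V})$ iff $B(\mathcal{H}) \vDash GMT(\phi)$ for every $\mathcal{H} \in \mathbf{K}$. By Theorem~\ref{Translation theorem for the double negation translation}, $B(\mathcal{H}) \vDash GMT(\phi)$ iff $(B(\mathcal{H}))_{\Box} \vDash \phi$; and because the unit $\eta$ of the GMT-adjunction is an isomorphism, $(B(\mathcal{H}))_{\Box} = \theta(B(\mathcal{H})) \cong \mathcal{H}$. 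Hence $GMT(\phi) \in \mathsf{Log}(\mathbf{V})$ iff $\mathcal{H} \vDash \phi$ for all $\mathcal{H} \in \mathsf{Var}(L)$, i.e.\ iff $\phi \in L$ by algebraic completeness. This gives $\phi \in L \iff GMT(\phi) \in \mathsf{Log}(\mathbf{V})$, so $\mathsf{Log}(\mathbf{V})$ is a modal companion of $L$.

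For the second point, let $M \in \mathbf{NExt}(\mathsf{S4})$ be any modal companion of $L$; I want $M \subseteq \mathsf{Log}(\mathbf{V})$, which, since $\mathbf{V}$ is a variety, is equivalent to $\mathbf{V} \subseteq \mathsf{Var}(M)$. As $\mathsf{Var}(M)$ is closed under $\mathbb{H}, \mathbb{S}, \mathbb{P}$, it suffices to show $B(\mathcal{H}) \in \mathsf{Var}(M)$ for each $\mathcal{H} \in \mathbf{K}$. By Proposition~\ref{Key Properties of the rho-map in S4}, $M$ being a modal companion of $L$ yields $\rho(M) = L$, and hence $\rho(\mathsf{Var}(M)) = \mathsf{Var}(\rho(M)) = \mathsf{Var}(L) = \mathbf{K}$; recalling that $\rho(\mathsf{Var}(M)) = \{\mathcal{B}_{\Box} : \mathcal{B} \in \mathsf{Var}(M)\}$, every $\mathcal{H} \in \mathbf{K}$ is isomorphic to $\mathcal{B}_{\Box} = \theta(\mathcal{B})$ for some $\mathcal{B} \in \mathsf{Var}(M)$. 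Since the GMT translation is sober (Corollary~\ref{The GMT translation is sober}), the counit $\varepsilon_{\mathcal{B}} \colon B(\theta(\mathcal{B})) \to \mathcal{B}$ is injective, and as $B$ preserves isomorphisms we obtain an embedding $B(\mathcal{H}) \cong B(\theta(\mathcal{B})) \hookrightarrow \mathcal{B}$; closure of $\mathsf{Var}(M)$ under subalgebras then gives $B(\mathcal{H}) \in \mathsf{Var}(M)$. Therefore $\{B(\mathcal{H}) : \mathcal{H} \in \mathbf{K}\} \subseteq \mathsf{Var}(M)$, whence $\mathbf{V} \subseteq \mathsf{Var}(M)$ and $M \subseteq \mathsf{Log}(\mathbf{V})$, completing the proof.

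The main obstacle is the second half, since an arbitrary modal companion $M$ may be strictly larger than $\tau(L)$ and so does not obviously relate to the generators $B(\mathcal{H})$ of $\mathbf{V}$. The leverage comes entirely from two features of sobriety: the $\rho$-preservation identity $\rho(\mathsf{Var}(M)) = \mathsf{Var}(\rho(M))$, which realises every $\mathcal{H} \in \mathbf{K}$ as the algebra of open elements of some $\mathcal{B} \in \mathsf{Var}(M)$, and the pointwise injectivity of the counit, which upgrades that realisation to an embedding $B(\mathcal{H}) \hookrightarrow \mathcal{B}$. I would take care to isolate these two ingredients explicitly, as they are precisely the properties the surrounding discussion flags as responsible for the existence of greatest modal companions in general, without appeal to any special feature of $\mathsf{Grz}$.
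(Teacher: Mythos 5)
Your proposal is correct and follows essentially the same route as the paper's own proof: the companion property is obtained from the unit being an isomorphism together with Theorem~\ref{Translation theorem for the double negation translation}, and maximality is reduced to showing each generator $B(\mathcal{H})$ lies in $\mathsf{Var}(M)$ via $\rho(\mathsf{Var}(M))=\mathsf{Var}(L)$ and the embedding $B(\mathcal{B}_{\Box})\preceq\mathcal{B}$ given by the injective counit. The only difference is that you spell out a few routine steps (HSP-preservation of equational validity, $B$ preserving isomorphisms) that the paper leaves implicit.
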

\begin{proof}
First we note that this is a modal companion: assume that $\phi\in L$. Since $\sigma(\mathsf{Var}(L))$ is generated by $B(\mathcal{H})$ for $\mathcal{H}\in \mathsf{Var}(L)$, if $\mathcal{H}$ is such an algebra, then $\mathcal{H}\vDash \phi$, so $B(\mathcal{H})\vDash GMT(\phi)$ (given the unit is an isomorphism, and so $(B(\mathcal{H}))_{\Box}\cong \mathcal{H}$). Hence $GMT(\phi)\in \mathsf{Log}(\sigma(\mathsf{Var}(L)))$. Conversely, if $\phi\notin L$, let $\mathcal{H}\in \mathsf{Var}(L)$ be such that $\mathcal{H}\nvDash \phi$; hence $B(\mathcal{H})\nvDash GMT(\phi)$. But since $B(\mathcal{H})\in \sigma(\mathsf{Var}(L))$ we have that $GMT(\phi)\notin \mathsf{Log}(\sigma(\mathsf{Var}(L)))$.

To see that it is the greatest modal companion, suppose that $M$ is an arbitrary modal companion of $L$. To show that $M\subseteq \mathsf{Log}(\sigma(\mathsf{Var}(L)))$ it suffices to show that $\sigma(\mathsf{Var}(L))\subseteq \mathsf{Var}(M)$. In turn to show this, it suffices to show that $\{B(\mathcal{H}) : \mathcal{H}\in \mathsf{Var}(L)\}\subseteq \mathsf{Var}(M)$. So let $\mathcal{H}\in \mathsf{Var}(L)$ be arbitrary. Since $M$ is a modal companion of $L$, then $\rho(\mathsf{Var}(M))=\mathsf{Var}(L)$. Hence $\mathcal{H}\cong \mathcal{B}_{\Box}$ for some $\mathcal{B}\in \mathsf{Var}(M)$. Moreover, we know that:
\begin{equation*}
    B(\mathcal{B}_{\Box})\preceq \mathcal{B},
\end{equation*}
which holds through the counit map. Hence $B(\mathcal{B}_{\Box})\in \mathsf{Var}(M)$. But then $\sigma(\mathcal{H})\in \mathsf{Var}(M)$, which shows the result.\end{proof}

To conclude this section, we discuss what has been one of the main applications of the maps $\rho$, $\tau$ and $\sigma$ in the literature: \textit{preservation theorems}. These establish that a given axiomatic extension $L\in \Lambda(\mathsf{IPC})$ has a property if and only if one (or all) of its modal companions have it, and similarly, that an axiomatic extension $M\in \mathbf{NExt}(\mathsf{S4})$ has a property if and only if $\rho(M)$ has it. For an extended discussion of such preservation results see \cite{Chagrov1992}. We mention here, without proof, a short selection of such results. We note that that we say that an axiomatic extension $L$ has the \textit{Finite Model Property} if $\mathsf{Var}(L)$ is generated by its finite algebras.

\begin{lemma}
    Let $L\in \Lambda(\mathsf{IPC})$ and $M\in \mathbf{NExt}(\mathsf{S4})$. Then:
    \begin{enumerate}
        \item If $L$ has the FMP, or is decidable; then $\tau(L)$ has the same property.
        \item If $M$ has any of the following properties: tabularity, FMP, decidability, local tabularity, interpolation; then $\rho(M)$ has the same property.
    \end{enumerate}
\end{lemma}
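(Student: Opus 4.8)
The plan is to read off each clause from the adjunction machinery already assembled, appealing to the classical Blok--Esakia preservation theory (see \cite{Chagrov1992,Chagrov1997-cr}) only for the two genuinely frame-theoretic points. Throughout I will use that both $(-)_{\Box}$ and $B(-)$ carry finite algebras to finite algebras --- for $B(-)$ because $B(\mathcal{H})$ embeds into the finite algebra $\mathbb{P}(\mathsf{Pr}(\mathcal{H}))$ --- that $B(-)$ carries an $n$-generated Heyting algebra to an $n$-generated $\mathsf{S4}$-algebra (the images of the generators generate $B(\mathcal{H})$ as a modal algebra, since $\phi(a\to b)$ equals $\Box(\neg\phi(a)\vee\phi(b))$), together with the unit isomorphism $(B(\mathcal{H}))_{\Box}\cong\mathcal{H}$ and the counit embedding $B(\mathcal{B}_{\Box})\preceq\mathcal{B}$.

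For part (2) I would argue property by property. For \emph{decidability}: by Proposition \ref{Key Properties of the rho-map in S4}(2) we have $\phi\in\rho(M)$ if and only if $GMT(\phi)\in M$, and $GMT$ is an effective syntactic operation, so a decision procedure for $M$ immediately yields one for $\rho(M)$. For \emph{tabularity} and \emph{FMP}: by Proposition \ref{Key Properties of the rho-map in S4}(2), $\mathsf{Var}(\rho(M))=\rho(\mathsf{Var}(M))=\{\mathcal{B}_{\Box}:\mathcal{B}\in\mathsf{Var}(M)\}$; since $(-)_{\Box}$ commutes with products and subalgebras and with surjections (Proposition \ref{Selective translation preserves surjections}), writing $\mathsf{Var}(M)=\mathbb{HSP}\{\mathcal{B}_{i}:i\in I\}$ with each $\mathcal{B}_{i}$ finite gives $\mathsf{Var}(\rho(M))=\mathbb{HSP}\{(\mathcal{B}_{i})_{\Box}:i\in I\}$ with the $(\mathcal{B}_{i})_{\Box}$ finite --- a singleton $I$ for tabularity, an arbitrary one for FMP. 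For \emph{local tabularity}: given an $n$-generated $\mathcal{H}\in\mathsf{Var}(\rho(M))=\rho(\mathsf{Var}(M))$, write $\mathcal{H}\cong\mathcal{B}_{\Box}$ with $\mathcal{B}\in\mathsf{Var}(M)$; then $B(\mathcal{H})\cong B(\mathcal{B}_{\Box})\preceq\mathcal{B}$ via the counit, so $B(\mathcal{H})\in\mathsf{Var}(M)$, and $B(\mathcal{H})$ is $n$-generated, hence finite, hence $\mathcal{H}\cong(B(\mathcal{H}))_{\Box}$ is finite. For \emph{interpolation} the claim is that $\mathsf{Var}(\rho(M))$ inherits the (super)amalgamation property from $\mathsf{Var}(M)$; this is the one clause that does not follow formally from the adjunction, and I would cite Maksimova's transfer theorem (see \cite[Chapter 11]{Chagrov1997-cr}).

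For part (1), \emph{FMP} of $\tau(L)$: by algebraic completeness and Proposition \ref{Key Properties of the tau-map in the S4 case} it suffices to show that $\tau(\mathsf{Var}(L))=\{\mathcal{B}:\mathcal{B}_{\Box}\in\mathsf{Var}(L)\}$ is generated by its finite members. Given a modal $\psi\notin\tau(L)$, I would first pass to the finitely generated subalgebra of a refuting $\mathsf{S4}$-algebra spanned by the subformula-values of $\psi$ --- still in $\tau(\mathsf{Var}(L))$ since this is a variety, but not yet finite --- and then filtrate its dual quasi-ordered model through the subformulas of $\psi$, producing a finite quasi-order that refutes $\psi$. For \emph{decidability} of $\tau(L)$: when $L$ additionally has the FMP this follows from the FMP of $\tau(L)$ just established, together with the recursive axiomatisability of $\tau(L)=\mathsf{S4}\oplus\{GMT(\phi):\phi\in L\}$ and the standard fact that recursive axiomatisability plus FMP yields decidability; in general one invokes Zakharyaschev's effective reduction of the word problem of $\tau(L)$ to that of $L$ (\cite[Chapter 9]{Chagrov1997-cr}).

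The hard part will be the FMP-preservation under $\tau$: one must verify that the cluster-skeleton of the filtered model still validates $L$, and here nothing formal about the adjunction helps --- the FMP of $L$ must be combined with a filtration chosen to be compatible with passage to skeletons (equivalently with the functor $\theta$), exactly as in the classical proof. The other nontrivial point is the interpolation clause of part (2), which rests on the amalgamation transfer. Everything else is routine bookkeeping built on the limit/colimit preservation properties of $(-)_{\Box}$ and $B(-)$ recorded above, so I would not reproduce it in detail.
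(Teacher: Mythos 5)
The paper offers no proof of this lemma at all: it is explicitly stated ``without proof'' and deferred wholesale to the classical literature (\cite{Chagrov1992,Chagrov1997-cr}). Your proposal therefore cannot diverge from ``the paper's proof,'' but it is worth comparing with what the paper does prove elsewhere. Your treatment of part (2) for tabularity, FMP, decidability and local tabularity is correct, and it is in fact the same functorial argument the paper later gives in full generality for sober translations at the end of Section \ref{Strongly Selective and Sober translations} (using that $\theta=(-)_{\Box}$ preserves $\mathbb{H}$, $\mathbb{S}$, $\mathbb{P}$ and finiteness, and that $\mathcal{F}=B(-)$ preserves embeddings and finite generation); your observation that interpolation is the one clause that does not follow from the adjunction and must be imported from Maksimova is likewise consistent with the paper, whose general sober-translation lemma silently drops interpolation from the list. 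Your local-tabularity argument via $B(\mathcal{H})\cong B(\mathcal{B}_{\Box})\preceq\mathcal{B}$ and the $n$-generation of $B(\mathcal{H})$ is sound. For part (1), however, you have only a sketch: the filtration argument as written does not establish that the skeleton of the filtrated frame still validates $L$, and you correctly flag this as the genuine difficulty --- this is Zakharyaschev's theorem on preservation of FMP under $\tau$, whose proof goes well beyond ``a filtration chosen to be compatible with skeletons'' and in the standard reference rests on canonical formulas. So part (1) of your proposal is an honest deferral rather than a proof, which is acceptable here only because the paper itself defers the entire lemma; the paper even remarks in Section \ref{Strongly Selective and Sober translations} that preservation along $\tau$ is not amenable to the categorical treatment, matching your diagnosis.
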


\subsection{Strongly Selective and Sober Translations}\label{Strongly Selective and Sober translations}

The outline of last section should give us a good idea of how to generalise the basic Blok-Esakia theory for many translations. For that purpose, assume throughout this section that $\classX$ and $\classY$ are quasivarieties, where $\classX$ is the algebraic semantics of a logic $\vdash_{\classX}$ (not necessarily the equivalent algebraic semantics), with a set of equations $\mu_{\classX}(x)$ witnessing this fact, and $\classY$ is the equivalent algebraic semantics of $\vdash_{\classY}$, with sets $\mu_{\classY}(x)$ and $\Delta_{\classY}(x,y)$. We assume throughout that $\overline{\zeta}=\langle \zeta,f\rangle:Tm(\fancyL_{\classX})\to Tm(\fancyL_{\classY})$ is a selective translation, and assume that $X$ is a set of variables. We also need a special assumption which is met in all cases we consider. This is that, essentially, the translation commutes with the algebraization \footnote{Note that the translation is not necessarily a substitution, which means that this assumption is not trivial}. More concretely, given any formula $\phi\in \terms(\fancyL_{\classX},X)$, we assume that:
\begin{equation*}
    \zeta^{*}(\mu_{\classX}(\phi))\Dashv \vDash_{\classY} \mu_{\classY}(\zeta_{*}(\phi))
\end{equation*}
Throughout, we use $\theta$ to refer to the right adjoint functor associated to this translation, and $\mathcal{F}$ to denote the corresponding left adjoint functor.

First we will define two maps generalising $\rho$ and $\tau$ from the previous section:

\begin{definition}
Let $\rho$ be the following map: for ${\vdash_{t}} \in \Lambda(\vdash_{\classY})$:
\begin{equation*}
    \rho(\vdash_{t})\coloneqq \{(\Gamma,\phi)\in \mathbb{P}(Tm(\fancyL_{\classX}))\times Tm(\fancyL_{\classX})  : \zeta_{*}[\Gamma]\vdash_{t} \zeta_{*}(\phi)]\}
\end{equation*}

Also, define $\tau$ as the following map, for $\vdash_{s}\in \Lambda(\vdash_{\classX})$:
\begin{equation*}
    \tau(\vdash_{s}) \coloneqq   {\vdash_{\classY}} \oplus \ \{(\zeta_{*}[\Gamma],\zeta_{*}(\phi))\in \mathbb{P}(Tm(\fancyL_{\classY}))\times Tm(\fancyL_{\classY})  : \Gamma \vdash_{s}\phi\}
\end{equation*}
\end{definition}

These maps arise in the context of the following definition:

\begin{definition}
Let ${\vdash_{t}}\in \Lambda(\vdash_{\classY})$ and ${\vdash_{s}}\in \Lambda(\vdash_{\classX})$. We say that $\vdash_{t}$ is a \textit{$\zeta$-companion} of $\vdash_{s}$ if:
\begin{equation*}
    \Gamma\vdash_{s} \phi \iff \zeta_{*}[\Gamma]\vdash_{t} \zeta_{*}(\phi)
\end{equation*}
Given any $\vdash_{s}$, we denote by $\zeta(\vdash_{s})$ the collection of $\zeta$-companions of this logic.\end{definition}

Following \ref{GMT and classic Blok-Esakia}, we also make the following definition on algebras: for $\mathbf{K}\in \Xi(\classX)$:
\begin{align*}
    \tau(\mathbf{K}) \coloneqq \{\mathcal{A} : \theta(\mathcal{A})\in K\}
\end{align*}

The following then corresponds to to Proposition \ref{Key Properties of the tau-map in the S4 case}.

\begin{proposition}\label{Properties of PA-companions under selective translations}
Let ${\vdash_{s}} \in \Lambda(\vdash_{\classX})$ and $\mathbf{K}\in \Xi(\classX)$. Assume that $\zeta$ is a strongly selective translation.
\begin{enumerate}
    \item $\tau(\mathbf{K})$ is a quasivariety.
    \item $\tau(\mathsf{QVar}(\vdash_{s}))=\mathsf{QVar}(\tau(\vdash_{s}))$. Hence $\tau(\mathsf{Log}(\mathbf{K}))=\mathsf{Log}(\tau(\mathbf{K}))$.
    \item $\tau$ is a complete lattice homomorphism.
    \item $\tau(\vdash_{s})$ is the least $\zeta$-companion of $\vdash_{s}$.
\end{enumerate}
\end{proposition}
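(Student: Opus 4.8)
{\bf Proof plan for Proposition \ref{Properties of PA-companions under selective translations}.}

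The plan is to mirror the four-part structure of Proposition \ref{Key Properties of the tau-map in the S4 case}, using the abstract properties of a strongly selective translation in place of the concrete facts about $(-)_{\Box}$ and the Boolean envelope. The key inputs are: Proposition \ref{Selective translation preserves surjections} (the right adjoint $\theta$ preserves surjections), the fact that any right adjoint preserves all limits (in particular products and injective homomorphisms, i.e. equalizers), Proposition \ref{Correctness of Translation in the unit} (the translation theorem: $\mathcal{A},\Theta\vDash \zeta^{*}(\lambda\approx\gamma)$ iff $\theta(\mathcal{A})\vDash\lambda\approx\gamma$), and the standing assumption that $\zeta$ commutes with the algebraizations, $\zeta^{*}(\mu_{\classX}(\phi))\Dashv\vDash_{\classY}\mu_{\classY}(\zeta_{*}(\phi))$. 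For part (4) I also need that the unit $\eta$ is an isomorphism, which is exactly the extra hypothesis of strong selectivity.

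For (1): given $\mathcal{A}\in\mathbb{ISP}_{R}(\tau(\mathbf{K}))$, write $\mathcal{A}\preceq \prod_{i\in I}\mathcal{A}_{i}/R$ with each $\theta(\mathcal{A}_{i})\in\mathbf{K}$; since $\theta$ is a right adjoint it commutes with products, with reduced products (a reduced product is a filtered colimit of products, but more simply one checks $\theta$ preserves the relevant congruence, exactly as in Proposition \ref{Characterisation of PA-quasivarieties}), and with subalgebras (it preserves equalizers/monos), so $\theta(\mathcal{A})\in\mathbb{ISP}_{R}(\mathbf{K})=\mathbf{K}$, whence $\mathcal{A}\in\tau(\mathbf{K})$. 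For (2): unravel the definitions — $\mathcal{A}\in\tau(\mathsf{QVar}(\vdash_{s}))$ iff $\theta(\mathcal{A})\vDash\vdash_{s}$ iff (by Proposition \ref{Correctness of Translation in the unit}, applied to each sequent, together with the commutation of $\zeta$ with $\mu$) $\mathcal{A}\vDash_{\Theta}\{(\zeta_{*}[\Gamma],\zeta_{*}(\phi)):\Gamma\vdash_{s}\phi\}$, i.e. $\mathcal{A}\vDash\tau(\vdash_{s})$ — and then the logic statement follows by the algebra/logic duality $\mathsf{QVar}(-),\mathsf{Log}(-)$. For (3): meet-preservation is immediate from the definition of $\tau$ on quasivarieties ($\theta(\mathcal{A})\in\bigcap_{i}\mathbf{K}_{i}$ iff $\theta(\mathcal{A})\in\mathbf{K}_{i}$ for all $i$); join-preservation: one inclusion is formal from monotonicity, and for the other, use (1) and (2) to reduce to showing $\bigvee_{i}\tau(\mathbf{K}_{i})\subseteq\tau(\bigvee_{i}\mathbf{K}_{i})$ on quasivarieties, then run the same limit-preservation argument as in (1) on an algebra built via $\mathbb{ISP}_{R}$ from $\bigcup_{i}\mathbf{K}_{i}$.

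For (4): that $\tau(\vdash_{s})$ is \emph{a} $\zeta$-companion means $\Gamma\vdash_{s}\phi$ iff $\zeta_{*}[\Gamma]\vdash_{\tau(\vdash_{s})}\zeta_{*}(\phi)$; the left-to-right direction is by construction. For right-to-left, suppose $\Gamma\not\vdash_{s}\phi$; pick $\mathcal{H}\in\mathsf{QVar}(\vdash_{s})$ refuting $(\Gamma,\phi)$; since $\eta$ is an isomorphism, $\mathcal{H}\cong\theta(\mathcal{F}(\mathcal{H}))$, so $\mathcal{F}(\mathcal{H})\in\tau(\mathsf{QVar}(\vdash_{s}))=\mathsf{QVar}(\tau(\vdash_{s}))$ by (2), and by Proposition \ref{Correctness of Translation in the unit} this algebra refutes $(\zeta_{*}[\Gamma],\zeta_{*}(\phi))$, so $\zeta_{*}[\Gamma]\not\vdash_{\tau(\vdash_{s})}\zeta_{*}(\phi)$. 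Minimality among $\zeta$-companions: if $\vdash_{t}$ is any $\zeta$-companion of $\vdash_{s}$, then for every $\Gamma\vdash_{s}\phi$ we have $\zeta_{*}[\Gamma]\vdash_{t}\zeta_{*}(\phi)$, and since $\vdash_{\classY}\subseteq\vdash_{t}$ (a $\zeta$-companion extends $\vdash_{\classY}$, as $\zeta$ is sound) and $\tau(\vdash_{s})$ is by definition the least extension of $\vdash_{\classY}$ containing all these sequents, $\tau(\vdash_{s})\subseteq\vdash_{t}$.

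The main obstacle I anticipate is part (4), specifically the right-to-left direction of ``is a $\zeta$-companion'': everything hinges on $\eta$ being an isomorphism so that every model of $\vdash_{s}$ is realized as $\theta(\mathcal{B})$ for some $\mathcal{B}\vDash\tau(\vdash_{s})$ — this is precisely why strong selectivity (rather than mere faithfulness) is required, paralleling how Proposition \ref{Key Properties of the tau-map in the S4 case}(4) used that the unit for GMT is an isomorphism. A secondary subtlety is being careful that $\theta$ genuinely preserves reduced products and not just products; this needs the same coordinatewise argument on filters used in the proof of Proposition \ref{Characterisation of PA-quasivarieties}, rather than a bare appeal to ``$\theta$ is a right adjoint.''
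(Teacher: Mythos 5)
Your proposal is correct and follows essentially the same route as the paper: parts (1) and (3) by transporting the preservation argument from the GMT case via the limit/surjection-preservation properties of $\theta$, part (2) by unravelling definitions through the translation theorem and the commutation of $\zeta$ with the algebraization maps, and part (4) by using the unit isomorphism to realize any countermodel of $\vdash_{s}$ as $\theta(\mathcal{B})$, with leastness being definitional. Your observation that preservation of reduced products needs the coordinatewise filter argument rather than a bare appeal to right-adjointness is a point the paper's proof leaves implicit, and is a worthwhile refinement.
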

\begin{proof}
(1) and (3) follow from the same arguments as before, using the fact that the unit is an isomorphism and the fact given by Proposition \ref{Selective translation preserves surjections}, that the right adjoint $\theta$ preserves surjective homomorphisms. For (2) assume that $\mathcal{A}\in \tau(\mathsf{QVar}(\vdash_{s}))$, hence $\theta(\mathcal{A})\in \mathsf{QVar}(\vdash_{s})$. Assume that $\mathcal{A},v\vDash \mu_{\classY}[\zeta_{*}[\Gamma]]$. hence, by assumption on the algebraization, $\mathcal{A},v\vDash \zeta^{*}[\mu_{\classX}[\Gamma]]$. Since the translation is selective, we can define a valuation $w:X\to \theta(\mathcal{A})$ by setting for $x\in X$, $w(x)=v(f(x))$, and then, in an analogous fashion to Proposition \ref{Correctness of Translation in the unit}, obtain that for each $\lambda\in Tm(\fancyL_{\classX},X)$, $w(\lambda)=v(\zeta^{*}(\lambda))$. Thus we obtain that $\theta(\mathcal{A}),w\vDash \mu_{\classX}[\Gamma]$. By assumption then $\theta(\mathcal{A}),w\vDash \mu_{\classX}(\phi)$, and so $\mathcal{A},v\vDash \zeta^{*}(\mu_{\classX}(\phi))$; by assumption then $\mathcal{A},v\vDash \mu_{\classY}[\zeta_{*}(\phi)]$. This shows that $\mathcal{A}\vDash (\zeta_{*}[\Gamma],\zeta_{*}(\phi))$. Similarly, if $\mathcal{A}\in \mathsf{QVar}(\tau(\vdash^{*}))$, then we show that $\theta(\mathcal{A})\in \mathsf{QVar}(\vdash^{*})$ using the converse arguments. By logic-algebra duality we get the result for logics.

For (4), assume that $\Gamma\nvdash_{s}\phi$. Let $\mathcal{A}\in \mathsf{QVar}(\vdash_{s})$ witness this. Since the translation is strongly selective, and hence, the unit is an isomorphism, we know that there exists some algebra $\mathcal{B}$ such that $\theta(\mathcal{B})\cong \mathcal{A}$. Hence, $\mathcal{B}\in \tau(\mathsf{QVar}(\vdash^{*}))$. By the same arguments as above, then $\mathcal{B}\nvDash (\zeta_{*}[\Gamma],\zeta_{*}(\phi))$. But by (2) we have that then $\mathcal{B}\in \mathsf{QVar}(\tau(\vdash^{*}))$, hence, by completeness, $(\zeta_{*}[\Gamma],\zeta_{*}(\phi))\notin \tau(\vdash^{*})$. The fact that it is least is by definition.
\end{proof}

Hence, all strongly selective translations admit a notion of a least $\zeta$-companion. If we strengthen this requirement to sober translations we can look at the following definition for $\bf{K}\in \Xi(\classY)$:

\begin{equation*}
    \rho(\mathbf{K})=\{\theta(\mathcal{A}) : \mathcal{A}\in \mathbf{K}\}.
\end{equation*}
Then we can prove the following:

\begin{proposition}\label{Selective and sober Translations Induce varieties}
Let $\langle \zeta,f\rangle$ be a sober translation, $\bf{K}\in \classY$ and $\vdash_{t}\in \Lambda(\vdash_{\classY})$. Then:
\begin{enumerate}
    \item $\rho(\mathbf{K})$ is a quasivariety.
    \item $\rho(\mathsf{QVar}(\vdash_{t}))=\mathsf{QVar}(\rho(\vdash_{t}))$. Hence $\rho(\mathsf{Log}(\bf{K}))=\mathsf{Log}(\rho(\bf{K}))$.
    \item $\rho:\Lambda(\classX)\to \Lambda(\classY)$ is a surjective complete homomorphism.
\end{enumerate}
\end{proposition}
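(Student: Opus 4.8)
The plan is to mirror the proof of Proposition~\ref{Key Properties of the rho-map in S4}, with the Boolean-envelope functor $B(-)$ replaced by the abstract left adjoint $\mathcal{F}$ and the $\mathbb{HSP}$-calculus replaced by the $\mathbb{ISP}_{R}$-calculus of quasivarieties. The three ingredients that make this work are exactly the defining clauses of a sober translation: the unit $\eta$ is an isomorphism, the counit $\varepsilon$ is pointwise injective, and $\mathcal{F}$ preserves injective homomorphisms. One standing fact will be used repeatedly: since $\zeta$ is selective, $\theta(\mathcal{A})=\mathcal{A}^{f}$ by Lemma~\ref{Identity of the notions of regular elements}, so---by the computation carried out in the proof of Proposition~\ref{Characterisation of PA-quasivarieties}---$\theta$ commutes with reduced products, $\theta\big(\prod_{i}\mathcal{A}_{i}/F\big)\cong\prod_{i}\theta(\mathcal{A}_{i})/F$, and $\theta$ also preserves injective homomorphisms, being a right adjoint. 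I stress that the commutation with reduced products is not automatic from $\theta$ being a right adjoint, a reduced product being a directed colimit of products rather than a limit; it genuinely uses the identification $\theta=(-)^{f}$. As is customary, $\rho(\mathbf{K})$ and the other classes below are understood as closed under isomorphic copies.

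For (1), I would take $\mathcal{A}\in\mathbb{ISP}_{R}(\rho(\mathbf{K}))$ and show $\mathcal{A}\in\rho(\mathbf{K})$, which by Mal'tsev's theorem gives that $\rho(\mathbf{K})$ is a quasivariety. Write $\mathcal{A}$ as a subalgebra of $\prod_{j\in J}\theta(\mathcal{C}_{j})/F$ with each $\mathcal{C}_{j}\in\mathbf{K}$; by commutation of $\theta$ with reduced products this is $\theta(\mathcal{D})$ with $\mathcal{D}=\prod_{j\in J}\mathcal{C}_{j}/F\in\mathbf{K}$. Applying $\mathcal{F}$ to $\mathcal{A}\hookrightarrow\theta(\mathcal{D})$ and using that $\mathcal{F}$ preserves embeddings, $\mathcal{F}(\mathcal{A})$ embeds into $\mathcal{F}(\theta(\mathcal{D}))$; the counit $\varepsilon_{\mathcal{D}}\colon\mathcal{F}(\theta(\mathcal{D}))\to\mathcal{D}$ is injective, so $\mathcal{F}(\mathcal{A})$ embeds into $\mathcal{D}\in\mathbf{K}$, hence $\mathcal{F}(\mathcal{A})\in\mathbf{K}$; and since $\eta$ is an isomorphism, $\mathcal{A}\cong\theta(\mathcal{F}(\mathcal{A}))\in\rho(\mathbf{K})$. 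This is the step I expect to be the main obstacle, and it is where sobriety is genuinely needed beyond strong selectivity: strong selectivity alone suffices for $\tau$ to have a least companion (Proposition~\ref{Properties of PA-companions under selective translations}), but dragging a whole $\mathbb{ISP}_{R}$-closure back into $\mathbf{K}$ needs both the injectivity of $\varepsilon$ and the preservation of embeddings by $\mathcal{F}$.

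For (2), the core observation is that for every $\mathcal{B}\in\classY$ and every pair $(\Gamma,\phi)$, $\mathcal{B}\vDash(\zeta_{*}[\Gamma],\zeta_{*}(\phi))$ if and only if $\theta(\mathcal{B})\vDash(\Gamma,\phi)$. I would deduce this from the valuation-transfer argument of Proposition~\ref{Correctness of Translation in the unit} together with the standing hypothesis $\zeta^{*}(\mu_{\classX}(\psi))\Dashv\vDash_{\classY}\mu_{\classY}(\zeta_{*}(\psi))$: a valuation $w$ into $\theta(\mathcal{B})=\mathcal{B}^{f}$ and the valuation $v$ into $\mathcal{B}$ it induces satisfy $w(\lambda)=v(\zeta_{*}(\lambda))$ for all $\lambda$, so that $\mathcal{B}$ validating $\mu_{\classY}[\zeta_{*}[\Gamma]]\to\mu_{\classY}(\zeta_{*}(\phi))$ under $v$ translates into $\theta(\mathcal{B})$ validating $\mu_{\classX}[\Gamma]\to\mu_{\classX}(\phi)$ under $w$, and conversely (precisely the computation in the proof of Proposition~\ref{Properties of PA-companions under selective translations}(2)). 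Granting this, if $\mathcal{A}=\theta(\mathcal{B})\in\rho(\mathsf{QVar}(\vdash_{t}))$ and $(\Gamma,\phi)\in\rho(\vdash_{t})$, i.e.\ $\zeta_{*}[\Gamma]\vdash_{t}\zeta_{*}(\phi)$, then $\mathcal{B}\vDash(\zeta_{*}[\Gamma],\zeta_{*}(\phi))$ and hence $\mathcal{A}\vDash(\Gamma,\phi)$; thus $\rho(\mathsf{QVar}(\vdash_{t}))\subseteq\mathsf{QVar}(\rho(\vdash_{t}))$, equivalently $\rho(\vdash_{t})\subseteq\mathsf{Log}(\rho(\mathsf{QVar}(\vdash_{t})))$. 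As $\rho(\mathsf{QVar}(\vdash_{t}))$ is a quasivariety by (1), applying $\mathsf{QVar}\circ\mathsf{Log}$ and using algebraic completeness yields the reverse inclusion, exactly as in Proposition~\ref{Key Properties of the rho-map in S4}(2); the statement $\rho(\mathsf{Log}(\mathbf{K}))=\mathsf{Log}(\rho(\mathbf{K}))$---and in particular that $\rho$ sends logics to logics---then follows by logic--algebra duality.

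For (3), preservation of arbitrary meets is immediate from the definition of $\rho$ on logics, since $\rho(\bigcap_{i}\vdash_{t_{i}})=\{(\Gamma,\phi):\forall i,\ \zeta_{*}[\Gamma]\vdash_{t_{i}}\zeta_{*}(\phi)\}=\bigcap_{i}\rho(\vdash_{t_{i}})$. For joins I would argue as in Proposition~\ref{Key Properties of the tau-map in the S4 case}(3): by (2) and algebraic completeness it suffices to verify $\rho(\bigvee_{i}\mathbf{K}_{i})=\bigvee_{i}\rho(\mathbf{K}_{i})$ on quasivarieties, where the left join is $\mathbb{ISP}_{R}$ of the union; the inclusion $\supseteq$ is monotonicity of $\rho$ together with (1), and $\subseteq$ holds because any member of $\mathbb{ISP}_{R}(\bigcup_{i}\mathbf{K}_{i})$ embeds into a reduced product of members of the $\mathbf{K}_{i}$, so---using that $\theta$ commutes with reduced products and preserves embeddings---its $\theta$-image embeds into the corresponding reduced product of the $\theta$-images, each of which lies in some $\rho(\mathbf{K}_{i})$. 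Finally, to see $\rho$ is surjective, given a quasivariety $\mathbf{P}\in\Xi(\classX)$ I would put $\mathbf{K}=\{\mathcal{A}\in\classY:\theta(\mathcal{A})\in\mathbf{P}\}$, which is a quasivariety by Proposition~\ref{Properties of PA-companions under selective translations}(1); then $\rho(\mathbf{K})\subseteq\mathbf{P}$ trivially, while for $\mathcal{C}\in\mathbf{P}$ the algebra $\mathcal{F}(\mathcal{C})$ lies in $\mathbf{K}$ because $\theta(\mathcal{F}(\mathcal{C}))\cong\mathcal{C}\in\mathbf{P}$, and $\theta(\mathcal{F}(\mathcal{C}))\cong\mathcal{C}$ gives $\mathcal{C}\in\rho(\mathbf{K})$, so $\rho(\mathbf{K})=\mathbf{P}$. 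Transporting this along logic--algebra duality shows that the induced map $\rho$ on logics is onto $\Lambda(\vdash_{\classX})$, completing the proof.
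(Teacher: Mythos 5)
Your proposal is correct and follows essentially the same route as the paper, which itself only gestures at the argument by saying it ``runs exactly the same way'' as the $\mathsf{S4}$ case with Mal'tsev's theorem replacing Tarski's and the counit's injectivity doing the key work; you have simply written out that adaptation in full, including the genuinely non-automatic point that $\theta$ commutes with reduced products via the identification $\theta(\mathcal{A})=\mathcal{A}^{f}$. The only cosmetic difference is that the paper's one-line proof speaks of the counit being an isomorphism, whereas sobriety (and your argument, correctly) only needs its pointwise injectivity.
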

\begin{proof}
The proof runs exactly the same way as in \ref{Key Properties of the rho-map in S4}, except we use Maltsev's theorem, instead of Tarski's HSP theorem; crucially we use the property that the counit is an isomorphism as in that proof.\end{proof}

We also derive that the definition of $\rho$ on logics, as given above, is well-defined. Importantly, we also obtain a useful criterion for being a $\zeta$-companion, which will be exploited in the next section.

\begin{proposition}\label{Properties of rho map in counits}
For each ${\vdash_{t}}\in \Lambda(\vdash_{\classY})$, $\rho(\vdash_{t})$ is a logic in $\Lambda(\vdash_{\classX})$. Moreover,  ${\vdash_{t}}\in \Lambda(\vdash_{\classY})$ is a $\zeta$-companion of ${\vdash_{s}}\in \Lambda(\vdash_{\classX})$ if and only if $\rho(\vdash_{t})=\vdash_{s}$. Hence, for all $\mathbf{K}\in \Xi(\classY)$ and $\mathbf{P}\in \Xi(\classX)$ we have that if $\mathsf{Log}(\mathbf{K})$ is a modal companion of $\mathsf{Log}(\mathbf{P})$ then $\rho(\mathbf{K})=\mathbf{P}$.
\end{proposition}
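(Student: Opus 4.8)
The plan is to push everything through the algebraic side, reducing the syntactic claims to Proposition \ref{Selective and sober Translations Induce varieties} together with the selective correctness of the translation. The key step is the identity
\[
\rho(\vdash_{t}) = \mathsf{Log}\bigl(\rho(\mathsf{QVar}(\vdash_{t}))\bigr).
\]
To prove it I would unwind the definition: $(\Gamma,\phi)\in\rho(\vdash_{t})$ iff $\zeta_{*}[\Gamma]\vdash_{t}\zeta_{*}(\phi)$, which by algebraic completeness of $\vdash_{t}$ (recall $\vdash_{t}\in\Lambda(\vdash_{\classY})$, an extension of the algebraizable $\vdash_{\classY}$, hence complete with respect to $\mathsf{QVar}(\vdash_{t})$) says $\mu_{\classY}[\zeta_{*}[\Gamma]]\vDash_{\mathsf{QVar}(\vdash_{t})}\mu_{\classY}(\zeta_{*}(\phi))$; by the standing assumption that $\zeta^{*}(\mu_{\classX}(\psi))\Dashv\vDash_{\classY}\mu_{\classY}(\zeta_{*}(\psi))$ this is equivalent to $\zeta^{*}(\mu_{\classX}[\Gamma])\vDash_{\mathsf{QVar}(\vdash_{t})}\zeta^{*}(\mu_{\classX}(\phi))$. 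Now for each $\mathcal{A}\in\classY$ the assignment $v\mapsto w$, $w(x)=f^{\mathcal{A}}(v(x))$, is a surjection from the valuations on $\mathcal{A}$ onto the valuations on $\theta(\mathcal{A})=\mathcal{A}^{f}$ (Lemma \ref{Identity of the notions of regular elements}) satisfying $\overline{w}(\delta)=\overline{v}(\zeta_{*}(\delta))$ for every $\delta\in Tm(\fancyL_{\classX},X)$, exactly as in the proof of Proposition \ref{Correctness of Translation in the unit}; hence $\zeta^{*}(\mu_{\classX}[\Gamma])\vDash_{\mathsf{QVar}(\vdash_{t})}\zeta^{*}(\mu_{\classX}(\phi))$ holds iff $\mu_{\classX}[\Gamma]\vDash_{\theta(\mathcal{A})}\mu_{\classX}(\phi)$ for every $\mathcal{A}\in\mathsf{QVar}(\vdash_{t})$, i.e. iff $\rho(\mathsf{QVar}(\vdash_{t}))\vDash(\Gamma,\phi)$, which is the right-hand side.

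Granting the identity, the first claim follows: by Proposition \ref{Selective and sober Translations Induce varieties}(1) the class $\rho(\mathsf{QVar}(\vdash_{t}))$ is a quasivariety, and it is a subquasivariety of $\classX$ since $\theta$ lands in $\classX$; the dual logic of any subquasivariety $\mathbf{Q}\subseteq\classX$ is substitution-invariant (by structurality of $\mu_{\classX}$) and contains $\vdash_{\classX}=\mathsf{Log}(\classX)$, while finitariness of $\rho(\vdash_{t})$ transfers directly from that of $\vdash_{t}$ because $\zeta_{*}$ is a map on formulas; thus $\rho(\vdash_{t})\in\Lambda(\vdash_{\classX})$. The equivalence ``$\vdash_{t}$ is a $\zeta$-companion of $\vdash_{s}$ iff $\rho(\vdash_{t})=\vdash_{s}$'' is then purely a matter of definitions: since $(\Gamma,\phi)\in\rho(\vdash_{t})$ iff $\zeta_{*}[\Gamma]\vdash_{t}\zeta_{*}(\phi)$, the set $\rho(\vdash_{t})$ coincides with $\vdash_{s}$ exactly when $\Gamma\vdash_{s}\phi\iff\zeta_{*}[\Gamma]\vdash_{t}\zeta_{*}(\phi)$ for all $(\Gamma,\phi)$, which is the definition of $\zeta$-companion.

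For the final clause, suppose $\mathbf{K}\in\Xi(\classY)$, $\mathbf{P}\in\Xi(\classX)$, and $\mathsf{Log}(\mathbf{K})$ is a $\zeta$-companion of $\mathsf{Log}(\mathbf{P})$. By the equivalence just proved, $\rho(\mathsf{Log}(\mathbf{K}))=\mathsf{Log}(\mathbf{P})$; by Proposition \ref{Selective and sober Translations Induce varieties}(2) the left-hand side is $\mathsf{Log}(\rho(\mathbf{K}))$, so $\mathsf{Log}(\rho(\mathbf{K}))=\mathsf{Log}(\mathbf{P})$. Since $\rho(\mathbf{K})$ is a quasivariety by Proposition \ref{Selective and sober Translations Induce varieties}(1) and $\mathbf{P}\in\Xi(\classX)$, applying $\mathsf{QVar}$ and algebraic completeness (a subquasivariety of $\classX$ is recovered from its dual logic) yields $\rho(\mathbf{K})=\mathbf{P}$.

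I expect the main obstacle to be exactly the identity $\rho(\vdash_{t})=\mathsf{Log}(\rho(\mathsf{QVar}(\vdash_{t})))$, and inside it the point that $\rho(\vdash_{t})$ is closed under substitution: because $\zeta$ is a translation and not a substitution, this is not visible syntactically and must be obtained by passing to the algebraic semantics, where it becomes the combination of ``$\rho(\mathsf{QVar}(\vdash_{t}))$ is genuinely a quasivariety'' with the selective correctness bridging valuations on $\mathcal{A}$ and on $\theta(\mathcal{A})$. The standing commutation assumption between the translation and the algebraizing equations is precisely what makes that bridge usable at the level of sequents rather than single equations.
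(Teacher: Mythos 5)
Your proposal is correct and follows essentially the route the paper intends: the paper states this proposition without an explicit proof, deferring to the parallel argument for Proposition \ref{Key Properties of the rho-map in S4}, and your reconstruction — establishing $\rho(\vdash_{t})=\mathsf{Log}(\rho(\mathsf{QVar}(\vdash_{t})))$ via the valuation correspondence of Proposition \ref{Correctness of Translation in the unit} together with Proposition \ref{Selective and sober Translations Induce varieties}, then reading off the companion equivalence definitionally and closing with algebraic completeness — is exactly that argument transposed to the general sober setting. Your closing remark correctly identifies substitution-invariance of $\rho(\vdash_{t})$ as the point that forces the detour through the semantics.
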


Hence, for sober translations we have that all finitary extensions of $\vdash_{\classX}$ have $\zeta$-companions, and the syntactic maps witnessing this transformation have a concrete semantic meaning. For these translations we can also prove the existence of greatest $\zeta$-companions: for $\mathbf{K}\in \Xi(\classX)$, define:
\begin{equation*}
    \sigma(\mathbf{K})\coloneqq \mathbb{ISP}_{R}\{\mathcal{F}(\mathcal{B}) : \mathcal{B}\in \mathbf{K}\}
\end{equation*}

Then we have the following, which has the same proof as in Proposition \ref{Maximal Grz-companion}:

\begin{proposition}
For any logic ${\vdash_{s}} \in \Lambda(\vdash_{\classX})$, $\mathsf{Log}(\sigma(\mathsf{QVar}(\vdash_{s})))$ is the greatest $\zeta$-companion of $\vdash_{s}$.
\end{proposition}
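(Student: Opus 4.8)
The plan is to mirror the proof of Proposition~\ref{Maximal Grz-companion} (which handles the analogous statement for $\mathsf{S4.Grz}$), replacing varieties by quasivarieties, Tarski's $\mathbb{HSP}$ theorem by Mal'tsev's $\mathbb{ISP}_{R}$ theorem, and the concrete functors $(-)_{\Box}$, $B(-)$ by the abstract adjoints $\theta$, $\mathcal{F}$. Set $\vdash_{t}\coloneqq \mathsf{Log}(\sigma(\mathsf{QVar}(\vdash_{s})))$; here $\zeta$ is sober, this being the context in which $\sigma$ is defined. Two things must be shown: that $\vdash_{t}$ is a $\zeta$-companion of $\vdash_{s}$, and that every $\zeta$-companion of $\vdash_{s}$ is contained in $\vdash_{t}$.

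For the ``soundness'' half of companionhood, observe that each generator $\mathcal{F}(\mathcal{B})$ of $\sigma(\mathsf{QVar}(\vdash_{s}))$ (with $\mathcal{B}\in \mathsf{QVar}(\vdash_{s})$) satisfies $\theta(\mathcal{F}(\mathcal{B}))\cong \mathcal{B}$, since the unit is an isomorphism by strong selectivity; hence $\mathcal{F}(\mathcal{B})\in \tau(\mathsf{QVar}(\vdash_{s}))=\mathsf{QVar}(\tau(\vdash_{s}))$ by Proposition~\ref{Properties of PA-companions under selective translations}(2). As $\mathsf{QVar}(\tau(\vdash_{s}))$ is a quasivariety, it is closed under $\mathbb{ISP}_{R}$, so $\sigma(\mathsf{QVar}(\vdash_{s}))\subseteq \mathsf{QVar}(\tau(\vdash_{s}))$, and applying $\mathsf{Log}$ (order-reversing) gives $\tau(\vdash_{s})\subseteq \vdash_{t}$. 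Since $\tau(\vdash_{s})$ is a $\zeta$-companion (Proposition~\ref{Properties of PA-companions under selective translations}(4)), $\Gamma\vdash_{s}\phi$ implies $\zeta_{*}[\Gamma]\vdash_{\tau(\vdash_{s})}\zeta_{*}(\phi)$, hence $\zeta_{*}[\Gamma]\vdash_{t}\zeta_{*}(\phi)$. For the ``completeness'' half, if $\Gamma\nvdash_{s}\phi$ pick $\mathcal{B}\in \mathsf{QVar}(\vdash_{s})$ with $\mathcal{B}\nvDash (\Gamma,\phi)$; then $\theta(\mathcal{F}(\mathcal{B}))\cong \mathcal{B}\nvDash (\Gamma,\phi)$, and the translation--algebraization correctness already used in the proof of Proposition~\ref{Properties of PA-companions under selective translations}(2) (namely Proposition~\ref{Correctness of Translation in the unit} together with the standing assumption that $\zeta$ commutes with the algebraization) gives $\mathcal{F}(\mathcal{B})\nvDash (\zeta_{*}[\Gamma],\zeta_{*}(\phi))$; as $\mathcal{F}(\mathcal{B})\in \sigma(\mathsf{QVar}(\vdash_{s}))$ this yields $\zeta_{*}[\Gamma]\nvdash_{t}\zeta_{*}(\phi)$. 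So $\vdash_{t}$ is a $\zeta$-companion.

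For maximality, let $\vdash_{t'}$ be any $\zeta$-companion of $\vdash_{s}$. Since $\mathsf{QVar}(-)$ is a dual isomorphism, $\vdash_{t'}\subseteq \vdash_{t}$ amounts to $\sigma(\mathsf{QVar}(\vdash_{s}))\subseteq \mathsf{QVar}(\vdash_{t'})$; the right-hand side being a quasivariety, it suffices to prove $\mathcal{F}(\mathcal{B})\in \mathsf{QVar}(\vdash_{t'})$ for each $\mathcal{B}\in \mathsf{QVar}(\vdash_{s})$. By Proposition~\ref{Properties of rho map in counits} we have $\rho(\vdash_{t'})=\vdash_{s}$, so by Proposition~\ref{Selective and sober Translations Induce varieties}(2), $\rho(\mathsf{QVar}(\vdash_{t'}))=\mathsf{QVar}(\vdash_{s})$; hence $\mathcal{B}\cong \theta(\mathcal{C})$ for some $\mathcal{C}\in \mathsf{QVar}(\vdash_{t'})$. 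Then $\mathcal{F}(\mathcal{B})\cong \mathcal{F}(\theta(\mathcal{C}))$, and pointwise injectivity of the counit (part of sobriety) makes $\varepsilon_{\mathcal{C}}\colon \mathcal{F}(\theta(\mathcal{C}))\to \mathcal{C}$ an embedding, so $\mathcal{F}(\theta(\mathcal{C}))$ is isomorphic to a subalgebra of $\mathcal{C}\in \mathsf{QVar}(\vdash_{t'})$; closure of quasivarieties under subalgebras and isomorphisms then gives $\mathcal{F}(\mathcal{B})\in \mathsf{QVar}(\vdash_{t'})$, completing the argument.

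I do not anticipate a serious obstacle, since the proof is largely an assembly of earlier results; the one point requiring care is the bookkeeping of the translation--algebraization interplay when transporting validity of sequents along $\theta$ and $\mathcal{F}$, but this is precisely what Proposition~\ref{Correctness of Translation in the unit} provides and can simply be cited. The genuinely new ingredient relative to the treatment of $\tau$ is the use of sobriety --- injectivity of the counit --- in the maximality step, which is what forces $\mathcal{F}(\theta(\mathcal{C}))$ back inside $\mathsf{QVar}(\vdash_{t'})$.
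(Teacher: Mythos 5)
Your proof is correct and follows essentially the same route as the paper, which itself only says the statement ``has the same proof as in Proposition~\ref{Maximal Grz-companion}'': companionhood via the unit isomorphism and the translation--algebraization correctness on the generators $\mathcal{F}(\mathcal{B})$, and maximality via $\rho(\mathsf{QVar}(\vdash_{t'}))=\mathsf{QVar}(\vdash_{s})$ together with the injective counit forcing $\mathcal{F}(\theta(\mathcal{C}))\preceq\mathcal{C}$. Your only deviation is routing the soundness half through $\tau(\vdash_{s})\subseteq\vdash_{t}$ rather than arguing directly on generators, which is a harmless repackaging of the same ingredients.
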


Finally we turn to the preservation results. We focus here only on preservation results along the map $\rho$. These appear to be those which are most amenable to transfer along the categorical lines we have here sketched. Additionally, there is good motivation to have special care about this direction: in general, when translating a system $\vdash_{\classX}$ to a system $\vdash_{\classY}$, the expectation will be that one can reason using well-understood tools of $\vdash_{\classY}$, to better study $\vdash_{\classX}$. Below we recall that a given logic $\vdash_{s}$ is \textit{locally tabular} if $\mathsf{QVar}(\vdash_{s})$ is locally finite: every finitely generated subalgebra of an algebra $\mathcal{B}\in \mathsf{QVar}(\vdash_{s})$ is finite.

\begin{lemma}
    Let ${\vdash_{s}}\in \Lambda(\vdash_{\classX})$ and ${\vdash_{t}}\in \Lambda(\vdash_{\classY})$, and let $\langle \zeta,f\rangle$ be a sober translation. Then if $\vdash_{t}$ has any of the following properties: tabularity, FMP, decidability or local tabularity; then $\rho(\vdash_{t})$ has the same property.
    \end{lemma}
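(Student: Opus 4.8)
The plan is to reduce all four cases to two features of a sober translation already in hand: the identity $\theta(\mathcal{A})=\mathcal{A}^{f}$ from Lemma \ref{Identity of the notions of regular elements}, so that $\theta$ carries finite algebras to finite algebras (as $\mathcal{A}^{f}$ is a subset of $A$), and the fact, from Proposition \ref{Selective and sober Translations Induce varieties}, that $\mathsf{QVar}(\rho(\vdash_{t}))=\rho(\mathsf{QVar}(\vdash_{t}))=\{\theta(\mathcal{A}) : \mathcal{A}\vDash\vdash_{t}\}$. Decidability is then immediate: by definition $(\Gamma,\phi)\in\rho(\vdash_{t})$ iff $\zeta_{*}[\Gamma]\vdash_{t}\zeta_{*}(\phi)$, and $\zeta_{*}$ is a computable map sending finite sets of terms to finite sets of terms, so a decision procedure for $\vdash_{t}$ yields one for $\rho(\vdash_{t})$.

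For the FMP I would argue by transferring refutations. If $(\Gamma,\phi)\notin\rho(\vdash_{t})$ then $\zeta_{*}[\Gamma]\nvdash_{t}\zeta_{*}(\phi)$, so by the FMP of $\vdash_{t}$ there is a finite $\mathcal{A}\vDash\vdash_{t}$ with $\mathcal{A}\nvDash(\zeta_{*}[\Gamma],\zeta_{*}(\phi))$. Passing from a refuting valuation $v$ on $\mathcal{A}$ to the valuation $w(x)=v(f(x))$ on $\theta(\mathcal{A})$ exactly as in the proof of Proposition \ref{Properties of PA-companions under selective translations} (using the hypothesis that $\zeta$ commutes with the algebraization, together with $w(\lambda)=v(\zeta_{*}(\lambda))$ for $\lambda\in Tm(\fancyL_{\classX},X)$), one gets $\theta(\mathcal{A})\nvDash(\Gamma,\phi)$; and $\theta(\mathcal{A})$ is finite and lies in $\mathsf{QVar}(\rho(\vdash_{t}))$. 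Thus every non-consequence of $\rho(\vdash_{t})$ is refuted by a finite algebra of its quasivariety, which is the FMP. Tabularity is the same argument carried out with a single generator: if $\vdash_{t}=\mathsf{Log}(\mathcal{A}_{0})$ for a finite $\mathcal{A}_{0}$ then $\mathsf{QVar}(\vdash_{t})=\mathbb{ISP}_{R}(\mathcal{A}_{0})$, and since $\theta$ commutes with $\mathbb{S}$ and $\mathbb{P}_{R}$ — this is precisely what was checked in the proof of Proposition \ref{Characterisation of PA-quasivarieties}, namely $\mathcal{B}\leq\mathcal{D}$ forces $\mathcal{B}^{f}\leq\mathcal{D}^{f}$ and $(\prod_{i}\mathcal{B}_{i}/R)^{f}=\prod_{i}\mathcal{B}_{i}^{f}/R$ — we have $\{\theta(\mathcal{A}_{0})\}\subseteq\rho(\mathsf{QVar}(\vdash_{t}))\subseteq\mathbb{ISP}_{R}(\theta(\mathcal{A}_{0}))$, so $\rho(\mathsf{QVar}(\vdash_{t}))$ has the same logic as the finite algebra $\theta(\mathcal{A}_{0})$, and $\rho(\vdash_{t})$ is tabular.

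For local tabularity, take $\mathcal{C}\in\mathsf{QVar}(\rho(\vdash_{t}))$ generated by $c_{1},\dots,c_{n}$; write $\mathcal{C}=\theta(\mathcal{A})$ with $\mathcal{A}\vDash\vdash_{t}$, so each $c_{i}\in\mathcal{A}^{f}$, and let $\mathcal{A}'$ be the $\fancyL_{\classY}$-subalgebra of $\mathcal{A}$ generated by the $c_{i}$. Local tabularity of $\vdash_{t}$ makes $\mathcal{A}'$ finite, and since every basic $\fancyL_{\classX}$-operation of $\mathcal{C}$ is $\zeta(g)$ evaluated in $\mathcal{A}$, an $\fancyL_{\classY}$-term, the $\fancyL_{\classX}$-subalgebra of $\mathcal{C}$ generated by $c_{1},\dots,c_{n}$ is contained in $\mathcal{A}'\cap\mathcal{A}^{f}=(\mathcal{A}')^{f}$, a subset of the finite set $A'$, hence finite. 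The one genuinely delicate point, and the step to be most careful about, is that $\rho(\mathsf{QVar}(\mathcal{A}_{0}))$ and $\mathbb{ISP}_{R}(\theta(\mathcal{A}_{0}))$ need not coincide on the nose — an $\fancyL_{\classX}$-subalgebra of $\theta(\mathcal{A})$ need not be $\theta$ of an $\fancyL_{\classY}$-subalgebra of $\mathcal{A}$ — which is why the tabularity step must be phrased at the level of logics (a quasivariety and any algebra generating it share the same logic) rather than at the level of classes; everything else is bookkeeping once the refutation-transfer argument of Proposition \ref{Properties of PA-companions under selective translations} and the commutation of $\theta$ with $\mathbb{S}$ and $\mathbb{P}_{R}$ are in place.
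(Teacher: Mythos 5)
Your proposal is correct, and for decidability, the FMP, and tabularity it follows essentially the same route as the paper: decidability is immediate from computability of $\zeta_{*}$, the FMP is obtained by pushing a finite countermodel through $\theta$ (which preserves finiteness since $\theta(\mathcal{A})=\mathcal{A}^{f}\subseteq A$), and tabularity from the commutation of $\theta$ with $\mathbb{S}$ and $\mathbb{P}_{R}$ --- your extra caution in phrasing the tabularity step as a sandwich at the level of logics is harmless and arguably cleaner, though the paper's on-the-nose identity $\rho(\mathbb{ISP}_{R}(\mathcal{A}_{0}))=\mathbb{ISP}_{R}(\theta(\mathcal{A}_{0}))$ is also available, since $\rho$ of a quasivariety is already known to be a quasivariety for sober translations.

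Where you genuinely diverge is local tabularity. The paper's argument is adjoint-theoretic: given a finitely generated $\mathcal{C}\preceq\theta(\mathcal{B})$, it applies $\mathcal{F}$, uses the sobriety condition that $\mathcal{F}$ preserves injective homomorphisms together with the explicit presentation of $\mathcal{F}(\mathcal{C})$ to conclude that $\mathcal{F}(\mathcal{C})$ is a finitely generated subalgebra of $\mathcal{B}$, hence finite, and then recovers $\mathcal{C}\cong\theta(\mathcal{F}(\mathcal{C}))$ via the unit isomorphism. You instead argue entirely inside $\mathcal{A}$: the $\fancyL_{\classX}$-operations of $\theta(\mathcal{A})$ are $\fancyL_{\classY}$-terms evaluated in $\mathcal{A}$, so the $\fancyL_{\classX}$-subalgebra generated by $c_{1},\dots,c_{n}\in\mathcal{A}^{f}$ is trapped inside the finite $\fancyL_{\classY}$-subalgebra $\mathcal{A}'$ they generate. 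This is more elementary --- it uses neither the left adjoint nor the injectivity of the counit, only term-definability of the operations of the algebra of regular elements --- and so it shows that the local-tabularity transfer needs much less than sobriety (sobriety still enters in identifying $\mathsf{QVar}(\rho(\vdash_{t}))$ with $\{\theta(\mathcal{A}):\mathcal{A}\in\mathsf{QVar}(\vdash_{t})\}$, but not in the finiteness argument itself). The paper's version buys uniformity with the rest of its categorical machinery; yours buys a weaker hypothesis and a shorter proof. One cosmetic slip: you write ``take $\mathcal{C}\in\mathsf{QVar}(\rho(\vdash_{t}))$ generated by $c_{1},\dots,c_{n}$; write $\mathcal{C}=\theta(\mathcal{A})$,'' which is fine here because every member of that quasivariety is of the form $\theta(\mathcal{A})$, but the cleaner formulation is to take a finitely generated subalgebra of some $\theta(\mathcal{A})$; your argument covers that case without change.
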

\begin{proof}
    Decidability is trivial. Assume that $\vdash_{t}$ has the FMP; we wish to show that $\rho(\vdash_{s})$ has it as well. Indeed assume that $(\Gamma,\phi)\notin \rho(\vdash_{s})$; hence by definition $(\zeta_{*}[\Gamma],\zeta_{*}(\phi))\notin {\vdash_{s}}$, so there is some finite algebra $\mathcal{B}\nvDash (\zeta_{*}[\Gamma],\zeta_{*}(\phi))$. Then  $\theta(\mathcal{B})$ will be again finite, so we obtain that $\theta(\mathcal{B})\vDash \rho(\vdash_{t})$ and it is a finite countermodel model. For tabularity, assume that $\mathsf{QVar}(\vdash_{s})=\mathbb{ISP}_{R}(\mathcal{A})$; using the fact that $\theta$ commutes with all limits and surjective homomorphisms, then $\rho(\mathsf{QVar}(\vdash_{s}))=\mathbb{ISP}_{R}(\theta(\mathcal{A}))$.
    
    Finally for local tabularity, assume that $\mathcal{C}\preceq \theta(\mathcal{B})$ is a finitely generated subalgebra. Hence we can present $\mathcal{C}\cong \terms(\fancyL,A)/\Phi$ where $A$ is a finite subset. Since $\mathcal{F}$ preserves injective homomorphisms, we have that $\mathcal{F}(\mathcal{C})\preceq \mathcal{F}(\theta(\mathcal{B}))\preceq \mathcal{B}$, and by Proposition \ref{Explicit construction of the left adjoint}, $$\mathcal{F}(\mathcal{C})\cong \terms(\fancyL_{\classY},A)/\mathsf{Cg}_{\classY}(\zeta^{*}(\Phi)\cup \{f(x)\approx x: x\in A\})$$. Hence $\mathcal{F}(\mathcal{C})$ is also finitely generated, and is a subalgebra of $\mathcal{B}$; so by local finiteness, $\mathcal{F}(\mathcal{C})$ is finite, hence $\theta(\mathcal{F}(\mathcal{C}))\cong \mathcal{C}$ is finite as well.\end{proof}

\section{Connecting Blok-Esakia and Polyatomic Logics}\label{Connecting Blok-Esakia and Polyatomic Logics}

In the previous sections we have introduced Polyatomic logics and Blok-Esakia theory as tools to study translations. In both cases the use of a selector term turned out to be important to derive some basic results. In this section we investigate a connection between these two concepts. In particular we will show that whilst an isomorphism between lattices of varieties might not hold in general -- for many classes of translations, such as sober translations, an isomorphism exists between the lattice of extensions of one system and the lattice of $\mathsf{PAt}$-$f$-logics of the other, where $f$ is the same selector of the translation. Additionally, we show that the schematic fragment -- used in inquisitive logic \cite{Ciardelli2018-ql} and adapted to DNA-logics in \cite{bezhanishvili_grilletti_quadrellaro_2021} and weak logics in \cite{quadrellaronakov} -- appears in this setting as the greatest $\zeta$-companion.

To highlight these connections, as in the previous section, we begin by discussing these results in the setting of the double negation and GMT translations, and then proceed to generalise to broader classes.

\subsection{Polyatomic Logics as Generalised Companions}\label{Polyatomic Logics as Generalised Companions}

First of all we recall from \cite[Definition 4.11]{bezhanishvili_grilletti_quadrellaro_2021} two crucial notions in the analysis of the lattice of DNA-logics: that of the \textit{least} and \textit{greatest} variant. More concretely, an analysis was given of the following concepts:

\begin{definition}
Let $L\in \Lambda(\mathsf{IPC})$. We say that $L$ is:
\begin{itemize}
    \item \textit{DNA-minimal} if whenever $S$ is an intermediate logic and $L^{\neg\neg}=S^{\neg\neg}$ then $L\subseteq S$.
    \item \textit{DNA-maximal} if whenever $S$ is an intermediate logic and $L^{\neg\neg}=S^{\neg\neg}$ then $S\subseteq L$.
\end{itemize}
\end{definition}

\begin{definition}\label{Definition of schematic fragment}
Let $L\in \Lambda^{\neg\neg}(\mathsf{IPC})$. We define the \textit{schematic fragment} of $L$, denoted $Schem(L)$ as follows:
\begin{equation*}
    Schem(L)=\{ \phi : \forall \overline{\psi}, \  \phi[\overline{\psi}/\overline{p}]\in L\}
\end{equation*}
\end{definition}

The following facts were shown in \cite[Proposition 4.5, Proposition 4.16]{bezhanishvili_grilletti_quadrellaro_2021}

\begin{proposition}
    For each $L\in \Lambda^{\neg\neg}(\mathsf{IPC})$, $Schem(L)\in \Lambda(\mathsf{IPC})$ is the greatest logic, in the sense of axiomatic extension, such that $Schem(L)^{\neg\neg}=L$. Moreover:
    \begin{equation*}
    \mathsf{Var}(Schem(L))=\mathsf{Var}(\{\langle H_{\neg} \rangle : \mathcal{H}\in \mathsf{Var}^{\neg\neg}(L)\})
\end{equation*}
\end{proposition}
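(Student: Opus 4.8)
The plan is to prove the two assertions separately, the first about the schematic fragment being the greatest DNA-variant preimage, and the second identifying its variety. For the first part, I would argue in two steps. First, show $Schem(L)^{\neg\neg} = L$: given $\phi \in Schem(L)$, apply the definition with the particular substitution $\overline{\psi} = \neg\neg\overline{p}$ to get $\phi[\neg\neg\overline{p}/\overline{p}] \in L$, hence $\phi \in L^{\neg\neg}$; conversely, if $\phi \in L^{\neg\neg}$ then for any substitution $\overline{\chi}$ we have, using that $L$ is closed under substitution and the identity $\neg\neg\neg\neg q \dashv\vdash \neg\neg q$, that $\phi[\overline{\chi}/\overline{p}][\neg\neg\overline{p}/\overline{p}]$ lies in $L$ — careful here, one needs that substituting $\neg\neg$ after an arbitrary substitution and then noting $\neg\neg$ of a regular element is that element; this is the standard computation from \cite{bezhanishvili_grilletti_quadrellaro_2021} and I would spell it out. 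So $Schem(L) \subseteq L^{\neg\neg}$ is false in general — rather the point is $Schem(L)^{\neg\neg} = L$ holds because $Schem(L)$ is exactly those $\phi$ all of whose substitution instances land in the variant, and one checks this recovers $L$ under the $\neg\neg$-substitution. Second, show maximality: if $S \in \Lambda(\mathsf{IPC})$ satisfies $S^{\neg\neg} = L$, then for $\phi \in S$, every substitution instance $\phi[\overline{\psi}/\overline{p}] \in S$ (closure under substitution), and in particular $\phi[\overline{\psi}/\overline{p}][\neg\neg\overline{p}/\overline{p}] \in S$, i.e. $\phi[\overline{\psi}/\overline{p}] \in S^{\neg\neg} = L$; since this holds for all $\overline{\psi}$, we get $\phi \in Schem(L)$, so $S \subseteq Schem(L)$.

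For the second assertion, I would use algebra-logic duality: $\mathsf{Var}(Schem(L))$ is the variety generated by all Heyting algebras validating $Schem(L)$. The claim is that this equals $\mathsf{Var}(\{\langle \mathcal{H}^{\neg\neg}\rangle : \mathcal{H} \in \mathsf{Var}^{\neg\neg}(L)\})$. For one inclusion, I would show that each $\langle \mathcal{H}^{\neg\neg}\rangle$ with $\mathcal{H} \in \mathsf{Var}^{\neg\neg}(L)$ validates $Schem(L)$: since $\mathcal{H} \models_{\neg\neg} L$ (i.e. $\mathcal{H} \models_f L^{\neg\neg}$ in the $\mathsf{PAt}$ notation), Lemma~\ref{Invariance of truth for f(p) valuation}(4) gives $\langle \mathcal{H}^{\neg\neg}\rangle \models L$, hence $\langle \mathcal{H}^{\neg\neg}\rangle \models Schem(L)$ since $Schem(L) \subseteq L$ — wait, that's backwards; rather $Schem(L)^{\neg\neg} = L$ does not give $Schem(L) \subseteq L$. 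Instead I should argue directly: a regularly generated algebra $\langle \mathcal{H}^{\neg\neg}\rangle$ validates $\phi$ (under \emph{all} valuations) iff it $\neg\neg$-validates all substitution instances of $\phi$ — but here one uses Corollary~\ref{Failure in regularly generated yields substitution instance}: if $\langle \mathcal{H}^{\neg\neg}\rangle \nvDash (\emptyset,\phi)$ for $\phi \in Schem(L)$, there is a substitution $\nu$ with $\mathcal{H} \nvDash_{\neg\neg} (\emptyset, \nu(\phi)[\neg\neg\overline p/\overline p])$, contradicting $\nu(\phi) \in L$ hence $\nu(\phi)[\neg\neg\overline p/\overline p] \in L$ being validated by $\mathcal{H} \in \mathsf{Var}^{\neg\neg}(L)$. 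For the reverse inclusion, I would show every subdirectly irreducible (or just every) algebra in $\mathsf{Var}(Schem(L))$ embeds appropriately: if $\mathcal{A} \models Schem(L)$, then I claim $\mathcal{A} \in \mathsf{Var}(\{\langle \mathcal{H}^{\neg\neg}\rangle : \mathcal{H} \in \mathsf{Var}^{\neg\neg}(L)\})$ — here the cleanest route is to observe that $\mathsf{Var}(Schem(L))$ is generated by its \emph{regularly generated} members (this follows from the $\mathsf{PAt}$-theory, since $Schem(L)$ generates a DNA-variant whose quasivariety, by Corollary~\ref{Regular generation of PA-Quasivarieties}, is regularly generated, combined with the fact that $Schem(L)$ is itself a standard logic so its variety is closed under all of HSP), and each regularly generated $\mathcal{A} \models Schem(L)$ satisfies $\mathcal{A} = \langle \mathcal{A}^{\neg\neg}\rangle$ with $\mathcal{A} \models_{\neg\neg} Schem(L)^{\neg\neg} = L$, so $\mathcal{A} \in \mathsf{Var}^{\neg\neg}(L)$ and $\mathcal{A} = \langle \mathcal{A}^{\neg\neg}\rangle$ is among the generators.

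The main obstacle, I expect, is getting the direction of inclusions straight in the second part and correctly invoking the interplay between "validates under all valuations" (standard semantics for $Schem(L)$, a genuine logic) and "validates under polyatomic valuations" ($\neg\neg$-semantics for $L$). The subtlety is that $Schem(L) \subseteq L$ need \emph{not} hold as sets of formulas, so one cannot naively transfer validity; the correct bridge is Corollary~\ref{Failure in regularly generated yields substitution instance} together with the definition of $Schem$ as closure under all substitutions, which is precisely engineered so that failure of $\phi \in Schem(L)$ in a regularly generated algebra produces a genuine failure of an $L$-theorem in an algebra of $\mathsf{Var}^{\neg\neg}(L)$. Once that bridge is in place the argument is bookkeeping. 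I would also double-check that $\mathsf{Var}(Schem(L))$ really is generated by its regularly generated algebras — this should follow because $Schem(L)$, being the schematic fragment, is a standard logic whose variant is $L$, and the regularly generated algebras validating $Schem(L)$ are cofinal in the appropriate sense — but if this needs the full strength of the DNA-variety machinery from \cite{bezhanishvili_grilletti_quadrellaro_2021}, I would cite Proposition~4.16 there directly as the paper already does.
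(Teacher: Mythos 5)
First, a point of orientation: the paper does not prove this proposition itself but cites it from the DNA-logic literature; the argument you are reconstructing is, however, carried out in full generality later in the paper (the lemma that $Schem({\vdash_{*}})$ is the greatest logic with $\mathsf{PAt}$-variant ${\vdash_{*}}$, and Proposition \ref{Semantics of schematic fragment}), so it is fair to compare against those. Your first part is essentially right: maximality of $Schem(L)$ among logics with $\neg\neg$-variant $L$ is exactly the substitution argument you give, and $Schem(L)^{\neg\neg}=L$ follows from closure of the underlying intermediate logic under substitution together with $\neg\neg\neg\neg p\dashv\vdash\neg\neg p$. One recurring side remark of yours is wrong, though: $Schem(L)\subseteq L$ \emph{does} hold, trivially, by taking the identity substitution in the definition of $Schem$; your worry that this inclusion ``need not hold'' misdiagnoses where the real difficulty lies. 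Your forward inclusion in the second part (each $\langle\mathcal{H}^{\neg\neg}\rangle$ with $\mathcal{H}\in\mathsf{Var}^{\neg\neg}(L)$ validates $Schem(L)$) is fine, and is in fact immediate from Lemma \ref{Invariance of truth for f(p) valuation}(4) applied with ${\vdash_{*}}=Schem(L)$, since $Schem(L)^{\neg\neg}=L$.

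The genuine gap is in the reverse inclusion $\mathsf{Var}(Schem(L))\subseteq\mathsf{Var}(\{\langle\mathcal{H}^{\neg\neg}\rangle:\mathcal{H}\in\mathsf{Var}^{\neg\neg}(L)\})$. You correctly identify the crux --- that $\mathsf{Var}(Schem(L))$ is generated by its regularly generated members --- but the justification you offer does not establish it. Corollary \ref{Regular generation of PA-Quasivarieties} says that the $\mathsf{PAt}$-variety $\mathsf{Var}^{\neg\neg}(L)=(\mathsf{Var}(Schem(L)))^{\uparrow}$ is generated \emph{as a $\mathsf{PAt}$-variety} (i.e.\ closing under $\mathbb{HSP}$ \emph{and} core superalgebras) by its regularly generated algebras. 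This only tells you that every $\mathcal{A}\in\mathsf{Var}(Schem(L))$ is a core superalgebra of something in $\mathbb{HSP}(R)$, where $R$ is the set of regularly generated members; it does not put $\mathcal{A}$ in $\mathbb{HSP}(R)$. Nor can closure of $\mathsf{Var}(Schem(L))$ under $\mathbb{HSP}$ rescue this: the analogous claim is false for a general standard logic ($\mathsf{Var}(\mathsf{IPC})$ is all Heyting algebras, yet its regularly generated members generate only the variety of Medvedev's logic), so the claim is special to schematic fragments and must use maximality --- which you only invoke in part one. The correct argument, and the one the paper gives for Proposition \ref{Semantics of schematic fragment}, inverts your logic: show that $\mathsf{Log}(\mathbb{HSP}(R))$ has $L$ as its $\neg\neg$-variant (via Lemma \ref{Basic Commutativity Result} together with Corollary \ref{Regular generation of PA-Quasivarieties}), conclude $\mathsf{Log}(\mathbb{HSP}(R))\subseteq Schem(L)$ by the maximality of the schematic fragment, and dualise to obtain $\mathsf{Var}(Schem(L))\subseteq\mathbb{HSP}(R)$. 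With that replacement the rest of your bookkeeping goes through.
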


In other words, the variety generated by the schematic fragment is precisely the variety generated by the regularly generated subalgebras of those $\mathcal{H}$ which belong to the DNA-variety of $L'$. Additionally, we have the following, shown in Proposition \cite[Proposition 4.12]{bezhanishvili_grilletti_quadrellaro_2021}:

\begin{proposition}
For each $L\in \Lambda(\mathsf{IPC})$, we have that:
\begin{enumerate}
    \item $L$ is DNA-minimal if and only if $\mathsf{Var}(L)=\mathsf{Var}^{\neg\neg}(L^{\neg\neg})$;
    \item $L$ is DNA-maximal if and only if $L=Schem(L^{\neg\neg})$.
\end{enumerate}
\end{proposition}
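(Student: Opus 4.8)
The plan is to prove each of the two biconditionals by unwinding the relevant definitions and then invoking the two facts quoted immediately before the statement, namely that $\mathsf{Var}(\mathit{Schem}(L'))=\mathsf{Var}(\{\langle \mathcal{H}^{\neg\neg}\rangle : \mathcal{H}\in\mathsf{Var}^{\neg\neg}(L')\})$ and that $\mathit{Schem}(L')$ is the greatest axiomatic extension with $\mathit{Schem}(L')^{\neg\neg}=L'$, together with the duality between $\Lambda(\mathsf{IPC})$ and $\Xi(\mathbf{HA})$ and the dual isomorphism between DNA-logics and DNA-varieties (both recalled in the excerpt). Throughout, write $L'=L^{\neg\neg}$.

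For part (1), I would argue as follows. Suppose first that $L$ is DNA-minimal. I want $\mathsf{Var}(L)=\mathsf{Var}^{\neg\neg}(L')$. The inclusion $\mathsf{Var}(L)\subseteq\mathsf{Var}^{\neg\neg}(L')$ is automatic: by Lemma \ref{Invariance of truth for f(p) valuation}-style reasoning (specialised to $f=\neg\neg$), every $\mathcal{H}\vDash L$ satisfies $\mathcal{H}\vDash_{\neg\neg}L'$, and the DNA-variety $\mathsf{Var}^{\neg\neg}(L')$ is the largest class of algebras validating $L'$ under regular valuations; since it is closed under core superalgebras it certainly contains $\mathsf{Var}(L)$. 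For the converse inclusion, let $S=\mathsf{Log}(\mathsf{Var}^{\neg\neg}(L'))$ — really the intermediate logic whose variety is the ``regular core'' of the DNA-variety; one checks $S^{\neg\neg}=L'=L^{\neg\neg}$ using the dual isomorphism between DNA-logics and DNA-varieties. DNA-minimality then gives $L\subseteq S$, hence $\mathsf{Var}(S)\subseteq\mathsf{Var}(L)$, i.e. $\mathsf{Var}^{\neg\neg}(L')\subseteq\mathsf{Var}(L)$. Conversely, if $\mathsf{Var}(L)=\mathsf{Var}^{\neg\neg}(L')$, take any intermediate $S$ with $S^{\neg\neg}=L^{\neg\neg}=L'$; then $\mathsf{Var}(S)\subseteq\mathsf{Var}^{\neg\neg}(S^{\neg\neg})=\mathsf{Var}^{\neg\neg}(L')=\mathsf{Var}(L)$ (the first inclusion being the automatic one above), so $L\subseteq S$, showing $L$ is DNA-minimal.

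For part (2), suppose first $L$ is DNA-maximal. Since $\mathit{Schem}(L^{\neg\neg})^{\neg\neg}=L^{\neg\neg}$ by the quoted proposition, DNA-maximality applied to $S=\mathit{Schem}(L^{\neg\neg})$ (which is an intermediate logic with the same DNA-variant as $L$) yields $\mathit{Schem}(L^{\neg\neg})\subseteq L$; but $\mathit{Schem}(L^{\neg\neg})$ is the \emph{greatest} axiomatic extension with that DNA-variant, and $L$ itself has DNA-variant $L^{\neg\neg}$, so $L\subseteq\mathit{Schem}(L^{\neg\neg})$; hence equality. Conversely, if $L=\mathit{Schem}(L^{\neg\neg})$, then for any intermediate $S$ with $S^{\neg\neg}=L^{\neg\neg}$ we have by maximality of the schematic fragment that $S\subseteq\mathit{Schem}(S^{\neg\neg})=\mathit{Schem}(L^{\neg\neg})=L$, which is exactly DNA-maximality.

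The main obstacle I anticipate is pinning down part (1)'s ``$S$'': one must be careful that the class $\{\langle\mathcal{H}^{\neg\neg}\rangle : \mathcal{H}\in\mathsf{Var}^{\neg\neg}(L')\}$ does generate a \emph{variety} whose dual logic has DNA-variant exactly $L'$, and that this coincides with $\mathsf{Var}^{\neg\neg}(L')$ only under the DNA-minimality hypothesis — the subtlety being the difference between a DNA-variety and the ordinary variety generated by its regularly generated members. All of this is available from Corollary \ref{Regular generation of PA-Quasivarieties}, the dual isomorphism, and the quoted description of $\mathsf{Var}(\mathit{Schem}(L'))$, so the proof is a matter of assembling these pieces in the right order rather than introducing new machinery; indeed one should simply cite \cite[Proposition 4.12]{bezhanishvili_grilletti_quadrellaro_2021} and note it is the specialisation $f=\neg\neg$ of the general $\mathsf{PAt}$ facts developed in Section \ref{Section: Basic Theory and Completeness of PA logic}.
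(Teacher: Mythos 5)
Your proof is correct. Note, though, that the paper does not prove this statement at all: it is recalled verbatim from \cite[Proposition 4.12]{bezhanishvili_grilletti_quadrellaro_2021}, so the paper's ``proof'' is a citation. What you have written is in effect the specialisation to $f=\neg\neg$ of arguments the paper only carries out later in full generality: your part (1) is essentially the proof of Lemma \ref{Semantic meaning of being f-minimal} (take $S=\mathsf{Log}(\mathsf{Var}^{\neg\neg}(L^{\neg\neg}))$, check via Lemma \ref{Basic Commutativity Result} and the dual isomorphism that $S^{\neg\neg}=L^{\neg\neg}$, and use that $\mathsf{Var}(L)\subseteq(\mathsf{Var}(L))^{\uparrow}=\mathsf{Var}^{\neg\neg}(L^{\neg\neg})$ always holds), and your part (2) is the standard two-line consequence of the schematic fragment being the \emph{greatest} logic with a given DNA-variant. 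Both directions of both equivalences check out. One small caveat: your parenthetical describing $\mathsf{Log}(\mathsf{Var}^{\neg\neg}(L^{\neg\neg}))$ as the logic of the ``regular core'' of the DNA-variety is misleading --- the regular core (the variety generated by the regularly generated members) is $\mathsf{Var}(Schem(L^{\neg\neg}))$, which is in general strictly smaller; what your argument actually uses, correctly, is the logic of the full DNA-variety, which is itself an ordinary variety and hence recovered by $\mathsf{Var}(\mathsf{Log}(-))$. The formula you wrote is the right one; only the gloss is off.
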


Hence, schematic fragments provide a concrete syntactic description of the greatest DNA-variant. This is not of course a very concrete decription -- as mentioned in \cite{bezhanishvili_grilletti_quadrellaro_2021} the schematic fragment of $\mathsf{Inq}$, inquisitive propositional logic, the $\neg\neg$-variant of $\mathsf{IPC}$, is the well-known \textit{Medvedev Logic}, which has no known recursive axiomatisation. Nevertheless, knowing the properties associated to it can provide us with insight on the nature of the translation and the logics at play, as exploited for instance in \cite{quadrellaronakov}.

The reader might have noticed a symmetry between the notions here introduced here of minimality and maximality, and the notions of the least modal companion and the greatest modal companion discussed in \ref{GMT and classic Blok-Esakia}. Having the two sets of concepts it might be natural to ask if there is any relationship between them. We will show that this is the case, and first illustrate this by the example of the GMT translation.

Recall from Example \ref{Box logics} that we discussed $\Box$-variants of $\mathsf{S4}$-systems, and hence we can easily generalise the definitions given above (formal and general definitions will be given in the next section).

\begin{proposition}\label{Box minimal logics if and only if least modal companion}
For each variety $\mathbf{K}\in \Xi(\mathbf{S4})$:
\begin{equation*}
    \tau(\rho(\mathbf{K}))=\mathbf{K}^{\uparrow}
\end{equation*}
Consequently, a logic in the sense of axiomatic extension $L\in \mathbf{NExt}(\mathsf{S4})$ is $\Box$-minimal if and only if it is the least modal companion of $\rho(L)$. 
\end{proposition}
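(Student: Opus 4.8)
The plan is to first establish the variety identity $\tau(\rho(\mathbf{K})) = \mathbf{K}^{\uparrow}$ and then deduce the statement about $\Box$-minimal logics by dualising it through the correspondences of Sections~\ref{Section: Basic Theory and Completeness of PA logic} and~\ref{GMT and classic Blok-Esakia}.

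For the inclusion $\mathbf{K}^{\uparrow}\subseteq\tau(\rho(\mathbf{K}))$ I would argue directly. If $\mathcal{A}\in\mathbf{K}^{\uparrow}$, pick $\mathcal{B}\in\mathbf{K}$ with $\mathcal{B}\preceq_{\Box}\mathcal{A}$. Applying $\theta = (-)_{\Box}$ to the inclusion $\mathcal{B}\hookrightarrow\mathcal{A}$ gives an embedding of Heyting algebras $\theta(\mathcal{B})\hookrightarrow\theta(\mathcal{A})$; but by Lemma~\ref{Identity of the notions of regular elements} these algebras have carriers $\mathcal{B}^{\Box}$ and $\mathcal{A}^{\Box}$, which are equal by the definition of $\preceq_{\Box}$, so in fact $\theta(\mathcal{B}) = \theta(\mathcal{A})$. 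Since $\mathcal{B}\in\mathbf{K}$, this puts $\theta(\mathcal{A})$ in $\rho(\mathbf{K})$, i.e. $\mathcal{A}\in\tau(\rho(\mathbf{K}))$.

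The reverse inclusion is the heart of the matter, and I expect it to be the main obstacle, since it is exactly where \emph{sobriety} and \emph{strong selectivity} of GMT are needed. Given $\mathcal{A}\in\tau(\rho(\mathbf{K}))$, so $\theta(\mathcal{A})\in\rho(\mathbf{K})$, closure of $\rho(\mathbf{K})$ under isomorphism (it is a variety by Proposition~\ref{Key Properties of the rho-map in S4}) yields $\mathcal{C}\in\mathbf{K}$ with $\theta(\mathcal{A})\cong\theta(\mathcal{C})$. I would then take $\mathcal{B} := \mathcal{F}(\theta(\mathcal{A}))$ and verify $\mathcal{B}\in\mathbf{K}$ and $\mathcal{B}\preceq_{\Box}\mathcal{A}$. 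For membership: by sobriety of GMT (Corollary~\ref{The GMT translation is sober}) the counit $\varepsilon_{\mathcal{C}}:\mathcal{F}(\theta(\mathcal{C}))\hookrightarrow\mathcal{C}$ is injective, so $\mathcal{B}\cong\mathcal{F}(\theta(\mathcal{C}))$ is a subalgebra of $\mathcal{C}\in\mathbf{K}$, hence lies in the variety $\mathbf{K}$. For the core-superalgebra relation: $\varepsilon_{\mathcal{A}}$ embeds $\mathcal{B}$ into $\mathcal{A}$, and since GMT is strongly selective the unit $\eta$ is an isomorphism, so the triangle identity $\theta(\varepsilon_{\mathcal{A}})\circ\eta_{\theta(\mathcal{A})} = \mathrm{id}$ forces $\theta(\varepsilon_{\mathcal{A}}):\theta(\mathcal{B})\to\theta(\mathcal{A})$ to be an isomorphism; thus $\varepsilon_{\mathcal{A}}$ carries $\mathcal{B}^{\Box} = \theta(\mathcal{B})$ bijectively onto $\mathcal{A}^{\Box} = \theta(\mathcal{A})$, which is precisely $\mathcal{B}\preceq_{\Box}\mathcal{A}$. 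This gives $\mathcal{A}\in\mathbf{K}^{\uparrow}$ and completes the identity.

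For the consequence I would pass to logics. Using the dual isomorphism between $\mathsf{PAt}$-$\Box$-logics and $\mathsf{PAt}$-$\Box$-quasivarieties (Corollary~\ref{Dual Isomorphism for PAt logics and quasivarieties}) together with Lemma~\ref{Basic Commutativity Result}, one gets $L^{\Box} = S^{\Box}$ iff $\mathsf{Var}(L)^{\uparrow} = \mathsf{Var}(S)^{\uparrow}$; the identity just proved rewrites the right-hand side as $\tau(\rho(\mathsf{Var}(L))) = \tau(\rho(\mathsf{Var}(S)))$, and applying $\rho$ and using $\rho\tau = \mathrm{id}$ on varieties of Heyting algebras (Proposition~\ref{Key Properties of the rho-map in S4}) this becomes equivalent to $\rho(L) = \rho(S)$. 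Now recall that the modal companions of an intermediate logic $P$ are exactly the $N$ with $\rho(N) = P$ (Proposition~\ref{Key Properties of the rho-map in S4}) and that the least of them is $\tau(P)$ (Proposition~\ref{Key Properties of the tau-map in the S4 case}). If $L$ is $\Box$-minimal, then since $\tau(\rho(L))$ is a modal companion of $\rho(L)$ we have $\rho(\tau(\rho(L))) = \rho(L)$, hence $(\tau(\rho(L)))^{\Box} = L^{\Box}$, and $\Box$-minimality yields $L\subseteq\tau(\rho(L))$; as $\tau(\rho(L))$ is the least modal companion of $\rho(L)$ and $L$ is itself a modal companion of $\rho(L)$, also $\tau(\rho(L))\subseteq L$, so $L = \tau(\rho(L))$ is the least modal companion of $\rho(L)$. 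Conversely, if $L$ is the least modal companion of $\rho(L)$ and $S^{\Box} = L^{\Box}$, then $\rho(S) = \rho(L)$, so $S$ is a modal companion of $\rho(L)$ and hence $L\subseteq S$; that is, $L$ is $\Box$-minimal.
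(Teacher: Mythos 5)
Your proof is correct and follows essentially the same route as the paper's: both inclusions of $\tau(\rho(\mathbf{K}))=\mathbf{K}^{\uparrow}$ are obtained exactly as in the text, using injectivity of the counit to place $\mathcal{F}(\theta(\mathcal{C}))\cong B(\mathcal{C}_{\Box})$ inside $\mathbf{K}$ and the unit being an isomorphism to see that $\mathcal{A}$ is a core superalgebra of it. Your derivation of the second claim runs through the equivalence $L^{\Box}=S^{\Box}\iff\rho(L)=\rho(S)$ rather than the paper's semantic characterisation $\mathsf{Var}(L)=\mathsf{Var}^{\Box}(L^{\Box})$ of $\Box$-minimality, but this is only a cosmetic reshuffling of the same ingredients (the first part of the proposition, Lemma \ref{Basic Commutativity Result}, and Propositions \ref{Key Properties of the tau-map in the S4 case} and \ref{Key Properties of the rho-map in S4}).
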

\begin{proof}
First assume that $\mathcal{B}\in \tau(\rho(\mathbf{K}))$; then $\mathcal{B}_{\Box}=\mathcal{C}_{\Box}$ where $\mathcal{C}\in \mathbf{K}$; hence, up to isomorphism, $B(\mathcal{C}_{\Box})$ is a subalgebra of $\mathcal{B}$, and also a subalgebra of $\mathcal{C}$, since the counit is injective. Thus $B(\mathcal{C}_{\Box})\in \mathbf{K}$, and  $\mathcal{B}$ is a core superalgebra of $B(\mathcal{C}_{\Box})$; so $\mathcal{B}\in \mathbf{K}^{\uparrow}$. Conversely, if $\mathcal{B}\in \mathbf{K}^{\uparrow}$, then $\mathcal{C}\preceq \mathcal{B}$ where $\mathcal{C}\in \mathbf{K}$, and $\mathcal{B}_{\Box}\cong \mathcal{C}_{\Box}$; hence, $\mathcal{B}_{\Box}\in \rho(\mathbf{K})$ by closure under isomorphisms, hence, $\mathcal{B}\in \tau(\rho(\mathbf{K}))$.

To see the second statement, first note that:
\begin{equation*}
    (\mathsf{Var}(L))^{\uparrow}=\mathsf{Var}^{\Box}(L^{\Box})
\end{equation*}
which follows essentially by Lemma \ref{Basic Commutativity Result}. Hence using the first part of this proposition:
\begin{equation*}
    \mathsf{Var}(L)=\mathsf{Var}^{\Box}(L^{\Box}) \iff \mathsf{Var}(L)=\tau(\rho(\mathsf{Var}(L))
\end{equation*}
Which using Proposition \ref{Key Properties of the tau-map in the S4 case} yields the result.\end{proof}

Just like least modal companions are $\Box$-minimal, we can see that greatest modal companions are $\Box$-maximal. As in Definition \ref{Definition of schematic fragment} we can define the schematic fragment of a $\Box$-logic $L$ and then the following is not hard to see:

\begin{proposition}\label{Box logics are maximal}
    For each $L\in \Lambda^{\Box}(\mathsf{S4})$, we have that $Schem(L)\in \Lambda(\mathsf{S4})$ is the greatest logic such that $Schem(L)^{\Box}=L$.
\end{proposition}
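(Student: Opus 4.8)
The plan is to mirror the structure of the proof of Proposition 4.16 in \cite{bezhanishvili_grilletti_quadrellaro_2021}, transferring it along the $\Box$-variant construction in the obvious way. Recall that $L \in \Lambda^{\Box}(\mathsf{S4})$ means there is some $M \in \mathbf{NExt}(\mathsf{S4})$ (in the sense of an axiomatic extension) with $M^{\Box} = L$. I would define $Schem(L) = \{\phi : \forall \overline{\psi}, \ \phi[\overline{\psi}/\overline{p}] \in L\}$ exactly as in Definition \ref{Definition of schematic fragment}, and then establish three things: (i) $Schem(L) \in \Lambda(\mathsf{S4})$, i.e.\ it is genuinely closed under uniform substitution (and contains $\mathsf{S4}$, and is closed under the rules); (ii) $Schem(L)^{\Box} = L$; and (iii) $Schem(L)$ is the largest axiomatic extension of $\mathsf{S4}$ with property (ii).

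For (i), closure under substitution is immediate from the definition: if $\phi \in Schem(L)$ and $\nu$ is any substitution, then for every $\overline{\psi}$ we have $\nu(\phi)[\overline{\psi}/\overline{p}] = \phi[\nu(\overline{p})[\overline{\psi}/\overline{p}]/\overline{p}]$, which is a substitution instance of $\phi$ and hence in $L$; so $\nu(\phi) \in Schem(L)$. Closure of $Schem(L)$ under Modus Ponens and necessitation follows since $L$ is closed under these and substitution commutes with them; and $\mathsf{S4} \subseteq Schem(L)$ since $\mathsf{S4} \subseteq L$ and $\mathsf{S4}$ is substitution-closed. For (ii), one inclusion is by taking $\overline{\psi} = \Box\overline{p}$ in the definition of $Schem(L)$: if $\phi \in Schem(L)$ then $\phi[\Box\overline{p}/\overline{p}] \in L \subseteq M$, so $\phi \in M^{\Box} = L$; wait — more carefully, $\phi \in Schem(L)$ gives $\phi[\Box\overline{p}/\overline{p}] \in L$, and since $L = M^{\Box}$ we want $\phi \in L^{\Box}$, i.e.\ $\phi[\Box\overline{p}/\overline{p}] \in L$, which is exactly what we have — so $Schem(L)^{\Box} \subseteq L$. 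Hmm, this needs the idempotency of $\Box$: $\phi[\Box\overline{p}/\overline{p}][\Box\overline{p}/\overline{p}]$ and $\phi[\Box\overline{p}/\overline{p}]$ are $\mathsf{S4}$-equivalent, so membership in $L$ transfers. For the reverse inclusion $L \subseteq Schem(L)^{\Box}$: if $\phi \in L = M^{\Box}$, then $\phi[\Box\overline{p}/\overline{p}] \in M$; one must check $\phi[\Box\overline{p}/\overline{p}] \in Schem(L)$, i.e.\ that every substitution instance $\phi[\Box\overline{\psi}/\overline{p}]$ lies in $L$; but $\phi[\Box\overline{\psi}/\overline{p}] = \psi'[\Box\overline{p}/\overline{p}]$ for a suitable $\phi$-related formula, and using that $\Box\Box\psi \leftrightarrow \Box\psi$ in $\mathsf{S4}$ together with $M$ being substitution-closed, one gets $\phi[\Box\overline{\psi}/\overline{p}][\Box\overline{p}/\overline{p}] \in M$, hence $\phi[\Box\overline{\psi}/\overline{p}] \in M^{\Box} = L$. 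So $\phi[\Box\overline{p}/\overline{p}] \in Schem(L)$, and applying the $\Box$-substitution once more (using idempotency) shows $\phi \in Schem(L)^{\Box}$.

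For (iii), suppose $N \in \Lambda(\mathsf{S4})$ satisfies $N^{\Box} = L$; I must show $N \subseteq Schem(L)$. Let $\phi \in N$. Since $N$ is closed under uniform substitution, $\phi[\Box\overline{\psi}/\overline{p}] \in N$ for every $\overline{\psi}$. Again invoking idempotency of $\Box$ in $\mathsf{S4}$, the formula $\phi[\Box\overline{\psi}/\overline{p}]$ is $\mathsf{S4}$-equivalent to $(\phi[\overline{\psi}/\overline{p}])[\Box\overline{p}/\overline{p}]$ — here I would verify the substitution bookkeeping carefully — hence $(\phi[\overline{\psi}/\overline{p}])[\Box\overline{p}/\overline{p}] \in N$, so $\phi[\overline{\psi}/\overline{p}] \in N^{\Box} = L$. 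As $\overline{\psi}$ was arbitrary, $\phi \in Schem(L)$. This gives $N \subseteq Schem(L)$, and combined with (i), (ii) we are done. The main obstacle is purely the substitution-composition bookkeeping in step (ii)/(iii): one needs the precise fact that for any formula $\phi$ and any tuple $\overline{\psi}$, the formulas $\phi[\Box\overline{\psi}/\overline{p}]$ and $\phi[\overline{\psi}/\overline{p}][\Box\overline{p}/\overline{p}]$ coincide up to $\mathsf{S4}$-provable equivalence, which rests on $\Box$ being idempotent (a selector term) and on $\mathsf{S4}$ proving the congruence rule for $\Box$; everything else is routine. This is precisely the $\mathsf{S4}$-instance of the general mechanism already isolated in Proposition \ref{Properties of PA-logics}, so an alternative, slicker route would be to cite the forthcoming general statement and merely remark that it specialises here.
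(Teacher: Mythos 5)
Your overall decomposition --- (i) $Schem(L)$ is a standard logic, (ii) $Schem(L)^{\Box}=L$, (iii) maximality via closure of $N$ under uniform substitution --- is exactly the route the paper takes: it offers no dedicated proof of this proposition, instead deferring to the general lemma in Section~\ref{Connecting Blok-Esakia and Polyatomic Logics} that $Schem(\vdash_{*})$ is the greatest logic having $\vdash_{*}$ as its $\mathsf{PAt}$-variant, whose proof is your step (iii) in general form. Steps (i) and (ii) are fine, and you are right that the inclusion $Schem(L)^{\Box}\subseteq L$ is the one place where idempotency of $\Box$ (together with replacement of provable equivalents in the normal logic $M$) genuinely enters.

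However, the identity you invoke in step (iii) is false: $\phi[\Box\overline{\psi}/\overline{p}]$ is \emph{not} in general $\mathsf{S4}$-equivalent to $(\phi[\overline{\psi}/\overline{p}])[\Box\overline{p}/\overline{p}]$. Take $\phi=p$ and $\psi=q\vee r$: the first formula is $\Box(q\vee r)$, the second is $\Box q\vee\Box r$, and these are not $\mathsf{S4}$-equivalent. Fortunately the detour through $\phi[\Box\overline{\psi}/\overline{p}]$ is unnecessary. What you must show is that $\phi[\overline{\psi}/\overline{p}]\in N^{\Box}$, i.e.\ that $\phi[\overline{\psi}/\overline{p}][\Box\overline{q}/\overline{q}]\in N$; but this formula equals $\phi[\overline{\psi}[\Box\overline{q}/\overline{q}]/\overline{p}]$, which is itself a single substitution instance of $\phi$ and hence lies in $N$ by closure under uniform substitution --- no $\mathsf{S4}$-equivalence and no idempotency is needed at this point. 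This is precisely how the paper's general lemma argues: substitute $\overline{\psi}$ first and then $f(\overline{q})$ for the remaining letters, both steps taking place inside the substitution-closed logic. (The reverse inclusion in your step (ii) should likewise be done purely by composing substitutions inside $M$, rather than by appealing to $\Box\Box\psi\leftrightarrow\Box\psi$.) With that one step repaired, your proof is correct and matches the paper's.
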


    Additionally, the following can be shown by a proof paralleling \cite[Theorem 4.16]{bezhanishvili_grilletti_quadrellaro_2021}; we will show it in full generality in the next section:

    \begin{lemma}\label{schematic Box logics are generated by maximal}
        For each $L\in \mathsf{S4}$, we have that:
        \begin{equation*}
            \mathsf{Var}(Schem(L^{\Box}))=\mathbb{HSP}(\{\langle \mathcal{B}_{\Box}\rangle : \mathcal{B}\in \mathsf{Var}(L)\})
        \end{equation*}
\end{lemma}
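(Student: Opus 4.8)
The plan is to prove both inclusions of the identity
\begin{equation*}
    \mathsf{Var}(Schem(L^{\Box}))=\mathbb{HSP}(\{\langle \mathcal{B}_{\Box}\rangle : \mathcal{B}\in \mathsf{Var}(L)\})
\end{equation*}
by translating each side into algebraic statements via the dual isomorphism between $\mathbf{NExt}(\mathsf{S4})$ and $\Xi(\mathbf{S4})$, together with the $\mathsf{PAt}$-level machinery of Section~\ref{Section: Basic Theory and Completeness of PA logic}. First I would recall that $Schem(L^{\Box})$ is, by Proposition~\ref{Box logics are maximal}, the greatest logic whose $\Box$-variant is $L^{\Box}$, i.e. it is $\Box$-maximal; the analogous algebraic statement (to be used freely) is that $\mathsf{Var}(Schem(L^{\Box}))$ is the greatest subvariety $\mathbf{W}\in\Xi(\mathbf{S4})$ with $\mathbf{W}^{\uparrow}=\mathsf{Var}^{\Box}(L^{\Box})=(\mathsf{Var}(L))^{\uparrow}$, the last equality being the instance of Lemma~\ref{Basic Commutativity Result} already used in Proposition~\ref{Box minimal logics if and only if least modal companion}.

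For the inclusion $\supseteq$: write $\mathbf{R}\coloneqq\mathbb{HSP}(\{\langle \mathcal{B}_{\Box}\rangle : \mathcal{B}\in \mathsf{Var}(L)\})$. I would first check that $\mathbf{R}^{\uparrow}=(\mathsf{Var}(L))^{\uparrow}$. The inclusion $\mathbf{R}^{\uparrow}\subseteq(\mathsf{Var}(L))^{\uparrow}$ follows because each $\langle\mathcal{B}_{\Box}\rangle$ is a subalgebra of $\mathcal{B}\in\mathsf{Var}(L)$ with the same set of open elements (by Lemma~\ref{Identity of the notions of regular elements}, $\mathcal{B}_{\Box}=\theta(\mathcal{B})$, and the generated subalgebra has the same $\Box$-regular elements), hence $\langle\mathcal{B}_{\Box}\rangle\in(\mathsf{Var}(L))^{\uparrow}$, and since $(\mathsf{Var}(L))^{\uparrow}$ is itself a variety (it is $\mathsf{Var}^{\Box}(L^\Box)$, a $\mathsf{PAt}$-variety, hence closed under $\mathbb{HSP}$) closing $\{\langle\mathcal{B}_{\Box}\rangle\}$ under $\mathbb{HSP}$ and then under core-superalgebras stays inside it. Conversely any $\mathcal{B}\in\mathsf{Var}(L)$ has $\langle\mathcal{B}_{\Box}\rangle\preceq_{\Box}\mathcal{B}$ with $\langle\mathcal{B}_{\Box}\rangle\in\mathbf{R}$, so $\mathcal{B}\in\mathbf{R}^{\uparrow}$, giving $(\mathsf{Var}(L))^{\uparrow}\subseteq\mathbf{R}^{\uparrow}$. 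Now since $\mathsf{Var}(Schem(L^{\Box}))$ is the \emph{greatest} variety with $\Box$-variant equal to $(\mathsf{Var}(L))^{\uparrow}=\mathbf{R}^{\uparrow}$, and $\mathbf{R}$ is a variety with $\mathbf{R}^{\uparrow}=(\mathsf{Var}(L))^{\uparrow}$, we conclude $\mathbf{R}\subseteq\mathsf{Var}(Schem(L^{\Box}))$.

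For the inclusion $\subseteq$: I would take $\mathcal{A}\in\mathsf{Var}(Schem(L^{\Box}))$ and show $\mathcal{A}\in\mathbf{R}$. By Corollary~\ref{Regular generation of PA-Quasivarieties} (or rather its variety analogue) applied to the $\mathsf{PAt}$-variety $(\mathsf{Var}(L))^{\uparrow}$, and using that $\mathsf{Var}(Schem(L^{\Box}))\subseteq(\mathsf{Var}(L))^{\uparrow}$ consists (up to core-superalgebras) of regularly generated algebras, it suffices to handle the regularly generated case: if $\mathcal{A}=\langle\mathcal{A}^{\Box}\rangle$ and $\mathcal{A}\in(\mathsf{Var}(L))^{\uparrow}$, then there is $\mathcal{B}\in\mathsf{Var}(L)$ with $\mathcal{B}\preceq_{\Box}\mathcal{A}$, whence $\mathcal{A}^{\Box}=\mathcal{B}^{\Box}=\mathcal{B}_{\Box}$ and so $\mathcal{A}=\langle\mathcal{A}^{\Box}\rangle=\langle\mathcal{B}_{\Box}\rangle\in\mathbf{R}$. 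The general $\mathcal{A}$ is an $\mathbb{HSP}$-consequence of such regularly generated algebras inside $\mathsf{Var}(Schem(L^{\Box}))$, hence lies in $\mathbb{HSP}(\mathbf{R})=\mathbf{R}$.

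The main obstacle I expect is making precise the claim that $\mathsf{Var}(Schem(L^{\Box}))$ is generated by its \emph{regularly generated} members — i.e. transferring Corollary~\ref{Regular generation of PA-Quasivarieties} from the $\mathsf{PAt}$-quasivariety $(\mathsf{Var}(L))^{\uparrow}$ down to the ordinary variety $\mathsf{Var}(Schem(L^{\Box}))$ sitting inside it, and doing so compatibly with the greatest-$\Box$-variant characterisation of $Schem$. Concretely one must argue that every $\mathcal{A}\vDash Schem(L^{\Box})$ lies in $\mathbb{HSP}$ of regularly generated algebras each of which \emph{also} validates $Schem(L^{\Box})$ (not merely lies in $(\mathsf{Var}(L))^{\uparrow}$); this is exactly the content of "Blok's lemma" in the classical case and should follow here from the maximality clause in Proposition~\ref{Box logics are maximal} together with the fact that $\langle\mathcal{A}^{\Box}\rangle\preceq_\Box\mathcal{A}$ forces $\langle\mathcal{A}^{\Box}\rangle$ into the same $\mathsf{PAt}$-variety, but the bookkeeping linking "validates the schematic fragment" to "is regularly generated and in the $\mathsf{PAt}$-variety" is the delicate point and deserves to be spelled out carefully.
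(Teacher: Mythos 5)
There is a genuine error at the heart of your $\supseteq$ direction. Since $\mathsf{Var}(-)$ is a \emph{dual} isomorphism, the fact that $Schem(L^{\Box})$ is the \emph{greatest} logic with $\Box$-variant $L^{\Box}$ (Proposition \ref{Box logics are maximal}) translates into $\mathsf{Var}(Schem(L^{\Box}))$ being the \emph{least} variety $\mathbf{W}$ with $\mathbf{W}^{\uparrow}=(\mathsf{Var}(L))^{\uparrow}$, not the greatest; the greatest such variety is $(\mathsf{Var}(L))^{\uparrow}$ itself, which is strictly larger in general (for $L=\mathsf{S4}$ it is all of $\mathbf{S4}$, whereas $\mathsf{Var}(Schem(\mathsf{S4}^{\Box}))$ is the variety of $\mathsf{Grz}$-algebras). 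With the correct minimality, your comparison of $\mathbf{R}$ with $\mathsf{Var}(Schem(L^{\Box}))$ yields $\mathsf{Var}(Schem(L^{\Box}))\subseteq\mathbf{R}$, i.e.\ the \emph{other} inclusion; together with your (correct) computation $\mathbf{R}^{\uparrow}=(\mathsf{Var}(L))^{\uparrow}$ this in fact disposes of the $\subseteq$ direction cleanly and makes your separate argument for it — which rests on the unproved claim that $\mathsf{Var}(Schem(L^{\Box}))$ is generated by its regularly generated members — unnecessary. But it leaves $\mathbf{R}\subseteq\mathsf{Var}(Schem(L^{\Box}))$ with no proof at all.

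That remaining inclusion is where the real content of the lemma lies, and it is exactly the point your closing paragraph gestures at without resolving: one must show that each $\langle\mathcal{B}_{\Box}\rangle$ with $\mathcal{B}\in\mathsf{Var}(L)$ actually \emph{validates} $Schem(L^{\Box})$. The missing tool is Corollary \ref{Failure in regularly generated yields substitution instance}: if $\langle\mathcal{B}_{\Box}\rangle\nvDash\phi$, then for some substitution $\nu$ one has $\mathcal{B}\nvDash \nu(\phi)[\Box\overline{p}/\overline{p}]$, whence $\nu(\phi)\notin L^{\Box}$ and so $\phi\notin Schem(L^{\Box})$. (Alternatively, one can show that $\mathbf{R}$ is itself the \emph{least} variety with $\uparrow$-closure $(\mathsf{Var}(L))^{\uparrow}$: any such $\mathbf{W}$ contains, for each $\mathcal{B}\in\mathsf{Var}(L)$, some $\mathcal{C}\preceq_{\Box}\mathcal{B}$, hence contains $\langle\mathcal{B}_{\Box}\rangle\preceq\mathcal{C}$; two least elements coincide.) For comparison, the paper does not prove this lemma in the $\Box$-setting at all but defers to the general Proposition \ref{Semantics of schematic fragment} and Corollary \ref{Specific structure of the greatest companion}, whose proofs run through the maximality of the schematic fragment, Corollary \ref{Regular generation of PA-Quasivarieties}, and Corollary \ref{Failure in regularly generated yields substitution instance} — the last of these being the ingredient your proposal lacks.
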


\begin{corollary}\label{Greatest modal companion in the case of GMT}
For each variety $\bf{K}\in \Xi(\mathbf{S4})$ we have that:
\begin{equation*}
    \mathsf{Log}(\sigma(\rho(\bf{K})))=Schem(\mathsf{Log}(\bf{K})^{\Box})
\end{equation*}
Consequently, a logic in the sense of axiomatic extension $L\in \mathbf{NExt}(\mathsf{S4})$ is $\Box$-maximal if and only if it is greatest modal companion of $\rho(L)$.
\end{corollary}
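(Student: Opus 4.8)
The plan is to pass to algebraic semantics, reduce the displayed identity to an identity of varieties, and then recognise the Boolean envelope $B(\mathcal{B}_{\Box})$ as nothing but the subalgebra of $\mathcal{B}$ generated by its open elements. First, observe that $\sigma(\rho(\mathbf{K}))$ is a variety of $\mathsf{S4}$-algebras (it is an $\mathbb{HSP}$-class by definition of $\sigma$, and $\rho(\mathbf{K})$ is a variety of Heyting algebras by Proposition \ref{Key Properties of the rho-map in S4}), and that $Schem(\mathsf{Log}(\mathbf{K})^{\Box})$ is an axiomatic extension of $\mathsf{S4}$ by Proposition \ref{Box logics are maximal}. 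By the algebra--logic duality between $\mathbf{NExt}(\mathsf{S4})$ and $\Xi(\mathbf{S4})$ it therefore suffices to prove $\sigma(\rho(\mathbf{K})) = \mathsf{Var}(Schem(\mathsf{Log}(\mathbf{K})^{\Box}))$. Writing $L = \mathsf{Log}(\mathbf{K})$, so that $\mathsf{Var}(L) = \mathbf{K}$, Lemma \ref{schematic Box logics are generated by maximal} identifies the right-hand side with $\mathbb{HSP}(\{\langle \mathcal{B}_{\Box}\rangle : \mathcal{B}\in\mathbf{K}\})$; and unfolding the definition of $\sigma$ together with Proposition \ref{Key Properties of the rho-map in S4}, the left-hand side is $\mathbb{HSP}\{B(\mathcal{H}) : \mathcal{H}\in\rho(\mathbf{K})\} = \mathbb{HSP}\{B(\mathcal{B}_{\Box}) : \mathcal{B}\in\mathbf{K}\}$. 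Hence the identity follows once we show $B(\mathcal{B}_{\Box})\cong\langle\mathcal{B}_{\Box}\rangle$ for each $\mathcal{B}\in\mathbf{K}$, since $\mathbb{HSP}$ is invariant under isomorphism.

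To establish this isomorphism I would use the counit $\varepsilon_{\mathcal{B}}\colon B(\mathcal{B}_{\Box})\to\mathcal{B}$ of the GMT adjunction. It is injective, since the counit is pointwise injective (recalled in Section \ref{GMT and classic Blok-Esakia}). Moreover, by the triangle identity relating the unit and counit, together with the fact that the unit $\eta$ is an isomorphism, the restriction of $\varepsilon_{\mathcal{B}}$ to the open elements $B(\mathcal{B}_{\Box})_{\Box}$ is an isomorphism onto $\mathcal{B}_{\Box}$. Since the Boolean envelope $B(\mathcal{B}_{\Box})$ is, by its very construction, generated as an $\mathsf{S4}$-algebra by the copy of $\mathcal{B}_{\Box}$ sitting inside it as its open elements, the image of $\varepsilon_{\mathcal{B}}$ is precisely the subalgebra of $\mathcal{B}$ generated by $\mathcal{B}_{\Box}$, namely $\langle\mathcal{B}_{\Box}\rangle$. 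Combined with injectivity of $\varepsilon_{\mathcal{B}}$ this yields $B(\mathcal{B}_{\Box})\cong\langle\mathcal{B}_{\Box}\rangle$, and hence the displayed identity. This is essentially the observation already invoked in the proof of Proposition \ref{Box minimal logics if and only if least modal companion}.

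For the ``consequently'' clause, note first that $L$ is \emph{always} a modal companion of $\rho(L)$: indeed $\phi\in\rho(L)\iff GMT(\phi)\in L$ holds by the very definition of $\rho(L)$, and $\rho(L)\in\Lambda(\mathsf{IPC})$ by Proposition \ref{Key Properties of the rho-map in S4}; so ``$L$ is the greatest modal companion of $\rho(L)$'' just means that $L$ coincides with that greatest companion. By Proposition \ref{Maximal Grz-companion} the greatest modal companion of $\rho(L)$ is $\mathsf{Log}(\sigma(\mathsf{Var}(\rho(L))))$, which by Proposition \ref{Key Properties of the rho-map in S4} equals $\mathsf{Log}(\sigma(\rho(\mathsf{Var}(L))))$, and by the identity just proved (applied to $\mathbf{K}=\mathsf{Var}(L)$) equals $Schem(\mathsf{Log}(\mathsf{Var}(L))^{\Box}) = Schem(L^{\Box})$. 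On the other hand, applying Proposition \ref{Box logics are maximal} to the $\Box$-logic $L^{\Box}$, the logic $Schem(L^{\Box})$ is the greatest axiomatic extension $S$ of $\mathsf{S4}$ with $S^{\Box}=L^{\Box}$, i.e. $L$ is $\Box$-maximal precisely when $L = Schem(L^{\Box})$. Combining these, $L$ is $\Box$-maximal iff $L = Schem(L^{\Box})$ iff $L$ is the greatest modal companion of $\rho(L)$, as required. The only genuinely nontrivial point of the whole argument is the isomorphism $B(\mathcal{B}_{\Box})\cong\langle\mathcal{B}_{\Box}\rangle$ of the second paragraph; everything else is an assembly of Propositions \ref{Key Properties of the rho-map in S4}, \ref{Maximal Grz-companion}, \ref{Box logics are maximal} and Lemma \ref{schematic Box logics are generated by maximal}.
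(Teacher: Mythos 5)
Your proof is correct and follows essentially the same route as the paper: reduce the identity to one of varieties via Lemma \ref{schematic Box logics are generated by maximal}, establish $B(\mathcal{B}_{\Box})\cong\langle\mathcal{B}_{\Box}\rangle$ (which the paper asserts more tersely from the description of the left adjoint, while you supply the counit/triangle-identity details), and then derive the second clause from Propositions \ref{Maximal Grz-companion}, \ref{Key Properties of the rho-map in S4} and \ref{Box logics are maximal}.
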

\begin{proof}
First note that given $\mathcal{B}$ an $\mathbf{S4}$-algebra, by the description of the left adjoint we have that:
\begin{equation*}
    B(\mathcal{B}_{\Box})\cong \langle \mathcal{B}_{\Box} \rangle
\end{equation*}
Hence by Lemma \ref{schematic Box logics are generated by maximal}, and the definition of $\sigma$, we have that:
\begin{equation*}
    \sigma(\rho(\bf{K}))=\mathbb{HSP}(\{B(\mathcal{B}_{\Box}) : \mathcal{B}\in \bf{K}\})=\mathbb{HSP}(\{\langle \mathcal{B}_{\Box}\rangle : \mathcal{B}\in \bf{K}\})=\mathsf{Var}(Schem(\mathsf{Log}(\bf{K})^{\Box}))
\end{equation*}
which implies the first statement. To see the second one, note that given $L\in \mathbf{NExt}(\mathsf{S4})$, then $L=Schem(L^{\Box})$ if and only if $\mathsf{Var}(L)=\mathsf{Var}(Schem(L^{\Box}))$, and in light of the above, setting $\bf{K}=\mathsf{Var}(L)$, and using Lemma \ref{Basic Commutativity Result}, the latter holds if and only if $\mathsf{Var}(L)=\sigma(\rho(\mathsf{Var}(L)))$. By algebraic completeness and Proposition \ref{Key Properties of the rho-map in S4} this holds if and only if $L=\mathsf{Log}(\sigma(\mathsf{Var}(\rho(L))))$, why by Proposition \ref{Maximal Grz-companion}, holds precisely if $L$ is the greatest modal companion of $\rho(L)$.\end{proof}

The above corollary provides a rather surprising bridge between, on one hand, schematic fragments, typically studied in the context of weak logics, and modal companions on the other hand. In a sense, this can be seen as explaining the key difficulties in establishing the last part of the Blok-Esakia theorem, and the role of the $\mathsf{Grz}$ axiom in establishing both the Blok-Esakia theorem and many of its variants, such as for bi-intuitionistic logic or modal intuitionistic logic \cite{Wolter2014}, since as noted above, the schematic fragment may happen to be a logic as pathological as the Medvedev logic. However, if we now consider a world where we did not know the axiom $\mathsf{Grz}$, we can imagine the possibilities of still studying the relationship between $\mathsf{S4}$ and $\mathsf{IPC}$ through the lens of variants. Namely, we could -- as we will do in the next section -- derive the following result:

\begin{proposition}
The lattice $\Lambda(\mathsf{IPC})$ is isomorphic to $\Lambda^{f}(\mathsf{S4})$.
\end{proposition}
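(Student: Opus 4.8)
The plan is to move the statement entirely onto the algebraic side and reduce it to a single lattice isomorphism $\Xi(\mathbf{HA}) \cong \Xi^{\Box}(\mathbf{S4})$, where $\Box$ is the (idempotent) selector term $\Box x$. Since $\mathsf{IPC}$ is algebraizable with equivalent algebraic semantics $\mathbf{HA}$, the map $\mathsf{QVar}$ is a dual isomorphism $\Lambda(\mathsf{IPC})\to\Xi(\mathbf{HA})$; and applying Corollary~\ref{Completeness mapping} with selector $f=\Box$ (taking $\vdash$ to be $\mathsf{S4}$, whose equivalent algebraic semantics is $\mathbf{S4}$), the map $\mathsf{QVar}^{\Box}$ is a dual isomorphism $\Lambda^{\Box}(\mathsf{S4})\to\Xi^{\Box}(\mathbf{S4})$. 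A lattice isomorphism $\Xi(\mathbf{HA})\cong\Xi^{\Box}(\mathbf{S4})$ then composes with these two dual isomorphisms (reversing order twice, hence preserving it) to give the desired lattice isomorphism $\Lambda(\mathsf{IPC})\cong\Lambda^{\Box}(\mathsf{S4})$.

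\textbf{The two maps.} For the core isomorphism I would use the maps already studied in Section~\ref{GMT and classic Blok-Esakia}, namely $\tau(\mathbf{K})=\{\mathcal{B}\in\mathbf{S4}:\mathcal{B}_{\Box}\in\mathbf{K}\}$ and $\rho(\mathbf{K})=\{\mathcal{B}_{\Box}:\mathcal{B}\in\mathbf{K}\}$. First one checks $\tau$ lands in $\Xi^{\Box}(\mathbf{S4})$: the functor $(-)_{\Box}=\theta$ preserves injective homomorphisms and products, and it commutes with reduced products (this is exactly the computation $(\prod_i\mathcal{B}_i/F)_{\Box}=\prod_i(\mathcal{B}_i)_{\Box}/F$ carried out in the proof of Proposition~\ref{Characterisation of PA-quasivarieties}), so $\tau(\mathbf{K})$ is closed under $\mathbb{S}$ and $\mathbb{P}_{R}$; moreover $\mathcal{B}\preceq_{\Box}\mathcal{B}'$ forces $\mathcal{B}'_{\Box}=\mathcal{B}'^{\Box}=\mathcal{B}^{\Box}=\mathcal{B}_{\Box}$ by Lemma~\ref{Identity of the notions of regular elements}, so $\tau(\mathbf{K})$ is closed under core superalgebras, and Proposition~\ref{Characterisation of PA-quasivarieties} gives that it is a $\mathsf{PAt}$-$\Box$-quasivariety. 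Dually, $\rho$ lands in $\Xi(\mathbf{HA})$: the GMT translation is sober (Corollary~\ref{The GMT translation is sober}), so by Proposition~\ref{Selective and sober Translations Induce varieties}(1) $\rho(\mathbf{K})$ is a quasivariety of Heyting algebras (the cited proof uses only closure of $\mathbf{K}$ under $\mathbb{ISP}_{R}$, which every $\mathsf{PAt}$-$\Box$-quasivariety satisfies).

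\textbf{Mutual inversion.} It remains to see $\tau$ and $\rho$ are mutually inverse. The inclusion $\rho(\tau(\mathbf{K}))\subseteq\mathbf{K}$ is immediate; for $\mathbf{K}\subseteq\rho(\tau(\mathbf{K}))$ one uses that GMT is strongly selective, so the unit is an isomorphism and $\mathcal{H}\cong(B(\mathcal{H}))_{\Box}$ for $\mathcal{H}\in\mathbf{K}$, whence $B(\mathcal{H})\in\tau(\mathbf{K})$ and $\mathcal{H}\in\rho(\tau(\mathbf{K}))$ (both classes being closed under isomorphism). For the other composite, Proposition~\ref{Box minimal logics if and only if least modal companion} yields $\tau(\rho(\mathbf{K}))=\mathbf{K}^{\uparrow}$ --- its proof uses only that $\mathbf{K}$ is closed under subalgebras and isomorphisms together with the sober-ness clauses, so it applies verbatim to a $\mathsf{PAt}$-$\Box$-quasivariety --- and $\mathbf{K}^{\uparrow}=\mathbf{K}$ since $\mathbf{K}$ is closed under core superalgebras. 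Both $\tau$ and $\rho$ are manifestly monotone, so being mutually inverse they are lattice isomorphisms, and the argument is complete.

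\textbf{Main obstacle.} The genuinely delicate point, as opposed to the bookkeeping, is reconciling the $\mathsf{PAt}$-side maps with the plain-variety maps $\tau,\rho$ of Section~\ref{GMT and classic Blok-Esakia}: one must confirm that Proposition~\ref{Box minimal logics if and only if least modal companion}, stated there for varieties, survives the passage to $\mathsf{PAt}$-$\Box$-quasivarieties, and this survives precisely because its proof rests on injectivity of the counit and the unit being an isomorphism rather than on any variety-specific closure. A logic-side route would face the analogous obstacle of showing that $\rho$ on $\mathbf{NExt}(\mathsf{S4})$ factors through $(-)^{\Box}$, i.e. that $\rho(\vdash_{t})$ depends only on $\vdash_{t}^{\Box}$; this in turn reduces to the observation that every $GMT$-formula is $\mathsf{S4}$-equivalent to its image under $[\Box\overline{p}/\overline{p}]$ (every variable already occurs boxed), a routine induction on formulas.
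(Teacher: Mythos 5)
Your proposal is correct and follows essentially the same route as the paper, which obtains this proposition as the GMT instance of the general $\mathsf{PAt}$-Blok--Esakia isomorphism for sober translations: pass to quasivarieties via the two algebraic-completeness dual isomorphisms, and show $\tau$ and $\rho$ are mutually inverse between $\Xi(\mathbf{HA})$ and $\Xi^{\Box}(\mathbf{S4})$ using that the unit is an isomorphism and the counit is injective. The only cosmetic difference is that you verify both composites directly (and re-check that the Section~\ref{GMT and classic Blok-Esakia} variety-level computations survive at the quasivariety level), whereas the paper establishes injectivity of $\tau$ from the unit isomorphism and surjectivity from $\tau\rho=\mathrm{id}$.
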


Let us exemplify this briefly: consider for instance the logic $\mathsf{LC}\in \Lambda(\mathsf{IPC})$, which is axiomatised by the axiom $p\rightarrow q\vee q\rightarrow p$. It can be shown by semantic methods that $\tau(\mathsf{LC})=\mathsf{S4}.3$, the system $\mathsf{S4}$ together with the axiom $\Box(\Box p\rightarrow q)\vee \Box(\Box q\rightarrow p)$. The lattice of extensions of this logic has a countable, though somewhat complicated structure; by contrast, in light of the previous result, one has that $\Lambda(\mathsf{LC})\cong \Lambda^{\Box}(\mathsf{S4}.3)\cong \Lambda(\mathsf{Grz.3})$, which is known to be isomorphic to an infinite descending chain (see \cite[pp.427]{Chagrov1997-cr}). Hence, the study of $\Box$-variant extensions could presumably, in a setting where the $\mathsf{Grz}$ axiom was not known, be carried out in a more straightforward fashion. A similar but more striking example occurs when we look at $\mathsf{CPC}$: its least modal companion is the system $\mathsf{S5}$, which has infinitely many extensions. By contrast, $\mathsf{S5}^{\Box}$ has no proper extensions.

\subsection{Connecting Companions and Variants}

In this section we conclude our discussion by establishing the connection between $\zeta$-companions and $\mathsf{PAt}$-logics. Assume throughout a selective translation $\langle \zeta,f\rangle$, and quasivarieties $\classX$ and $\classY$ in the same conditions as in Section \ref{Section: General Blok-Esakia Theory}.

\begin{definition}
Let ${\vdash_{t}}\in \Lambda(\vdash_{\classY})$. We say that ${\vdash_{t}}$ is:
\begin{itemize}
    \item \textit{$f$-minimal} if whenever ${\vdash_{m}} \in \Lambda(\vdash_{\classY})$ and  ${\vdash_{m}^{f}}={\vdash_{t}^{f}}$ then ${\vdash_{t}}\subseteq {\vdash_{m}}$.
    \item \textit{$f$-maximal} if whenever ${\vdash_{m}} \in \Lambda(\vdash_{\classY})$ and  ${\vdash_{m}^{f}}={\vdash_{t}^{f}}$ then ${\vdash_{m}}\subseteq {\vdash_{t}}$.
\end{itemize}
\end{definition}

\begin{lemma}\label{Semantic meaning of being f-minimal}
Let $\vdash_{t}\in \Lambda(\vdash_{\classY})$. Then $\vdash_{t}$ is $f$-minimal if and only if $\mathsf{QVar}(\vdash_{t})=\mathsf{QVar}^{f}(\vdash_{t}^{f})$.    
\end{lemma}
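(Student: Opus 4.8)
The plan is to unwind the definitions on both sides and reduce to facts already established. Recall that $\mathsf{QVar}^{f}(\vdash_{t}^{f})$ is, by Lemma~\ref{Basic Commutativity Result}(1), equal to $\mathsf{QVar}^{\uparrow}(\vdash_{t}) = (\mathsf{QVar}(\vdash_{t}))^{\uparrow}$, the $\mathsf{PAt}$-$f$-variant of the quasivariety dual to $\vdash_{t}$. So the claimed equivalence becomes: $\vdash_{t}$ is $f$-minimal iff $\mathsf{QVar}(\vdash_{t}) = (\mathsf{QVar}(\vdash_{t}))^{\uparrow}$, i.e.\ iff $\mathsf{QVar}(\vdash_{t})$ is already closed under core superalgebras. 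This is the reformulation I would aim for first, since it is the natural semantic shadow of $f$-minimality, exactly paralleling Proposition~\ref{Key Properties of the rho-map in S4}-style arguments and the DNA case in \cite{bezhanishvili_grilletti_quadrellaro_2021}.

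For the forward direction, suppose $\vdash_{t}$ is $f$-minimal. Let $\mathbf{K} = \mathsf{QVar}(\vdash_{t})$; I want $\mathbf{K}^{\uparrow} \subseteq \mathbf{K}$. Consider the logic $\vdash_{m} := \mathsf{Log}(\mathbf{K}^{\uparrow} \cap \classY$-closure$)$ — more precisely, set $\vdash_{m} = \mathsf{Log}(\mathsf{QVar}^{f}(\vdash_{t}^{f}))$ regarded through the quasivariety it generates, but the cleanest route is: take $\vdash_{m}$ to be the dual logic of the quasivariety generated by $\mathbf{K}^{\uparrow}$ (which equals $\mathbf{K}^{\uparrow}$ by Proposition~\ref{Characterisation of PA-quasivarieties}, since $\mathbf{K}^{\uparrow}$ is already a $\mathsf{PAt}$-quasivariety, hence closed under $\mathbb{I},\mathbb{S},\mathbb{P}_{R}$). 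Then $\vdash_{m} \subseteq \vdash_{t}$ because $\mathbf{K} \subseteq \mathbf{K}^{\uparrow}$ (inflationarity of $(-)^{\uparrow}$) dualizes to logics. On the other hand $\vdash_{m}^{f} = \vdash_{t}^{f}$: using Lemma~\ref{Basic Commutativity Result}(2), $\vdash_{m}^{f} = (\mathsf{Log}(\mathbf{K}^{\uparrow}))^{f} = \mathsf{Log}^{f}((\mathbf{K}^{\uparrow})^{\uparrow}) = \mathsf{Log}^{f}(\mathbf{K}^{\uparrow})$ since $(-)^{\uparrow}$ is idempotent, and similarly $\vdash_{t}^{f} = \mathsf{Log}^{f}(\mathbf{K}^{\uparrow})$ via Lemma~\ref{Basic Commutativity Result}(2) applied to $\mathbf{K}$. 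So $f$-minimality of $\vdash_{t}$ gives $\vdash_{t} \subseteq \vdash_{m}$, hence $\vdash_{t} = \vdash_{m}$, hence $\mathbf{K} = \mathbf{K}^{\uparrow}$, which by Lemma~\ref{Basic Commutativity Result}(1) is exactly $\mathsf{QVar}(\vdash_{t}) = \mathsf{QVar}^{f}(\vdash_{t}^{f})$.

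For the converse, suppose $\mathsf{QVar}(\vdash_{t}) = \mathsf{QVar}^{f}(\vdash_{t}^{f})$, and let $\vdash_{m} \in \Lambda(\vdash_{\classY})$ with $\vdash_{m}^{f} = \vdash_{t}^{f}$. Applying $\mathsf{QVar}^{f}$ (a dual isomorphism by Corollary~\ref{Dual Isomorphism for PAt logics and quasivarieties}) gives $\mathsf{QVar}^{f}(\vdash_{m}^{f}) = \mathsf{QVar}^{f}(\vdash_{t}^{f})$, i.e.\ $(\mathsf{QVar}(\vdash_{m}))^{\uparrow} = (\mathsf{QVar}(\vdash_{t}))^{\uparrow} = \mathsf{QVar}(\vdash_{t})$ by hypothesis and Lemma~\ref{Basic Commutativity Result}(1). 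Since $\mathsf{QVar}(\vdash_{m}) \subseteq (\mathsf{QVar}(\vdash_{m}))^{\uparrow}$ by inflationarity, we get $\mathsf{QVar}(\vdash_{m}) \subseteq \mathsf{QVar}(\vdash_{t})$, which dualizes to $\vdash_{t} \subseteq \vdash_{m}$, as required for $f$-minimality. The main obstacle I anticipate is purely bookkeeping: making sure the quasivariety generated by $\mathbf{K}^{\uparrow}$ is $\mathbf{K}^{\uparrow}$ itself (so that $\mathsf{Log}$/$\mathsf{QVar}$ duality can be applied cleanly) and that the various instances of Lemma~\ref{Basic Commutativity Result} are invoked with the correct argument; no genuinely new idea beyond the established dual isomorphism and the idempotence of $(-)^{\uparrow}$ seems to be needed.
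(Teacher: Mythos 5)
Your proposal is correct and follows essentially the same route as the paper: the witnessing logic you construct, $\mathsf{Log}(\mathbf{K}^{\uparrow})$, is by Lemma \ref{Basic Commutativity Result}(1) literally the paper's $\mathsf{Log}(\mathsf{QVar}^{f}(\vdash_{t}^{f}))$, and both directions proceed via the same appeals to the commutativity lemma, inflationarity of $(-)^{\uparrow}$, and logic--quasivariety duality. The only difference is presentational: you make explicit the idempotence of $(-)^{\uparrow}$ and the fact that $\mathbf{K}^{\uparrow}$ is already a quasivariety, which the paper leaves implicit.
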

\begin{proof}
Note that in light of Lemma \ref{Invariance of truth for f(p) valuation}, if $\mathcal{A}\vDash {\vdash_{t}}$ then $\mathcal{A}\vDash_{f} {\vdash_{t}^{f}}$. So first assume that $\vdash_{t}$ is $f$-minimal. Consider $\mathsf{Log}(\mathsf{QVar}^{f}(\vdash_{t}^{f}))$. Then note that the $f$-variant of this logic will simply be $\vdash_{t}^{f}$, which by minimality implies that:
\begin{equation*}
    {\vdash_{t}} \subseteq  \mathsf{Log}(\mathsf{QVar}^{f}(\vdash_{t}^{f}))
\end{equation*}
Which implies equality.

Conversely, assume that $\mathsf{QVar}(\vdash_{t})=\mathsf{QVar}^{f}(\vdash_{t}^{f})$. Suppose that $\vdash_{m}^{f}=\vdash_{t}^{f}$. Note that we have that $\mathsf{QVar}(\vdash_{m})\subseteq (\mathsf{QVar}(\vdash_{m}))^{\uparrow}$; but by Lemma \ref{Basic Commutativity Result}, this means that $\mathsf{QVar}(\vdash_{m})\subseteq \mathsf{QVar}^{f}(\vdash_{m}^{f})=\mathsf{QVar}(\vdash_{t})$.\end{proof}

\begin{definition}
Let ${\vdash_{*}}\in \Lambda^{f}(\vdash_{\classY})$. We define the \textit{schematic fragment} of ${\vdash_{*}}$, denoted $Schem(\vdash_{*})$ as follows:
\begin{equation*}
    Schem(\vdash_{*})=\{ (\Gamma,\phi) : \forall \overline{\psi}, \  \Gamma[\overline{\psi}/\overline{p}]\vdash_{*}\phi[\overline{\psi}/\overline{p}])\}
\end{equation*}
\end{definition}

We begin by proving that when we are working with sufficiently nice translations -- such sober translations -- we have something analogous to the Blok-Esakia isomorphism, relating logics of one system to polyatomic logics of the other.

\begin{proposition}\label{Map tau dualises semantically}
Assume that $\zeta$ is a strongly selective translation, and $\mathbf{K}\in \Xi(\classX)$. Then $\tau(\mathbf{K})$ is a $\mathsf{PAt}$-quasivariety. Moreover, the assignment:
\begin{equation*}
    \tau: \Xi(\classX) \to \Xi^{f}(\classY)
\end{equation*}
is injective.
\end{proposition}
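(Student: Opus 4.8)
The plan is to prove the two assertions separately, both leveraging the explicit descriptions of the adjunction developed in Section \ref{Section: Translations and Polyatomic Logics} together with the characterisation of $\mathsf{PAt}$-quasivarieties in Proposition \ref{Characterisation of PA-quasivarieties}. For the first assertion, that $\tau(\mathbf{K})$ is a $\mathsf{PAt}$-quasivariety, I would first show that $\tau(\mathbf{K}) = (\tau(\mathbf{K}))^{\uparrow}$; combined with the fact (already proven in Proposition \ref{Properties of PA-companions under selective translations}(1), using strong selectivity) that $\tau(\mathbf{K})$ is a quasivariety, this gives the result via Proposition \ref{Characterisation of PA-quasivarieties}. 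So suppose $\mathcal{B} \preceq_{f} \mathcal{A}$ with $\mathcal{B} \in \tau(\mathbf{K})$, i.e.\ $\theta(\mathcal{B}) \in \mathbf{K}$. By Lemma \ref{Identity of the notions of regular elements}, $\theta(\mathcal{B}) = \mathcal{B}^{f} = \mathcal{A}^{f} = \theta(\mathcal{A})$, so $\theta(\mathcal{A}) \in \mathbf{K}$ and hence $\mathcal{A} \in \tau(\mathbf{K})$; this shows closure under core superalgebras, as wanted.

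For the injectivity of $\tau: \Xi(\classX) \to \Xi^{f}(\classY)$, the idea is to exhibit a left inverse. I would show that $\rho(\tau(\mathbf{K})) = \mathbf{K}$ for every $\mathbf{K} \in \Xi(\classX)$, where $\rho(\mathbf{K}') = \{\theta(\mathcal{A}) : \mathcal{A} \in \mathbf{K}'\}$. One inclusion is immediate: if $\mathcal{A} \in \tau(\mathbf{K})$ then $\theta(\mathcal{A}) \in \mathbf{K}$ by definition, so $\rho(\tau(\mathbf{K})) \subseteq \mathbf{K}$. For the reverse inclusion, take $\mathcal{C} \in \mathbf{K}$. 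Since $\zeta$ is strongly selective, the unit $\eta$ of the adjunction $\mathcal{F} \dashv \theta$ is an isomorphism, so $\mathcal{C} \cong \theta(\mathcal{F}(\mathcal{C}))$; in particular $\theta(\mathcal{F}(\mathcal{C})) \in \mathbf{K}$, so $\mathcal{F}(\mathcal{C}) \in \tau(\mathbf{K})$, and therefore $\mathcal{C} \cong \theta(\mathcal{F}(\mathcal{C})) \in \rho(\tau(\mathbf{K}))$ (using closure of $\mathbf{K}$ under isomorphisms). Hence $\rho \circ \tau = \mathrm{id}_{\Xi(\classX)}$, which forces $\tau$ to be injective.

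The step I expect to require the most care is not a deep obstacle but rather a bookkeeping point: verifying that $\rho(\tau(\mathbf{K}))$ is literally $\mathbf{K}$ rather than merely the quasivariety it generates — i.e.\ that we do not need to close under $\mathbb{ISP}_{R}$ on the right. This is handled precisely because $\eta$ being an isomorphism supplies, for each $\mathcal{C} \in \mathbf{K}$, a \emph{genuine} preimage $\mathcal{F}(\mathcal{C})$ under $\theta$ (up to isomorphism), rather than just an approximation; without strong selectivity the argument would break down, which is consistent with the role played by the analogous hypothesis in Proposition \ref{Key Properties of the rho-map in S4}. One should double-check that $\mathcal{F}(\mathcal{C})$ indeed lies in $\classY$ and that $\tau(\mathbf{K}) \subseteq \classY$ is the ambient quasivariety in which we are working, but this is guaranteed by the standing assumptions on $\classX, \classY$ and the adjunction. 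With $\rho$ as an explicit left inverse, injectivity of $\tau$ follows immediately.
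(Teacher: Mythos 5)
Your proposal is correct and matches the paper's proof in all essentials: the first part (closure under core superalgebras via Lemma \ref{Identity of the notions of regular elements} plus Proposition \ref{Properties of PA-companions under selective translations}) is the same argument, and your injectivity argument via the left inverse $\rho\circ\tau=\mathrm{id}$ is just the contrapositive packaging of the paper's direct argument, resting on the same key fact that $\theta(\mathcal{F}(\mathcal{A}))\cong\mathcal{A}$ because the unit is an isomorphism. The only cosmetic point is that $\rho(\tau(\mathbf{K}))$ as literally defined need only equal $\mathbf{K}$ up to closure under isomorphism, but since quasivarieties are isomorphism-closed this does not affect the conclusion.
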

\begin{proof}
By Proposition \ref{Properties of PA-companions under selective translations}, we know that $\tau(\mathbf{K})$ is a quasivariety. Moreover, it is easy to see that it is closed under core superalgebras: if $\mathcal{A}\in \tau(\mathbf{K})$, by definition $\theta(\mathcal{A})\in \bf{K}$; hence if  $\mathcal{A}\preceq_{f} \mathcal{B}$, then $\theta(\mathcal{A})=\theta(\mathcal{B})$ by Lemma \ref{Identity of the notions of regular elements}; so $\theta(\mathcal{B})\in \mathbf{K}$, and thus $\mathcal{B}\in \tau(\mathbf{K})$. To see that $\tau$ is injective, notice that if $\mathbf{K}\neq \mathbf{K}'$, and $\mathcal{A}\in \mathbf{K}$ and $\mathcal{A}\notin \mathbf{K}'$, then $\mathcal{F}(\mathcal{A})\in \tau(\mathbf{K})$ by definition of $\tau$, and the fact that $\theta(\mathcal{F}(\mathcal{A}))\cong \mathcal{A}$. On the other hand, if $\mathcal{F}(\mathcal{A})\in \tau(\mathbf{K}')$, then $\theta(\mathcal{F}(\mathcal{A}))\in \mathbf{K}'$. So $\tau(\mathbf{K})\neq \tau(\mathbf{K}')$.\end{proof}

If moreover we assume that the translation is sober, we have the following result:

\begin{proposition}($\mathsf{PAt}$-Blok Esakia Isomorphism) \label{Weak Blok-Esakia}
Assume that $\zeta$ is a sober translation. Then the assignment $\tau:\Xi(\classX)\to \Xi^{f}(\classY)$ is a lattice isomorphism.
\end{proposition}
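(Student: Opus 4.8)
The plan is to show that the map $\tau:\Xi(\classX)\to \Xi^{f}(\classY)$ from Proposition \ref{Map tau dualises semantically} is a lattice isomorphism by exhibiting an explicit inverse, namely the restriction of the map $\rho$ to the lattice of $\mathsf{PAt}$-$f$-quasivarieties, $\rho:\Xi^{f}(\classY)\to\Xi(\classX)$, $\mathbf{K}\mapsto\{\theta(\mathcal{A}):\mathcal{A}\in\mathbf{K}\}$. We already know from Proposition \ref{Selective and sober Translations Induce varieties} that $\rho(\mathbf{K})$ is a quasivariety in $\Xi(\classX)$ for any quasivariety $\mathbf{K}\subseteq\classY$, and from Proposition \ref{Map tau dualises semantically} that $\tau$ is injective and lands in $\Xi^{f}(\classY)$. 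Since both $\tau$ and $\rho$ are monotone (this is immediate from their definitions), it suffices to prove the two composites are the identity: $\rho(\tau(\mathbf{K}))=\mathbf{K}$ for all $\mathbf{K}\in\Xi(\classX)$, and $\tau(\rho(\mathbf{K}))=\mathbf{K}$ for all $\mathbf{K}\in\Xi^{f}(\classY)$. A monotone bijection between lattices whose inverse is also monotone is automatically a lattice isomorphism, so this is enough.

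For the first composite: if $\mathcal{A}\in\mathbf{K}$, then since $\zeta$ is strongly selective the unit $\eta_{\mathcal{A}}$ is an isomorphism, so $\theta(\mathcal{F}(\mathcal{A}))\cong\mathcal{A}$; and $\mathcal{F}(\mathcal{A})\in\tau(\mathbf{K})$ by definition of $\tau$ (as $\theta(\mathcal{F}(\mathcal{A}))\cong\mathcal{A}\in\mathbf{K}$). Hence $\mathcal{A}\cong\theta(\mathcal{F}(\mathcal{A}))\in\rho(\tau(\mathbf{K}))$, giving $\mathbf{K}\subseteq\rho(\tau(\mathbf{K}))$. Conversely, any element of $\rho(\tau(\mathbf{K}))$ is of the form $\theta(\mathcal{B})$ with $\mathcal{B}\in\tau(\mathbf{K})$, and $\theta(\mathcal{B})\in\mathbf{K}$ by the very definition of $\tau(\mathbf{K})$. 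So $\rho(\tau(\mathbf{K}))=\mathbf{K}$; this direction uses only strong selectivity.

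For the second composite, $\tau(\rho(\mathbf{K}))=\mathbf{K}$ for $\mathbf{K}$ a $\mathsf{PAt}$-$f$-quasivariety, this is where sobriety (in particular the injectivity of the counit $\varepsilon$ and preservation of injections by $\mathcal{F}$) becomes essential, and I expect this to be the main obstacle. The argument should mirror the proof of Proposition \ref{Box minimal logics if and only if least modal companion} in the GMT case. One inclusion is easy: if $\mathcal{A}\in\mathbf{K}$ then $\theta(\mathcal{A})\in\rho(\mathbf{K})$, and since $\eta_{\mathcal{A}}$ is an isomorphism, $\theta(\mathcal{F}(\mathcal{A}))\cong\mathcal{A}$... but more directly, $\mathcal{A}$ itself has $\theta(\mathcal{A})\in\rho(\mathbf{K})$, so $\mathcal{A}\in\tau(\rho(\mathbf{K}))$ by definition of $\tau$; thus $\mathbf{K}\subseteq\tau(\rho(\mathbf{K}))$ with no sobriety needed. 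The hard inclusion is $\tau(\rho(\mathbf{K}))\subseteq\mathbf{K}$. Take $\mathcal{B}\in\tau(\rho(\mathbf{K}))$, so $\theta(\mathcal{B})\in\rho(\mathbf{K})$, meaning $\theta(\mathcal{B})\cong\theta(\mathcal{C})$ for some $\mathcal{C}\in\mathbf{K}$. Apply $\mathcal{F}$: the counit gives a canonical homomorphism $\varepsilon_{\mathcal{C}}:\mathcal{F}(\theta(\mathcal{C}))\to\mathcal{C}$ which is injective by sobriety, so $\mathcal{F}(\theta(\mathcal{C}))$ embeds in $\mathcal{C}\in\mathbf{K}$, hence $\mathcal{F}(\theta(\mathcal{C}))\in\mathbf{K}$ (as $\mathbf{K}$ is closed under subalgebras). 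Now $\mathcal{F}(\theta(\mathcal{B}))\cong\mathcal{F}(\theta(\mathcal{C}))\in\mathbf{K}$; and $\varepsilon_{\mathcal{B}}:\mathcal{F}(\theta(\mathcal{B}))\to\mathcal{B}$ is also an injective homomorphism with $\theta$ applied to it becoming (essentially) the identity, so $\mathcal{F}(\theta(\mathcal{B}))\preceq\mathcal{B}$ and moreover $\mathcal{F}(\theta(\mathcal{B}))\preceq_{f}\mathcal{B}$ because, via the unit being an iso, $\theta(\mathcal{F}(\theta(\mathcal{B})))\cong\theta(\mathcal{B})$, i.e. the two algebras have the same $f$-regular elements (here invoking Lemma \ref{Identity of the notions of regular elements}). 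Thus $\mathcal{B}$ is a core superalgebra of an algebra in $\mathbf{K}$, and since $\mathbf{K}$ is a $\mathsf{PAt}$-$f$-quasivariety it is closed under core superalgebras, so $\mathcal{B}\in\mathbf{K}$. The delicate point to verify carefully is that $\mathcal{F}(\theta(\mathcal{B}))\preceq_{f}\mathcal{B}$, i.e. that passing to the image of the counit does not change the regular elements; this follows from the triangle identity $\theta(\varepsilon_{\mathcal{B}})\circ\eta_{\theta(\mathcal{B})}=\mathrm{id}_{\theta(\mathcal{B})}$ together with $\eta$ being an isomorphism, which forces $\theta(\varepsilon_{\mathcal{B}})$ to be an isomorphism onto $\theta(\mathcal{B})$, i.e. $\mathcal{F}(\theta(\mathcal{B}))$ and $\mathcal{B}$ share the same $f$-regular part. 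Once both composites are shown to be the identity, monotonicity of $\tau$ and $\rho$ gives that $\tau$ is a lattice isomorphism with inverse $\rho$, completing the proof.
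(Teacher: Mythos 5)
Your proposal is correct and follows essentially the same route as the paper: the decisive step in both is showing $\tau(\rho(\mathbf{K}))=\mathbf{K}$ for a $\mathsf{PAt}$-quasivariety $\mathbf{K}$, using injectivity of the counit to get $\mathcal{F}(\theta(\mathcal{C}))\preceq\mathcal{C}$ and the unit isomorphism to see that $\mathcal{F}(\theta(\mathcal{B}))\preceq_{f}\mathcal{B}$, then invoking closure under core superalgebras. Your additional verification that $\rho(\tau(\mathbf{K}))=\mathbf{K}$ on $\Xi(\classX)$ is redundant given the injectivity already established in Proposition \ref{Map tau dualises semantically}, but it is sound and in fact spells out the $\preceq_{f}$ step (via the triangle identity) more explicitly than the paper does.
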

\begin{proof}
Let $\mathbf{K}$ be an arbitrary $\mathsf{PAt}$-quasivariety in $\Lambda^{f}(\classY)$. We will show that $\tau\rho(\mathbf{K})=\mathbf{K}$. Indeed, if $\mathcal{A}\in \mathbf{K}$, then $\theta(\mathcal{A})\in \rho(\mathbf{K})$, so by definition $\mathcal{A}\in \tau\rho(\mathbf{K})$. Conversely, if $\mathcal{A}\in \tau\rho(\mathbf{K})$, then $\theta(\mathcal{A})\in \rho(\mathbf{K})$. By assumption, then $\theta(\mathcal{A})=\theta(\mathcal{B})$ for some $\mathcal{B}\in \bf{K}$. Hence note that $\mathcal{F}(\theta(\mathcal{A}))=\mathcal{F}(\theta(\mathcal{B}))$. Since the translation is sober, by sobriety, $\mathcal{F}(\theta(\mathcal{B}))\preceq \mathcal{B}$, so $\mathcal{F}(\theta(\mathcal{B}))\in \bf{K}$. But by sobriety again, $\mathcal{F}(\theta(\mathcal{A}))\preceq \mathcal{A}$, and so we get that $\mathcal{A}$ is a core superalgebra of $\mathcal{F}(\theta(\mathcal{B}))$. Since $\mathbf{K}$ is a $\mathsf{PAt}$-quasivariety, $\mathcal{A}\in \bf{K}$. This shows that $\tau$ is surjective. The fact that $\tau$ is an injective homomorphism follows from Proposition \ref{Map tau dualises semantically}.\end{proof}

Using the algebraic completeness proved in Section \ref{Section: Translations and Polyatomic Logics} we thus obtain:

\begin{corollary}
If $\langle \zeta,f\rangle$ is a sober translation, then $\Lambda(\vdash_{\classX})\cong \Lambda^{f}(\vdash_{\classY})$.
\end{corollary}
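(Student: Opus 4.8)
The plan is to deduce this corollary by composing the $\mathsf{PAt}$-Blok-Esakia isomorphism of Proposition~\ref{Weak Blok-Esakia} with the two algebraic completeness dualities already available. Concretely, recall that by general algebraic completeness (the algebra-logic duality for quasivarieties, discussed just before Corollary~\ref{Completeness mapping}) the map $\mathsf{QVar}(-):\Lambda(\vdash_{\classX})\to \Xi(\classX)$ is a dual isomorphism, with inverse $\mathsf{Log}(-)$; and by Corollary~\ref{Completeness mapping}, the map $\mathsf{QVar}^{f}(-):\Lambda^{f}(\vdash_{\classY})\to \Xi^{f}(\classY)$ is also a dual isomorphism, with inverse $\mathsf{Log}^{f}(-)$. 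Since $\zeta$ is sober, Proposition~\ref{Weak Blok-Esakia} gives that $\tau:\Xi(\classX)\to \Xi^{f}(\classY)$ is a lattice isomorphism. Hence the composite
\begin{equation*}
    \Lambda(\vdash_{\classX})\xrightarrow{\ \mathsf{QVar}\ }\Xi(\classX)\xrightarrow{\ \tau\ }\Xi^{f}(\classY)\xrightarrow{\ \mathsf{Log}^{f}\ }\Lambda^{f}(\vdash_{\classY})
\end{equation*}
is a composite of two dual isomorphisms and one isomorphism, hence an isomorphism of lattices, which establishes $\Lambda(\vdash_{\classX})\cong \Lambda^{f}(\vdash_{\classY})$.

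First I would spell out that each of the three maps above is well-defined and order-reversing or order-preserving as claimed: $\mathsf{QVar}$ and $\mathsf{Log}^{f}$ are dual isomorphisms (order-reversing bijections) by the cited completeness results, and $\tau$ lands in $\Xi^{f}(\classY)$ and is an order-preserving bijection by Proposition~\ref{Map tau dualises semantically} together with Proposition~\ref{Weak Blok-Esakia}. Two order-reversing bijections compose to an order-preserving one, so inserting the order-preserving $\tau$ between them yields an order-preserving bijection overall; its inverse, $\mathsf{Log}\circ\rho\circ\mathsf{QVar}^{f}$, is likewise order-preserving, so the composite is a genuine lattice isomorphism. It is worth remarking, to make the statement transparent, that under this identification a logic $\vdash_{s}\in\Lambda(\vdash_{\classX})$ corresponds to $\mathsf{Log}^{f}(\tau(\mathsf{QVar}(\vdash_{s})))$, which by Proposition~\ref{Properties of PA-companions under selective translations}(2) and the commutativity Lemma~\ref{Basic Commutativity Result} is precisely $(\tau(\vdash_{s}))^{f}$, the $\mathsf{PAt}$-variant of the least $\zeta$-companion of $\vdash_{s}$; this gives the isomorphism a concrete syntactic description and is the natural sanity check to include.

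I do not expect any genuine obstacle here, since all the substantive work was done in establishing Proposition~\ref{Weak Blok-Esakia} (which in turn rests on sobriety: the counit being an isomorphism, $\mathcal{F}$ preserving injections, and the unit being an isomorphism). The only points requiring a little care are purely bookkeeping: making sure the direction of each duality is tracked correctly so that the composite comes out covariant rather than contravariant, and confirming that the bijection $\tau$ on quasivarieties is not merely a poset isomorphism but a complete lattice isomorphism — the latter is immediate since any order isomorphism between complete lattices automatically preserves arbitrary meets and joins, but one could alternatively invoke Corollary~\ref{PA-logic map is complete Lattice Homomorphism}, Proposition~\ref{Pa-variety map is complete homomorphism} and Proposition~\ref{Properties of PA-companions under selective translations}(3) to see that all the maps in the composite are complete homomorphisms. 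Thus the proof is essentially a one-line composition argument, and the main content is simply assembling the pieces already proved.
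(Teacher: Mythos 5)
Your proof is correct and is exactly the argument the paper intends: the corollary is stated without an explicit proof, as an immediate consequence of composing the dual isomorphisms given by algebraic completeness on both sides with the lattice isomorphism $\tau:\Xi(\classX)\to\Xi^{f}(\classY)$ from the $\mathsf{PAt}$-Blok--Esakia theorem. Your additional remarks on tracking the contravariance and on the explicit description of the composite as $\vdash_{s}\mapsto(\tau(\vdash_{s}))^{f}$ are accurate and consistent with Lemma \ref{Basic Commutativity Result} and Proposition \ref{Properties of PA-companions under selective translations}.
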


The following also follows like Proposition \ref{Box minimal logics if and only if least modal companion}, using Lemma \ref{Semantic meaning of being f-minimal} and the $\mathsf{PAt}$-Blok Esakia isomorphism:

\begin{corollary}\label{f-minimal if and only if least modal companion}
    If $\langle \zeta,f\rangle$ is a sober translation, then a logic ${\vdash_{t}}\in \Lambda(\vdash_{\classY})$ is $f$-minimal if and only if it is the least modal companion of $\rho(\vdash_{t})$.
\end{corollary}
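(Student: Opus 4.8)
The plan is to deduce this corollary from the results already assembled in the previous section, exactly in parallel with the proof of Proposition \ref{Box minimal logics if and only if least modal companion}. The key inputs are: the $\mathsf{PAt}$-Blok Esakia isomorphism (Proposition \ref{Weak Blok-Esakia}), which tells us $\tau:\Xi(\classX)\to\Xi^{f}(\classY)$ is a lattice isomorphism with inverse $\rho$ when $\zeta$ is sober; the semantic characterisation of $f$-minimality (Lemma \ref{Semantic meaning of being f-minimal}), which says $\vdash_{t}$ is $f$-minimal iff $\mathsf{QVar}(\vdash_{t})=\mathsf{QVar}^{f}(\vdash_{t}^{f})$; the commutativity result $\mathsf{QVar}^{f}(\vdash_{*}^{f})=\mathsf{QVar}^{\uparrow}(\vdash_{*})$ from Lemma \ref{Basic Commutativity Result}; and the fact that $\tau$ captures least $\zeta$-companions (Proposition \ref{Properties of PA-companions under selective translations}), together with the identity $\tau(\mathsf{QVar}(\vdash))=\mathsf{QVar}(\tau(\vdash))$ and its $\rho$-analogue from Proposition \ref{Selective and sober Translations Induce varieties}.

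First I would establish the semantic identity $\tau(\rho(\mathbf{K}))=\mathbf{K}^{\uparrow}$ for every $\mathbf{K}\in\Xi(\classY)$, which is the algebraic heart of the statement. One inclusion: if $\mathcal{B}\in\mathbf{K}^{\uparrow}$, pick $\mathcal{C}\in\mathbf{K}$ with $\mathcal{C}\preceq_{f}\mathcal{B}$, so $\theta(\mathcal{B})=\theta(\mathcal{C})\in\rho(\mathbf{K})$ by Lemma \ref{Identity of the notions of regular elements}, hence $\mathcal{B}\in\tau(\rho(\mathbf{K}))$ by definition of $\tau$. For the reverse: if $\mathcal{B}\in\tau(\rho(\mathbf{K}))$ then $\theta(\mathcal{B})\in\rho(\mathbf{K})$, so $\theta(\mathcal{B})=\theta(\mathcal{C})$ for some $\mathcal{C}\in\mathbf{K}$; applying $\mathcal{F}$ and using sobriety (the counit is pointwise injective, so $\mathcal{F}(\theta(\mathcal{C}))\preceq\mathcal{C}$, and $\mathcal{F}$ preserves injections so $\mathcal{F}(\theta(\mathcal{B}))\preceq\mathcal{B}$) gives that $\mathcal{B}$ is a core superalgebra of $\mathcal{F}(\theta(\mathcal{C}))\in\mathbf{K}$, whence $\mathcal{B}\in\mathbf{K}^{\uparrow}$. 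This is essentially the computation already done inside the proof of Proposition \ref{Weak Blok-Esakia}, specialised appropriately.

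Next I would chain the characterisations. Given $\vdash_{t}\in\Lambda(\vdash_{\classY})$, set $\mathbf{K}=\mathsf{QVar}(\vdash_{t})$. By Lemma \ref{Semantic meaning of being f-minimal}, $\vdash_{t}$ is $f$-minimal iff $\mathsf{QVar}(\vdash_{t})=\mathsf{QVar}^{f}(\vdash_{t}^{f})$, which by Lemma \ref{Basic Commutativity Result} equals $\mathsf{QVar}^{\uparrow}(\vdash_{t})=\mathbf{K}^{\uparrow}$; so $f$-minimality amounts to $\mathbf{K}=\mathbf{K}^{\uparrow}$. By the identity just proved, $\mathbf{K}^{\uparrow}=\tau(\rho(\mathbf{K}))$, and by Proposition \ref{Selective and sober Translations Induce varieties} (item 2) plus Proposition \ref{Properties of PA-companions under selective translations} (item 2) we have $\tau(\rho(\mathbf{K}))=\tau(\mathsf{QVar}(\rho(\vdash_{t})))=\mathsf{QVar}(\tau(\rho(\vdash_{t})))$. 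Thus $f$-minimality is equivalent to $\mathsf{QVar}(\vdash_{t})=\mathsf{QVar}(\tau(\rho(\vdash_{t})))$, i.e.\ by algebraic completeness to $\vdash_{t}=\tau(\rho(\vdash_{t}))$. Finally, Proposition \ref{Properties of PA-companions under selective translations}(4) says $\tau(\rho(\vdash_{t}))$ is the least $\zeta$-companion of $\rho(\vdash_{t})$, so $\vdash_{t}=\tau(\rho(\vdash_{t}))$ holds precisely when $\vdash_{t}$ itself is the least $\zeta$-companion of $\rho(\vdash_{t})$, as required.

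The main obstacle is bookkeeping rather than mathematical depth: one must be careful that every commutation used ($\tau\circ\mathsf{QVar}=\mathsf{QVar}\circ\tau$, $\rho\circ\mathsf{QVar}=\mathsf{QVar}\circ\rho$, $(-)^{\uparrow}\circ\mathsf{QVar}^{f}$ versus $\mathsf{QVar}^{f}\circ(-)^{f}$) is licensed by the stated hypotheses, and in particular that sobriety—not merely strong selectivity—is what makes $\tau\rho(\mathbf{K})=\mathbf{K}^{\uparrow}$ go through, since the reverse inclusion genuinely uses both pointwise injectivity of the counit and preservation of injections by $\mathcal{F}$. No step should require new ideas beyond reassembling the machinery of Section \ref{Section: General Blok-Esakia Theory} and the commutativity lemmas of Section \ref{Section: Basic Theory and Completeness of PA logic}.
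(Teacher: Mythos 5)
Your proof is correct and takes essentially the same route as the paper, which obtains this corollary exactly as you do by transplanting Proposition \ref{Box minimal logics if and only if least modal companion} via Lemma \ref{Semantic meaning of being f-minimal}, Lemma \ref{Basic Commutativity Result}, and the identity $\tau(\rho(\mathbf{K}))=\mathbf{K}^{\uparrow}$ extracted from the $\mathsf{PAt}$-Blok--Esakia argument. (One cosmetic slip: $\mathcal{F}(\theta(\mathcal{B}))\preceq\mathcal{B}$ follows from pointwise injectivity of the counit at $\mathcal{B}$, just as for $\mathcal{C}$, not from $\mathcal{F}$ preserving injections, though both are available under sobriety.)
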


This isomorphism can thus serve as a natural correspondence for the study of the relationship between two systems. As an example application, we get another proof that the Double Negation Translation is not sober, and one which makes the fact stand out sharply: if so, then by the above result we would have that $\Lambda(\vdash_{\mathsf{CPC}})\cong \Lambda^{\neg\neg}(\vdash_{\mathsf{IPC}})$, which is absurd since there are infinitely many DNA-logics \cite[Theorem 5.11]{bezhanishvili_grilletti_quadrellaro_2021}.

The rest of this section will be dedicated to to obtaining a corresponding result to Corollary \ref{Box minimal logics if and only if least modal companion} for $f$-maximal logics. First, given $\bf{K}\in \Xi(\classX)$, we define

    \begin{equation*}
        \sigma(K)=\mathsf{QVar}(\{ \mathcal{F}(\mathcal{A}) : \mathcal{A}\in \mathbf{K}\})
\end{equation*}

To proceed, we will need to collect some properties about the schematic fragment which are easy to see: 

\begin{lemma}
For each $\mathsf{PAt}$-logic ${\vdash_{*}}\in \Lambda^{f}(\vdash_{\classY})$, $Schem(\vdash_{*})$ is a logic in $\Lambda(\vdash_{\classY})$. Moreover, it is the greatest logic which has $\vdash_{*}$ as its $\mathsf{PAt}$-variant.
\end{lemma}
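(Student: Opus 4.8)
The plan is to prove two things: (i) $Schem(\vdash_{*})$ is a logic in $\Lambda(\vdash_{\classY})$, i.e.\ a substitution-invariant finitary consequence relation extending $\vdash_{\classY}$; and (ii) it is the greatest such logic whose $\mathsf{PAt}$-$f$-variant is $\vdash_{*}$.

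For (i), I would first check that $Schem(\vdash_{*})$ is a consequence relation. Reflexivity is immediate, since if $\phi\in\Gamma$ then $\phi[\overline{\psi}/\overline{p}]\in\Gamma[\overline{\psi}/\overline{p}]$ for every substitution, so $(\Gamma,\phi)$ passes the test. For Cut, suppose $\Gamma\vdash_{Schem}\delta$ for every $\delta\in V$ and $V\vdash_{Schem}\phi$; then for any substitution $\overline{\psi}$ we have $\Gamma[\overline{\psi}/\overline{p}]\vdash_{*}\delta[\overline{\psi}/\overline{p}]$ for each $\delta\in V$, and $V[\overline{\psi}/\overline{p}]\vdash_{*}\phi[\overline{\psi}/\overline{p}]$, so by Cut in $\vdash_{*}$ we get $\Gamma[\overline{\psi}/\overline{p}]\vdash_{*}\phi[\overline{\psi}/\overline{p}]$. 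Finitariness follows because $\vdash_{*}$ is finitary and the witnessing finite subset can be taken uniformly (or one appeals directly to the finitary hypothesis standing throughout the paper). Next, substitution invariance: if $\Gamma\vdash_{Schem}\phi$ and $\nu$ is any substitution, I must show $\nu[\Gamma]\vdash_{Schem}\nu(\phi)$, i.e.\ $\nu[\Gamma][\overline{\psi}/\overline{p}]\vdash_{*}\nu(\phi)[\overline{\psi}/\overline{p}]$ for all $\overline{\psi}$; but $\nu$ followed by the substitution $\overline{p}\mapsto\overline{\psi}$ is itself a substitution $\overline{p}\mapsto\overline{\psi'}$ where $\psi'_i=\nu(p_i)[\overline{\psi}/\overline{p}]$, so this is just an instance of the defining condition of $Schem(\vdash_{*})$ applied to $\overline{\psi'}$. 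Finally $\vdash_{\classY}\subseteq Schem(\vdash_{*})$ holds because $\vdash_{\classY}\subseteq\vdash_{*}$ (as $\vdash_{*}$ is a $\mathsf{PAt}$-variant, cf.\ Proposition~\ref{Properties of PA-logics}(1)) and $\vdash_{\classY}$ is itself substitution invariant, so any instance of a $\vdash_{\classY}$-consequence is again a $\vdash_{\classY}$-consequence, hence in $\vdash_{*}$.

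For (ii), I would argue along the lines of \cite[Proposition 4.5]{bezhanishvili_grilletti_quadrellaro_2021}. First, $Schem(\vdash_{*})^{f}=\vdash_{*}$: the inclusion $Schem(\vdash_{*})^{f}\subseteq\vdash_{*}$ follows by taking the particular substitution $\overline{p}\mapsto f(\overline{p})$ in the definition of $Schem$, since $\Gamma\vdash^{f}_{Schem(\vdash_{*})}\phi$ means $\Gamma[f(\overline{p})/\overline{p}]\vdash_{Schem(\vdash_{*})}\phi[f(\overline{p})/\overline{p}]$, which instantiated at the identity substitution gives $\Gamma[f(\overline{p})/\overline{p}]\vdash_{*}\phi[f(\overline{p})/\overline{p}]$, i.e.\ $\Gamma\vdash_{*}^{ff}\phi$; and since $f$ is a selector term, $f^2\dashv\vdash f$, so $\vdash_{*}^{ff}=\vdash_{*}$ (one checks $\Gamma[f^2(\overline p)/\overline p]\vdash_{*}\phi[f^2(\overline p)/\overline p]$ iff $\Gamma[f(\overline p)/\overline p]\vdash_{*}\phi[f(\overline p)/\overline p]$ using that $\vdash_{*}$, being a $\mathsf{PAt}$-$f$-variant, proves $\Delta(f^2(q),f(q))$ and hence $f^2(q)\dashv\vdash_{*} f(q)$). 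For the reverse inclusion $\vdash_{*}\subseteq Schem(\vdash_{*})^{f}$: if $\Gamma\vdash_{*}\phi$, write $\vdash_{*}=\vdash_{u}^{f}$ for a standard logic $\vdash_{u}\in\Lambda(\vdash_{\classY})$; then $\Gamma[f(\overline p)/\overline p]\vdash_{u}\phi[f(\overline p)/\overline p]$, and by substitution invariance of $\vdash_{u}$, for every $\overline\psi$ we have $\Gamma[f(\overline\psi)/\overline p]\vdash_{u}\phi[f(\overline\psi)/\overline p]$; but $f(\overline\psi)$ taking values in $\classY$-regular elements means $\Gamma[f(\overline\psi)/\overline p]=\Gamma[\overline\psi/\overline p][f(\overline p)/\overline p]$ up to $\vdash_{*}$-equivalence, showing $\Gamma[\overline\psi/\overline p]\vdash_{*}\phi[\overline\psi/\overline p]$ for all $\overline\psi$, i.e.\ $\Gamma\vdash^{f}_{Schem(\vdash_{*})}\phi$ — here I should be slightly careful and instead argue directly that $\Gamma[\overline\psi/\overline p][f(\overline p)/\overline p]\vdash_{*}\phi[\overline\psi/\overline p][f(\overline p)/\overline p]$, which is immediate since $\vdash_{*}=\vdash_{u}^{f}$ and $\vdash_{u}$ is substitution-closed. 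Finally, maximality: suppose $\vdash_{s}\in\Lambda(\vdash_{\classY})$ with $\vdash_{s}^{f}=\vdash_{*}$, and $\Gamma\vdash_{s}\phi$; by substitution invariance $\Gamma[\overline\psi/\overline p]\vdash_{s}\phi[\overline\psi/\overline p]$ for every $\overline\psi$, hence $\Gamma[\overline\psi/\overline p][f(\overline p)/\overline p]\vdash_{s}\phi[\overline\psi/\overline p][f(\overline p)/\overline p]$, i.e.\ $\Gamma[\overline\psi/\overline p]\vdash_{s}^{f}\phi[\overline\psi/\overline p]$, which is $\Gamma[\overline\psi/\overline p]\vdash_{*}\phi[\overline\psi/\overline p]$; since this holds for all $\overline\psi$, we get $\Gamma\vdash_{Schem(\vdash_{*})}\phi$, so $\vdash_{s}\subseteq Schem(\vdash_{*})$.

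The main obstacle I anticipate is the bookkeeping around composing substitutions with the $f$-substitution, and in particular making precise the step ``$f(\overline\psi)$ takes values in regular elements, so applying $f$ again changes nothing": the clean way is to avoid talking about elements and instead work purely syntactically, using that $\vdash_{*}$ (being a $\mathsf{PAt}$-$f$-variant) satisfies $f(q)\dashv\vdash_{*}f^{2}(q)$ for atoms and hence, by induction on term structure together with the congruence properties of $\Delta$ exactly as in the proof of Proposition~\ref{Properties of PA-logics}, satisfies $\chi[f(\overline p)/\overline p]\dashv\vdash_{*}\chi[f^2(\overline p)/\overline p]$ for all $\chi$. Everything else is a routine unwinding of the definitions of consequence relation, substitution invariance, and the $\mathsf{PAt}$-$f$-variant operation.
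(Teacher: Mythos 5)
Your proposal is correct and follows essentially the same route as the paper: the paper declares the verification that $Schem(\vdash_{*})$ is a logic and that its $\mathsf{PAt}$-variant is $\vdash_{*}$ to be straightforward, and proves only maximality, by exactly your argument (uniform substitution in $\vdash_{t}$ followed by the $f$-substitution). Your filling-in of the omitted parts — in particular the use of idempotence $f^{2}\dashv\vdash f$ and substitution invariance of the underlying standard logic to show $Schem(\vdash_{*})^{f}=\vdash_{*}$ — is sound.
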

\begin{proof}
The verification that this is a logic is straightforward. It is also clear that the schematic fragment will have $\vdash_{*}$ as its $\mathsf{PAt}$-variant. Now suppose that $\vdash_{t}$ has $\vdash_{*}$ as a $\mathsf{PAt}$-variant. Assume that $\Gamma\vdash_{t}\phi$ and let $\overline{\psi}\in Tm(\fancyL_{\classY})$. Then we have $\Gamma[\overline{\psi}/\overline{p}]\vdash_{t}\phi[\overline{\psi}/\overline{p}]$ by uniform substitution, hence, $\Gamma[\overline{\psi}/\overline{p}][f(\overline{q})/\overline{q}]\vdash_{t}\phi[\overline{\psi}/\overline{q}][f(\overline{q})/\overline{q}]$. This means that $\Gamma[\overline{\psi}/\overline{p}]\vdash_{*}\phi[\overline{\psi}/\overline{p}]$. Hence $(\Gamma,\phi)\in Schem(\vdash_{*})$.\end{proof}

Moreover, we can show that schematic fragments are correspond exactly to the regularly generated quasivarieties. 

\begin{proposition}\label{Semantics of schematic fragment}
Let ${\vdash_{t}}\in \Lambda(\vdash_{\classY})$. Then ${\vdash_{t}^{f}}=Schem(\vdash_{t}^{f})$ if and only if $\mathsf{QVar}(\vdash_{t})=\mathsf{QVar}(\{\langle \mathcal{B}^{f}\rangle :\mathcal{B}\in \mathsf{QVar}(\vdash_{t})\})$.
\end{proposition}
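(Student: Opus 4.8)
The plan is to prove the two directions by translating the statement ``$\vdash_t^f = Schem(\vdash_t^f)$'' into a statement about quasivarieties via the dual isomorphisms established in Section \ref{Section: Basic Theory and Completeness of PA logic}, and then to identify the quasivariety dual to $Schem(\vdash_t^f)$ with $\mathsf{QVar}(\{\langle \mathcal{B}^f\rangle : \mathcal{B}\in \mathsf{QVar}(\vdash_t)\})$. First I would record the general principle behind this: since $Schem(\vdash_t^f)$ is by the preceding lemma the greatest logic whose $\mathsf{PAt}$-variant is $\vdash_t^f$, the condition ``$\vdash_t^f = Schem(\vdash_t^f)$'' is simply the statement that $\vdash_t$ is $f$-maximal. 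So the proposition is really the dual-algebraic characterization of $f$-maximality, which is the natural companion to Lemma \ref{Semantic meaning of being f-minimal}. By algebraic completeness (Corollary \ref{Completeness mapping}) and the dual isomorphism $\mathsf{QVar}$, it suffices to show: $Schem(\vdash_t^f)$ is dual to $\mathsf{QVar}(\{\langle \mathcal{B}^f\rangle : \mathcal{B}\in \mathsf{QVar}(\vdash_t)\})$, call this class $\mathbf{R}$; then $\vdash_t^f = Schem(\vdash_t^f)$ iff $\mathsf{QVar}(\vdash_t)=\mathbf{R}$, which is exactly the asserted equivalence (noting $\mathbf{R}\subseteq \mathsf{QVar}(\vdash_t)$ always holds since each $\langle \mathcal{B}^f\rangle$ is a subalgebra of $\mathcal{B}$).

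For the key computation, I would argue that $\mathsf{QVar}(Schem(\vdash_t^f)) = \mathbf{R}$. For the inclusion $\mathbf{R}\subseteq \mathsf{QVar}(Schem(\vdash_t^f))$: take $\mathcal{B}\in \mathsf{QVar}(\vdash_t)$ and $(\Gamma,\phi)\in Schem(\vdash_t^f)$; I must show $\langle \mathcal{B}^f\rangle \vDash (\Gamma,\phi)$. Using Corollary \ref{Failure in regularly generated yields substitution instance}, if $\langle\mathcal{B}^f\rangle\nvDash(\Gamma,\phi)$ then there is a substitution $\nu$ with $\mathcal{B}\nvDash_f(\nu[\Gamma][f(\overline p)/\overline p],\nu(\phi)[f(\overline p)/\overline p])$, i.e. $\mathcal{B}\nvDash(\nu[\Gamma][f(\overline p)/\overline p],\nu(\phi)[f(\overline p)/\overline p])$; since $\mathcal{B}\vDash\vdash_t$, this contradicts the fact that $(\nu[\Gamma],\nu(\phi))$ lies in $\vdash_t$ — which follows because $(\Gamma,\phi)\in Schem(\vdash_t^f)$ means $\nu[\Gamma]\vdash_t^f \nu(\phi)$, i.e. $\nu[\Gamma][f(\overline p)/\overline p]\vdash_t \nu(\phi)[f(\overline p)/\overline p]$. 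Conversely, for $\mathsf{QVar}(Schem(\vdash_t^f))\subseteq \mathbf{R}$: suppose $\mathcal{A}\notin \mathbf{R}$; by Lemma \ref{Invariance of truth for f(p) valuation}(3) and closure of $\mathbf{R}$, one finds a pair $(\Gamma,\phi)$ with $\mathbf{R}\vDash(\Gamma,\phi)$ but $\mathcal{A}\nvDash(\Gamma,\phi)$; then I would check $(\Gamma,\phi)\in Schem(\vdash_t^f)$, using that for every substitution $\nu$ and every $\mathcal{B}\in\mathsf{QVar}(\vdash_t)$ we have $\langle\mathcal{B}^f\rangle\vDash(\nu[\Gamma],\nu(\phi))$, hence by Lemma \ref{Invariance of truth for f(p) valuation}(4) and the definition of the $\mathsf{PAt}$-variant, $\nu[\Gamma]\vdash_t^f\nu(\phi)$. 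This gives $\mathcal{A}\notin\mathsf{QVar}(Schem(\vdash_t^f))$, as needed.

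With the duality $\mathsf{QVar}(Schem(\vdash_t^f))=\mathbf{R}$ in hand, the proposition follows: $\vdash_t^f=Schem(\vdash_t^f)$ iff their dual quasivarieties coincide iff $\mathsf{QVar}(\vdash_t)=\mathbf{R}=\mathsf{QVar}(\{\langle\mathcal{B}^f\rangle:\mathcal{B}\in\mathsf{QVar}(\vdash_t)\})$. I expect the main obstacle to be the careful bookkeeping with the substitution $\nu$ and the operation $(-)[f(\overline p)/\overline p]$ together — making sure that ``$(\Gamma,\phi)\in Schem(\vdash_t^f)$'' really does unwind, via the definition of the $\mathsf{PAt}$-variant, into the validity statement about all $\langle\mathcal{B}^f\rangle$, and conversely; this is where Corollary \ref{Failure in regularly generated yields substitution instance} and the four parts of Lemma \ref{Invariance of truth for f(p) valuation} do the real work, and where one must be attentive that $Schem(\vdash_t^f)$ quantifies over \emph{all} substitutions $\overline\psi\in Tm(\fancyL_{\classY})$, matching the generation of $\mathbf{R}$ by \emph{all} regularly generated subalgebras.
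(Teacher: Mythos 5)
Your proposal is correct and follows essentially the same route as the paper: both arguments reduce the claim (read, as the paper's own proof does, as the $f$-maximality condition ${\vdash_{t}}=Schem(\vdash_{t}^{f})$) to identifying $\mathsf{QVar}(Schem(\vdash_{t}^{f}))$ with the quasivariety generated by the algebras $\langle \mathcal{B}^{f}\rangle$, and the substantive inclusion rests in both cases on Corollary \ref{Failure in regularly generated yields substitution instance} together with Lemma \ref{Invariance of truth for f(p) valuation}. The only difference is one of packaging: you prove that identity unconditionally and obtain the reverse inclusion by separating a point of $\classY$ with a quasi-equation, whereas the paper proves the two directions of the biconditional separately and gets that inclusion by computing $(\mathsf{Log}(\mathsf{QVar}_{R}(\vdash_{t})))^{f}={\vdash_{t}^{f}}$ via Lemma \ref{Basic Commutativity Result} and Corollary \ref{Regular generation of PA-Quasivarieties} and then invoking maximality of the schematic fragment.
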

\begin{proof}
First we prove right to left. Note that by maximality of the schematic fragment amongst logics with the same variant, ${\vdash_{t}}\subseteq Schem(\vdash_{t}^{f})$.So suppose that $\Gamma\nvdash_{t}\phi$. Hence by assumption, we can find $\mathcal{A}\in \mathsf{QVar}(\vdash_{t})$, a regularly generated algebra, such that $\mathcal{A}\nvDash (\Gamma,\phi)$; hence by Lemma \ref{Invariance of truth for f(p) valuation}, $\langle A^{f}\rangle \nvDash (\Gamma,\phi)$; so by Corollary \ref{Failure in regularly generated yields substitution instance}, we obtain a substitution $\nu$, such that $\mathcal{A}\nvDash_{f} (\nu[\Gamma],\nu(\phi))$, and so again by Lemma \ref{Invariance of truth for f(p) valuation}, $\mathcal{A}\nvDash (\nu[\Gamma][f(\overline{q})/\overline{q}],\nu(\phi)[f(\overline{q})/\overline{q}])$. Hence, since $\mathcal{A}\in \mathsf{QVar}(\vdash_{t})$,  we have that $\Gamma[\sigma(\overline{p})/\overline{p}][f(\overline{q})/\overline{q}]\nvdash_{t}\phi[\sigma(\overline{p})/\overline{p}][f(\overline{q})/\overline{q}]$; so $\Gamma[\sigma(\overline{p})/\overline{p}]\nvdash_{t}^{f}\phi[\sigma(\overline{p})/\overline{p}]$ and by definition
\begin{equation*}
    (\Gamma,\phi)\notin Schem(\vdash_{t}^{f})
\end{equation*}
as desired.

Now assume that ${\vdash_{t}}=Schem(\vdash_{t}^{f})$. First define:
\begin{equation*}
    \mathsf{QVar}_{R}(\vdash_{t})=\mathsf{QVar}(\{\langle B^{f}\rangle : \mathcal{B}\in \mathsf{QVar}(\vdash_{t})\}).
\end{equation*}
Then it is straightforward to see that
\begin{equation*}
    \mathsf{QVar}_{R}(\vdash_{t})\subseteq \mathsf{QVar}(\vdash_{t}).
\end{equation*}

For the other inclusion, we show that $\mathsf{Log}(\mathsf{QVar}_{R})$ has $\vdash_{t}^{f}$ as its $\mathsf{PAt}$-variant. Indeed we have:
\begin{align*}
    (\mathsf{Log}(\mathsf{QVar}_{R}(\vdash_{t})))^{f}&=\mathsf{Log}^{f}(\mathsf{QVar}(\{\langle B^{f}\rangle : B\in \mathsf{QVar}(\vdash_{t})\})^{\uparrow})\\
    &= \mathsf{Log}^{f}(\mathsf{QVar}^{f}(\vdash_{t}))\\
    &= \vdash_{t}^{f}
\end{align*}
Where the first inclusion follows by the commutativity of the operators from Lemma \ref{Basic Commutativity Result}, the second follows by Corollary \ref{Regular generation of PA-Quasivarieties} from the fact that every $\mathsf{PAt}$-quasivariety is generated as a $\mathsf{PAt}$-quasivariety by its regular elements and the final equality follows by the algebraic completeness result. Hence we conclude that $\mathsf{Log}(\mathsf{QVar}_{R}(\vdash_{t}))$ has $\vdash_{t}^{f}$ as its $\mathsf{PAt}$-variant, and thus we know that $\mathsf{Log}(\mathsf{QVar}_{R}(\vdash_{t}))\subseteq Schem(\vdash_{t}^{f})$. Hence $\mathsf{QVar}(Schem((\vdash_{*})^{f}))\subseteq \mathsf{QVar}_{R}(\vdash_{*})$, which shows the result.\end{proof}

Moreover, schematic fragments are also companions. To simplify notation, given $\vdash_{s}\in \Lambda(\vdash_{\classX})$, we denote by:
\begin{equation*}
    \mathbf{Sch}(\vdash_{s})\coloneqq Schem(\mathsf{Log}^{f}(\mathsf{QVar}(\tau(\vdash_{s}))))
\end{equation*}

\begin{lemma}\label{Schematic fragment is a PA companion}
For each ${\vdash_{s}}\in \Lambda(\vdash_{\classX})$ we have that $\mathbf{Sch}(\vdash_{s})$ is a $\zeta$-companion of $\vdash_{s}$.
\end{lemma}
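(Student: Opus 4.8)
The plan is to reduce the statement to two facts already at our disposal: the preceding lemma, which says that the schematic fragment is the greatest logic with a prescribed $\mathsf{PAt}$-variant, and Proposition \ref{Properties of PA-companions under selective translations}, which says that $\tau(\vdash_{s})$ is itself a $\zeta$-companion of $\vdash_{s}$. The first move is to make $\mathbf{Sch}(\vdash_{s})$ concrete. Since $\tau(\vdash_{s})\in\Lambda(\vdash_{\classY})$, algebraic completeness gives $\mathsf{Log}(\mathsf{QVar}(\tau(\vdash_{s})))=\tau(\vdash_{s})$, and hence Proposition \ref{Maps are well-defined 2} yields $\mathsf{Log}^{f}(\mathsf{QVar}(\tau(\vdash_{s})))=(\tau(\vdash_{s}))^{f}$. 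Therefore $\mathbf{Sch}(\vdash_{s})=Schem((\tau(\vdash_{s}))^{f})$.

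For the forward implication, suppose $\Gamma\vdash_{s}\phi$. By the very definition of $\tau$ on logics, $(\zeta_{*}[\Gamma],\zeta_{*}(\phi))\in\tau(\vdash_{s})$. Now $\tau(\vdash_{s})$ has $(\tau(\vdash_{s}))^{f}$ as its $\mathsf{PAt}$-variant, so by the preceding lemma (the schematic fragment being the greatest such logic) we get $\tau(\vdash_{s})\subseteq Schem((\tau(\vdash_{s}))^{f})=\mathbf{Sch}(\vdash_{s})$, whence $\zeta_{*}[\Gamma]\vdash_{\mathbf{Sch}(\vdash_{s})}\zeta_{*}(\phi)$.

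For the converse, assume $\zeta_{*}[\Gamma]\vdash_{\mathbf{Sch}(\vdash_{s})}\zeta_{*}(\phi)$. Instantiating the definition of $Schem$ at the identity substitution $\overline{\psi}=\overline{p}$ gives $\zeta_{*}[\Gamma]\vdash_{(\tau(\vdash_{s}))^{f}}\zeta_{*}(\phi)$, that is, $\zeta_{*}[\Gamma][f(\overline{p})/\overline{p}]\vdash_{\tau(\vdash_{s})}\zeta_{*}(\phi)[f(\overline{p})/\overline{p}]$. Next, a routine induction on the construction of terms shows that $\zeta_{*}(\chi)[f(\overline{p})/\overline{p}]\dashv\vdash_{\classY}\zeta_{*}(\chi)$ for every $\chi\in Tm(\fancyL_{\classX})$: the base case $\chi=p$ is exactly the idempotency $f(f(x))\approx f(x)$ (immediate from the two selectivity conditions, pushed through algebraizability of $\vdash_{\classY}$), constants are left fixed by the substitution, and the inductive step uses the congruence properties of interderivability guaranteed by algebraizability. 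Since $\vdash_{\classY}\subseteq\tau(\vdash_{s})$, these interderivabilities hold in $\tau(\vdash_{s})$, and Cut then gives $\zeta_{*}[\Gamma]\vdash_{\tau(\vdash_{s})}\zeta_{*}(\phi)$. Finally, since $\tau(\vdash_{s})$ is the least $\zeta$-companion of $\vdash_{s}$ by Proposition \ref{Properties of PA-companions under selective translations}(4), in particular a $\zeta$-companion, we conclude $\Gamma\vdash_{s}\phi$. Combining both directions shows $\mathbf{Sch}(\vdash_{s})$ is a $\zeta$-companion of $\vdash_{s}$.

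The main obstacle is precisely the last appeal: one needs that $\tau(\vdash_{s})$ is a \emph{genuine} $\zeta$-companion, not merely a logic proving the translations of all $\vdash_{s}$-theorems. This is the single point where bare selectivity does not suffice — it rests on the unit of the adjunction being an isomorphism (so that every algebra of $\mathsf{QVar}(\vdash_{s})$ is realised as $\theta(\mathcal{B})$ for some $\mathcal{B}\vDash\tau(\vdash_{s})$), exactly the content of Proposition \ref{Properties of PA-companions under selective translations}. Everything else is formal: unwinding $\mathbf{Sch}$ via the completeness maps, the maximality property of the schematic fragment, and the bookkeeping with the idempotent selector $f$.
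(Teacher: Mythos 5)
Your proof is correct, and it reaches the conclusion by a genuinely different route from the paper's. The paper argues semantically in both directions: for soundness it checks directly that every substitution instance of $(\zeta_{*}[\Gamma],\zeta_{*}(\phi))$ is $f$-valid in $\mathsf{QVar}(\tau(\vdash_{s}))$, using closure of $\tau(\vdash_{s})$ under uniform substitution; for faithfulness it builds a countermodel by transporting a failing valuation on some $\mathcal{B}\in \mathsf{QVar}(\vdash_{s})$ to $\mathcal{F}(\mathcal{B})$ and then invoking Corollary \ref{Failure in regularly generated yields substitution instance} to produce a substitution instance refuting membership in the schematic fragment. You instead make the argument almost entirely formal: after identifying $\mathbf{Sch}(\vdash_{s})$ with $Schem((\tau(\vdash_{s}))^{f})$ via Proposition \ref{Maps are well-defined 2} and completeness, soundness is just the maximality of the schematic fragment among logics with a given $\mathsf{PAt}$-variant, and faithfulness is the identity-substitution instance plus a syntactic reduction to the already-established fact that $\tau(\vdash_{s})$ is a genuine $\zeta$-companion (Proposition \ref{Properties of PA-companions under selective translations}). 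Your route buys brevity and isolates the single non-formal ingredient, the unit being an isomorphism, which you correctly flag; the paper's route is self-contained on the semantic side and avoids the interderivability bookkeeping. Both rest on the same hypothesis and both are valid.

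One small point of care in your converse direction: interderivability $\dashv\vdash_{\classY}$ is not in general a congruence of an algebraizable logic, so the induction should not be run on $\dashv\vdash$ directly. Run it instead on the valid equation $\zeta_{*}(\chi)[f(\overline{p})/\overline{p}]\approx \zeta_{*}(\chi)$ in $\classY$ (every variable occurrence in $\zeta_{*}(\chi)$ sits under an $f$, and $\vDash_{\classY} f(f(x))\approx f(x)$ by selectivity), and only then pass to interderivability via $\vdash_{\classY}\Delta(\lambda,\gamma)$ together with the detachment property $\Delta(x,y),x\vdash y$ -- exactly the manoeuvre used in the proof of Proposition \ref{Properties of PA-logics}. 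With that adjustment the argument is complete.
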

\begin{proof}
Suppose that $\Gamma\vdash_{s}\phi$. Then by definition, $(\zeta_{*}[\Gamma],\zeta_{*}[\phi])\in \tau(\vdash_{s})$. Now suppose that $\overline{\psi}\in Tm(\fancyL_{\classY})$ is any finite collection of formulas. We want to show that if $\mathcal{A}\in \mathsf{QVar}(\tau(\vdash_{s}))$, then $\mathcal{A}\vDash_{f} (\zeta_{*}[\Gamma][\overline{\psi}/\overline{p}],\zeta_{*}[\phi][\overline{\psi}/\overline{p}])$ . Hence by Lemma \ref{Invariance of truth for f(p) valuation}, we want to check that $\mathcal{A}\vDash (\zeta_{*}[\Gamma][\overline{\psi}/\overline{p}][f(\overline{q})/\overline{q}],\zeta_{*}[\overline{\psi}/\overline{p}][f(\overline{q})/\overline{q}])$. But we know that $\tau(\vdash_{s})$ is a logic, and hence is closed under uniform substitution; so since $\mathcal{A}\vDash (\zeta_{*}[\Gamma],\zeta_{*}[\phi])$, the result follows.

Conversely, suppose that $(\Gamma,\phi)\notin \ \vdash_{s}$. Hence for some $\mathcal{B}\in \mathsf{QVar}(\vdash_{s})$, $\mathcal{B}\nvDash (\Gamma,\phi)$. Note that since the unit is an isomorphism, $\mathcal{F}(\mathcal{B})\vDash \tau(\vdash_{s})$, and since $\mathcal{B}\nvDash (\Gamma,\phi)$, say through a valuation $v$, we can define a valuation $w$ on $\mathcal{F}(\mathcal{B})$ by setting $w(p)=v(p)$. Set $\mathcal{C}=\mathcal{F}(\mathcal{B})$, and note that by the same arguments as used in Proposition \ref{Correctness of Translation in the unit}, we can then show that $\mathcal{C},w\nvDash (\zeta^{*}[\Gamma],\zeta^{*}[\phi])$, and the valuation takes values in $f$-regular elements; hence $\mathcal{C}\nvDash_{f} (\zeta^{*}[\Gamma],\zeta^{*}[\phi])$, which by Lemma \ref{Invariance of truth for f(p) valuation} means that $\langle \mathcal{C}^{f}\rangle \nvDash_{f} (\zeta_{*}[\Gamma],\zeta_{*}[\phi])$. But now by Corollary \ref{Failure in regularly generated yields substitution instance}, we have that there is a substitution $\nu$, such that $\mathcal{C}\nvDash_{f} (\nu[\zeta_{*}[\Gamma]],\nu[\zeta_{*}[\phi]]$. This means that $(\zeta[\Gamma],\zeta[\phi])\notin \mathbf{Sch}(\vdash_{*})$.\end{proof}

The next lemma shows the basic facts necessary to establish the fact that schematic fragments correspond to greatest modal companions:

\begin{lemma}\label{Minimality of the sigma map}
Let $\mathbf{K}\in \Lambda(\classY)$. Let $\mathcal{A}\in \classY$ and $\mathcal{B}\in \classX$. Then:
\begin{enumerate}
    \item If $\mathsf{Log}(\mathbf{K})$ is a $\zeta$-companion of ${\vdash_{s}} \in \Lambda(\classX)$, then if $\bf{S}=\mathsf{QVar}(\vdash_{s})$ then $\sigma(\bf{S})\subseteq \bf{K}$.
    \item If $\mathcal{A}=\mathcal{F}(\mathcal{B})$ then $\mathcal{A}$ is regularly generated.
    \item If $\mathcal{A}$ is regularly generated then $\mathcal{A}\cong \mathcal{F}(\theta(\mathcal{A}))$.
\end{enumerate}
\end{lemma}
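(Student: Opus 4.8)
The plan is to prove the three items in order, using the explicit description of $\mathcal{F}$ from Proposition \ref{Explicit construction of the left adjoint}, the identification $\theta(\mathcal{A})=\mathcal{A}^{f}$ from Lemma \ref{Identity of the notions of regular elements}, and the description of the unit from Lemma \ref{Explicit description of the unit}.

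For item (1): suppose $\mathsf{Log}(\mathbf{K})$ is a $\zeta$-companion of $\vdash_{s}$, and set $\mathbf{S}=\mathsf{QVar}(\vdash_{s})$. To show $\sigma(\mathbf{S})=\mathbb{ISP}_{R}\{\mathcal{F}(\mathcal{B}):\mathcal{B}\in\mathbf{S}\}\subseteq\mathbf{K}$, since $\mathbf{K}$ is a quasivariety it suffices to show $\mathcal{F}(\mathcal{B})\in\mathbf{K}$ for each $\mathcal{B}\in\mathbf{S}$. Fix such a $\mathcal{B}$, presented as $\terms(\fancyL_{\classX},X)/\Psi$. The point is that $\mathcal{F}(\mathcal{B})$ is generated by $\{[\zeta_{*}(x)]:x\in X\}$, so validity of a pair $(\Gamma,\phi)$ on $\mathcal{F}(\mathcal{B})$ reduces, via structurality of $\mu$ and Proposition \ref{Correctness of Translation in the unit}, to checking translated pairs of the form $(\zeta_{*}[\Gamma'],\zeta_{*}(\phi'))$ where such pairs, read back through $\theta(\mathcal{F}(\mathcal{B}))\cong\mathcal{B}$ (the unit is an isomorphism, as $\zeta$ is strongly selective), are exactly those valid on $\mathcal{B}$, hence those in $\vdash_{s}$ (by completeness of $\mathbf{S}$), hence those in $\tau(\vdash_{s})\subseteq\mathsf{Log}(\mathbf{K})$, hence valid on all of $\mathbf{K}$ — and in particular we get that every defining quasi-equation of $\mathbf{K}$ holds on $\mathcal{F}(\mathcal{B})$. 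More directly: $\mathbf{K}$ is a quasivariety, so $\mathbf{K}=\mathsf{QVar}(\vdash_{t})$ with $\vdash_{t}=\mathsf{Log}(\mathbf{K})$ a $\zeta$-companion of $\vdash_{s}$; then $\tau(\vdash_{s})$ is the \emph{least} $\zeta$-companion by Proposition \ref{Properties of PA-companions under selective translations}(4), so $\tau(\vdash_{s})\subseteq\vdash_{t}$, so $\mathbf{K}=\mathsf{QVar}(\vdash_{t})\subseteq\mathsf{QVar}(\tau(\vdash_{s}))=\tau(\mathbf{S})$ by Proposition \ref{Properties of PA-companions under selective translations}(2); and since $\theta(\mathcal{F}(\mathcal{B}))\cong\mathcal{B}\in\mathbf{S}$ we have $\mathcal{F}(\mathcal{B})\in\tau(\mathbf{S})$. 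This does not immediately give $\mathcal{F}(\mathcal{B})\in\mathbf{K}$, so the cleaner route is the first one: verify directly that $\mathcal{F}(\mathcal{B})\vDash\vdash_{t}$ using that $\mathcal{F}(\mathcal{B})$ is $\zeta_{*}$-generated and $\theta(\mathcal{F}(\mathcal{B}))\cong\mathcal{B}\vDash\vdash_{s}$, together with $\rho(\vdash_{t})=\vdash_{s}$ (Proposition \ref{Properties of rho map in counits}) and the fact that the relevant instances pull back through the unit isomorphism.

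For item (2): $\mathcal{F}(\mathcal{B})=\terms(\fancyL_{\classY},X)/\mathsf{Cg}_{\classY}(\zeta^{*}(\Psi)\cup\{(f(x),x):x\in X\})$ is generated as an $\fancyL_{\classY}$-algebra by the classes $[x]$; but modulo the congruence $[x]=[f(x)]$, and $[f(x)]$ is an $f$-regular element (since $\vDash_{\classY}\Theta(f(x))$, equivalently $f$ is idempotent so $f(f(x))\approx f(x)$ holds). Hence $\mathcal{F}(\mathcal{B})$ is generated by $f$-regular elements, i.e. $\mathcal{F}(\mathcal{B})=\langle\mathcal{F}(\mathcal{B})^{f}\rangle$, which is the definition of regularly generated.

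For item (3): suppose $\mathcal{A}$ is regularly generated, i.e. $\mathcal{A}=\langle\mathcal{A}^{f}\rangle$. The unit $\eta_{\mathcal{A}}:\mathcal{A}\to\theta(\mathcal{F}(\mathcal{A}))$ is always a homomorphism; I claim it is an isomorphism here. It is injective or not depending on faithfulness, but more robustly: by naturality and the triangle identities, $\theta(\varepsilon_{?})$ gives a retraction, so it suffices to show $\eta_{\mathcal{A}}$ is both injective and surjective. Surjectivity: $\theta(\mathcal{F}(\mathcal{A}))=\mathcal{F}(\mathcal{A})^{f}$ by Lemma \ref{Identity of the notions of regular elements}; every element of $\mathcal{F}(\mathcal{A})^{f}$ is (as in item (2) and the proof of Lemma \ref{Essentially full implies surjective}) of the form $[f(\lambda(f(\overline{x})))]$, but since $\mathcal{A}$ is generated by its $f$-regular elements we can write the corresponding element of $\mathcal{A}$ using only regular generators, and then $\eta_{\mathcal{A}}$ hits it — here is where ``$\mathcal{A}$ regularly generated'' is used. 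Injectivity: if $[\zeta_{*}(\lambda)]_{\mathcal{F}(\mathcal{A})}=[\zeta_{*}(\gamma)]_{\mathcal{F}(\mathcal{A})}$ then by Proposition \ref{Correctness of Translation in the unit}, read in $\theta(\mathcal{F}(\mathcal{A}))$, together with the fact that the generators of $\mathcal{A}$ can be taken regular, one recovers $[\lambda]_{\mathcal{A}}=[\gamma]_{\mathcal{A}}$. So $\eta_{\mathcal{A}}$ is an isomorphism $\mathcal{A}\cong\theta(\mathcal{F}(\mathcal{A}))$. (Alternatively, and more slickly: by item (2), $\mathcal{F}(\theta(\mathcal{A}))$ is regularly generated; $\mathcal{A}$ is regularly generated by hypothesis; and the counit component gives $\mathcal{F}(\theta(\mathcal{A}))\to\mathcal{A}$, which between regularly generated algebras sharing, via $\theta$, the same core is an isomorphism — but this needs care about which properties of the adjunction are available, so I would present the direct argument.)

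The main obstacle I anticipate is item (3): pinning down exactly why ``regularly generated'' forces the unit to be an isomorphism, without tacitly assuming strong selectivity or essential fullness (which would make \emph{every} unit component an isomorphism and trivialize the hypothesis). The key is that regular generation of $\mathcal{A}$ lets us choose a presentation $\mathcal{A}\cong\terms(\fancyL_{\classX},X)/\Psi$ in which \emph{every generator} $x\in X$ maps to an $f$-regular element of $\mathcal{A}$; combined with the second selectivity clause $\Theta(x)\eqrelY f(x)\approx x$ and the explicit formula for $\mathcal{F}$, this collapses the difference between $[x]_{\mathcal{F}(\mathcal{A})}$ and $[f(x)]_{\mathcal{F}(\mathcal{A})}$ in a way that makes $\eta_{\mathcal{A}}$ both onto $\mathcal{F}(\mathcal{A})^{f}$ and injective. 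Items (1) and (2) are essentially bookkeeping with the explicit descriptions already in the paper.
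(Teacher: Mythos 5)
Your item (2) is correct and is in fact cleaner than the paper's own argument: reading off from the explicit presentation of $\mathcal{F}(\mathcal{B})$ that its generators satisfy $[x]=[f(x)]$ and are therefore $f$-regular (since the two selectivity clauses give $\vDash_{\classY}f(f(x))\approx f(x)$) yields regular generation directly, with no appeal to the counit. The problems are in items (1) and (3). For (1), both routes you sketch try to deduce $\mathcal{F}(\mathcal{B})\in\mathbf{K}$ from the behaviour of \emph{translated} formulas on $\mathcal{F}(\mathcal{B})$. That cannot work: $\mathsf{Log}(\mathbf{K})$ is an arbitrary extension of $\vdash_{\classY}$ whose defining quasi-equations need not be equivalent to translated ones, even on an algebra generated by regular elements, so knowing that $\mathcal{F}(\mathcal{B})$ validates the translated consequences of $\vdash_{s}$ says nothing about the rest of $\mathsf{Log}(\mathbf{K})$. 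Concretely, for the (non-sober) KGG translation the logic $\mathsf{KC}$ of weak excluded middle is a $\zeta$-companion of $\mathsf{CPC}$, yet $\mathcal{F}(\mathbf{4})$ is the Heyting algebra freely generated by one regular element, which refutes $\neg x\vee\neg\neg x\approx 1$ (witness the regular open $(0,1)$ in the opens of $\mathbb{R}$); so $\sigma(\mathsf{BA})\not\subseteq\mathsf{Var}(\mathsf{KC})$, and your argument, which never uses sobriety, would ``prove'' the contrary. The paper's proof is genuinely different: $\rho(\mathbf{K})=\mathbf{S}$ by Proposition \ref{Properties of rho map in counits}, so each $\mathcal{A}\in\mathbf{S}$ equals $\theta(\mathcal{B})$ for some $\mathcal{B}\in\mathbf{K}$, and the \emph{injective counit} gives $\mathcal{F}(\mathcal{A})=\mathcal{F}(\theta(\mathcal{B}))\preceq\mathcal{B}$, whence $\mathcal{F}(\mathcal{A})\in\mathbf{K}$ by closure under subalgebras. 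The counit is the ingredient your proof is missing.

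For (3) you argue about the wrong natural transformation. Here $\mathcal{A}\in\classY$, so $\mathcal{F}(\mathcal{A})$, the unit component $\eta_{\mathcal{A}}$, and a presentation of $\mathcal{A}$ over $\fancyL_{\classX}$ are not even defined; the map to be shown an isomorphism is the counit $\varepsilon_{\mathcal{A}}:\mathcal{F}(\theta(\mathcal{A}))\to\mathcal{A}$. Your parenthetical ``alternative'' is essentially the paper's proof and is the one to run: $\varepsilon_{\mathcal{A}}$ is injective by sobriety, its image is a subalgebra of $\mathcal{A}$ containing $\mathcal{A}^{f}=\theta(\mathcal{A})$ (by the triangle identity $\theta(\varepsilon_{\mathcal{A}})\circ\eta_{\theta(\mathcal{A})}=\mathrm{id}_{\theta(\mathcal{A})}$ together with the unit being an isomorphism), and regular generation of $\mathcal{A}$ then forces this subalgebra to be all of $\mathcal{A}$. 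Your worry about ``trivializing the hypothesis'' dissolves once the correct map is in view: strong selectivity makes every \emph{unit} component an isomorphism, but the \emph{counit} component is an isomorphism only at regularly generated algebras, which is precisely what (3) asserts.
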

\begin{proof}
For (1) it suffices to show that $\{\mathcal{F}(\mathcal{A}) : \mathcal{A}\in \mathbf{S}\}\subseteq \mathbf{K}$. By Proposition \ref{Weak Blok-Esakia}, we have that $\rho(\mathbf{K})=\mathbf{S}$. Hence if $\mathcal{A}\in \mathbf{S}$, then $\mathcal{A}=\theta(\mathcal{B})$ for some $\mathcal{B}\in \mathbf{K}$. But then $\mathcal{F}(\mathcal{A})=\mathcal{F}(\theta(\mathcal{B}))\preceq \mathcal{B}$, given the translation is sober. So indeed, $\mathcal{F}(\mathcal{A})\in \mathbf{K}$, as intended.

For (2) first note that $\theta(\mathcal{A})\cong \mathcal{B}$, since the unit is an isomorphism. Hence, consider $\langle \mathcal{B}\rangle$, the subalgebra generated in $\mathcal{A}$ by $\mathcal{B}$. Clearly we have that $\theta(\langle\mathcal{B}\rangle)\cong \mathcal{B}$. But then we have that $\mathcal{A}\cong \mathcal{F}(\mathcal{B})$ is a subalgebra of $\langle\mathcal{B}\rangle$, since the counit is injective. Hence $\langle \mathcal{B}\rangle$ and $\mathcal{A}$ are subalgebras of each other, hence, the same algebra.

For (3), suppose that $\mathcal{A}$ is regularly generated. Then first note that $\theta(\mathcal{A})\subseteq \mathcal{F}(\theta(\mathcal{A})$, since $\theta(\mathcal{A})\cong \theta(\mathcal{F}(\theta(\mathcal{A}))\subseteq \mathcal{F}(\theta(\mathcal{A}))$. Hence, $\mathcal{F}(\theta(\mathcal{A}))$ is a subalgebra of $\mathcal{A}$ which contains the regular elements; since $\mathcal{A}$ is regularly generated, $\mathcal{A}\cong \mathcal{F}(\theta(\mathcal{A}))$.
\end{proof}

Using all of these lemmas we conclude the following:

\begin{corollary}\label{Specific structure of the greatest companion}
If $\langle\zeta,f\rangle$ be a sober translation, then for each $\vdash_{s}\in \Lambda(\vdash_{\classX})$, $\mathbf{Sch}(\vdash_{s})$ is the greatest $\zeta$-companion of $\vdash_{s}$.
\end{corollary}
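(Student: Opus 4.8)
The plan is to reduce everything to the algebraic side and route it through the map $\sigma$. By Lemma~\ref{Schematic fragment is a PA companion} we already know $\mathbf{Sch}(\vdash_s)$ is a $\zeta$-companion of $\vdash_s$, so the only thing to prove is maximality: given an arbitrary $\zeta$-companion $\vdash_t\in\Lambda(\vdash_{\classY})$, show $\vdash_t\subseteq\mathbf{Sch}(\vdash_s)$. Passing through the dual isomorphism supplied by algebraic completeness, this is equivalent to $\mathsf{QVar}(\mathbf{Sch}(\vdash_s))\subseteq\mathsf{QVar}(\vdash_t)$. Writing $\mathbf{S}=\mathsf{QVar}(\vdash_s)$, I would sandwich the left-hand side: first establish $\mathsf{QVar}(\mathbf{Sch}(\vdash_s))\subseteq\sigma(\mathbf{S})$, and then invoke Lemma~\ref{Minimality of the sigma map}(1), which — since $\mathsf{Log}(\mathsf{QVar}(\vdash_t))=\vdash_t$ is a $\zeta$-companion of $\vdash_s$ — gives $\sigma(\mathbf{S})\subseteq\mathsf{QVar}(\vdash_t)$. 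Chaining the two inclusions finishes the argument.

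For the first inclusion I would proceed as follows. Using Proposition~\ref{Maps are well-defined 2} together with algebraic completeness, identify $\mathsf{Log}^{f}(\mathsf{QVar}(\tau(\vdash_s)))$ with $(\tau(\vdash_s))^{f}$, so that $\mathbf{Sch}(\vdash_s)=Schem((\tau(\vdash_s))^{f})$. Applying the computation from the proof of Proposition~\ref{Semantics of schematic fragment} to the logic $\tau(\vdash_s)$ — specifically the half showing that $\mathsf{Log}$ of $\mathsf{QVar}(\{\langle\mathcal{B}^{f}\rangle:\mathcal{B}\in\mathsf{QVar}(\tau(\vdash_s))\})$ has $(\tau(\vdash_s))^{f}$ as its $\mathsf{PAt}$-variant, which does not use $f$-maximality — and using that $Schem$ is the greatest logic with a given $\mathsf{PAt}$-variant, I obtain $\mathsf{QVar}(\mathbf{Sch}(\vdash_s))\subseteq\mathsf{QVar}(\{\langle\mathcal{B}^{f}\rangle:\mathcal{B}\in\mathsf{QVar}(\tau(\vdash_s))\})$. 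It then remains to check that each generator $\langle\mathcal{B}^{f}\rangle$ lies in $\sigma(\mathbf{S})$. By Proposition~\ref{Properties of PA-companions under selective translations}(2), $\mathsf{QVar}(\tau(\vdash_s))=\tau(\mathbf{S})=\{\mathcal{A}:\theta(\mathcal{A})\in\mathbf{S}\}$, so $\theta(\mathcal{B})\in\mathbf{S}$; the algebra $\langle\mathcal{B}^{f}\rangle$ is regularly generated and satisfies $\theta(\langle\mathcal{B}^{f}\rangle)=\langle\mathcal{B}^{f}\rangle^{f}=\mathcal{B}^{f}=\theta(\mathcal{B})$ by Lemma~\ref{Identity of the notions of regular elements}, so Lemma~\ref{Minimality of the sigma map}(3) gives $\langle\mathcal{B}^{f}\rangle\cong\mathcal{F}(\theta(\mathcal{B}))$, which by the definition of $\sigma$ belongs to $\sigma(\mathbf{S})=\mathsf{QVar}(\{\mathcal{F}(\mathcal{A}):\mathcal{A}\in\mathbf{S}\})$. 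Since $\sigma(\mathbf{S})$ is itself a quasivariety, the quasivariety generated by these algebras is contained in it, completing the inclusion.

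I expect the main obstacle to be the bookkeeping around the schematic fragment: carefully isolating from the proof of Proposition~\ref{Semantics of schematic fragment} exactly the implication that does not presuppose $f$-maximality, and verifying it applies verbatim to $\tau(\vdash_s)$, together with the precise interplay between $\langle\mathcal{B}^{f}\rangle$, $\theta$, $\mathcal{F}$ and sobriety. As a sanity check, one can alternatively observe directly that $\mathbf{Sch}(\vdash_s)$ is $f$-maximal (because $Schem(\vdash_{*})^{f}=\vdash_{*}$ for every $\mathsf{PAt}$-logic $\vdash_{*}$, whence $Schem(\mathbf{Sch}(\vdash_s)^{f})=Schem((\tau(\vdash_s))^{f})=\mathbf{Sch}(\vdash_s)$) and apply Proposition~\ref{Semantics of schematic fragment} to $\mathbf{Sch}(\vdash_s)$ itself; but then one still has to identify its regularly generated algebras with algebras of the form $\mathcal{F}(\mathcal{A})$, $\mathcal{A}\in\mathbf{S}$, which is essentially the same computation. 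Everything else — the reduction through algebraic completeness, the identity $\mathsf{QVar}(\tau(\vdash_s))=\tau(\mathbf{S})$, and the final appeal to Lemma~\ref{Minimality of the sigma map} — is routine once these translations between the logical and algebraic sides are set up.
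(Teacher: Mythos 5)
Your proposal is correct and follows essentially the same route as the paper: establish companionship via Lemma \ref{Schematic fragment is a PA companion}, identify the regularly generated algebras of the schematic fragment's quasivariety with algebras of the form $\mathcal{F}(\mathcal{A})$ for $\mathcal{A}\in\mathsf{QVar}(\vdash_{s})$ (using parts (2) and (3) of Lemma \ref{Minimality of the sigma map} together with $\theta(\langle\mathcal{B}^{f}\rangle)=\mathcal{B}^{f}$), and finish with Lemma \ref{Minimality of the sigma map}(1). The only cosmetic difference is that you run the regular-generation computation through $\mathsf{QVar}(\tau(\vdash_{s}))$ and settle for the inclusion $\mathsf{QVar}(\mathbf{Sch}(\vdash_{s}))\subseteq\sigma(\mathsf{QVar}(\vdash_{s}))$, whereas the paper applies Proposition \ref{Semantics of schematic fragment} to $\mathbf{Sch}(\vdash_{s})$ itself and establishes the equality $\mathsf{QVar}(\mathbf{Sch}(\vdash_{s}))=\sigma(\mathsf{QVar}(\vdash_{s}))$ — exactly the alternative you flag at the end of your write-up.
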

\begin{proof}
By Lemma \ref{Schematic fragment is a PA companion}, we have that $\mathbf{Sch}(\vdash_{s})$ is a $\zeta$-companion. Moreover, by lemma \ref{Semantics of schematic fragment}, we have that $\mathsf{QVar}(\mathbf{Sch}(\vdash_{s}))$ is generated by its regularly generated algebras. Then in light of Lemma \ref{Minimality of the sigma map}, we note that
\begin{equation*}
    \{\langle \mathcal{B}^{f}\rangle : \mathcal{B}\in \mathsf{QVar}(\mathbf{Sch}(\vdash_{s}))\} = \{\mathcal{F}(\mathcal{A}) : \mathcal{A} \in \mathsf{QVar}(\vdash_{s})\}.
\end{equation*}
To see this, note that by Lemma \ref{Minimality of the sigma map}.1, $\{\mathcal{F}(\mathcal{A}) :\mathcal{A}\in \mathsf{QVar}(\vdash_{*})\}\subseteq \mathsf{QVar}(\mathbf{Sch}(\vdash_{s}))$, and by part 2 of that lemma, all of the former are regularly generated algebras. On the other hand, if $\mathcal{B}\in \mathsf{QVar}(\mathbf{Sch}(\vdash_{s}))$ is a regularly generated algebra, then first note that $\theta(\mathcal{B})\in \mathsf{QVar}(\vdash_{s})$, by Proposition \ref{Properties of rho map in counits}, and the fact that $\mathbf{Sch}(\vdash_{s})$ is a $\zeta$-companion. Hence $\mathcal{B}\cong \mathcal{F}(\theta(\mathcal{B}))\in \{\mathcal{F}(\mathcal{A}) : \mathcal{A}\in \mathsf{QVar}(\vdash_{s})\}$.

Hence we obtain.
\begin{equation*}
    \mathsf{QVar}(\mathbf{Sch}(\vdash_{s})) = \mathsf{QVar}(\{\mathcal{F}(\mathcal{A}) : \mathcal{A}\in \mathsf{QVar}(\vdash_{s})\}),
\end{equation*}
Now let $\bf{K}$ be any quasivariety such that $\mathsf{Log}(\bf{K})$ is a $\zeta$-companion of $\vdash_{s}$. By Lemma \ref{Minimality of the sigma map}.1, we have that  \begin{equation*}
    \{\mathcal{F}(\mathcal{A}) : \mathcal{A}\in \mathsf{QVar}(\vdash_{*})\}\subseteq \bf{K}
\end{equation*}
and so $\mathsf{QVar}(\mathbf{Sch}(\vdash_{*}))\subseteq \mathbf{K}$. But then by completeness:
\begin{equation*}
    \mathsf{Log}(\mathbf{K})\subseteq \mathsf{Log}(\mathsf{QVar}(\mathbf{Sch}(\vdash_{*}))) = \mathbf{Sch}(\vdash_{*}).
\end{equation*}
This shows that $\mathbf{Sch}$ is the greatest $\zeta$-companion, as was to show.\end{proof}

Hence, using the same arguments as in Corollary \ref{Greatest modal companion in the case of GMT} we can now derive the following.

\begin{corollary}
    If $\langle \zeta,f\rangle$ is a sober translation, then a logic ${\vdash_{t}}\in \Lambda(\vdash_{\classY})$ is $f$-maximal if and only if it is the greatest modal companion of $\rho(\vdash_{t})$.
\end{corollary}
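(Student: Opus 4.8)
The plan is to factor the equivalence through the schematic fragment, reducing everything to Corollary~\ref{Specific structure of the greatest companion}. Concretely, I would prove the chain: $\vdash_{t}$ is $f$-maximal $\iff$ ${\vdash_{t}}=Schem(\vdash_{t}^{f})$ $\iff$ ${\vdash_{t}}=\mathbf{Sch}(\rho(\vdash_{t}))$ $\iff$ $\vdash_{t}$ is the greatest $\zeta$-companion of $\rho(\vdash_{t})$. The last equivalence is essentially free: by Proposition~\ref{Properties of rho map in counits}, $\vdash_{t}$ is always \emph{a} $\zeta$-companion of $\rho(\vdash_{t})$ (since trivially $\rho(\vdash_{t})=\rho(\vdash_{t})$), and by Corollary~\ref{Specific structure of the greatest companion} the \emph{greatest} such companion is exactly $\mathbf{Sch}(\rho(\vdash_{t}))$; since greatest elements are unique, $\vdash_{t}$ is the greatest $\zeta$-companion precisely when ${\vdash_{t}}=\mathbf{Sch}(\rho(\vdash_{t}))$. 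Here ``modal companion'' in the statement is read as ``$\zeta$-companion'', matching the terminology of Section~\ref{Section: General Blok-Esakia Theory}.

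For the first equivalence I would use the fact, established just above, that $Schem(\vdash_{t}^{f})$ is the \emph{greatest} logic in $\Lambda(\vdash_{\classY})$ having $\vdash_{t}^{f}$ as its $\mathsf{PAt}$-variant. If $\vdash_{t}$ is $f$-maximal, then since $Schem(\vdash_{t}^{f})$ shares its $\mathsf{PAt}$-variant with $\vdash_{t}$, applying the definition of $f$-maximality with ${\vdash_{m}}=Schem(\vdash_{t}^{f})$ yields $Schem(\vdash_{t}^{f})\subseteq{\vdash_{t}}$, while maximality of the schematic fragment gives the reverse inclusion, so ${\vdash_{t}}=Schem(\vdash_{t}^{f})$. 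Conversely, if ${\vdash_{t}}=Schem(\vdash_{t}^{f})$ and ${\vdash_{m}^{f}}={\vdash_{t}^{f}}$, then $\vdash_{m}$ has $\vdash_{t}^{f}$ as its $\mathsf{PAt}$-variant, hence ${\vdash_{m}}\subseteq Schem(\vdash_{t}^{f})={\vdash_{t}}$; thus $\vdash_{t}$ is $f$-maximal.

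For the second equivalence it suffices to show $\mathbf{Sch}(\rho(\vdash_{t}))=Schem(\vdash_{t}^{f})$, i.e., unravelling the definition of $\mathbf{Sch}$, that $\mathsf{Log}^{f}(\mathsf{QVar}(\tau(\rho(\vdash_{t}))))={\vdash_{t}^{f}}$. I would first compute the inner quasivariety: Proposition~\ref{Properties of PA-companions under selective translations}(2) gives $\mathsf{QVar}(\tau(\rho(\vdash_{t})))=\tau(\mathsf{QVar}(\rho(\vdash_{t})))$, and Proposition~\ref{Selective and sober Translations Induce varieties}(2) gives $\mathsf{QVar}(\rho(\vdash_{t}))=\rho(\mathsf{QVar}(\vdash_{t}))$, so $\mathsf{QVar}(\tau(\rho(\vdash_{t})))=\tau\rho(\mathsf{QVar}(\vdash_{t}))$. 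Next I would establish $\tau\rho(\mathbf{K})=\mathbf{K}^{\uparrow}$ for every $\mathbf{K}\in\Xi(\classY)$ --- the general analogue of Proposition~\ref{Box minimal logics if and only if least modal companion}: by Lemma~\ref{Identity of the notions of regular elements}, $\mathcal{B}\preceq_{f}\mathcal{A}$ forces $\theta(\mathcal{A})=\mathcal{A}^{f}=\mathcal{B}^{f}=\theta(\mathcal{B})$, whence $\rho(\mathbf{K})=\rho(\mathbf{K}^{\uparrow})$; and since $\mathbf{K}^{\uparrow}$ is a $\mathsf{PAt}$-quasivariety on which $\tau\rho$ restricts to the identity by (the proof of) Proposition~\ref{Weak Blok-Esakia}, we get $\tau\rho(\mathbf{K})=\tau\rho(\mathbf{K}^{\uparrow})=\mathbf{K}^{\uparrow}$. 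Finally, Lemma~\ref{Basic Commutativity Result}(2) together with algebraic completeness yields $\mathsf{Log}^{f}(\mathsf{QVar}(\vdash_{t})^{\uparrow})=(\mathsf{Log}(\mathsf{QVar}(\vdash_{t})))^{f}={\vdash_{t}^{f}}$, completing this step and the chain.

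I expect the only genuinely delicate point to be the identity $\tau\rho(\mathbf{K})=\mathbf{K}^{\uparrow}$, where sobriety is used essentially (it already underlies both Corollary~\ref{Specific structure of the greatest companion} and the $\mathsf{PAt}$-Blok--Esakia isomorphism of Proposition~\ref{Weak Blok-Esakia}); the rest is bookkeeping with the commutativity lemmas, exactly parallel to the concrete GMT treatment in Proposition~\ref{Box minimal logics if and only if least modal companion} and Corollary~\ref{Greatest modal companion in the case of GMT}.
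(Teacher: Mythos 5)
Your proof is correct and follows essentially the route the paper intends (the paper simply defers to ``the same arguments as Corollary~\ref{Greatest modal companion in the case of GMT}'', i.e.\ exactly your factorization: $f$-maximal $\iff$ ${\vdash_{t}}=Schem(\vdash_{t}^{f})$ $\iff$ ${\vdash_{t}}=\mathbf{Sch}(\rho(\vdash_{t}))$ $\iff$ greatest $\zeta$-companion, with the last step supplied by Corollary~\ref{Specific structure of the greatest companion}). The only cosmetic difference is that you establish the middle identity $\mathbf{Sch}(\rho(\vdash_{t}))=Schem(\vdash_{t}^{f})$ via $\tau\rho(\mathbf{K})=\mathbf{K}^{\uparrow}$ together with Lemma~\ref{Basic Commutativity Result}, whereas the paper's GMT template reaches the same identity through $\sigma$ and Proposition~\ref{Semantics of schematic fragment}; both are valid and rest on the same sobriety hypotheses.
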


These results also allow us to re-evaluate what makes the case of the Blok-Esakia isomorphism special. Given we know that for any sober translation we have a greatest companion, given by the schematic fragment of the least companion (for example), it cannot be the \textit{existence} of such a greatest companion, nor, as just shown, its semantics. Rather, it appears that what is special about this is that the $\mathsf{Grz}$ axiom \textit{axiomatises} the greatest companion, forcing the join of two schematic fragments to again be a schematic fragment, something which does not seem guaranteed in general, even for sober translations. As such, we introduce the following definition, to demarcate this special property of the GMT translation.

\begin{definition}
Let $\overline{\zeta}$ be a sober translation. We say that this is a \textit{BE-translation} if there is some logic ${\vdash_{*}}\in \Lambda(\vdash_{\classY})$ such that for each ${\Vdash}\in \Lambda(\vdash_{\classY})$, ${\Vdash}\in \Lambda(\vdash_{*})$ if and only if there is some ${\vdash^{*}}\in \Lambda(\vdash_{\classX})$ such that $\mathbf{Sch}(\vdash^{*})={\Vdash}$.
\end{definition}

As discussed in pp.\pageref{Maximal Grz-companion}, the GMT translation and  its close progeny (the translaton for bi-intuitionistic logic, amongst others), are examples of BE-translations. In Figure \ref{fig:typesoftranslationsandexamples}, we summarise the types of translations we have encountered.

\begin{figure}[h]
    \centering
\tikzset{every picture/.style={line width=0.75pt}} 

\begin{tikzpicture}[x=0.75pt,y=0.75pt,yscale=-1,xscale=1]

\draw   (259.98,308.14) .. controls (163.06,307.12) and (85.21,238.03) .. (86.09,153.81) .. controls (86.97,69.6) and (166.25,2.15) .. (263.17,3.17) .. controls (360.09,4.18) and (437.95,73.27) .. (437.06,157.49) .. controls (436.18,241.7) and (356.9,309.15) .. (259.98,308.14) -- cycle ;
\draw   (260.45,263.52) .. controls (177.71,262.65) and (111.24,203.67) .. (112,131.77) .. controls (112.75,59.88) and (180.43,2.3) .. (263.17,3.17) .. controls (345.91,4.03) and (412.38,63.02) .. (411.62,134.91) .. controls (410.87,206.8) and (343.19,264.38) .. (260.45,263.52) -- cycle ;
\draw   (260.98,212.78) .. controls (194.36,212.09) and (140.85,164.6) .. (141.46,106.71) .. controls (142.06,48.83) and (196.56,2.47) .. (263.17,3.17) .. controls (329.79,3.86) and (383.3,51.35) .. (382.7,109.24) .. controls (382.09,167.12) and (327.59,213.48) .. (260.98,212.78) -- cycle ;
\draw   (262.09,106.45) .. controls (228.08,106.09) and (200.76,82.68) .. (201.06,54.16) .. controls (201.36,25.64) and (229.17,2.81) .. (263.17,3.17) .. controls (297.18,3.52) and (324.51,26.93) .. (324.21,55.45) .. controls (323.91,83.97) and (296.1,106.8) .. (262.09,106.45) -- cycle ;
\draw   (261.48,165.07) .. controls (210.02,164.53) and (168.69,127.85) .. (169.16,83.14) .. controls (169.63,38.43) and (211.72,2.63) .. (263.17,3.17) .. controls (314.63,3.7) and (355.96,40.39) .. (355.49,85.09) .. controls (355.02,129.8) and (312.93,165.61) .. (261.48,165.07) -- cycle ;

\draw (186.5,265) node [anchor=north west][inner sep=0.75pt]  [font=\small,color={rgb, 255:red, 87; green, 83; blue, 83 }  ,opacity=1 ] [align=left] {\textit{Contextual Translations}};
\draw (198,220) node [anchor=north west][inner sep=0.75pt]  [font=\small,color={rgb, 255:red, 87; green, 83; blue, 83 }  ,opacity=1 ] [align=left] {\textit{Selective Translations}};
\draw (158.5,138.5) node [anchor=north west][inner sep=0.75pt]   [align=left] {$\displaystyle K_{\neg \neg }$};
\draw (210,120) node [anchor=north west][inner sep=0.75pt]  [font=\small,color={rgb, 255:red, 87; green, 83; blue, 83 }  ,opacity=1 ] [align=left] {\textit{Sober Translations}};
\draw (209.5,60) node [anchor=north west][inner sep=0.75pt]  [font=\small,color={rgb, 255:red, 87; green, 83; blue, 83 }  ,opacity=1 ] [align=left] {\textit{BE Translations}};
\draw (240.5,28.5) node [anchor=north west][inner sep=0.75pt]   [align=left] {$\displaystyle GMT$};
\draw (182,167) node [anchor=north west][inner sep=0.75pt]  [font=\small,color={rgb, 255:red, 87; green, 83; blue, 83 }  ,opacity=1 ] [align=left] {\textit{S. Selective Translations}};
\draw (155.5,193.5) node [anchor=north west][inner sep=0.75pt]   [align=left] {$\displaystyle G$};

\end{tikzpicture}

    \caption{Types of Translations and Examples}
    \label{fig:typesoftranslationsandexamples}
\end{figure}
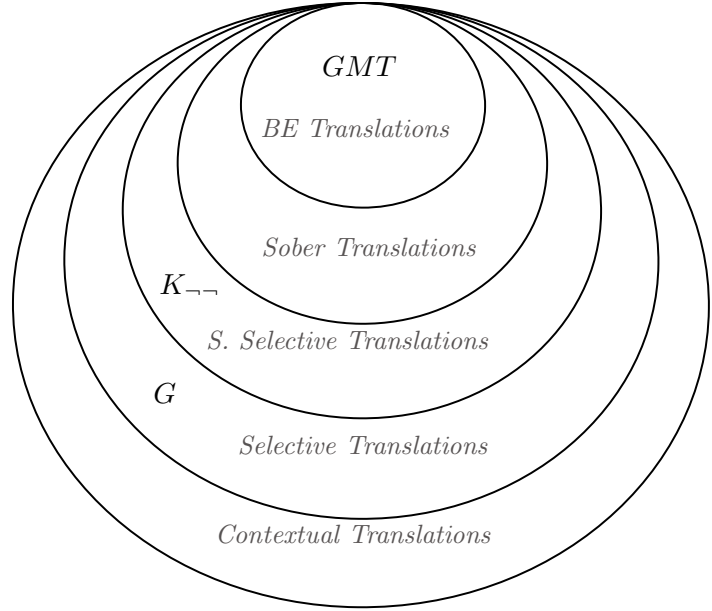

These translations are some of those we have previously encountered, and they also witness the strictness of some of these inclusions. We also note that the inclusion of selective in contextual translations is also strict: $K_{2}$, the translation between Kleene algebras and Distributive lattices mentioned in \cite{Moraschini2018}, is a contextual translation but not selective. The other translations appeared throughout the paper and their properties were discussed in \ref{Example of KGG and GMT translations 1}  \ref{KGG translation is strongly selective} \ref{Kolmogorov and GMT translations are selective}, and Proposition \ref{The GMT translation is sober}, and can briefly be summarises as follows:
\begin{itemize}
    \item The GMT translation is a BE-translation;
    \item The KGG translation is strongly selective but not sober;
    \item The Goldblatt translation is selective and faithful but not strongly selective.
\end{itemize}

We leave open whether there are any natural sober translations which are not BE-translations.

\section{Conclusion}\label{Section: Conclusion}

In this paper, we introduced the concept of Polyatomic logics, and initiated a generalised study of ``Blok-Esakia theory" for a large class of translations, and proved some results connecting the two concepts. We briefly summarise our main results. In Section \ref{Section: Basic Theory and Completeness of PA logic} we introduced $\mathsf{PAt}$-logics, and their algebraic semantics, $\mathsf{PAt}$-quasivarieties, and provided an algebraic completeness result. In Section \ref{Section: Translations and Polyatomic Logics} we introduced several classes of translations, namely selective, strongly selective and sober translations, and through a categorical analysis of the adjunction associated to this translation provided some necessary and sufficient conditions for translations to be in some of these classes. In Section \ref{Section: General Blok-Esakia Theory}, we made use of these various classes of translations to introduce a general Blok-Esakia theory, providing a notion of companion suitable for selective translations, and providing generalisations of the semantic and syntactic assignments $\rho$ and $\tau$ which occurr in the study of the modal companions of $\mathsf{IPC}$; we also proved that assuming a translation is sober, some properties, including FMP, tabularity and local tabularity are preserved by $\rho$. In Section \ref{Connecting Blok-Esakia and Polyatomic Logics} we proved the existence of a general conection between Polyatomic logics and general Blok-Esakia theory, including a variant of the Blok-Esakia isomorphism which connects the lattice of extensions of one system with the lattice of Polyatomic logics of the other system; we also showed that the notions of $f$-minimal and $f$-maximal logics, which appear naturally in the study of Polyatomic logics, correspond for sober translations, respectively, to the notions of greatest and least companions.

Beyond these specific results, in our view one of the main contributions of this article lies in the autonomous treatment of translations as a subject of mathematical analysis, continuing the work of \cite{Moraschini2018}. In particular, this perspective, which centers the relationship between the two logical systems in translation, as well as the extensions of each of these systems, allows us to ask a different set of questions with a very wide scope of application. We list some of these directions that we find most promising.

The most immediate theoretical question is a better grasp of the classes of selective, strongly selective and sober translations. Namely, it would be interesting to provide more syntactic characterisations of these classes, depending solely on the structure of the translation. Additionally, we think it could be valuable to seek a logical and algebraic meaning to other natural classes of adjunctions: for instance, a consequence of sobriety, through the left adjoint preserving equalizers and being faithful, is that the adjunction is comonadic and idempotent. Can we characterise the translations which correspond to such adjunctions through syntactic or algebraic properties? Can we characterise in those terms those translations where the right adjoint has itself a right adjoint (such as is the case with the KGG translation)?

Moreover, it would be interesting to understand how the results of the present paper fare when some (or all) of the restrictions imposed here are lifted: unariness of the translation, working with quasivarieties instead of generalized quasivarieties, algebraizability of the logics when defining $\mathsf{PAt}$-variants, amongst others. Moreover, a systematic study of the kinds of properties of the logical systems which are generally preserved by the semantic operators associated to the adjunction, namely through the operator $\tau$ as defined, would be very valuable. Additionally,  understanding the properties inherited by the $\mathsf{PAt}$-variant of a logic would also fall within this general direction of preservation theorems.

In a different direction, it would be interesting to apply the tools of Polyatomic logics to concrete settings. We expect that in settings like the Goldblatt translation, relating orthologic and the KTB modal logic system, where the Blok-Esakia isomorphism was shown to fail and all forms of classic Blok-Esakia theory were shown to be out of reach, these techniques could prove useful in clarifying the relationship between these logical systems. Likewise, in the context of several logics not closed under uniform substitution -- such as Buss' logic of provability, two-dimensional modal logics, Dynamic Epistemic Logics, amongst others -- it would be useful to see if the notions of Polyatomic logics can be fruitfully applied in providing rich algebraic semantics, and developing the metalogical properties of these systems.

Finally, it would be interesting to explore to what extent the connections laid out between Blok-Esakia theory and Polyatomic logics fare in broader terms. For instance, it was shown that $f$-minimal and $f$-maximal logics correspond to least and greatest companions. Does this correspondence extend to all modal companions/$f$-variants? These and the above questions are left open for the moment.

\section{Acknowledgements}

I would like to thank Nick Bezhanishvili and Tommaso Moraschini, under whose supervision the majority of the work above was developed, for their support and advice. I am also thankful to two anonymous reviewers for their careful reading, comments and suggestions, which greatly improved the presentation and structure of this paper.

\printbibliography[
    heading=bibintoc,
    title={Bibliography}
]

@misc{stronkowskiblokesakia,
  doi = {10.48550/ARXIV.1810.09286},
  url = {https://arxiv.org/abs/1810.09286},
  author = {Stronkowski,  Michał M.},
  keywords = {Logic (math.LO),  FOS: Mathematics,  FOS: Mathematics,  03B45},
  title = {On the Blok-Esakia theorem for universal classes},
  publisher = {arXiv},
  year = {2018},
  copyright = {arXiv.org perpetual,  non-exclusive license}
}

@BOOK{Ciardelli2018-ql,
  title     = "Inquisitive Semantics",
  author    = "Ciardelli, Ivano and Groenendijk, Jeroen and Roelofsen, Floris",
  publisher = "Oxford University Press",
  series    = "Oxford Surveys in Semantics and Pragmatics",
  month     =  nov,
  year      =  2018,
  address   = "London, England"
}

@incollection{Wolter2014,
  doi = {10.1007/978-94-017-8860-1_5},
  url = {https://doi.org/10.1007/978-94-017-8860-1_5},
  year = {2014},
  publisher = {Springer Netherlands},
  pages = {99--118},
  author = {Frank Wolter and Michael Zakharyaschev},
  title = {On the Blok-Esakia Theorem},
  booktitle = {Leo Esakia on Duality in Modal and Intuitionistic Logics}
}

@article{Banaschewski1996,
author = {Banaschewski, B. and Pultr, A.},
journal = {Cahiers de Topologie et Géométrie Différentielle Catégoriques},
keywords = {category of frames with weakly open morphisms; Booleanization of frames; reflection},
language = {eng},
number = {1},
pages = {41-60},
publisher = {Dunod éditeur, publié avec le concours du CNRS},
title = {Booleanization},
url = {http://eudml.org/doc/91572},
volume = {37},
year = {1996},
}

@misc{quadrellaronakov,
  doi = {10.48550/ARXIV.2210.06047},
  url = {https://arxiv.org/abs/2210.06047},
  author = {Nakov,  Georgi and Quadrellaro,  Davide Emilio},
  keywords = {Logic (math.LO),  FOS: Mathematics,  FOS: Mathematics},
  title = {Algebraizable Weak Logics},
  publisher = {arXiv},
  year = {2022},
  copyright = {Creative Commons Attribution Non Commercial Share Alike 4.0 International}
}

@article{Chagrov1992,
  doi = {10.1007/bf00370331},
  url = {https://doi.org/10.1007/bf00370331},
  year = {1992},
  publisher = {Springer Science and Business Media {LLC}},
  volume = {51},
  number = {1},
  pages = {49--82},
  author = {Alexander Chagrov and Michael Zakharyashchev},
  title = {Modal companions of intermediate propositional logics},
  journal = {Studia Logica}
}

@BOOK{Awodey2010-eu,
  title     = "Category Theory",
  author    = "Awodey, Steve",
  publisher = "Oxford University Press",
  series    = "Oxford Logic Guides",
  edition   =  2,
  month     =  jun,
  year      =  2010,
  address   = "London, England"
}

@book{MacLane1978,
  doi = {10.1007/978-1-4757-4721-8},
  url = {https://doi.org/10.1007/978-1-4757-4721-8},
  year = {1978},
  publisher = {Springer New York},
  author = {Saunders Mac Lane},
  title = {Categories for the Working Mathematician}
}

@article{aqvisttwodimensional,
 ISSN = {00223611, 15730433},
 URL = {http://www.jstor.org/stable/30226969},
 author = {Lennart Åqvist},
 journal = {Journal of Philosophical Logic},
 number = {1},
 pages = {1--76},
 publisher = {Springer},
 title = {Modal Logic with Subjunctive Conditionals and Dispositional Predicates},
 urldate = {2023-03-21},
 volume = {2},
 year = {1973}
}

@article{daviesandhumberstone,
 ISSN = {00318116, 15730883},
 URL = {http://www.jstor.org/stable/4319391},
 author = {Martin Davies and Lloyd Humberstone},
 journal = {Philosophical Studies: An International Journal for Philosophy in the Analytic Tradition},
 number = {1},
 pages = {1--30},
 publisher = {Springer},
 title = {Two Notions of Necessity},
 urldate = {2023-03-21},
 volume = {38},
 year = {1980}
}

@article{Buss1990,
  doi = {10.1305/ndjfl/1093635417},
  url = {https://doi.org/10.1305/ndjfl/1093635417},
  year = {1990},
  month = mar,
  publisher = {Duke University Press},
  volume = {31},
  number = {2},
  author = {Samuel R. Buss},
  title = {The modal logic of pure provability.},
  journal = {Notre Dame Journal of Formal Logic}
}

@InCollection{sep-dynamic-epistemic,
	author       =	{Baltag, Alexandru and Renne, Bryan},
	title        =	{{Dynamic Epistemic Logic}},
	booktitle    =	{The {Stanford} Encyclopedia of Philosophy},
	editor       =	{Edward N. Zalta},
	howpublished =	{\url{https://plato.stanford.edu/archives/win2016/entries/dynamic-epistemic/}},
	year         =	{2016},
	edition      =	{{W}inter 2016},
	publisher    =	{Metaphysics Research Lab, Stanford University}
}

@article{Yang2016,
  doi = {10.1016/j.apal.2016.03.003},
  url = {https://doi.org/10.1016/j.apal.2016.03.003},
  year = {2016},
  month = jul,
  publisher = {Elsevier {BV}},
  volume = {167},
  number = {7},
  pages = {557--589},
  author = {Fan Yang and Jouko V\"{a}\"{a}n\"{a}nen},
  title = {Propositional logics of dependence},
  journal = {Annals of Pure and Applied Logic}
}

@incollection{Grilletti2022,
  doi = {10.1007/978-3-030-98479-3_15},
  url = {https://doi.org/10.1007/978-3-030-98479-3_15},
  year = {2022},
  publisher = {Springer International Publishing},
  pages = {297--322},
  author = {Gianluca Grilletti and Davide Emilio Quadrellaro},
  title = {Lattices of~Intermediate Theories via~Ruitenburg's Theorem},
  booktitle = {Lecture Notes in Computer Science}
}

@BOOK{Gorbunov1998-sh,
  title     = "Algebraic theory of quasivarieties",
  author    = "Gorbunov, Viktor A",
  publisher = "Kluwer Academic/Plenum",
  series    = "Siberian School of Algebra and Logic",
  edition   =  1998,
  month     =  sep,
  year      =  1998,
  address   = "New York, NY",
  language  = "en"
}

@article{bezhanishvili_grilletti_quadrellaro_2021, 
title={An Algebraic Approach to Inquisitive and $\mathtt {DNA}$ -Logics},
journal={The Review of Symbolic Logic}, 
publisher={Cambridge University Press}, 
author={Bezhanishvili, Nick and Grilletti, Gianluca and Quadrellaro, Davide Emilio},
year={2021},
pages={1–41}
}

@incollection{Bezhanishvili2019_grilletti_holliday,
  doi = {10.1007/978-3-662-59533-6_3},
  year = {2019},
  publisher = {Springer Berlin Heidelberg},
  pages = {35--52},
  author = {Bezhanishvili, Nick and Grilletti, Gianluca and Holliday, Wesley H. },
  title = {Algebraic and Topological Semantics for Inquisitive Logic via Choice-Free Duality},
  booktitle = {Logic,  Language,  Information,  and Computation}
}

@phdthesis{Blok1976VarietiesOI,
  title={Varieties of interior algebras},
school ={Universiteit van Amsterdam},
  author={Willem J. Blok},
  year={1976}
}

@book{BurrisSankappanavar,
    author = {Burris, Stanley and Sankappanavar,H.P. },
    year = {1981},
    title = {A Course in Universal Algebra},
    publisher = {Springer},
    address = {New York}
}

@BOOK{Chagrov1997-cr,
  title     = {Modal Logic},
  author    = {Chagrov, Alexander and Zakharyaschev, Michael},
  publisher = {Clarendon Press},
  series    = {Oxford Logic Guides},
  year      =  {1997},
  address   = {Oxford, England}
}

@article{Ciardelli2010,
  doi = {10.1007/s10992-010-9142-6},
  year = {2010},
  publisher = {Springer Science and Business Media {LLC}},
  volume = {40},
  number = {1},
  pages = {55--94},
  author = {Ciardelli, Ivano and Roelofsen, Floris},
  title = {Inquisitive Logic},
  journal = {Journal of Philosophical Logic}
}

@mastersthesis{antoniocleani,
author = {Cleani, Antonio Maria},
year = {2021},
title = {Translational Embeddings via Stable Canonical Rules},
school = {Universiteit van Amsterdam},
adress = {Amsterdam}
}

@book{Davey2002-lr,
  title = {Introduction to Lattices and Order},
  author    = {Davey, Brian A. and Priestley, Hilary A.},
  publisher = {Cambridge University Press},
  pages     = {175--200},
  year      =  {2002},
  address   = {Cambridge}
}

@article{Dukarm1988,
  doi = {10.4064/cm-55-1-11-17},
  year = {1988},
  publisher = {Institute of Mathematics,  Polish Academy of Sciences},
  volume = {55},
  number = {1},
  pages = {11--17},
  author = {J. J. Dukarm},
  title = {Morita equivalence of algebraic theories},
  journal = {Colloquium Mathematicum}
}

@book{Esakiach2019HeyAlg,
	title = {Heyting Algebras: Duality Theory},
	publisher = {Springer},
	author = {Esakia,Leo},
	editor = {Bezhanishvili, Guram and Holliday, Wesley},
	note = {English translation of the original 1985 book},
	year = {2019}
}

@inproceedings{esakiapaperatconference,
  author = {Leo Esakia},
  title  = {On Modal Companions of Superintuitionistic Logics},
  organization = {VII Soviet symposium on logic (Kiev,76)},
year = {1976},
 pages = {135-136}
}

@book{Font2016-dk,
  title     = {Abstract algebraic logic -- An introductory textbook},
  author    = {Font, Josep Maria},
  publisher = {College Publications},
  address = {London},
  year      = {2016}
}

@article{freyd,
author = {Freyd, Peter},
journal = {Colloquium Mathematicae},
language = {eng},
number = {1},
pages = {89-106},
title = {Algebra valued functors in general and tensor products in particular},
volume = {14},
year = {1966},
}

@article{Fussner2021,
  doi = {10.4204/eptcs.343.3},
  year = {2021},
  publisher = {Open Publishing Association},
  volume = {343},
  pages = {37--49},
  author = {Fussner, Wesley and St. John, Gavin},
  title = {Negative Translations of Orthomodular Lattices and Their Logic},
  journal = {Electronic Proceedings in Theoretical Computer Science}
}

@online{gehrkevangoolnewbook,
  author = {Gehrke, Mai and van Gool,  Sam},
  title = {Topological duality for distributive lattices,  and applications},
  eprinttype = {arXiv},
  eprint = {arXiv:2203.03286
},
version = {2},
  year = {2022},
}

@article{Gentzen1936,
  doi = {10.1007/bf01565428},
  year = {1936},
  publisher = {Springer Science and Business Media {LLC}},
  volume = {112},
  number = {1},
  pages = {493--565},
  author = {Gerhard Gentzen},
  title = {Die Widerspruchsfreiheit der reinen Zahlentheorie},
  journal = {Mathematische Annalen}
}

@article{GIRARD19871,
title = {Linear logic},
journal = {Theoretical Computer Science},
volume = {50},
number = {1},
pages = {1-101},
year = {1987},
issn = {0304-3975},
doi = {https://doi.org/10.1016/0304-3975(87)90045-4},
author = {Jean-Yves Girard}
}

@mastersthesis{minhatesedemestrado,
  author  = "Rodrigo N. Almeida",
  title   = "Polyatomic Logics and Generalised Blok-Esakia Theory with Applications to Orthologic and KTB",
  school  = "University of Amsterdam",
  year    = "2022"
}

@incollection{davis_1990, 
title={Eine Interpretation des intuitionistischen Aussagenkalküls},
author={Kurt Gödel},
volume={55},
DOI={10.2307/2274985}, 
number={1}, 
journal={Journal of Symbolic Logic}, publisher={Cambridge University Press}, editor={Davis, Martin}, 
year={1990},
pages={346–346}
}

@incollection{esakiapaperabouts41,
author = {Leo Esakia},
title = {On the theory of modal and superintuitionistic logics},
booktitle = {Logical Inference},
year = {1979},
editor = {V.A. Smirnov},
address = {Moscow},
pages = {147-171},
publisher = {Nauka}
}

@phdthesis{moraschiniphd,
  author       = {Tommaso Moraschini}, 
  title        = {Investigations into the role of translations in abstract algebraic logic},
  school       = {Universitat de Barcelona},
  year         = {2016}
}

@article{godelintuition,
author={Kurt Godel},
title={Eine Interpretation des intuitionischen Aussagenkalkuls},
journal={Ergebnisse eines Mathematischen Kolloquiums},
volume={4},
pages={39--40},
year={1933}
}

@article{Dummett1959,
  doi = {10.1002/malq.19590051405},
  url = {https://doi.org/10.1002/malq.19590051405},
  year = {1959},
  publisher = {Wiley},
  volume = {5},
  number = {14-24},
  pages = {250--264},
  author = {M. A. E. Dummett and E. J. Lemmon},
  title = {Modal Logics Between S 4 and S 5},
  journal = {Zeitschrift f\"{u}r Mathematische Logik und Grundlagen der Mathematik}
}

@article{Goldblatt1974,
  doi = {10.1007/bf00652069},
  year = {1974},
  publisher = {Springer Science and Business Media {LLC}},
  volume = {3},
  number = {1-2},
  pages = {19--35},
  author = {Goldblatt, Robert},
  title = {Semantic analysis of orthologic},
  journal = {Journal of Philosophical Logic}
}

@incollection{Grilletti2022-oh,
  title     = {Lattices of intermediate theories via ruitenburg's theorem},
  booktitle = {Lecture Notes in Computer Science},
  author    = {Grilletti, Gianluca and Quadrellaro, Davide Emilio},
  publisher = {Springer International Publishing},
  pages     = {297--322},
  series    = {Lecture notes in computer science},
  year      =  {2022},
  address   = {Cham}
}

@ARTICLE{Grzegorczyk1967-bb,
  title     = {Some relational systems and the associated topological spaces},
  author    = {Grzegorczyk, Andrzej},
  journal   = {Fundamenta Mathematicae},
  publisher = {Institute of Mathematics, Polish Academy of Sciences},
  volume    =  {60},
  number    =  {3},
  pages     = {223--231},
  year      =  {1967}
}

@article{Hartonas2020-kn,
  title     = {Modal translation of substructural logics},
  author    = {Hartonas, Chrysafis},
  journal   = {Journal of Applied Non-Classical Logics},
  publisher = {Informa UK Limited},
  volume    =  {30},
  number    =  {1},
  pages     = {16--49},
  year      =  {2020}
  }

@incollection{kolmogorovexcludedmidle,
  title = {On the Principle of the Excluded Middle},
  author = {Kolmogorov, Andrey},
  booktitle = {From Frege to G{\"o}del : a source book in mathematical logic,        1879-1931},
  editor    = {van Heijenoort, Jean},
  publisher = {Harvard University Press},
  series    = {Source Books in the History of the Sciences},
  year      =  {1990},
  address   = {London, England}
  }

@incollection{McKenzie2017,
  doi = {10.1201/9780203748671-10},
  year = {2017},
  month = oct,
  publisher = {Routledge},
  pages = {211--243},
  author = {Ralph McKenzie},
  title = {An algebraic version of categorical equivalence for varieties and more general algebraic categories},
  booktitle = {Logic and algebra}
}

@article{Moraschini2018,
title={A Logical and Algebraic Characterisation of Adjunctions between Generalised Quasi-Varieties},
volume={83},
number={3}, 
journal={The Journal of Symbolic Logic}, 
publisher={Cambridge University Press}, 
author={Moraschini, Tommaso}, year={2018},
pages={899–919}
}

@article{Tur2008,
  year = {2008},
  publisher = {Duke University Press},
  volume = {49},
  number = {2},
  author = {Tur, Juan Soliveres  and Vidal, Juan Climent},
  title = {Functors of Lindenbaum-Tarski,  Schematic Interpretations,  and Adjoint Cylinders between Sentential Logics},
  journal = {Notre Dame Journal of Formal Logic}
}

\end{document}